\definecolor{darkgreen}{RGB}{47,139,79}
\definecolor{darkblue}{RGB}{36,24,130}
\let\oldtocsection=\tocsection
\let\oldtocsubsection=\tocsubsection
\renewcommand{\tocsection}[2]{\hspace{0em}\oldtocsection{#1}{#2}}
\renewcommand{\tocsubsection}[2]{\hspace{1em}\oldtocsubsection{#1}{#2}}
\DeclareRobustCommand{\SkipTocEntry}[5]{}
\title{Homological stability  for classical groups}
\author{David Sprehn}
\author{Nathalie Wahl}
\subjclass[2010]{20J05,11E57}
\newcommand{\im} {\operatorname{Im}}
\newcommand{\aut} {\operatorname{Aut}}
\newcommand{\Aut} {\operatorname{Aut}}
\newcommand{\rank} {\operatorname{rank}}
\newcommand{\iso} {\cong}
\newcommand{\semidirect} {\rtimes}
\newcommand{\inv} {\ensuremath ^{-1}}
\newcommand{\bbR} {\ensuremath{\mathbb{R}}}
\newcommand{\bbC} {\ensuremath{\mathbb{C}}}
\newcommand{\bbQ} {\ensuremath{\mathbb{Q}}}
\newcommand{\bbZ} {\ensuremath{\mathbb{Z}}}
\newcommand{\F} {\ensuremath{\mathbb{F}}}
\newcommand{\set}[2] {\left\lbrace {#1} \ \middle\arrowvert\ {#2} \right\rbrace}
\newcommand{\abs}[1] {\left| {#1} \right|}
\newcommand{\induction}[3] {\left. {#1} \right\uparrow_{#2}^{#3}}
\newcommand{\+} {\quad\mbox{and}\quad}
\DeclareMathAlphabet\mathbfcal{OMS}{cmsy}{b}{n}
\DeclareMathOperator\id{id}
\DeclareMathOperator\codim{codim}
\DeclareMathOperator\Cone{Cone}
\DeclareMathOperator\colim{colim}
\DeclareMathOperator\G{\mathrm{GL}}
\DeclareMathOperator\A{\mathrm{A}}
\DeclareMathOperator\Gi{\mathbf{G}}
\DeclareMathOperator\Ai{\mathbf{A}}
\DeclareMathOperator\GA{\Gamma}
\DeclareMathOperator\bP{\overline{\mathcal{T}}}
\DeclareMathOperator\PP{\mathcal{T}}
\DeclareMathOperator\PPi{\mathbfcal{T}} 
\DeclareMathOperator\pP{\mathcal{P}}
\DeclareMathOperator\HH{\mathcal{H}}
\DeclareMathOperator\R{\mathfrak{R}} 
\DeclareMathOperator\g{g} 
\DeclareMathOperator\K{\mathfrak{K}} 
\newcommand{\Si}{\Sigma}
\newcommand{\Forms}{\operatorname{Form}}
\newcommand{\GL}{\operatorname{GL}}
\newcommand{\OO}{\operatorname{O}}
\newcommand{\Sp}{\operatorname{Sp}}
\newcommand{\UU}{\operatorname{U}}
\newcommand{\Z}{\mathbb{Z}}
\newcommand{\St}{\operatorname{St}}
\theoremstyle{definition}
\newtheorem{thm}{Theorem}[section]
\newtheorem{prop}[thm]{Proposition}
\newtheorem{lemma}[thm]{Lemma}
\newtheorem{lem}[thm]{Lemma}
\newtheorem{cor}[thm]{Corollary}
\newtheorem{rem}[thm]{Remark}
\newtheorem{defn}[thm]{Definition}
\newtheorem{Def}[thm]{Definition}
\numberwithin{equation}{section}
\newtheorem{Th}{Theorem}
\newtheorem{Co}[Th]{Corollary}
\newtheorem{Thp}{Theorem}
\newcommand{\al}{\alpha}
\newcommand{\s}{\sigma}
\newcommand{\eps}{\varepsilon}
\newcommand{\op}{\oplus}
\newcommand{\x}{\times}
\newcommand{\minus}{\backslash}
\newcommand{\inc}{\hookrightarrow}
\newcommand{\rar}{\longrightarrow}
\newcommand{\sta}{\stackrel}
\newcommand{\note}[1]
{{\bf [N: #1]}}
\begin{document}

\begin{abstract}
We prove a slope 1 stability range for the  homology of the
symplectic, orthogonal and unitary groups with respect to the
hyperbolic form, over any fields other than $\F_2$, improving the
known range by a factor 2 in the case of finite fields.  Our result
more generally applies to the automorphism groups of vector spaces
equipped with a possibly degenerate form (in the sense of Bak, Tits
and Wall). For finite fields of odd characteristic, and more generally
fields in which $-1$ is a sum of two squares, we deduce a stability
range for the orthogonal groups with respect to the Euclidean form, and a corresponding result for the unitary groups.

In addition, we include an exposition of Quillen's unpublished slope 1
stability argument for the general linear groups over fields other
than $\F_2$, and use it to recover also the improved range of
Galatius--Kupers--Randal-Williams in the case of finite fields, at the characteristic. 
\end{abstract}

\maketitle

\section{Introduction}

The homology of the classical groups has long been known to stabilize, in quite large generality, see eg., \cite{Bet,Cathelineau,Charney,Essert,Frie,Maazen,Mirzaii-vdK,Sah,vdK,Vogtmann}.  
Our main result is an improvement on the known stability range for the  symplectic, unitary and orthogonal groups over finite fields. 
The proof is based on an unpublished argument of Quillen in the case of $\GL_n$, which we also present here. The proof works for the groups defined over any field, finite or infinite, as long as it is not $\F_2$: 
\begin{Th}\label{thm:intro stability}
Let $\F$ be a field other than $\F_2$.  The stabilization maps
\begin{align*}
H_d(\GL_n(\F);\Z) &\rar H_d(\GL_{n+1}(\F);\Z)\\
H_d(\Sp_{2n}(\F);\Z) &\rar H_d(\Sp_{2n+2}(\F);\Z)\\
H_d(\UU^\s_{n,n}(\F);\Z)&\rar H_d(\UU^\s_{n+1,n+1}(\F);\Z)\\
H_d(\OO_{n,n}(\F);\Z) &\rar H_d(\OO_{n+1,n+1}(\F);\Z)
\end{align*}
are isomorphisms in degrees $d\leq n-1$ and surjections in degree $d=n$. 

Moreover, if $\F$ is the finite field $\F_{\ell^r}$  for $\ell$ a prime number with $\ell^r\neq 2$, 
$$H_d(\G_n(\F_{\ell^r});\F_\ell)\rar  H_d(\G_{n+1}(\F_{\ell^r});\F_\ell)$$ 
is an isomorphism for $d\le n+r(\ell-1)-3$ and a surjection in degrees  $d= n+r(\ell-1)-2$. 
\end{Th}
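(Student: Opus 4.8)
The backbone of the argument will be the standard "generic" homological stability machine: build for each family $\G_n$ a highly connected simplicial complex (or semisimplicial set) on which $\G_n$ acts, with stabilizers of $p$-simplices isomorphic to $\G_{n-p-1}$ (up to a harmless factor), and feed the connectivity estimate into the spectral sequence of the action to run an induction on $n$ and $d$. For $\GL_n$ the relevant complex is the complex of unimodular sequences / split partial bases; for $\Sp$, $\OO$, $\UU$ one uses the complex of isotropic unimodular sequences or hyperbolic summands, exactly as in the work of Charney, Mirzaii--van der Kallen, and the authors' hyperbolic-form setup recalled earlier in the paper. The first half of Theorem~\ref{thm:intro stability} --- the slope~$1$ range with $\Z$ coefficients over any $\F\neq\F_2$ --- is what one gets from this machine together with Quillen's connectivity argument; I would take that as established (it is the content of the earlier sections) and concentrate on the extra input needed for the finite-field, characteristic-$\ell$ statement.

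For the improvement over $\F_{\ell^r}$ at the prime $\ell$, the key point is that one can do \emph{better} than the universal slope-$1$ bound because the low-degree mod-$\ell$ homology of $\GL_n(\F_{\ell^r})$ itself vanishes in a range growing with $r$. Concretely: the plan is to combine the slope-$1$ stability already proved with Quillen's computation that $\widetilde H_*(\GL_n(\F_{\ell^r});\F_\ell)=0$ in degrees $*\le r(\ell-1)-1$ (equivalently, the mod-$\ell$ homology of $\GL_\infty(\F_{\ell^r})$ is concentrated in degrees that are multiples of $\approx r(\ell-1)$, by Quillen's $K$-theory computation of finite fields). Feeding such a vanishing range back into the spectral sequence of the action on the split-building-type complex upgrades the slope-$1$ conclusion: whenever the relevant differentials land in degrees below the vanishing threshold they are forced to be zero, which shifts the isomorphism range from $d\le n-1$ up to $d\le n+r(\ell-1)-3$ and the surjectivity range correspondingly. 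This is the same mechanism by which Galatius--Kupers--Randal-Williams obtain their range; I would present it here as a clean consequence of (i) the slope-$1$ result, (ii) Quillen's finite-field computation, and (iii) a "generic acyclicity meets a vanishing band" bookkeeping argument in the spectral sequence.

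For the symplectic, unitary and orthogonal cases one proceeds identically, using the hyperbolic-form analogue of the complex together with the analogous vanishing of mod-$\ell$ homology for $\Sp_{2n}(\F_{\ell^r})$, $\UU^\s_{n,n}(\F_{\ell^r})$ and $\OO_{n,n}(\F_{\ell^r})$ in a range $*\le r(\ell-1)-1$ --- the latter again following from Quillen-type computations (Friedlander, Fiedorowicz--Priddy) of the $\F_\ell$-homology of these families, which is concentrated in the same periodic pattern as in the $\GL$ case. One has to be slightly careful about the orthogonal groups and the prime $2$: the statement restricts to $\ell^r\neq 2$, and for $\ell=2$ the homology of $\OO_{n,n}$ is more complicated, but this is exactly the case the hypothesis excludes, so no extra argument is needed.

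The main obstacle, I expect, is \emph{not} the spectral-sequence bookkeeping (which is routine once the ingredients are in place) but rather organizing the argument so that the connectivity estimates for the relevant complexes and the finite-field homology-vanishing inputs are invoked in a uniform way across all four families, and checking the precise numerics so that the bound comes out as $d\le n+r(\ell-1)-3$ rather than something off by one. A secondary subtlety is that the mod-$\ell$ vanishing range one really needs is for the \emph{unstable} groups $\G_n(\F_{\ell^r})$, not merely the stable ones, so one must make sure that the stable vanishing range, combined with the already-proven slope-$1$ stability, does propagate correctly to the unstable groups in the range where the theorem claims it --- this is a small induction that has to be set up carefully to avoid circularity with the main stability statement being proved.
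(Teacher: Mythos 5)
There is a genuine gap, on both halves of the statement. For the slope~$1$ part, the machine you invoke --- complexes of unimodular or isotropic unimodular sequences, with $p$-simplex stabilizers isomorphic to $\G_{n-p-1}$ --- is precisely the setup of Charney, Mirzaii--van der Kallen and Essert, and it only yields slope~$2$ ranges for the symplectic, unitary and orthogonal families; the slope~$1$ range is the main new content of the theorem and cannot be ``taken as established.'' The paper's proof works instead with the posets of subspaces, resp.\ isotropic subspaces containing the radical (the Tits building and the isotropic building, plus relative versions), whose stabilizers are affine-type groups and whose associated chain complex carries coefficients in the (relative) Steinberg module; the slope~$1$ bound comes from the vanishing of the (relative) Steinberg coinvariants (Theorem~\ref{thm:relcoinvars} and \cite[Thm.~1.1]{APS}), which kills the extra column in the spectral sequence of Theorem~\ref{thm:SS}. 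Your proposal contains no substitute for this mechanism, so the isomorphism range $d\le n-1$ for $\Sp_{2n}$, $\UU_{n,n}$, $\OO_{n,n}$ is not actually reached.

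For the finite-field improvement, your proposed input is the wrong one and is circular. The unstable vanishing $H_d(\GL_n(\F_{\ell^r});\F_\ell)=0$ in a range independent of $n$ is exactly what one \emph{deduces} from stability together with Quillen's stable vanishing (this is \cite[Cor C]{GKRW} and the analogue of Corollary~\ref{cor:C}); using it as an input to prove the improved stability range is the circularity you flag at the end but do not resolve. Moreover, the terms in the spectral sequence that must vanish to improve the range are not plain homology groups of smaller general linear groups but homology with Steinberg coefficients of the relative (affine) stabilizers; the paper's actual input is the independent computation $H_q\bigl(\A^T(V,V_0);\St(V,V_0)\otimes\F_\ell\bigr)=0$ for $q\le r(\ell-1)-2$ from \cite[Lem 5.2]{GKRW}, inserted into the column $p=\dim\pP+1$ of Theorem~\ref{thm:SS}. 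Finally, note that the improved range in the statement is claimed only for $\G_n=\GL_n$: the paper explicitly cannot (and does not) extend it to the symplectic, unitary and orthogonal groups, because the isotropic building has no contractible augmented analogue of $\bP(V)$, $\bP(V,V_0)$ (see Remark~\ref{rem:range}); your claim that ``one proceeds identically'' for those families rests on the same circular unstable-vanishing input and would not go through.
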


The case of $\GL_n$ 
is the stability range obtained by Quillen in his note books \cite[p10]{QuillenNotes} (which though starts with a couple of bleached pages!), with 
the improvement of Galatius--Kupers--Randal-Williams \cite{GKRW} in the case of finite fields $\F_{\ell^r}$ at the characteristic $\ell$; the improved range is obtained here by inputing a twisted coefficient computation \cite[Lem 5.2]{GKRW} into Quillen's argument
\footnote{The paper \cite{GKRW} gives in addition a slope $\frac{2}{3}$ range for $\GL_n(\F_2)$ with $\F_2$ coefficients, which we do not recover with our methods.}.  
For the other groups in the case $\F$ a finite field, our result improves the best known stability ranges by  a factor of two. (See Essert~\cite[Thm 3.8]{Essert} who obtains a bound $d\leq \frac{n}{2}-1$, slightly improving \cite[Thm 8.2]{Mirzaii-vdK}, and \cite{LoovdK} for symplectic group. Mirzaii \cite{Mir} has a slope 1 range for finite fields but away from the characteristic.)   
In the case of an infinite field,
our ranges match those obtained by  Essert~\cite[Thm 3.8]{Essert} and Sah~\cite[App.~B]{Sah} (see also \cite{Mir}), except that Sah also gets injectivity in degree $n$ for the general linear groups, which does not hold in our generality,  
see Remark~\ref{rem:range}.

Our result in the case of symplectic, unitary and orthogonal groups, follows from a more general stability result, stabilizing the automorphism group of any {\em formed space} with the {\em hyperbolic form}, as we will explain now. We start by recalling the definition of these objects, and explain how they relate to the above groups.  


\subsection{Automorphisms of formed spaces}

To prove our result, we use the general framework of forms originating in the work of Bak, Tits and Wall (see eg.~\cite{Bak69,Tits,Wallaxiomatic,WallII}), and later developed also by Magurn--van der Kallen--Vaserstein \cite{MVV}. Given a field $\F$,  a (possibly trivial) involution $\s:\F\to \F$, an element $\eps\in \F$ such that $\eps\bar \eps=1$, for $\bar\eps:=\s(\eps)$, and a certain  additive subgroup $\Lambda \le \F$, a {\em form} $q$ on an $\F$--vector space $E$ is an element of the quotient 
\[\Forms(E,\s,\eps,\Lambda):= \operatorname{Sesq}_{\sigma}(E)/X(E,\sigma,\eps,\Lambda)  \]
of the sesquilinear forms on $E$ by a subgroup $X(E,\sigma,\eps,\Lambda)$ that depends on the chosen $\s,\eps$ and $\Lambda$ (see Definition~\ref{def:form}). We call the pair $(E,q)$ a {\em formed space}. 
Direct sum induces a monoidal structure on the category of formed spaces and form-preserving maps. 

 To a form $q\in \Forms(E,\s,\eps,\Lambda)$, one associates two new maps 
$\omega_q:E\x E\to \F$ and $Q_q:E\to \F/\Lambda$ defined by 
 \[ \omega_q(v,w)=q(v,w)+\eps\overline{q(w,v)} \ \ \ \textrm{and}\ \ \  Q_q(v)=q(v,v)+\Lambda\in\F/\Lambda, \]
where $\omega_q$ is an ``$\eps$-skew symmetric sesquilinear form'' by construction, and $Q_q$ should be thought of as  a ``quadratic refinement'' of $\omega_q$.  
We show in our companion paper \cite[Prop 2.7]{Forms} that the form $q$ is always determined by either $\omega_q$ or $Q_q$. As $\omega_q$ and $Q_q$ for various choices of $\s$, $\eps$ and $\Lambda$ recover the classical  notions of alternating, Hermitian and quadratic forms, this allows to study the symplectic, unitary  and orthogonal groups, all within one framework. More precisely, 
for $(E,q)$ a formed space, let 
$$\Gi(E)=\Gi(E,q):=\aut(E,q)$$
denote its automorphism group, that is the group of bijective linear maps preserving the form, where we drop $q$ from the notation when $q$ is unambiguous. 
The classical groups appearing in Theorem \ref{thm:intro stability} are automorphisms of the formed space $\HH^{\op n}$ for various choices of the parameters $\s,\eps$ and $\Lambda$, where  $\HH=(\F^2,q_{\HH})$ is  the ``hyperbolic'' formed space, with 
$$q_{\HH}:\F^2\x \F^2\to \F$$
defined by $q_{\HH}(v,w)=\overline{v_1} w_2$, for $\overline{v_1}:=\s(v_1)$ as above:

\begin{Def}\label{def:classicalgroups}

\noindent
(Symplectic) For $\sigma=\id$, $\eps=-1$, $\Lambda=\F$, we have 
$\omega_{q_{\HH}}(v,w)=v_1w_2-v_2w_1$ is a non-degenerate alternating form 
and the {\em symplectic group} is 
\[ \Sp_{2n}(\F) =\Aut(\F^{2n},\omega_{q_{\HH}}^{\op n})= \Gi(\HH^{\op n}). \]

\smallskip

\noindent
(Unitary) For $\sigma\neq \id$, $\eps=1$, $\Lambda=\F^\s$, we have 
$\omega_{q_{\HH}}(v,w)=\overline{v_1}w_2+\overline{v_2}w_1$ is a non-degenerate Hermitian form, 
and the {\em unitary group} is
\[ \UU_{n,n}(\F)=\UU_{n,n}(\F,\s)=\Aut(\F^{2n},\omega_{q_{\HH}}^{\op n})=\Gi(\HH^{\op n}). \]

\smallskip

\noindent
(Orthogonal) For $\sigma=\id$, $\eps=1$, $\Lambda=0$, we have 
$Q_{q_{\HH}}(v)=v_1v_2$ is a quadratic form 
with associated non-degenerate symmetric bilinear form
$\omega_{q_{\HH}}(v,w)=v_1w_2+v_2w_1$,
and the {\em orthogonal group} is  
\[ \OO_{n,n}(\F) =\Aut(\F^{2n},Q_{q_{\HH}}^{\op n})=\Gi(\HH^{\op n}). \]
\end{Def}
So, up to the choice of parameters $(\s,\eps,\Lambda)$, the symplectic, unitary and orthogonal groups of Theorem \ref{thm:intro stability} are really the same group $\Gi(\HH^{\op n})$. The fact that $\Gi(\HH^{\op n})$ is the same subgroup of $\GL(\F^{2n})$ as $\Aut(\F^{2n},\omega_{q_{\HH}}^{\op n})$ in the first two cases and $\Aut(\F^{2n},Q_{q_{\HH}}^{\op n})$ in the third is given by \cite[Thm~A and Prop~2.7]{Forms}, and the fact that the form $q_{\HH}^{\op n}$ is non-degenerate follows from  Lemma~\ref{lem:properties of HH}.  (A form $q$ is called non-degenerate if the associated form $\omega_q$ is non-degenerate, see Definition~\ref{def:kerneletc}.)

\medskip

With this language in place, Theorem~\ref{thm:intro stability} is actually a consequence of the following more general result: 

\begin{Thp}[see Theorem~\ref{thm:mainstability}]\label{thm:stabilityintro'}
Let $E=(E,q)$ be a formed space over a field $\F\neq \F_2$.  Then 
$$\begin{array}{ll}
H_d(\Gi(E\op \HH),\Gi(E);\Z)=0 & \textrm{for}\ \ d\le \g(E)\\
\end{array}$$ 
where  the {\em genus} $\g(E)$ is the maximal dimension of an isotropic subspace in $E$ modulo the radical
(see Definition~\ref{def:kerneletc}).
\end{Thp}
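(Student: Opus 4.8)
The standard approach to homological stability theorems of this kind is the method of Quillen: find a highly-connected simplicial complex (or semi-simplicial set) on which $\Gi(E \oplus \HH^{\oplus m})$ acts with stabilizers the smaller groups, and feed this into a spectral sequence argument. So the first step is to introduce, for each formed space $E$, a simplicial complex $\PP(E)$ whose $p$-simplices are $(p+1)$-tuples of ``hyperbolic pairs'' in $E$ spanning a nondegenerate subspace isomorphic to $\HH^{\oplus (p+1)}$, with the requisite orthogonality/transversality conditions so that the complementary formed space is again well-defined. Concretely, a vertex is a pair $(v,w)$ in $E$ with $\omega_q(v,w)$ normalized (and the $Q_q$-values prescribed in the orthogonal case), and a set of vertices forms a simplex when the corresponding $2(p+1)$ vectors span a subspace on which $q$ restricts to $q_\HH^{\oplus(p+1)}$, with orthogonal complement a formed space. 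The group $\Gi(E)$ acts transitively on $p$-simplices, with stabilizer of a $p$-simplex isomorphic to $\Gi(E')$ for $E'$ the orthogonal complement, which drops the genus by $p+1$.

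The heart of the matter is then the \emph{high connectivity} of $\PP(E \oplus \HH^{\oplus m})$: one needs that $\PP(E')$ is $\left(\frac{\g(E')-2}{2}\right)$-connected, or rather a slope-$1$ version $\left(\g(E')-2\right)$-connected, for this to give the stated slope-$1$ range after running the spectral sequence. This connectivity statement is almost certainly proved separately — and it is where the hypothesis $\F \neq \F_2$ enters, since over $\F_2$ there are too few hyperbolic pairs to build enough simplices. I would prove it by the now-standard technique: show $\PP(E)$ is the complex of a suitable poset, or relate it to a ``complex of isotropic vectors'' (cf.\ the split building), and run a bad-simplices / link induction argument (Quillen's method, or the Hatcher–Wahl machinery), using that links of simplices in $\PP(E)$ are again of the form $\PP(E')$ with smaller genus, plus a separate input about the connectivity of the underlying complex of isotropic lines. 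The condition $\F \neq \F_2$ is used to find, given finitely many hyperplanes/subspaces to avoid, a vector outside all of them with prescribed form value — this is the combinatorial ``general position'' input.

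Granting the connectivity result, the final step is the spectral sequence argument. One forms the augmented semi-simplicial set with $E\oplus\HH^{\oplus m}$ acting, takes the associated spectral sequence computing $H_*(\Gi(E\oplus\HH^{\oplus m}))$ with $E^1$-page built from $H_*(\Gi(E\oplus\HH^{\oplus m'}))$ for $m' < m$ (identifying stabilizers via the monoidal structure and transitivity, together with Shapiro's lemma), and runs an induction on genus: the connectivity of $\PP$ kills the relevant part of the $E^1$-page in a range, and the differentials entering and leaving are exactly the stabilization maps, so one concludes that $H_d(\Gi(E\oplus\HH),\Gi(E);\Z)=0$ in the stated range by comparing with the inductive hypothesis. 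The main obstacle is unquestionably the connectivity estimate for $\PP(E)$ with the optimal slope-$1$ bound; the spectral sequence bookkeeping, while delicate (one must be careful that stabilizers really are smaller $\Gi$'s and that the action is transitive, which uses Witt-type extension results — presumably a consequence of the companion paper \cite{Forms} and Lemma~\ref{lem:properties of HH}), is essentially formal once the complex and its connectivity are in hand.
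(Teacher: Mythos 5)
Your proposal follows the unimodular/hyperbolic-pairs template (a complex whose simplices are hyperbolic pairs, with stabilizers the smaller groups $\Gi(E')$, fed into the standard spectral sequence), but this is not the paper's route, and as written it has a genuine gap: the whole argument is conditioned on a slope-$1$ connectivity statement for the complex of hyperbolic pairs, and that statement is neither proved by your sketch nor available in the literature. The known connectivity results for such complexes (via unitary stable rank, as in Mirzaii--van der Kallen, Essert, or Theorem 5.16 of \cite{RWW}) are of slope $2$ or worse, which is exactly why the previously known stability ranges for $\Sp$, $\UU$, $\OO$ over finite fields had slope $2$; the paper even remarks that the connectivity of the relevant RWW-type building is not known to the authors. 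A ``bad simplices / link induction'' argument of the Hatcher--Wahl type does not produce slope-$1$ connectivity here, so your plan would at best recover the existing slope-$2$ ranges, not the theorem being proved. You also locate the hypothesis $\F\neq\F_2$ in a general-position step of the connectivity proof, which is not where it is needed.

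The paper instead works with the \emph{poset of isotropic subspaces} $\PPi(E)$ (and a relative version $\PPi(E,U)$), i.e.\ the isotropic Tits building, which is Cohen--Macaulay of dimension $\g(E)-1$ --- this is the slope-$1$ connectivity input, proved by extending Vogtmann's argument, with no hypothesis on the field. The price is that the stabilizers of the poset action are not smaller groups $\Gi(E')$ but parabolic-type extensions involving affine groups $\Ai(E,U)$ and general linear groups, and the coefficients appearing in the resulting spectral sequence (built from the chain complex attached to a Cohen--Macaulay poset) are Steinberg modules $\St(W)$ and relative Steinberg modules $\St(W,W_0)$. The decisive ingredient replacing your connectivity estimate is the vanishing of the (relative) Steinberg coinvariants, $\St(V,V_0)_{\A^T(V,V_0)}=0$, proved via a join decomposition of the relative building; this is precisely where $\F\neq\F_2$ enters, in the elementary form $M_{\F^\times}=0$ for any $\F$-vector space $M$. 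Finally, the induction is not ``essentially formal'': it is a joint induction over the groups $\Gi$, the affine groups $\Ai$/$\A^T(V,V_0)$, and their buildings and relative buildings, with the general linear case treated in parallel. So the key missing idea in your proposal is the switch from a complex with group-like stabilizers (where one would need an unavailable slope-$1$ connectivity result) to the isotropic building with Steinberg-twisted coefficients and coinvariant vanishing.
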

Note that $E$ in the theorem is allowed to be degenerate, and again only the field $\F_2$ is excluded; this assumption is used in the form
$\F_{\F^\times}=0$, 
which is true for all fields other than $\F_2$. 
Three out of four cases in Theorem~\ref{thm:intro stability} are obtained by taking $(E,q)=\HH^{\op n}$ with the three choices for $(\s,\eps,\Lambda)$ of Definition~\ref{def:classicalgroups}, given that $\HH^{\op n}$ has genus $n$ (by Lemma~\ref{lem:properties of HH}). 
 The remaining case of the general linear groups in Theorem~\ref{thm:intro stability}   is not a corollary of the above theorem, since the trivial form on $\F^n$ has genus zero; this case, also treated in Theorem~\ref{thm:mainstability},  thus
receives a certain amount of separate treatment. Theorem~\ref{thm:mainstability}  contains in addition  a stability result for certain affine groups, and the improved range for linear groups over finite fields at the characteristic, as in Theorem~\ref{thm:intro stability}. The proof of this improved range does not carry over to likewise give an improved range in the case of the symplectic, orthogonal and unitary groups, the main obstacle being the connectivity of the building used (see Remark~\ref{rem:range}).

\subsection{Euclidean orthogonal and unitary groups}

We consider in addition stability for 
the Euclidean orthogonal and unitary groups $\OO(n,\F)$ and $\UU(n,\F,\sigma)$ associated to the form which has an orthonormal basis
 (see Definition~\ref{def:Euclidean}): 
\begin{Th}
\label{thm:introeucli}
(1) Let $\F$ be a field with $\operatorname{char}(\F)\neq 2$ and so that $-1=a^2+b^2$ for some $a,b\in \F$. 
Then the stabilization map 
$$ H_d(\OO(n,\F);\Z)\rar H_d(\OO(n+1,\F);\Z)$$ 
is an isomorphism in degrees
$ d \leq \frac{n-4}{2}$  
and a surjection in degree
$ d = \frac{n-3}{2}$.

\smallskip

\noindent
(2) Let $\F\neq \F_2$, and $\sigma=\overline{(\cdot)}$ a nontrivial involution of $\F$, such that $-1=\bar a a+\bar b b$ for some $a,b\in \F$.
Then the stabilization map 
$$ H_d(\UU(n,\F,\sigma);\Z)\rar H_d(\UU(n+1,\F,\sigma);\Z)$$
 is an isomorphism in degrees
$d \leq \frac{n-4}{2}$,
and a surjection in degree $d = \frac{n-3}{2}$.

\smallskip

\noindent
If  $b$ in (1) or (2) can be chosen to be 0,  the corresponding bounds can be improved by 1.
\end{Th}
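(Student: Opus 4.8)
The plan is to reduce the Euclidean case to the hyperbolic case, which is already handled by Theorem~\ref{thm:stabilityintro'}, by exhibiting the Euclidean orthogonal (resp.\ unitary) group as the automorphism group of a suitable \emph{degenerate} formed space whose genus grows linearly in $n$, and then invoking the general stability result. Concretely, the hypothesis $-1 = a^2+b^2$ (resp.\ $-1 = \bar aa+\bar bb$) is exactly what is needed to produce an isotropic vector — and in fact an isotropic plane — inside a Euclidean space of bounded dimension: a $2$-dimensional Euclidean space already contains an isotropic line once $-1$ is a sum of two squares, and a $4$-dimensional one contains a hyperbolic plane $\HH$. So I would write the $n$-dimensional Euclidean space, up to a bounded number of summands, as $\HH^{\oplus \lfloor (n-c)/2\rfloor} \oplus (\text{bounded remainder})$ for a small constant $c$, i.e.\ realize $\OO(n,\F)$ as $\Gi(E_n)$ where $E_n$ has genus roughly $n/2$.

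The key steps, in order, are: (i) record (as a lemma, presumably \ref{def:Euclidean} and the surrounding discussion) that the Euclidean form on $\F^2$ has an isotropic vector precisely when $-1$ is a sum of two squares, and that on $\F^4$ it contains a copy of $\HH$ (for the unitary case, the analogous statement with $\bar aa+\bar bb$); (ii) decompose the $n$-dimensional Euclidean formed space as an orthogonal sum of $\lfloor (n-c)/2\rfloor$ copies of $\HH$ together with a bounded-dimensional Euclidean complement, and conclude that its genus is $\geq \lfloor (n-c)/2\rfloor - O(1)$, in fact exactly $\lfloor (n-c)/2\rfloor$ for the right small $c$; (iii) observe that the Euclidean stabilization map $\OO(n,\F)\to \OO(n+1,\F)$ is, under this identification, compatible (after pairing up two consecutive stabilizations) with the hyperbolic stabilization $\Gi(E)\to \Gi(E\oplus\HH)$ of Theorem~\ref{thm:stabilityintro'}; (iv) feed the genus bound into that theorem and translate the vanishing of $H_d(\Gi(E\oplus\HH),\Gi(E))$ into the isomorphism/surjectivity range for the Euclidean groups, keeping careful track of the constant shift — this is where the $\frac{n-4}{2}$ and $\frac{n-3}{2}$ (improved to $\frac{n-3}{2}$, $\frac{n-2}{2}$ when $b=0$, since then a single Euclidean line is already isotropic and one saves a dimension) come from. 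The final clause ($b=0$) is just the observation that $-1=a^2$ makes the Euclidean form on $\F^2$ itself hyperbolic, shrinking the constant $c$ by one and hence improving the range by one.

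The main obstacle I expect is step (iii): the Euclidean stabilization adds one orthonormal basis vector at a time, whereas Theorem~\ref{thm:stabilityintro'} stabilizes by adding a hyperbolic plane $\HH$, so the two maps do not literally agree — one must compare $\OO(n)\to\OO(n+2)$ with $\Gi(E)\to\Gi(E\oplus\HH)$ after choosing compatible decompositions, check that the relevant square of groups commutes up to conjugation (so that it commutes on homology), and verify that the intermediate group $\OO(n+1)$ does not spoil the estimate. A clean way to do this is to note that a standard argument (going back to van der Kallen) reduces a homological stability statement to the vanishing of the relative homology for \emph{any} cofinal sequence of inclusions with the right connectivity, so one need not match the maps on the nose; but making this precise, and pinning down the exact additive constants in the range so that they match the stated $\frac{n-4}{2}$ and $\frac{n-3}{2}$, is the delicate bookkeeping. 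A secondary subtlety in part (2) is ensuring the involution $\sigma$ interacts correctly with the hyperbolic decomposition — one uses the unitary $\HH$ of Definition~\ref{def:classicalgroups} with $\eps=1$, $\Lambda=\F^\s$, and checks that a $4$-dimensional $\sigma$-Euclidean space contains it exactly when $-1=\bar aa+\bar bb$.
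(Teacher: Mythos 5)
Your top-level idea (reduce Euclidean stabilization to the hyperbolic stability theorem using the sum-of-two-squares hypothesis) is indeed the paper's, but as written the argument has a factual error and a genuine gap exactly at your step (iii). First, a $2$-dimensional Euclidean space contains an isotropic line if and only if $-1$ is a \emph{square}, not merely a sum of two squares (and a $1$-dimensional nondegenerate space is never isotropic, so "a single Euclidean line is already isotropic" when $b=0$ is also off --- it is the plane $\mathcal{E}^{\op 2}$ that becomes $\HH$). Consequently, when $b\neq 0$ your proposed comparison of $\OO(n,\F)\to\OO(n+2,\F)$ with $\Gi(E)\to\Gi(E\op\HH)$ is simply unavailable: one only has $\mathcal{E}^{\op 4}\iso\HH^{\op 2}$, so the shortest Euclidean composite that is a hyperbolic stabilization is the four-step one. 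Second, the "van der Kallen cofinality" appeal does not substitute for the comparison: from high connectivity of the four-fold composites you can extract injectivity of a single step (as the first leg of a composite) and surjectivity (as the last leg), and interleaving these does give \emph{an} isomorphism range, but the bookkeeping does not recover the stated surjection in degree $\tfrac{n-3}{2}$ (for $n$ odd and $b\neq 0$ you lose a degree), so the claim that the constants "come out" as $\tfrac{n-4}{2}$ and $\tfrac{n-3}{2}$ is not established by what you propose.

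What the paper actually does at this point is different and is where the content lies: it proves the intermediate Theorem~\ref{thm:intronondg} by introducing the opposite form $E'=(E,-q)$, using Proposition~\ref{prop:hat+-} (Wall's $E\op E'\iso\HH^{\op\dim E}$) together with Lemma~\ref{lem:--++} (the hypothesis gives $E'\op E'\iso E\op E$, and $E'\iso E$ when $b=0$), and then factoring hyperbolic stabilizations through the \emph{single} Euclidean stabilization in two different ways --- once starting from $E^{\op n}$ to get injectivity of $\Gi(E^{\op n})\to\Gi(E^{\op n+1})$, and once starting from $E^{\op n-2}\op E'$ to get its surjectivity --- with the genus estimates of Proposition~\ref{prop:++++} supplying the ranges; Theorem~\ref{thm:introeucli} then follows by taking $E=\mathcal{E}$. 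You should also note that part (2) allows characteristic $2$ (only $\F_2$ is excluded), where $\omega_{q_{\mathcal{E}}}=(1+\eps)\bar a b=0$ for $\eps=1$, so $\mathcal{E}$ is degenerate in the paper's sense and your nondegenerate decomposition never gets started; the paper handles this by rescaling the form by $1/\alpha$ with $\alpha\notin\F^{\s}$ to change $\eps$ to $\bar\alpha/\alpha\neq 1$ (Remark~\ref{rem:eucli}(2)), a step absent from your proposal.
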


Cathelineau~\cite{Cathelineau} obtained such a stability result for $\OO(n,\F)$ with range $d\leq n-1$ under the assumption that $\F$ is an infinite Pythagorean field\footnote{A {\em Pythagorean field} is a field in which every sum of two squares is a square.} of characteristic not  2 (see also \cite{Vog82} for a related result), and  Sah~\cite{Sah} for  $\UU(n,\F)$ with  a range $d\leq n-1$ in the case  $\F=\bbR$ or $\bbC$. See also \cite{Coll}. 

Part (1) of the theorem applies in particular to finite fields $\F_q$ for $q$ odd and part (2) to $\F_{q^2}$ with the  nontrivial involution for any $q\neq 2$ (see Lemmas~\ref{lem:sumsquares} and \ref{lem:norm}). 
Besides finite fields, Theorem~\ref{thm:introeucli} in the orthogonal case also applies to some infinite fields which are not Pythagorean, such as $\bbQ(i)$.

\smallskip

Theorem~\ref{thm:introeucli} is actually a consequence of our main stability theorem, Theorem~\ref{thm:stabilityintro'}. Just like in the previous case, there is a more general statement behind: 
\begin{Thp}\label{thm:intronondg} 
Let $(\F,\s)$ be a field with involution satisfying that $-1=\bar a a+\bar b b$ for some $a,b\in \F$, and let $(E,q)$ be a non-degenerate formed space. 
Then  the stabilization map 
$$H_d(\Gi(E^{\op n});\Z) \rar  H_d(\Gi(E^{\op n+1});\Z)$$
 is injective in degrees  $d\leq k(\lfloor \frac{n}{2}\rfloor-e)-1$,
and a surjection in degrees $d\leq k(\lfloor\frac{n-1}{2}\rfloor-e)$, where $k=\dim(E)$ and $e=0$ if $b$ can be chosen to be $0$ in the above equation, and $e=1$ otherwise. 
\end{Thp}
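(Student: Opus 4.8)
The strategy is to reduce the stabilization map for $E^{\oplus n} \to E^{\oplus n+1}$ to a repeated application of the hyperbolic stability result, Theorem~B' (i.e.\ Theorem~\ref{thm:stabilityintro'}). The key algebraic input is that, over a field with involution in which $-1 = \bar a a + \bar b b$, the formed space $E \oplus E$ (or perhaps $E$ itself, when $b$ may be taken to be $0$, which is why the parameter $e$ appears) contains a hyperbolic summand $\HH$; equivalently, $E \oplus E \cong \HH \oplus E'$ for some formed space $E'$, and more generally each pair of copies of $E$ contributes one hyperbolic plane. This is the place where the hypothesis on $-1$ is used: writing $-1 = \bar a a + \bar b b$ lets one build an isotropic vector, hence a hyperbolic pair, inside $E \oplus E$ (with a single copy of $E$ sufficing when $b = 0$). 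I would isolate this as a preliminary lemma, presumably the one referred to as Lemma~\ref{lem:sumsquares}/\ref{lem:norm} for the finite-field consequences, and state it for general non-degenerate $E$.

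Granting that, I would argue as follows. Group the $n$ summands of $E^{\oplus n}$ into $\lfloor n/2 \rfloor$ pairs (with one leftover copy of $E$ if $n$ is odd), and use the isomorphisms above to rewrite
\[
E^{\oplus n} \cong \HH^{\oplus m} \oplus R, \qquad E^{\oplus n+1} \cong \HH^{\oplus m'} \oplus R,
\]
where $m, m'$ count the available hyperbolic planes and $R$ absorbs the rest; the stabilization map $E^{\oplus n} \hookrightarrow E^{\oplus n+1}$ is then identified, up to conjugation by an automorphism, with a composite of $(m'-m)$ hyperbolic stabilizations $\Gi(F) \to \Gi(F \oplus \HH)$ for suitable intermediate formed spaces $F$ containing $R$. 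Adding one copy of $E$ increases the genus of the relevant space by $\dim(E)$ minus a correction (the $-e$ term), so that by Theorem~B' each hyperbolic stabilization is $(\mathbb{Z}$-homology$)$ injective through a range that grows linearly with the genus of $F$. Since $F$ ranges over spaces of genus roughly $k(\lfloor n/2\rfloor - e)$, chaining the vanishing of relative homology groups $H_d(\Gi(F\oplus\HH),\Gi(F);\Z) = 0$ for $d \le \g(F)$ via the long exact sequences gives injectivity of the total map in the claimed range $d \le k(\lfloor n/2\rfloor - e) - 1$; running the surjectivity half of Theorem~B' along the same chain (which gives surjectivity one degree higher at each stage but with genus computed at the \emph{source}, hence the $\lfloor (n-1)/2\rfloor$) yields the surjectivity statement. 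The book-keeping with floors, and the off-by-one in whether the leftover odd copy of $E$ helps, is exactly where the $\lfloor n/2 \rfloor$ versus $\lfloor (n-1)/2 \rfloor$ asymmetry comes from.

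The main obstacle, and the step requiring the most care, is the decomposition lemma: showing that over such a field every non-degenerate $E$ satisfies $E \oplus E \cong \HH \oplus E'$ (respectively $E \cong \HH \oplus E'$ when $b=0$), functorially enough that the resulting identification of $E^{\oplus n} \hookrightarrow E^{\oplus n+1}$ really is (conjugate to) an iterated hyperbolic stabilization. Concretely one must exhibit an explicit isotropic vector in $E \oplus E$ using the two-square expression for $-1$, check it is non-degenerate as part of a hyperbolic pair (invoking the characterization of non-degeneracy via $\omega_q$ from Definition~\ref{def:kerneletc} and the companion paper \cite{Forms}), and verify the orthogonal complement is again non-degenerate so the induction on the number of copies can proceed. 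A secondary technical point is making sure the genus computation $\g(F \oplus E) = \g(F) + \dim(E) - e$ (or the relevant inequality) holds for the intermediate spaces $F$ that arise, not merely for direct sums of copies of $E$; once that is in hand, the homological chaining is a routine diagram chase using the relative vanishing from Theorem~B'.
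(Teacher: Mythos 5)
There is a genuine gap, concentrated in your ``decomposition lemma''. Under hypothesis (B) (i.e.\ $-1=\bar a a+\bar b b$ with $b\neq 0$ forced), it is simply false that $E\op E$ contains a hyperbolic summand: for the Euclidean line $\mathcal{E}$ over $\F_q$ with $q\equiv 3 \pmod 4$, the form $x^2+y^2$ on $\mathcal{E}^{\op 2}$ is anisotropic (an isotropic vector would make $-1$ a square), so $\mathcal{E}^{\op 2}\not\iso\HH\op E'$. Moreover, even where a decomposition exists, ``one hyperbolic plane per pair of copies of $E$'' is far too weak: the asserted range grows like $k\lfloor n/2\rfloor$ with $k=\dim E$, so you need each pair (resp.\ quadruple) of copies to be \emph{entirely} hyperbolic. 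The paper gets this not by hunting for isotropic vectors in $E\op E$ but by Wall's unconditional isomorphism $E\op(E,-q)\iso\HH^{\op k}$ (Proposition~\ref{prop:hat+-}); the hypothesis on $-1$ enters only to compare $(E,-q)$ with $E$ (Lemma~\ref{lem:--++}), giving $E^{\op 2}\iso\HH^{\op k}$ under (A) and $E^{\op 4}\iso\HH^{\op 2k}$ under (B) (Proposition~\ref{prop:++++}), which is exactly where the offset $e$ comes from.

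The second problem is your identification of the single stabilization $\Gi(E^{\op n})\to\Gi(E^{\op n+1})$ with a composite of hyperbolic stabilizations ``up to conjugation'': this is impossible on dimension grounds whenever $k$ is odd (adding one copy of $E$ changes the dimension by $k$, while each $\op\HH$ adds $2$), and $k=1$ is the main case of interest. The paper instead uses a two-sided factorization through stabilization by $E':=(E,-q)$: post-composing $\op E$ with $\op E'$ gives $\op\HH^{\op k}$, whose homology map is injective in degrees $d\le\g(E^{\op n})-1$ by iterating Theorem~\ref{thm:mainstability}, whence $\op E$ is injective there; and pre-composing, the $\HH^{\op k}$-stabilization starting at $\Gi(E^{\op n-2}\op E')$ factors through $\Gi(E^{\op n})\to\Gi(E^{\op n+1})$, forcing the latter to be surjective in degrees $d\le\g(E^{\op n-2}\op E')$. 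The stated ranges then follow from the genus estimates $\g(E^{\op n})\ge k(\lfloor n/2\rfloor-e)$ and $\g(E^{\op n-2}\op E')\ge k(\lfloor (n-1)/2\rfloor-e)$ obtained from Proposition~\ref{prop:++++} and Lemma~\ref{lem:props of sum}. Your genus bookkeeping and the asymmetry between the injectivity and surjectivity ranges are in the right spirit, but without the $(E,-q)$ device both the hyperbolicity input and the reduction to Theorem~\ref{thm:mainstability} break down as written.
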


Theorem~\ref{thm:introeucli} is obtained by applying the above result to the Euclidean formed space $\mathcal{E}=(\F,q_{\mathcal{E}})$, with $q_{\mathcal{E}}(a,b)=\bar a b$. This formed space is non-degenerate as long as $\eps\neq -1$, which is the reason for the assumptions in Theorem~\ref{thm:introeucli}---see Remark~\ref{rem:eucli} more details about this, for why there is no Euclidian symplectic group. 

\begin{rem}
One may wonder whether Theorems~\ref{thm:introeucli} and~\ref{thm:intronondg}, having slope 2 ranges, could be proved using a more direct and simpler stability argument. The groups $\Gi(E^{\op n})$ fit eg.~into  the framework of  \cite{RWW} and stability with slope 2 will follow if one can show that a certain associated building is at least slope 2 connected  (with no a priori non-degeneracy assumption, or assumption on the field). The connectivity of this building is however not known to us. In the case of the groups $\OO(n,\F)$, this building is closely related to the poset of orthonormal frames studied in \cite{Vog82}, where a connectivity bound dependent on the Pythagorian number of the field\footnote{the smallest integer $p=p(\F)$ such that every sum of squares in $\F$ can be written as a sum of $p$ squares} is given (see Corollary 1.8 in that paper). The connectivity of a similar building is proved by Cathelineau \cite[Prop 4.2]{Cathelineau} under the hypothesis that $\F$ is Pythagorian, that is with Pythagorian number 1. 
\end{rem}

\subsection{Vanishing ranges} 

The homology of the general linear groups, symplectic, unitary and orthogonal groups over finite fields was completely computed by Quillen \cite{QuillenK} and Fiedorowicz-Priddy \cite{FP}\footnote{See Theorems~IV.2.1, IV.5.2, IV.7.2, V.2.1, V.3.1, V.5.1 in \cite{FP}.} 
away from the characteristic of the field. At the characteristic, the stable homology of these groups was shown to vanish by the same authors (see also \cite{Friedlander,Hiller}), but 
their unstable homology remains rather mysterious. 
Our stability range in the case of finite fields thus translate to improved vanishing ranges for the homology of these groups at the characteristic. We make this explicit here in the symplectic, unitary and orthogonal case. The general linear group case is \cite[Cor C]{GKRW}.

Combining Theorem~\ref{thm:intro stability} with the vanishing of the stable homology \cite[Thm III.4.6]{FP} gives: 

\begin{Co}\label{cor:C}
(1)~For $\F_{\ell^r}\neq \F_2$, $$H_d(\Sp_{2n}(\F_{\ell^r});\F_\ell) = 0 \ \ \ \textrm{for}\ \ \  0<d<n$$

\noindent
(2)~For $\F_{\ell^{2r}}\neq \F_2$, $$H_d(\UU_{n,n}(\F_{\ell^{2r}});\F_\ell) =0\ \  \ \textrm{ for}\ \ \  0<d<n.$$

\noindent
(3)~For $\ell$ odd $$H_d(\OO_{n,n}(\F_{\ell^r});\F_\ell) = 0\ \ \ \textrm{ for}\ \ \  0<d<n.$$
\end{Co}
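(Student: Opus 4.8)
\textbf{Proof proposal for Corollary~\ref{cor:C}.}
The plan is to combine the slope~1 stability range of Theorem~\ref{thm:intro stability} with the vanishing of the stable homology of these groups at the characteristic, as recorded in \cite[Thm III.4.6]{FP}, together with the observation that the stabilization maps assemble into a direct system whose colimit computes the stable homology. First I would set $\G_n$ to be $\Sp_{2n}$, $\UU_{n,n}$ or $\OO_{n,n}$ over the appropriate finite field, and note that the inclusions $\G_n \hookrightarrow \G_{n+1}$ used for stabilization give a sequence $\G_1 \hookrightarrow \G_2 \hookrightarrow \cdots$ with $\colim_n \G_n =: \G_\infty$, so that $H_d(\G_\infty;\F_\ell) = \colim_n H_d(\G_n;\F_\ell)$ since homology commutes with filtered colimits.

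Next I would invoke \cite[Thm III.4.6]{FP}: at the defining characteristic $\ell$, the group $\G_\infty$ has the $\F_\ell$-homology of a point, i.e. $H_d(\G_\infty;\F_\ell)=0$ for all $d>0$. (For the orthogonal case one uses $\ell$ odd, which is exactly the hypothesis in part~(3); for the symplectic and unitary cases the only constraint is that the field not be $\F_2$, matching the hypotheses in parts~(1) and~(2), and in the unitary case the relevant field for the defining-characteristic statement is $\F_{\ell^{2r}}$ with its nontrivial involution.) Then for fixed $d>0$, the stability statement of Theorem~\ref{thm:intro stability} tells us that $H_d(\G_n;\F_\ell) \to H_d(\G_{n+1};\F_\ell)$ is an isomorphism once $d \leq n-1$, i.e. once $n \geq d+1$. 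Hence for $n \geq d+1$ the map $H_d(\G_n;\F_\ell) \to H_d(\G_\infty;\F_\ell)$ is an isomorphism, so $H_d(\G_n;\F_\ell) = 0$ for all $n \geq d+1$; equivalently $H_d(\G_n;\F_\ell)=0$ whenever $0 < d < n$, which is the claimed vanishing range in each of the three cases.

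There is essentially no obstacle here beyond bookkeeping: the content is entirely in the two inputs (Theorem~\ref{thm:intro stability} and \cite[Thm III.4.6]{FP}), and the only point requiring a line of care is that the stabilization maps used in Theorem~\ref{thm:intro stability} are the same maps whose colimit the Fiedorowicz--Priddy computation refers to — this is standard, as both are induced by the orthogonal-sum inclusion $\G(E) \hookrightarrow \G(E\oplus\HH)$, and one can cite Lemma~\ref{lem:properties of HH} for the genus bookkeeping that makes Theorem~\ref{thm:stabilityintro'} apply to $E = \HH^{\oplus n}$. I would present the argument uniformly for the three cases, remarking only on the parity hypothesis in the orthogonal case and on the choice of finite field (with involution) in the unitary case.
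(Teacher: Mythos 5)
Your argument is correct and is essentially the paper's own proof: the paper likewise deduces the corollary by feeding the stable vanishing of \cite[Thm III.4.6]{FP} into the slope~1 stability range (there cited directly as Theorem~\ref{thm:mainstability}, from which Theorem~\ref{thm:intro stability} is derived), with the colimit bookkeeping you spell out left implicit.
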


Using other results in the litterature, one can complete the above corollary as follows: 

\begin{rem} 
(1) (Symplectic and unitary groups with $\F=\F_2$) The stability result \cite[Thm 3.8]{Essert} applies to the case $\F=\F_2$ and gives 
$$H_d(\Sp_{2n}(\F_{2});\F_2) = H_d(\UU_{n,n}(\F_{2});\F_2) =0 \ \ \ \textrm{for}\ \ \  0<d<\frac{n-1}{2}.$$

\smallskip

\noindent (2) (Orthogonal groups in characteristic 2)
The first homology group $H_1(\OO_{n,n}(\F_{\ell^r});\F_\ell) =\Z/2$ when $\ell=2$ \cite[II.7.19]{FP}, so there is no vanishing range in characteristic two for orthogonal groups. There is however a stable vanishing for the commutator subgroup $\OO'_{n,n}(\F_{\ell^r})$ of $\OO_{n,n}(\F_{\ell^r})$ \cite[Thm III.4.6]{FP}. (This subgroup identifies with the subgroup $D\OO_{n,n,}(\F_{\ell^r})$ of matrices with trivial Dickson invariant, see \cite[II.7.19]{FP}.)  Stability for the commutator subgroups follows from a twisted stability theorem for the groups, as in \cite[Cor 3.9]{RWW}, and in fact, the groups $\OO'_{n,n}(\F_{\ell^r})$ occur as a special case of Theorem 5.16 in \cite{RWW}, which, combined with the vanishing result of \cite{FP} gives that 
  $$H_d(\OO'_{n,n}(\F_{2^r});\F_2) = 0\ \ \ \textrm{ for}\ \ \  0<d<\frac{n-5}{3}.\footnote{where we used that the unitary stable rank of a field is  1 \cite[Rem 6.4]{Mirzaii-vdK}}$$
\end{rem}

In finite characteristic, there is one isomorphism class of non-degenerate symplectic and Hermitian form in each dimension,  and only one orthogonal group
in odd dimensions,  see \cite[Sec II.4,6,7]{FP}. In even dimensions and odd characteristic, there are two non-degenerate quadratic forms that are not isomorphic, corresponding to the groups usually called 
$\OO^+(2n,\F_{\ell^r})=\OO(2n,\F_{\ell^r})$ and $\OO^-(2n,\F_{\ell^r})$, see \cite[II.4.5]{FP}. (See Remark~\ref{rem:eucli}(3) for the characteristic 2 case.) The group $\OO_{n,n}(\F_{\ell^r})$ defined above is isomorphic to the first or the second of these groups depending on whether $(-1)^n$ has a square root in $\F_{\ell^r}$ or not.  There is though only one stable orthogonal group \cite[Prop II.4.11]{FP}. (See also Theorem~\ref{thm:stable group}.)

Our stability results, again combined with the vanishing results of \cite{FP}, also prove:

\begin{Co}\label{cor:D}
(1) For $\ell^{r}\neq 2$, $$H_d(\UU(n,\F_{\ell^{2r}});\F_\ell)=0 \ \ \ \textrm{for}\ \ \  0<d<\frac{n}{2}.$$ 

\noindent
(2) For $\ell$ an odd prime,
$$H_d(\OO^{\pm}(n,\F_{\ell^r});\F_\ell)=  0 \ \ \ \textrm{for}\ \ \  0<d<\frac{n-1}{2}$$
where $\OO^\pm$ stands for either $\OO^+$ or $\OO^-$ when $n$ is even, and for $\OO$ when $n$ is odd. 
\end{Co}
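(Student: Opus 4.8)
The plan is to read Corollary~\ref{cor:D} off from the stability ranges already established, together with the vanishing of the stable homology of these groups at the characteristic. The first step is to present each group in the statement as the automorphism group $\Gi$ of an explicit formed space, so that the relevant stabilization maps are covered by our theorems. Over $\F_{\ell^{2r}}$ with its nontrivial involution, non-degenerate Hermitian spaces are classified by their dimension alone, so the Euclidean unitary group $\UU(n,\F_{\ell^{2r}})=\Gi(\mathcal{E}^{\op n})$ of Theorem~\ref{thm:introeucli}(2) is \emph{the} $n$-dimensional unitary group, and in particular $\UU(2m,\F_{\ell^{2r}})=\UU_{m,m}(\F_{\ell^{2r}})$. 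Over $\F_{\ell^r}$ with $\ell$ odd, a non-degenerate $n$-dimensional quadratic space has anisotropic part of dimension at most $2$, so $\OO^{\pm}(n,\F_{\ell^r})\cong\Gi(E_0\op\HH^{\op k})$ with $E_0$ anisotropic and $n=\dim E_0+2k$: here $\OO^+(2m,\F_{\ell^r})=\OO_{m,m}(\F_{\ell^r})$ by Definition~\ref{def:classicalgroups}, $\OO^-(2m,\F_{\ell^r})=\Gi(q_0\op\HH^{\op(m-1)})$ for $q_0$ the $2$-dimensional anisotropic form, and $\OO(2m+1,\F_{\ell^r})=\Gi(\langle 1\rangle\op\HH^{\op m})$ coincides with the Euclidean orthogonal group $\Gi(\mathcal{E}^{\op(2m+1)})$ of Theorem~\ref{thm:introeucli}(1).

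Next, since the stable orthogonal group and the stable unitary group over a finite field do not depend on the chosen cofinal sequence of isometry groups (\cite[Prop II.4.11]{FP} and its analogue), each of the sequences above has the same stable $\F_\ell$-homology as $\colim_m\OO_{m,m}(\F_{\ell^r})$, respectively $\colim_m\UU_{m,m}(\F_{\ell^{2r}})$; by \cite[Thm III.4.6]{FP} this vanishes in positive degrees. It then remains to push this vanishing into the unstable range. For the hyperbolic cases $\OO^+(2m)=\OO_{m,m}$ and $\UU(2m)=\UU_{m,m}$ this is already Corollary~\ref{cor:C}. For $\OO^-(2m,\F_{\ell^r})$ I would stabilize by $\HH$: since $q_0$ is anisotropic we have $\g(q_0\op\HH^{\op k})=k$, so Theorem~\ref{thm:stabilityintro'} gives $H_d\big(\Gi(q_0\op\HH^{\op(k+1)}),\Gi(q_0\op\HH^{\op k});\Z\big)=0$ for $d\le k$, and chasing the long exact sequence against the stable vanishing yields the asserted interval. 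For the odd-dimensional orthogonal groups and for the unitary groups I would instead feed the one-dimensional stabilizations into Theorem~\ref{thm:introeucli}/Theorem~\ref{thm:intronondg}, using that $-1$ is a sum of two squares in $\F_{\ell^r}$ (Lemma~\ref{lem:sumsquares})---with the square term removable precisely when $\ell^r\equiv 1\!\pmod 4$---and that $-1$ is always a norm in $\F_{\ell^{2r}}$ (Lemma~\ref{lem:norm}), so that the parameter $e$ of Theorem~\ref{thm:intronondg} equals $0$ in the unitary case; the half-integral slope of these ranges produces the bounds $\tfrac n2$ and $\tfrac{n-1}2$.

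I expect the main difficulties to be twofold. First, the minus-type orthogonal groups require care: for half the residues of $\ell^r\bmod 4$ and half the parities of $m$, the group $\OO^-(2m,\F_{\ell^r})$ is not isomorphic to any $\OO_{m',m'}(\F_{\ell^r})$, so it is not covered by Corollary~\ref{cor:C} and must be realized through the anisotropic space $q_0$; one has to check that the sequence $k\mapsto\Gi(q_0\op\HH^{\op k})$ has the stable orthogonal group as its colimit and that Theorem~\ref{thm:stabilityintro'} applies there with genus exactly $k$. Second, matching the precise bounds $\tfrac n2$ and $\tfrac{n-1}2$---rather than a bound one step weaker---calls for a careful long-exact-sequence and range-chasing argument using the sharp form of Theorem~\ref{thm:mainstability} and the interplay between degree-$d$ surjectivity in one dimension and degree-$d$ injectivity in the next, rather than the intro-level statements applied naively.
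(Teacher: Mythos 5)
Your overall strategy is the paper's: identify each group as $\Gi$ of an explicit formed space via the finite-field classification in \cite{FP}, and play the stable vanishing \cite[Thm III.4.6]{FP} against the stability theorems. Your treatment of the even-dimensional cases is essentially identical to the paper's (the non-split even orthogonal group via $q_0\op\HH^{\op(m-1)}$ of genus $m-1$ is exactly the paper's decomposition $\HH^{\op(m-1)}\op\mathcal{E}\op\mathcal{E}^{\pm}$, and the split one is covered by Corollary~\ref{cor:C}); the labelling quibble about which of $\OO^{\pm}$ is $\OO_{m,m}$ is harmless since your two mechanisms cover both types. The genuine gap is in the odd-dimensional orthogonal case, where you propose to route through Theorem~\ref{thm:introeucli}/Theorem~\ref{thm:intronondg}. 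Those theorems carry the offset $e$, and for $\F_{\ell^r}$ with $\ell^r\equiv 3\pmod 4$ one cannot take $b=0$, so $e=1$: with $k=1$ the injectivity range of Theorem~\ref{thm:intronondg} is only $d\le\lfloor n/2\rfloor-2$, so comparing with the stable group yields vanishing only for $0<d\le m-2$ when $n=2m+1$, one degree short of the asserted $0<d<\frac{n-1}{2}$. Your remark that "the half-integral slope of these ranges produces the bounds $\frac n2$ and $\frac{n-1}{2}$" is therefore not correct in the $e=1$ case; the slope-$2$ theorems cannot recover the stated range there.

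The paper avoids this (and says explicitly that it does \emph{not} use Theorem~\ref{thm:intronondg}): over a finite field of odd characteristic the classification \cite[II.4.5]{FP} gives $\mathcal{E}^{\op(2m+1)}\cong\mathcal{E}\op\HH^{\op m}$ in every odd dimension, irrespective of whether $-1$ is a square, so $\g=m$ exactly (Lemma~\ref{lem:properties of HH}) and the slope-$1$ Theorem~\ref{thm:mainstability} applies directly, giving the full stated range; the same device handles the odd-dimensional unitary case via $\mathcal{E}\op\HH^{\op m}$ over $\F_{\ell^{2r}}$. Your unitary route does land on the same range as the paper's, because there $e=0$ by Lemma~\ref{lem:norm}, so the only real repair needed is to replace the Euclidean-stability detour in the orthogonal case by the Witt decomposition plus Theorem~\ref{thm:mainstability}, exactly as in the paper.
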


There is some overlap between the two corollaries as $\UU_{n,n}(\F_{\ell^{2r}})\cong \UU(2n,\F_{\ell^{2r}})$ and $\OO_{n,n}(\F_{\ell^r})$ is isomorphic to either $\OO^+(2n,\F_{\ell^r})$ or $\OO^-(2n,\F_{\ell^r})$. 
For the unitary groups, both results gives the same range, whereas for the orthogonal groups, the first result is better when there is overlap. 
(See Remark~\ref{rem:better} for more details.)

\subsection{Buildings} 

The proofs of the homological stability results proceed by studying the Tits building of a vector space, and the isotropic building of a formed space, together with their actions by the relevant automorphism groups.  These buildings are regarded as posets: the poset of nontrivial subspaces of a vector space, and the poset of isotropic subspaces of a formed space (containing the radical). In both cases, we need relative variants of the posets; see Definitions~\ref{def:GL building} and~\ref{def:iso buildings}.
These posets are highly connected, and indeed satisfy the Cohen-Macaulay property, i.e.~all their intervals are homotopy-spherical: this is well-known for the general linear building, and was proven in a special case by Vogtmann in~\cite{Vogtmann} for the isotropic building. We show in Theorems~\ref{thm:PE is C-M}  and~\ref{thm:PEU is CM}    that Vogtmann's argument extends to our more general set-up. 
Note that it is crucial for obtaining a slope 1 stability with our argument that the building is at least slope 1 connected. 

\subsection{Spectral sequence argument} 

We prove the stability theorem using spectral sequences associated to the action of the groups on the buildings: Following Quillen, we associate a small chain complex to the Cohen-Macaulay posets (see Appendix~\ref{app:CM}), which gives a double complex using the group action and hence a spectral sequence (see Section~\ref{sec:SS}). 
Stabilizers of the action appear in the spectral sequence and must be treated in parallel; these are affine versions of the groups. These stabilizer subgroups are more complicated than those arising when using simplicial complexes of unimodular vectors or split versions of these (as for example in~\cite{Charney,Maazen,Mirzaii-vdK,RWW}),  see  Proposition~\ref{prop:stabilizers}. 
Also, the fact that we work with posets rather than simplicial complexes gives more complicated twisted coefficients in the spectral sequence, in the present case twisted by the Steinberg module, the top homology of the general linear building (the same Steinberg module in all cases!).  But these two difficulties are in fact, together, the reason why we can improve the slope in the argument: indeed, they combine to give the vanishing of certain groups in the spectral sequence in the form of the vanishing of  the Steinberg coinvariants. This vanishing is well-known for the non-relative building, see eg.~\cite[Thm.~1.1]{APS}, and we give here a proof for the relative case  (see Theorem~\ref{thm:relcoinvars}), again following ideas outlined by Quillen, using a join decomposition of the building (Theorem~\ref{thm:join decomposition}). 
In the case of finite fields, this vanishing was improved at the characteristic by Galatius--Kupers--Randal-Williams to a vanishing of the twisted homology in a range \cite[Lem 5.2]{GKRW}, yielding the improved stability range under the same assumption. 

\medskip

It might be possible to apply the above argument to eg.~$\GL_n(R)$ for $R$ belonging to a certain class of rings. 
In addition to the coinvariant vanishings mentioned above, the main properties of the buildings that are used in the proof of the stability theorem are summarized in Section~\ref{sec:props}.

One can also ask whether other families of groups would fit into the arguments presented here, and 
look for a general set-up making such a spectral sequence work, for example for groups arising as automorphism groups of ``uncomplemented versions'' of the  complemented categories  of \cite{PutSam} or the homogeneous categories of \cite{RWW}, as suggested by the examples considered here. We do not know whether such a general set-up can be formulated. We did attempt to formulate an abstract list of ingredients making the above spectral sequence argument work, and found a list that was long and unenlightning.

\medskip

\noindent
{\em Organization of the paper: } In Section~\ref{sec:forms}, we give a brief introduction to the framework of forms. 
In Section~\ref{sec:buildings}, we define and study the general linear buildings with the associated Steinberg modules, while Section~\ref{sec:isotropicbuilding} is concerned with the buildings of isotropic subspaces of formed spaces. Section~\ref{sec:groupsandbuildings} defines and studies the groups acting on the buildings. 
Section~\ref{sec:stability} gives the proof of Theorems~\ref{thm:intro stability} and \ref{thm:stabilityintro'} through the more general Theorem~\ref{thm:mainstability}, and deduces Corollary~\ref{cor:C},   while Section~\ref{sec:eucli} treats the case of the Euclidean orthogonal and unitary groups, proving Theorem~\ref{thm:intronondg} and Corollary~\ref{cor:D}. Finally,  Appendix~\ref{app:isotropic} gives some basic properties of subspaces of formed spaces, while the short  Appendix~\ref{app:CM} associates a chain complex to a Cohen-Macaulay poset.

\addtocontents{toc}{\SkipTocEntry}
\subsection*{Acknowledgements}
The authors were both supported by the Danish National Research Foundation through
the Centre for Symmetry and Deformation (DNRF92). The second author was also supported by the  European Research
Council (ERC) under the European Union's Horizon 2020 research and innovation programme (grant agreement No.~772960), the Heilbronn Institute for Mathematical Research, and 
would like to thank the Isaac Newton Institute for Mathematical Sciences, Cambridge, for support and hospitality during the programme Homotopy Harnessing Higher Structures, where this paper was finalized (EPSRC grant numbers EP/K032208/1 and EP/R014604/1). 
The authors would like to thank S{\o}ren Galatius, Alexander Kupers and Oscar Randal-Williams for useful discussions around Quillen's stability argument for general linear groups, and the referee for suggesting using Lemma 5.2 of \cite{GKRW} to improve our stability range.

\tableofcontents

\section{Forms and classical groups}\label{sec:forms}

We give here a brief introduction to the framework of forms of Bak,
Tits and  Wall \cite{Bak69,Tits,Wallaxiomatic,WallII})  (see also \cite{MVV}) restricting to the case of fields, referring to \cite{Forms} for more details.  
We introduce the notion of kernel, radical and genus for a vector
space equipped with a form, and study the hyperbolic form. 

\medskip

Fix a field $\F$ and a field involution $\s:\F\to\F$ (possibly the identity). Throughout, we will write $\bar c=\sigma(c)$ in analogy with complex conjugation.
Given a scalar $\eps\in\F$ satisfying
$\eps\bar\eps=1$, 
define
\begin{align*} \Lambda_{min} &= \set{a-\eps\bar a}{a\in\F} \\
 \Lambda_{max} &= \set{a\in\F}{a+\eps\bar a=0}. \end{align*}
These are additive subgroups of $\F$. Let $\Lambda$ be an abelian groups such that
\[ \Lambda_{min}\leq\Lambda\leq\Lambda_{max}. \]
In fact, for most triples $(\F,\s,\eps)$, we actually have $\Lambda_{min}=\Lambda_{max}$ so 
that there is a unique $\Lambda$ determined by the triple (see
\cite[Prop~A.1]{Forms}). 

For an $\F$--vector space $E$,  recall that a map $f:E\times E\to\F$ is called {\em sesquilinear} if $f$ is biadditive and 
satisfies that  $f(av,bw)=\bar af(v,w)b$ for all $a,b\in \F$ and
$v,w\in E$. The set of sesquilinear forms on a vector space $E$ forms a vector space $\operatorname{Sesq}_\sigma(E)$ with addition and scalar multiplication defined pointwise.

\begin{Def}\label{def:form}
A $(\sigma,\eps,\Lambda)$-\textit{quadratic form} on $E$ (or just a {\em form} for short) is an element of the quotient
\[ \Forms(E,\sigma,\eps,\Lambda)=\operatorname{Sesq}_{\sigma}(E)/X(E,\sigma,\eps,\Lambda),  \]
where $\operatorname{Sesq}_\sigma(E)$ denotes the vector space of sesquilinear forms on $E$
and $X(E,\sigma,\eps,\Lambda)$ is the additive subgroup of all those $f$ satisfying
\[ f(v,v)\in\Lambda \ \ \ \ \textrm{and}\ \ \  \  f(w,v)=-\eps\overline{f(v,w)}. \]
A \textit{formed space} $(E,q)$ is a finite-dimensional vector space $E$ equipped with a form $q$.
\end{Def}
The set of forms on $E$ is contravariantly functorial in $E$: from a linear map $f:E\to F$ and a form $q$ on $F$, we produce a form on $E$ by setting  $f^*q(v,w)=q(f(v),f(w))$
for $v,w\in E$.

\medskip

As described in the introduction, to a form $q$ we can associate two new objects which capture the features of $q$ in a more accessible way: a sesquilinear map $\omega_q: E\x E\to \F$ defined by 
$\omega_q(v,w)=q(v,w)+\eps\overline{q(w,v)}$, 
which satisfies that  $\omega(v,w)=\eps \overline{\omega(w,v)}$, 
and a set map $Q_q: E\to \F/\Lambda$
$Q_q(v)=q(v,v)+\Lambda\in\F/\Lambda$, 
which satisfies that $Q_q(av)=a\bar aQ_q(v)$.  
Theorem~2.5 in \cite{Forms}  shows that the maps $\omega_q$ and $Q_q$  are independent of the choice of representative of $q$, that 
they satisfy the equation
\begin{equation}\label{equ:Qomega}
Q_q(v+w)-Q_q(v)-Q_q(w)=\omega_q(v,w) \in \F/\Lambda.
\end{equation}
and that  the form $q$ is completely determined by the pair $(\omega_q, Q_q)$.  More than that, Proposition~2.4 in that paper shows that in fact one of $\omega_q$ and $Q_q$ is always enough to determine $q$.

\medskip

Formed spaces form a category $\textbf{Forms}(\F,\sigma,\eps,\Lambda)$, in which the morphisms 
\[ (E,q_E)\to (E',q_{E'}) \]
are
the linear maps $f:E\to E'$ preserving the form, i.e.~such that the form
\[ f^*q_{E'}=q_{E'}(f(-),f(-)) \] 
coincides with $q_E$
as forms on $E$. 
This condition is well-defined, and is equivalent to preserving both $\omega_q$ and $Q_q$ by \cite[Thm~2.5]{Forms}. 
The morphisms in $\textbf{Forms}(\F,\sigma,\eps,\Lambda)$ are called \textit{isometries}.
The category of formed spaces is symmetric monoidal under the direct sum operation
\[ (E,q_E)\oplus(E',q_{E'}) = (E\oplus E',q_{E\oplus E'}) \]
where
\[ q_{E\oplus E'}((v,w),(v',w'))=q_E(v,v')+q_{E'}(w,w'). \]
The monoidal  category of  formed spaces over a finite
field $\F$ with standard parameters $(\s,\eps,\Lambda)$ is studied in quite a lot of details in \cite[Chap II]{FP}.

\medskip

Given a vector space $E$, let $^\s\! E^*$ denote the vector space of
$\s$-skew-linear maps $E\to \F$, that is, additive maps $f:E\to \F$
such that $f(av)=\bar a f(v)$, with vector space structure defined
pointwise from that of $\F$.

\begin{defn}\label{def:kerneletc} Let $E=(E,q)$ be a formed space. \\
(1) The {\em kernel} of $E$ is defined to be the kernel
$\K(E) = \ker(\flat_q)$
of the associated linear map
\[ \flat_q: E\to {^\sigma\! E^*},\ v\mapsto\omega_q(-,v). \]
We say that $(E,q)$ is {\em non-degenerate} if $\K(E)=0$.

\smallskip

\noindent
(2) The \textit{orthogonal complement} 
$U^\perp$ of a subspace $U\leq E$ is defined to be the subspace consisting of all $v\in E$ such that $\omega(v,U)=0$.  That is, $U^\perp$ is the kernel of the composition
\[ E\xrightarrow{\flat_q}{^\sigma\! E^*}\xrightarrow{incl^*}{^\sigma U^*}. \]

\smallskip

\noindent
(3) The \textit{radical} of $E$ is defined to be the set
\[ \R(E)=\set{v\in\K(E)}{Q_q(v)=0}. \]

\smallskip

\noindent
(4) A subspace $U$ of $E$ is called \textit{isotropic} if $q|_U=0$ (or equivalently, $\omega_q|_U=0$ and $Q_q|_U=0$).

\smallskip

\noindent
(5) The {\em genus} $\g(E)$ is defined to be the maximum dimension of an isotropic subspace minus the dimension of the radical $\R(E)$.
\end{defn}

\begin{rem}
(1) The radical $\R(E)$ is  a subspace of $E$, because $Q_q$ is additive on $\K(E)$ and satisfies $Q_q(cv)=\bar ccQ_q(v)$ for $c\in\F,v\in E$.
The radical is isotropic.
If the characteristic of $\F$ is not 2, then $\R(E)=\K(E)$, since equation (\ref{equ:Qomega}) in that case gives that $\omega_q|_{\K(E)}=0$ implies $Q_q|_{\K(E)}=0$. 

\smallskip

\noindent
(2) Note that orthogonality is a symmetric relation: $\omega_q(v,w)=0$ if and only if $\omega_q(w,v)=0$, but note also that the orthogonal complement $U^\perp$ defined above will usually not be a complement in the sense of vector spaces. In fact, we will typically be interested in $U^\perp$ when $U$ is isotropic, in which case we actually have $U\subset U^\perp$! Section~\ref{sec:complements} in the appendix is concerned with properties of orthogonal complements that are used in the paper. 
\end{rem}

\begin{lemma}\label{lem:props of sum}
Let $E=(E,q)$ and $E'=(E',q')$ be two formed spaces.  Then
\begin{enumerate}
\item\label{lem:props of sum:K} $\K(E\oplus E') = \K(E)\oplus\K(E')$
\item\label{lem:props of sum:R} If $\K(E')=0$, then $\R(E\oplus E')=\R(E)$.
\item\label{lem:props of sum:g} $\g(E\oplus E') \geq \g(E)+\g(E')$.
\end{enumerate}
\end{lemma}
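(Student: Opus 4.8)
The plan is to prove the three statements in order, as each is elementary given the definitions of kernel, radical and genus, and the explicit formula for $q_{E\oplus E'}$ (hence for $\omega_{q_{E\oplus E'}}$ and $Q_{q_{E\oplus E'}}$) in terms of the summands.

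For \eqref{lem:props of sum:K}, I would first observe that $\omega_{q_{E\oplus E'}}((v,w),(v',w'))=\omega_q(v,v')+\omega_{q'}(w,w')$, which follows directly from the direct-sum formula for $q_{E\oplus E'}$ and the definition $\omega_q(v,w)=q(v,w)+\eps\overline{q(w,v)}$. Consequently $\flat_{q_{E\oplus E'}} = \flat_q \oplus \flat_{q'}$ under the identification ${}^\sigma(E\oplus E')^* \cong {}^\sigma E^* \oplus {}^\sigma E'^*$, and taking kernels gives $\K(E\oplus E') = \K(E)\oplus\K(E')$. For \eqref{lem:props of sum:R}, note that $Q_{q_{E\oplus E'}}(v,w) = Q_q(v)+Q_{q'}(w)$ in $\F/\Lambda$. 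If $\K(E')=0$, then by \eqref{lem:props of sum:K} any $(v,w)\in\K(E\oplus E')$ has $w=0$, so $(v,0)\in\R(E\oplus E')$ iff $v\in\K(E)$ and $Q_q(v)=0$, i.e.\ iff $v\in\R(E)$; thus $\R(E\oplus E')=\R(E)\oplus 0 = \R(E)$.

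For \eqref{lem:props of sum:g}, I would take a maximal isotropic subspace $U\leq E$ and a maximal isotropic subspace $U'\leq E'$ and check that $U\oplus U'\leq E\oplus E'$ is isotropic: since $q_{E\oplus E'}$ restricted to $U\oplus U'$ is $q|_U + q'|_{U'} = 0$, this is immediate. Hence the maximal dimension of an isotropic subspace of $E\oplus E'$ is at least $\dim U + \dim U'$. Combining with \eqref{lem:props of sum:K}, which gives $\dim\R(E\oplus E')\leq \dim\R(E)+\dim\R(E')$ (in fact the radical of the sum contains $\R(E)\oplus\R(E')$, but for the inequality on genus the direction $\dim\R(E\oplus E') \le \dim\R(E)+\dim\R(E')$ is what is needed — and actually one gets equality since $\R(E)\oplus\R(E')\subseteq\R(E\oplus E')$ by the $Q$-formula and \eqref{lem:props of sum:K}), we obtain
\[
\g(E\oplus E') = (\text{max isotropic dim}) - \dim\R(E\oplus E') \geq (\dim U + \dim U') - (\dim\R(E)+\dim\R(E')) = \g(E)+\g(E').
\]

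I do not anticipate a real obstacle here; the only point requiring a little care is bookkeeping the identification ${}^\sigma(E\oplus E')^*\cong {}^\sigma E^*\oplus {}^\sigma E'^*$ and checking it is compatible with the $\flat$ maps, and making sure in \eqref{lem:props of sum:g} that one does not need \emph{equality} of radicals but only the inequality $\dim\R(E\oplus E')\le\dim\R(E)+\dim\R(E')$, which follows from $\K(E\oplus E')=\K(E)\oplus\K(E')$ together with $Q_{q_{E\oplus E'}}=Q_q\oplus Q_{q'}$. The reason the inequality in \eqref{lem:props of sum:g} need not be an equality is that a maximal isotropic subspace of $E\oplus E'$ need not split as a direct sum of isotropic subspaces of $E$ and $E'$ — for instance when $E=E'=\HH$ with the hyperbolic form the genus jumps, which is exactly the phenomenon exploited later via Lemma~\ref{lem:properties of HH}.
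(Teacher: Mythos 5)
Parts (1) and (2) of your argument are correct and are essentially the paper's proof: the identity $\flat_{q_{E\oplus E'}}=\flat_q\oplus\flat_{q'}$ under ${}^\sigma(E\oplus E')^*\cong{}^\sigma E^*\oplus{}^\sigma E'^*$ gives (1), and (2) then follows since $Q_{q_{E\oplus E'}}(v,0)=Q_q(v)$.

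Part (3), however, has a genuine gap. Your derivation needs the inequality $\dim\R(E\oplus E')\le\dim\R(E)+\dim\R(E')$, and your parenthetical even asserts $\R(E\oplus E')=\R(E)\oplus\R(E')$. Neither holds in general: the inclusion $\R(E)\oplus\R(E')\subseteq\R(E\oplus E')$ gives the \emph{opposite} dimension inequality, and the containment can be strict, precisely because the condition $Q_q(v)+Q_{q'}(w)=0$ does not force $Q_q(v)=Q_{q'}(w)=0$. The paper's remark immediately following the lemma is a counterexample: over a field of characteristic $2$ with $(\sigma,\eps,\Lambda)=(\id,1,0)$, take $E=(\F,ab)$ and $E'=(\F,-ab)$; then $\R(E)=\R(E')=0$ while $\R(E\oplus E')=\langle(1,1)\rangle$ is one-dimensional, so your inequality reads $1\le 0$. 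The conclusion of (3) is still true, but the way to get it is to enlarge the isotropic subspace rather than to bound the radical of the sum: with $U\le E$ and $U'\le E'$ maximal isotropic, the subspace $W=(U\oplus U')+\R(E\oplus E')$ is again isotropic (adding the radical to an isotropic subspace preserves isotropy, using $Q_q(v+w)-Q_q(v)-Q_q(w)=\omega_q(v,w)$), and $(U\oplus U')\cap\R(E\oplus E')\subseteq\R(E)\oplus\R(E')$, since for $(v,w)$ in this intersection one has $Q_q(v)=0$ and $Q_{q'}(w)=0$ automatically (because $v\in U$, $w\in U'$), hence $v\in\R(E)$ and $w\in\R(E')$. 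Therefore $\dim W-\dim\R(E\oplus E')\ge\dim U+\dim U'-\dim\R(E)-\dim\R(E')=\g(E)+\g(E')$, which gives $\g(E\oplus E')\ge\g(E)+\g(E')$. (A minor side point: your closing example is off — for $E=E'=\HH$ the genus does not jump, as $\g(\HH^{\oplus2})=2=\g(\HH)+\g(\HH)$; strictness in (3) is exhibited instead by $(\F,ab)\oplus(\F,-ab)$ in odd characteristic, as in the paper's remark.)
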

\begin{proof}
Statement~(\ref{lem:props of sum:K}) follows from the fact that, for all $e\in E,e'\in E'$, 
\[ \flat_{q\op q'}(e+e')=\flat_q(e)+\flat_q(e')\in {^\sigma\!
    E^*}\oplus{^\sigma\! E'^*}. \]
Statement~(\ref{lem:props of sum:R}) follows from (\ref{lem:props of sum:K}) and (\ref{lem:props of sum:g}) for the fact that $U\op U'$ is isotropic whenever $U\leq E$ and $U'\leq E'$ both are.
\end{proof}

\begin{rem}
It is not true in general that $\R(E\oplus E') = \R(E)\oplus\R(E')$, or that
$\g(E\oplus E') = \g(E)+\g(E')$.  Take for example the case $(\sigma,\eps,\Lambda)=(\textrm{id},1,0)$ and let $E=(\F,q)$ with $q(a,b)=ab$, and $E'=(\F,q')$ with $q'(a,b)=-ab$.
If the characteristic of $\F$ is two, this is a counterexample to the first statement as $\R(E\op E') = \langle(1,1)\rangle$.
If not, it is a counterexample to the second one as $\langle(1,1)\rangle$ is isotropic.
\end{rem}

\subsection{The hyperbolic formed space $\HH$}
 
We will be particularly interested in the following basic two-dimensional hyperbolic formed space:
\begin{defn}\label{def:hyperbolic plane}
Given $(\F,\s)$, define a formed space
$\HH=(\F^2,q_{\HH})$ by setting  \[\quad q_{\HH}(v,w)=\overline{v_1}w_2. \] 
\end{defn}

The following result gives properties of the formed space $\HH$, which show that it is a natural ``atomic object'' to stabilize with. These properties will be used in Section~\ref{sec:stability}, where our main homological stability result is proved.

\begin{lemma}\label{lem:properties of HH}
The formed space $\HH=(\F^2,q_{\HH})$ of Definition~\ref{def:hyperbolic plane}  has kernel $\K(\HH)=\R(\HH)=0$ and genus $\g(\HH)=1$ and, given any formed space $(E,q)$, 
we have that $$\R(E\op \HH)=\R(E) \ \ \ \textrm{and}\ \ \  \g(E\op\HH)=\g(E)+1.$$ 
In particular, $\g(\HH^{\op n})=n$. 
\end{lemma}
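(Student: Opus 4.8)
The statement has three parts: (i) $\K(\HH)=\R(\HH)=0$ and $\g(\HH)=1$; (ii) for any formed space $(E,q)$, $\R(E\op\HH)=\R(E)$ and $\g(E\op\HH)=\g(E)+1$; (iii) the conclusion $\g(\HH^{\op n})=n$, which should follow from (i) and (ii) by an immediate induction on $n$, using $\HH^{\op n}=\HH^{\op(n-1)}\op\HH$. So the real content is (i) and (ii), and in fact (ii) will largely be a consequence of (i) combined with the already-proved Lemma~\ref{lem:props of sum} and Lemma~\ref{lem:properties of HH}... Let me reorganize: the plan is to prove (i) by direct computation, then deduce the radical statement in (ii) from (i) and Lemma~\ref{lem:props of sum}(\ref{lem:props of sum:R}), and then prove the genus statement in (ii) by hand.

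For part (i), I would compute $\omega_{q_\HH}$ and $Q_{q_\HH}$ explicitly from the definitions $\omega_q(v,w)=q(v,w)+\eps\overline{q(w,v)}$ and $Q_q(v)=q(v,v)+\Lambda$. With $q_\HH(v,w)=\overline{v_1}w_2$ one gets $\omega_{q_\HH}(v,w)=\overline{v_1}w_2+\eps\,\overline{w_1}v_2$ (after applying the involution and $\eps\bar\eps=1$) and $Q_{q_\HH}(v)=\overline{v_1}v_2+\Lambda$. The map $\flat_{q_\HH}$ sends $v=(v_1,v_2)$ to the skew-linear functional $w\mapsto\omega_{q_\HH}(w,v)$; reading off its matrix in the standard basis shows it is invertible (the off-diagonal entries are $1$ and $\eps$, both units), so $\K(\HH)=0$, hence by Remark after Definition~\ref{def:kerneletc} also $\R(\HH)\subseteq\K(\HH)=0$. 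For the genus: since $\R(\HH)=0$, $\g(\HH)$ equals the maximal dimension of an isotropic subspace. A line $\F e_2$ (or $\F e_1$) is isotropic since $q_\HH$ vanishes on it, so $\g(\HH)\ge 1$; and $\HH$ itself is not isotropic (e.g. $q_\HH(e_1,e_2)=1\ne 0$, or $\omega_{q_\HH}$ is non-degenerate so no $2$-dimensional isotropic subspace exists in a $2$-dimensional space), so $\g(\HH)=1$.

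For part (ii): the radical statement $\R(E\op\HH)=\R(E)$ is exactly Lemma~\ref{lem:props of sum}(\ref{lem:props of sum:R}) applied with $E'=\HH$, since $\K(\HH)=0$ by (i). For the genus, Lemma~\ref{lem:props of sum}(\ref{lem:props of sum:g}) already gives $\g(E\op\HH)\ge\g(E)+\g(\HH)=\g(E)+1$, so the work is the reverse inequality $\g(E\op\HH)\le\g(E)+1$. Here I would take an isotropic subspace $U\le E\op\HH$ realizing the maximum, i.e. $\dim U=\g(E\op\HH)+\dim\R(E\op\HH)=\g(E\op\HH)+\dim\R(E)$, and argue that it can be modified so that its projection to the $\HH$ factor has dimension at most $1$; intersecting with $E$ then produces an isotropic subspace of $E$ of dimension at least $\dim U-1$, giving the bound. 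Concretely, if the $\HH$-component of $U$ were $2$-dimensional, then $U$ would contain a vector whose $\HH$-part is $e_1$-like and one whose $\HH$-part is $e_2$-like, and isotropy of $U$ together with $\omega_{q_\HH}(e_1,e_2)=1$ and the splitting $\omega_{E\op\HH}=\omega_E\perp\omega_\HH$ would force a contradiction (the cross term in $\omega$ cannot be cancelled by anything in $E$).

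The main obstacle I expect is this last inequality $\g(E\op\HH)\le\g(E)+1$: one has to be careful that the optimal isotropic subspace of $E\op\HH$ need not split as (isotropic in $E$) $\op$ (isotropic in $\HH$) — indeed the Remark right before this lemma warns that genus is not additive in general — so the argument must genuinely use the non-degeneracy of $\omega_{q_\HH}$ on the $\HH$ summand to control the projection to $\HH$, rather than any naive splitting. Everything else (the computation of $\omega$ and $Q$ on $\HH$, invertibility of $\flat_{q_\HH}$, and the final induction) is routine.
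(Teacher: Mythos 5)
Your part (i), the identity $\R(E\op\HH)=\R(E)$ via Lemma~\ref{lem:props of sum}, the lower bound $\g(E\op\HH)\ge\g(E)+1$, and the final induction are all fine and agree with the paper (modulo a harmless slip: $\omega_{q_\HH}(v,w)=\overline{v_1}w_2+\eps w_1\overline{v_2}$, not $\eps\overline{w_1}v_2$; invertibility of $\flat_{q_\HH}$ is unaffected). The genuine gap is in the only nontrivial step, the upper bound $\g(E\op\HH)\le\g(E)+1$. Your ``concrete'' argument --- that a maximal isotropic $U\le E\op\HH$ cannot project onto a $2$-dimensional subspace of $\HH$ because the cross term $\omega_{q_\HH}(e_1,e_2)=1$ ``cannot be cancelled by anything in $E$'' --- is false: the $E$-components $x,x'$ of two vectors of $U$ need not be orthogonal in $E$, so $\omega_E(x,x')$ can perfectly well cancel $\omega_{q_\HH}(h,h')$. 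A uniform counterexample (any $\s,\eps,\Lambda$): take $E=\HH$ and, inside $\HH\op\HH$, the graph $W=\set{\big((a,b),(-a,b)\big)}{a,b\in\F}$ of the anti-isometry $(a,b)\mapsto(-a,b)$; the pullback of $q_\HH\op q_\HH$ to $W$ is $q_\HH-q_\HH=0$, so $W$ is a $2$-dimensional (hence maximal-dimension) isotropic subspace whose projection to the $\HH$ factor is all of $\HH$. So the contradiction you rely on does not exist, and the remaining phrase ``it can be modified so that its projection to the $\HH$ factor has dimension at most $1$'' is precisely the assertion that needs a proof, for which no argument is given. (You were right to be wary of the Remark preceding the lemma; your sketch nevertheless implicitly assumes the splitting behaviour it warns against.)

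For comparison, the paper closes exactly this step with two inputs you did not use: Lemma~\ref{lem:dimmax} (Witt's lemma implies all maximal isotropic subspaces of $E\op\HH$ have the same dimension), which reduces the problem to showing that one conveniently chosen isotropic subspace, namely $U\op L$ with $U\le E$ maximal isotropic and $L\le\HH$ an isotropic line, is maximal; and the orthogonal-complement calculus of Lemma~\ref{lem:capperp}, giving $(U\op L)^\perp/(U\op L)\cong(U^\perp\cap E)/U$, which contains no nonzero isotropic vector by maximality of $U$, so $U\op L$ is indeed maximal and $\g(E\op\HH)=\dim(U\op L)-\dim\R(E)=\g(E)+1$. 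Some input of this kind (Witt/Lemma~\ref{lem:dimmax}, or an honest modification procedure replacing an arbitrary maximal isotropic subspace by one meeting $E$ in codimension $\le1$) is needed; without it your argument does not go through.
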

\begin{proof}
Let $e_1,e_2$ be the standard basis for $\HH$, 
and $\alpha_1,\alpha_2$ the dual basis for ${^\sigma\!\HH^*}$.  
We have 
\[ \omega_q(v,w) = \overline{v_1}w_2 + \eps\overline{v_2}w_1, \]
so $\flat_q(e_1)=\eps\alpha_2 \ \text{and}\ \flat_q(e_2)=\alpha_1$.
Hence $\flat_q$ is bijective and $\K(\HH)=0$.
That shows that $\HH$ itself is not isotropic, so $\g(\HH)<2$.  But the span of $e_1$ is isotropic, so $\g(\HH)=1$.

Let $(E,q)$ be a formed space. By Lemma~\ref{lem:props of sum} and the above calculation, \hbox{$\R(E\op \HH)=\R(E)$} and $g((E\op \HH)\ge g(E)+1$. In Lemma~\ref{lem:dimmax}, we will see that all the maximal isotropic subspaces of $E\op \HH$ have the same dimension, so it is enough to consider the dimension of a maximal isotropic subpace containing $U\op L$ for $U\le E$ and $L\le \HH$ maximal isotropic. Now, using Lemma~\ref{lem:capperp}, we have $$(U\op L)^\perp\, =\, U^\perp\cap L^\perp\, =\, (U^\perp\cap E) \, \op\,  (L^\perp\cap \HH)\, =\, (U^\perp\cap E) \, \op\,  L$$
from which it follows that $(U\op L)^\perp/U\op L \, =\,  (U^\perp \cap E)/U$. 
By maximality of $U$, we have that the right hand side has no isotropic vector. Hence the same holds for the left hand side. 
\end{proof}

Finally, we show how $\HH$ relates to other formed spaces. 
Let $\textbf{IVect}_\F$ denote the category of 
 finite-dimensional $\F$-vector spaces and isomorphisms, and  $\textbf{IForms}(\F,\sigma,\eps,\Lambda)$ the category of formed spaces and bijective isometries. 
There are functors 
$$\xymatrix{F: \textbf{IForms}(\F,\sigma,\eps,\Lambda) \  \ \ar@<1ex>[rr] && \ar@<1ex>[ll] \ \ \textbf{IVect}_\F : \Phi}$$
with $F$ the forgetful functor and $\Phi$ defined on  objects by 
\[ \Phi(V)=(V\oplus {^\s V^*},q) \ \ \ \textrm{with }\ \  q((v,\alpha),(w,\beta))=\beta(v),\]
and on morphisms by $$\Phi(g)=g\op (g^{-1})^*.$$

The isomorphism $\F^2\cong \F\op {^\s\F^*}$ taking $(a,b)$ to $(a,b\s)$ defines an isomorphism of formed spaces
\[ \HH\iso\Phi(\F). \]
More generally, choosing a basis for $V$ determines an isomorphism
\[ \Phi(V)\iso\HH^{\op \dim V}. \]

The following proposition will be used in Section~\ref{sec:eucli}: 

\begin{prop}\label{prop:hat+-} 
Let $(E,q)$ be a nondegenerate formed space over a field $\F$.
Then there is an isomorphism 
\[ 
(E,q)\oplus(E,-q)\iso \HH^{\op \dim E}, \]
which is natural with respect to bijective isometries of $E$. 
\end{prop}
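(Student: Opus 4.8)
The plan is to exhibit an explicit isometry $(E,q)\oplus(E,-q)\iso\Phi(E)$, where $\Phi(E)=(E\oplus{}^\s\! E^*,q_\Phi)$ is the hyperbolic formed space on $E$ from the discussion above; since $E$ is non-degenerate, $\flat_q\colon E\to{}^\s\! E^*$ is an isomorphism, and composing $\Phi(E)\iso\HH^{\op\dim E}$ (via a choice of basis) will give the statement. So the real content is to produce a natural isometry between $(E,q)\oplus(E,-q)$ and $\Phi(E)$. By \cite[Prop~2.7]{Forms} it suffices to match up the pairs $(\omega,Q)$, so I will only need to check the isometry condition at the level of $\omega_q$ and $Q_q$, not of $q$ itself.

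First I would write down the candidate map. On $(E,q)\oplus(E,-q)$ the form $\omega$ is $\omega_q\oplus(-\omega_q)$ and $Q$ is $Q_q\oplus(-Q_q)$ (using $\omega_{-q}=-\omega_q$ and $Q_{-q}=-Q_q$, which follow directly from the defining formulas for $\omega_q,Q_q$). The natural ``hyperbolic'' move is to pass to the sum and difference coordinates: define $\psi\colon E\oplus E\to E\oplus E$ by $\psi(v,w)=(v+w,\,v-w)$ when $\operatorname{char}\F\neq 2$, and then apply $\id\oplus\flat_q$ in the second coordinate to land in $E\oplus{}^\s\! E^*$. One computes that under $(v,w)\mapsto(v+w,v-w)$ the form $\omega_q\oplus(-\omega_q)$ becomes, up to the factor $2$, the pairing $\big((x,y),(x',y')\big)\mapsto \omega_q(x,y')+\eps\overline{\omega_q(y,x')}$ — that is, after applying $\flat_q$ to the $y$-slot, exactly $\omega_{q_\Phi}$ — and similarly $Q_q\oplus(-Q_q)$ transforms into the hyperbolic $Q$. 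In characteristic $2$ the substitution $\psi$ is not invertible, so there I would instead use the honest hyperbolic isometry $(v,w)\mapsto(v+w,\,w)$ (a shear), which is invertible over any field, and check directly that it carries $\omega_q\oplus\omega_q$ (note $-q=q$ issues: in char $2$ one must be a little careful since $-1=1$, but the second summand is still written $(E,-q)$ and $-q$ as a form may differ from $q$ because $\Lambda$-classes and the sesquilinear representative matter) to the hyperbolic $\omega$, and likewise for $Q$; the Remark after Lemma~\ref{lem:props of sum} already isolates exactly this char-$2$ subtlety, so I would lean on that computation.

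Next I would verify naturality: for a bijective isometry $g$ of $(E,q)$, the induced map $g\oplus g$ on $(E,q)\oplus(E,-q)$ should correspond under the above identification to $\Phi(g)=g\oplus(g^{-1})^*$ on $\Phi(E)$. This is a diagram chase: $g\oplus g$ commutes with the sum/difference (or shear) substitution $\psi$ on the nose, and $\flat_q\circ g=(g^{-1})^*\circ\flat_q$ because $g$ preserves $\omega_q$; combining these gives the claim. Finally, composing with a basis-induced isomorphism $\Phi(E)\iso\HH^{\op\dim E}$ yields the stated isomorphism $(E,q)\oplus(E,-q)\iso\HH^{\op\dim E}$, though I would note that this last step is natural only after fixing the basis (the naturality asserted in the proposition is with respect to isometries of $E$, and holds at the level of $\Phi(E)$).

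I expect the main obstacle to be the characteristic $2$ case: the elegant sum/difference change of coordinates degenerates, one must track the difference between the form $q$ and the sesquilinear representative when forming $-q$, and the quadratic refinement $Q$ (valued in $\F/\Lambda$) must be checked by hand since it is no longer determined by $\omega$ alone. The odd-characteristic case, by contrast, should be essentially a one-line substitution once the transformation of $\omega_q$ and $Q_q$ is written out.
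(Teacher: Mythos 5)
Your overall strategy (identify $(E,q)\op(E,-q)$ with $\Phi(E)=(E\op{}^{\s}E^{*},\bar q)$ and then choose a basis to get $\HH^{\op\dim E}$) is the same as the paper's, but your explicit maps have a genuine gap, precisely in the place you flagged. In odd characteristic your map $(v,w)\mapsto(v+w,\flat_q(v-w))$ pulls $\omega_{\bar q}$ back to $2\big(\omega_q\op(-\omega_q)\big)$; an isometry must preserve the form on the nose, so "up to the factor $2$" is not enough. This is repairable (replace the first coordinate by $\tfrac12(v+w)$ and check that the error terms land in $\Lambda_{min}$, which is stable under multiplication by $\tfrac12$), but you did not do it. The characteristic $2$ case, however, is not repaired by your shear: the map $(v,w)\mapsto(v+w,\flat_q(w))$ sends the first summand $(E,q)\op 0$ identically onto the subspace $E\op 0\le E\op{}^{\s}E^{*}$, which is isotropic for $\bar q$, whereas $q\op(-q)$ restricted to $(E,q)\op 0$ is the nondegenerate form $q$; equivalently, its pullback of $\omega_{\bar q}$ is $\omega_q(v,w')+\omega_q(w,v')$ rather than $\omega_q(v,v')+\omega_q(w,w')$. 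So no choice of $\Lambda$-bookkeeping can make it an isometry, and the Remark after Lemma~\ref{lem:props of sum} (a counterexample about radicals and genus) gives you no computation to lean on here. The statement is still true in characteristic $2$ (e.g.\ by an Arf-invariant count in the orthogonal case), but your proposal does not prove it.

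The paper avoids the case split entirely by translating Wall's argument: it defines a single map $\phi_E(u,v)=\big(u-g_q(u-v),\,\flat_q(u-v)\big)$, where $g_q=\flat_q^{-1}\circ\big(v\mapsto q(-,v)\big)$ uses nondegeneracy, and checks directly that $\phi_E$ is a linear isomorphism with $\phi_E^{*}\bar q-(q\op(-q))\in X(E\op E,\s,\eps,\Lambda_{min})\le X(E\op E,\s,\eps,\Lambda)$, so it is an isometry in every characteristic; naturality then follows because an isometry $\alpha$ preserves $q$ and $\omega_q$, hence commutes with $\flat_q$ and $g_q$, giving $(\alpha\op(\alpha^{-1})^{*})\circ\phi_E=\phi_{E'}\circ(\alpha\op\alpha)$. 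Your naturality argument and your remark that the final identification $\Phi(E)\iso\HH^{\op\dim E}$ depends on a basis are fine; the missing ingredient is a correct isometry that works uniformly, and in particular in characteristic $2$.
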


This result is essentially a special case of \cite[Thm 3]{Wallaxiomatic}. 
We give a translation of Wall's proof to our set-up.

\begin{proof} We show that $(E,q)\oplus(E,-q)\iso \Phi(E)$. 
Define  $\phi_E\colon E\oplus E\to E\oplus\, ^{\s}E^*$ by $$\phi_E(u,v)=(u-g_q(u-v),\flat_q(u-v))$$
for $\flat_q$ as in Definition~\ref{def:kerneletc}   
and
$g_q: E \ \stackrel{q(-,-)}\rar\  {^\s E^*} \ \stackrel{\flat_q^{-1}}\rar\  E$ 
taking $v$ to $v'$ if $q(w,v)=\omega_q(w,v')$ for every $w\in E$. 
One checks that $\phi_E$ is a vector space isomorphism and, writing $\Phi(E)=(E\oplus\, ^{\s}E^*,\bar q)$, 
that it is an isometry by checking that the form $\phi_E^*\bar q-(q\op (-q))\in X(E\op E,\s,\eps,\Lambda_{min})\le  X(E\op E,\s,\eps,\Lambda)$.  

For naturality, suppose that $\alpha:(E,q)\to (E',q')$ is a bijective isometry. We need to check that $(\alpha\op (\alpha^{-1})^*)\circ \phi_E=\phi_{E'}\circ (\alpha\op \alpha)$. This 
is true because $\alpha$ preserves $q$ and $\omega_q$, and hence also $\flat_q$ and $g_q$.  
\end{proof}

For further use, we note that the formed spaces $(E,q)$ and $(E,-q)$ (or their squares) are isomorphic under certain assumptions on the field: 

\begin{lemma}\label{lem:--++}
Let $(E,q)$ be a formed space over a field $\F$. \\ 
(1)~If there exists $a\in \F$ such that $\bar a a=-1$, then $(E,q)\cong (E,-q)$. 

\smallskip

\noindent
(2)~If there exists $a,b\in \F$ such that $\bar a a+\bar b b=-1$, then $(E\op E,-q\op -q)\cong (E\op E,q\op q)$.
\end{lemma}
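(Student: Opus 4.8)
The plan is to produce in each case an explicit isometry, working at the level of sesquilinear representatives and then checking that the difference of the pulled-back form and the target form lands in $X(-,\s,\eps,\Lambda_{min})$, which suffices since $\Lambda_{min}\leq\Lambda$. For part (1), suppose $a\in\F$ satisfies $\bar a a=-1$. The natural candidate is scalar multiplication $m_a\colon E\to E$, $v\mapsto av$. Computing the pullback of $q$ along $m_a$ gives $(m_a^*q)(v,w)=q(av,aw)=\bar a\,q(v,w)\,a$, and I would like this to equal $-q(v,w)$ as a form. If $\F$ is commutative (it is a field), $\bar a\,q(v,w)\,a = \bar a a\, q(v,w) = -q(v,w)$ on the nose, so $m_a\colon(E,-q)\to(E,q)$ is already an isometry of sesquilinear forms, not merely modulo $X$. (One should double-check the sesquilinearity convention: $q(av,aw)=\bar a q(v,w) a$ is exactly the defining relation in Definition~\ref{def:form}, so this is immediate.) This settles (1).

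For part (2), suppose $\bar a a+\bar b b=-1$. The model to keep in mind is the classical fact that a norm-form identity lets one build an isometry of a hyperbolic/orthogonal sum; here the $2\times 2$ matrix $\begin{bmatrix} a & -\bar b\\ b & \bar a\end{bmatrix}$ acting on $E\oplus E$ should do the job. Concretely, define $\psi\colon E\oplus E\to E\oplus E$ by $\psi(u,v)=(au-\bar b v,\; bu+\bar a v)$; this is linear, and it is invertible because its ``norm'' $\bar a a+\bar b b=-1$ is a unit (one can write down the inverse explicitly, or note $\psi$ composed with its conjugate-transpose is $-\id$). I would then compute $(\psi^*(q\oplus q))\big((u,v),(u',v')\big)$, expand using sesquilinearity and biadditivity, and collect terms. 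The diagonal terms contribute $(\bar a a+\bar b b)\,q(u,u') = -q(u,u')$ and similarly $-q(v,v')$, giving the desired $(-q\oplus -q)$; the cross terms, of the shape $\overline{(au)}\,q(\cdot)\,(\bar b v')$ etc., should cancel in pairs or assemble into an element of $X(E\oplus E,\s,\eps,\Lambda_{min})$. So the claim is $(E\oplus E,q\oplus q)\cong(E\oplus E,-q\oplus -q)$ via $\psi$, after checking $\psi^*(q\oplus q)-(-q\oplus -q)\in X$.

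The main obstacle — really the only place requiring care — is the bookkeeping in part (2): verifying that the off-diagonal terms in $\psi^*(q\oplus q)$ either vanish or lie in $X(E\oplus E,\s,\eps,\Lambda_{min})$, and in particular that the ``diagonal of the form'' condition $f(x,x)\in\Lambda_{min}$ holds for the difference $f=\psi^*(q\oplus q)+(q\oplus q)$. Here one uses the identity $Q_q(av)=\bar a a\,Q_q(v)$ together with $\bar a a+\bar b b=-1$ and the $\eps$-symmetry relation $q(w,v)\equiv -\eps\overline{q(v,w)}$ modulo $X$ to see that the mixed scalar coefficients $\bar a\,(\cdot)\,\bar b$ versus their conjugates differ by something of the form $c-\eps\bar c\in\Lambda_{min}$. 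This is routine but is exactly the kind of computation the excerpt elsewhere (e.g.\ the proof of Proposition~\ref{prop:hat+-}) defers to a ``one checks that'' remark, so I would present it at the same level of detail. If it is cleaner, one may alternatively deduce (2) from Proposition~\ref{prop:hat+-} in the non-degenerate case and then reduce the general case to the non-degenerate one by splitting off the radical, though the direct matrix argument above does not need non-degeneracy and so is preferable.
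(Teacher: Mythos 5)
Your proposal follows essentially the same route as the paper: scalar multiplication by $a$ for part (1), and an explicit $2\times2$ matrix of ``norm'' $-1$ acting on $E\oplus E$ for part (2) (the paper uses $\begin{bmatrix}\bar a&-\bar b\\ b&a\end{bmatrix}$, a trivial variant of yours, with the same explicit inverse). The only remark worth making is that the cross terms cancel exactly by commutativity of $\F$, so the isometry holds on the nose at the level of sesquilinear forms and no appeal to $X(\,\cdot\,,\s,\eps,\Lambda_{min})$ is needed.
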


\begin{proof}
To prove (1), we use the isomorphism $\alpha: E\to E$ given by multiplication by $a$ for $a$ such that $\bar a a=-1$. Then $q(\alpha(v),\alpha(w))=-q(v,w)$ by our choice of $a$. 

For (2), we pick $a,b\in \F$ such that $\bar a a+\bar b b=-1$ and let 
\[ \beta=\begin{bmatrix}\bar a&-\bar b\,\\b&a\end{bmatrix}\colon E^{\op 2}\to E^{\op 2}. \]
It is bijective since it has inverse
\[ \begin{bmatrix} -a&-\bar b\,\\b&-\bar a\,\end{bmatrix}. \]
It induces an isometry $(E\op E,-q\op -q)\to (E\op E,q\op q)$  since
\begin{align*}
&q\op q\big((\bar av-\bar bw,bv+aw),(\bar av'-\bar bw',bv'+aw')\big) \\
&= q(\bar av-\bar bw,\bar av'-\bar bw') + q(bv+aw,bv'+aw') = -q(v,v')-q(w,w'). \qedhere
\end{align*}
\end{proof}

\section{The buildings of a vector space}\label{sec:buildings}

In this section, we define and study the buildings and relative
buildings of subspaces of a vector space, and collect their basic
properties. 
We show that the known  vanishing of the coinvariants of the top homology of the standard building also holds in the relative case. 
Before introducing the building, we recall some basic definitions about posets.

\medskip

The dimension of a poset is defined to be the maximal length $d$ of a chain $a_0<a_1<\dots<a_d$ of elements in the poset. We will here only consider posets in which any maximal chain has the same finite length; these are called {\em graded posets}.  In a graded poset, we define the {\em rank} of an element $a$ as the length of any maximal chain ending at $a$. 

A poset $\pP$ is said to be {\em $n$-connected} if its realization is
$n$-connected, and 
{\em spherical} if it is $(\dim \pP-1)$-connected.
For $x,y\in \pP\cup \{-\infty,\infty\}$, we define the {\em interval $(x,y)$} as the subposet  $$(x,y)=\{z\in \pP\ |\ x<z<y\}\ \ \ \textrm{with in particular}\ \   (-\infty,y)=\pP_{<y} \  \ \textrm{and}\ \  (x,\infty)=\pP_{>x}.$$

\begin{Def}\label{def:CM}
A graded poset $\pP$ is {\em Cohen-Macaulay} if for every $-\infty\le x\le y\le \infty$, the interval $(x,y)$ is spherical, i.e.~if $\pi_i(|(x,y)|)=0$ for $i<\dim(x,y)$.
\end{Def}
Being Cohen-Macaulay implies being spherical, which is the case $x=-\infty$ and $y=\infty$.

\medskip

We recall the Tits building of a vector space, and define its relative analogue that will be relevant to us.

\begin{Def}\label{def:GL building}
Let $V$ be a finite-dimensional vector space over a field $\F$. Define the {\em building} of $V$ as 
\[ \PP(V)=\{0<W<V\}, \]
to be the poset of nontrivial proper subspaces of $V$.
For vector spaces $V_0\le V$, we define the {\em relative building}  as 
\[ \PP(V, V_0) = \set{W<V}{W+V_0=V}. \]
\end{Def}

The building and relative building have the following properties: 

\begin{thm}\label{thm:GL building properties} 
Let $V$ be a finite-dimensional vector space and $V_0$ a subspace.  Then $\PP(V)$ and $\PP(V,V_0)$ are Cohen-Macaulay and for  $W\in\PP(V)$  and  $W'\in\PP(V,V_0)$, 
$$\begin{array}{rcl|rcl}
\dim\PP(V) &=& \dim V-2 & \dim\PP(V,V_0) &=& \dim V_0-1, \\
\PP(V)_{<W} &\cong & \PP(W) & \PP(V,V_0)_{<W'} &\cong & \PP(W',W'\cap V_0), \\
\PP(V)_{>W} &\cong & \PP(V/W) & \PP(V,V_0)_{>W'} &\cong & \PP(V/W'). \\
\rank(W) &=& \dim W-1 & \rank(W') &=& \dim(W'\cap V_0)
\end{array}$$
\end{thm}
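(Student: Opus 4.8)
The plan is to treat the non-relative case first (which is classical) and then leverage it for the relative case via the two identifications of lower and upper intervals. First I would handle $\PP(V)$: the dimension count $\dim\PP(V)=\dim V-2$ is immediate since a maximal chain $0<W_1<\cdots<W_{d}<V$ corresponds to a partial flag, with length maximized by a full flag of subspaces of dimensions $1,\dots,\dim V-1$, hence $d=\dim V-1$ and $\dim\PP(V)=d-1=\dim V-2$. The identifications $\PP(V)_{<W}\cong\PP(W)$ and $\PP(V)_{>W}\cong\PP(V/W)$ are tautological from the definition (subspaces strictly between $0$ and $W$ are exactly the nontrivial proper subspaces of $W$; subspaces strictly between $W$ and $V$ correspond under $U\mapsto U/W$ to nontrivial proper subspaces of $V/W$), and $\rank(W)=\dim W-1$ follows because a maximal chain ending at $W$ is a full flag in $W$ together with $W$ itself, of length $\dim W-1$. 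The Cohen--Macaulay property of $\PP(V)$ is the Solomon--Tits theorem: $\PP(V)$ is homotopy equivalent to a wedge of $(\dim V-2)$-spheres, and since every interval $(x,y)$ in $\PP(V)$ is again of the form $\PP(W/W')$ for $W'<W$ (using the two interval identifications), each interval is itself a Tits building, hence spherical of the correct dimension.

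For the relative building $\PP(V,V_0)=\{W<V\mid W+V_0=V\}$, the key observation to establish first is the two interval identifications on the right-hand column. For $W'\in\PP(V,V_0)$, a subspace $W$ with $W<W'$ lies in $\PP(V,V_0)_{<W'}$ iff $W+V_0=V$; intersecting with $W'$ and using $W'+V_0=V$ together with the modular law gives $W+(W'\cap V_0)=W'$, so $\PP(V,V_0)_{<W'}\cong\PP(W',W'\cap V_0)$. For the upper interval, $W$ with $W'<W<V$ automatically satisfies $W+V_0\supseteq W'+V_0=V$, so $\PP(V,V_0)_{>W'}=\PP(V)_{>W'}\cong\PP(V/W')$, an ordinary Tits building. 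From the lower-interval identification and induction on $\dim V_0$, the rank formula $\rank(W')=\dim(W'\cap V_0)$ and the dimension formula $\dim\PP(V,V_0)=\dim V_0-1$ follow: a maximal chain ending at $W'$ is a maximal chain in $\PP(W',W'\cap V_0)$ together with $W'$, of length $(\dim(W'\cap V_0)-1)+1=\dim(W'\cap V_0)$; and the top rank is achieved when $W'\cap V_0=V_0$, i.e.\ $V_0\subseteq W'$, giving $\rank=\dim V_0$ and $\dim\PP(V,V_0)=\dim V_0-1$.

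It remains to prove $\PP(V,V_0)$ is Cohen--Macaulay. Using the interval identifications, every interval $(x,y)$ in $\PP(V,V_0)$ is either an ordinary Tits building $\PP(W/W')$ (when $x\neq-\infty$, since $\PP(V,V_0)_{<W'}$ is itself a relative building but its \emph{upper} intervals past any element are ordinary buildings — more carefully, $(W,W')$ with $-\infty<W<W'$ is $\PP(W')_{>W}\cap\PP(V,V_0)$ which by the upper-interval computation is an ordinary building $\PP(W'/W)$), or the whole poset $\PP(V,V_0)$ (the case $x=-\infty$, $y=\infty$), or a lower interval $\PP(W',W'\cap V_0)$, again a relative building. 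So by induction on $\dim V_0$ it suffices to prove $\PP(V,V_0)$ itself is spherical, i.e.\ $(\dim V_0-2)$-connected. I expect \textbf{this sphericity statement to be the main obstacle}. The natural approach is to fix a complement $C$ to $V_0$ in $V$ (so $V=V_0\oplus C$) and build a contraction-type or Solomon--Tits-style deformation: the subposet of $W\in\PP(V,V_0)$ containing $C$ is isomorphic to $\PP(V/C)\cup\{V/C$ analogue$\}\cong\PP(V_0)$ shifted, and one shows the inclusion of this subposet is highly connected by a poset homotopy argument (e.g.\ repeatedly using that $W\mapsto W+C$ or a link argument over the bad locus where $C\not\subseteq W$), reducing the connectivity of $\PP(V,V_0)$ to that of the ordinary building $\PP(V_0)$, which is $(\dim V_0-2)$-connected by Solomon--Tits. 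I would carry this out by the standard ``bad simplices'' / Quillen poset fiber technique, checking that the relevant links are joins of buildings of the appropriate dimension. Once sphericity of the relative building in all dimensions is in hand, the Cohen--Macaulay property follows from the interval analysis above by downward induction on the size of intervals.
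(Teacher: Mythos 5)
The interval identifications, the rank and dimension formulas, and the non-relative case are correct and essentially coincide with the paper's argument: the paper also verifies $\PP(V,V_0)_{<W'}\cong\PP(W',W'\cap V_0)$ via the modular-law identity $(X+V_0)\cap W'=X+(V_0\cap W')$, and quotes Solomon--Tits for $\PP(V)$. One small slip in your dimension count: the top rank is \emph{not} achieved when $V_0\subseteq W'$ --- that together with $W'+V_0=V$ would force $W'=V$, which is excluded. The maximal elements of $\PP(V,V_0)$ are the hyperplanes $W'$ with $W'+V_0=V$ (equivalently $V_0\not\leq W'$), of rank $\dim(W'\cap V_0)=\dim V_0-1$; this is what gives $\dim\PP(V,V_0)=\dim V_0-1$ (your stated ``$\rank=\dim V_0$'' would in any case contradict the dimension you then assert).

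The genuine gap is exactly where you flag it: the sphericity of $\PP(V,V_0)$. The paper does not prove this from scratch; it cites Vogtmann (Cor.~1.3 of her paper) and only checks that the intervals are buildings of the stated type, so that Cohen--Macaulayness follows. Your sketched reduction does not work as written. The subposet of those $W\in\PP(V,V_0)$ containing a fixed complement $C$ of $V_0$ has $C$ as a minimal element, so it is a cone on a copy of $\PP(V_0)$ and hence contractible; its connectivity is therefore not ``that of $\PP(V_0)$'', and since $\PP(V,V_0)$ is generally not contractible (for $\dim V=2$ and $V_0$ a line it is a discrete set with more than one point), the inclusion of this subposet cannot be a homotopy equivalence. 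What you would actually need is that this inclusion is $(\dim V_0-1)$-connected, and that is precisely the hard content, which the sketch does not supply; moreover the proposed homotopy $W\mapsto W+C$ does not stay in the poset, since $W+C$ can equal $V$. If you want a self-contained argument rather than citing Vogtmann, a workable route is the join decomposition proved later in the paper (Theorem~\ref{thm:join decomposition}, whose proof uses only the Quillen fiber lemma and monotone maps, not Theorem~\ref{thm:GL building properties}): for a line $L\leq V_0$ one has $\PP(V,V_0)\simeq\PP(V/L,V_0/L)\ast\PP(V,L)$, and since $\PP(V,L)$ is a nonempty discrete set, induction on $\dim V_0$ together with the behaviour of connectivity under joins yields sphericity in dimension $\dim V_0-1$.
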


\begin{proof}
The Cohen-Macaulay property of $\PP(V)$ is the Solomon-Tits theorem; for instance see~\cite[p.~118]{QuillenHomotopy}.
For $\PP(V,V_0)$, Vogtmann~\cite[Cor.~1.3]{Vogtmann} shows that it is spherical, which implies the Cohen-Macaulay property by the non-relative case, once we have checked that the intervals are non-relative buildings as stated.

This isomorphism $\PP(V,V_0)_{<W'}\cong \PP(W',W'\cap V_0)$ takes $X\leq W'$ to itself. This is well-defined because if $X\leq W'$,
\[ (X+V_0)\cap W' = X+(V_0\cap W'), \]
so the latter equals $W'$ if and only if
$X+V_0\geq W'$, which holds 
if and only if
$X+V_0=V$ since $W'+V_0=V$. The isomorphism $\PP(V,V_0)_{>W'}\cong\PP(V/W')$ is given by taking $X\in \PP(V,V_0)_{>W'}$ to $X/W'\in \PP(V/W')$. 
The statements for $P(V)$ are checked likewise. 
\end{proof}

In Section~\ref{sec:stability}, we will also need augmented variants of the buildings $\PP(V)$ and $\PP(V,V_0)$:
\begin{Def}\label{def:GL relative building bar}
For $V_0\le V$, define 
\[ \bP(V)=\{0<W\leq V\}, \]
\[ \bP(V, V_0) = \set{W\leq V}{W+V_0=V}. \]
\end{Def}
These differ from the buildings we studied so far by the addition of a maximal element. (In particular they are contractible.) 
The addition of a maximal element increases the dimensions by one, and does not affect the Cohen-Macaulay property:

\begin{thm}\label{thm:barCM}
For $V_0\le V$ finite-dimensional vector spaces, the buildings $\bP(V)$ and $\bP(V,V_0)$ are Cohen-Macaulay, and:
$$\begin{array}{rcl|rcl}
\dim\bP(V) &=& \dim V-1 & \dim\bP(V,V_0) &=& \dim V_0, \\
\bP(V)_{<W} &\cong & \PP(W) & \bP(V,V_0)_{<W} &\cong & \PP(W,W\cap V_0), \\
\bP(V)_{>W} &\cong & \bP(V/W) & \bP(V,V_0)_{>W} &\cong & \bP(V/W).
\end{array}$$
\end{thm}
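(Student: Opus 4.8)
The plan is to deduce Theorem~\ref{thm:barCM} from Theorem~\ref{thm:GL building properties} by analyzing what the addition of a single maximal element does to a graded poset. Write $\bP=\PP\cup\{\hat 1\}$ for either pair $(\PP,\bP)=(\PP(V),\bP(V))$ or $(\PP(V,V_0),\bP(V,V_0))$; in both cases the new element $\hat 1$ (namely $V$) is strictly above every element of $\PP$. First I would record the elementary general fact: if $\pP$ is a graded Cohen--Macaulay poset of dimension $d$ and $\pP^+=\pP\cup\{\hat 1\}$ with $\hat 1$ a new top element, then $\pP^+$ is graded of dimension $d+1$ (every maximal chain of $\pP$ extends uniquely by appending $\hat 1$), and $\pP^+$ is Cohen--Macaulay. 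For the last point one checks intervals: an interval $(x,y)$ in $\pP^+$ with $y\neq\hat 1$ is just the interval $(x,y)$ in $\pP$, hence spherical of the right dimension by hypothesis; an interval $(x,\hat 1)$ with $x\in\pP$ equals $\pP_{>x}$ which is spherical since $\pP$ is Cohen--Macaulay (this is the interval $(x,\infty)$ in $\pP$); the interval $(x,\hat 1)$ with $x=-\infty$ is all of $\pP$, spherical because $\pP$ is; and intervals ending at $\hat 1$ with lower endpoint $-\infty$ or any $x$ have dimension one more than the corresponding interval in $\pP$, matching the dimension shift. So $\bP$ is Cohen--Macaulay with $\dim\bP=\dim\PP+1$, which gives $\dim\bP(V)=\dim V-1$ and $\dim\bP(V,V_0)=\dim V_0$ using Theorem~\ref{thm:GL building properties}.

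Next I would establish the interval identifications. For $W\in\bP$ with $W\neq\hat 1$ (i.e.\ $W<V$) we have $\bP_{<W}=\PP_{<W}$ literally, since nothing new lies below a non-top element; thus $\bP(V)_{<W}\cong\PP(W)$ and $\bP(V,V_0)_{<W}\cong\PP(W,W\cap V_0)$ by Theorem~\ref{thm:GL building properties}. (When $W=\hat 1=V$, the downset $\bP_{<W}$ is all of $\PP$, which is of course consistent with viewing $V$ as $\bP(V/V)$ trivial, but this case is not needed.) For the upset, $\bP_{>W}=\PP_{>W}\cup\{\hat 1\}$, so $\bP(V)_{>W}\cong\PP(V/W)\cup\{\widehat{V/W}\}=\bP(V/W)$, using the isomorphism $\PP(V)_{>W}\cong\PP(V/W)$ of Theorem~\ref{thm:GL building properties} extended by sending $\hat 1=V$ to $V/W$; and similarly $\bP(V,V_0)_{>W}\cong\bP(V/W)$, using $\PP(V,V_0)_{>W}\cong\PP(V/W)$ and again adjoining top elements. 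One should note the isomorphisms are poset isomorphisms (order preserving and reflecting), which is immediate since they are restrictions of the quotient-by-$W$ map.

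The remaining content is just to verify the asserted dimensions of the specific buildings, which follows by combining the general dimension shift with the values in Theorem~\ref{thm:GL building properties}. There is essentially no obstacle here: the whole statement is a formal consequence of Theorem~\ref{thm:GL building properties} together with the trivial observation that adjoining a new maximum to a Cohen--Macaulay graded poset preserves the Cohen--Macaulay property and shifts dimension by one. If anything is delicate, it is only the bookkeeping of the boundary cases $x=-\infty$, $y=\infty$, and $y=\hat 1$ in the Cohen--Macaulay check, and making sure the interval $(x,\hat 1)$ for $x\in\pP$ is identified with $\pP_{>x}$ rather than with something involving $\hat 1$ itself; once that is pinned down the proof is a couple of lines.
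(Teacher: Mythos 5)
Your overall route is the same as the paper's: quote Theorem~\ref{thm:GL building properties} and analyze what adjoining a single maximal element does, and your identifications of the lower and upper intervals and the dimension bookkeeping are correct. However, the Cohen--Macaulay check has a gap at exactly the one point carrying the new content. Writing $\pP^+=\pP\cup\{\hat 1\}$, the intervals of $\pP^+$ that are not already intervals of $\pP$ are not only those of the form $(x,\hat 1)$ (which, as you say, coincide with $(x,\infty)$ in $\pP$), but also the intervals $(x,\infty)$ of $\pP^+$ with $x<\hat 1$, i.e.\ the ones containing $\hat 1$ as an element, including the whole poset $\pP^+$ itself (the case $x=-\infty$, $y=\infty$). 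Your first clause (``an interval $(x,y)$ with $y\neq\hat 1$ is just the interval in $\pP$'') is false when $y=\infty$, since then the interval is $(x,\infty)_{\pP}\cup\{\hat 1\}$; and your final clause only observes that such intervals have dimension one more than the corresponding interval of $\pP$, which is not an argument for their sphericity --- if anything, the dimension shift means the Cohen--Macaulay property of $\pP$ supplies one degree less connectivity than sphericity of the enlarged interval would demand, so nothing can be ``matched'' this way.

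The missing observation --- which is precisely the one-line key fact the paper's proof isolates --- is that every interval of $\pP^+$ containing $\hat 1$ has $\hat 1$ as its maximum element, so its realization is a cone and hence contractible, hence spherical regardless of its dimension. Once you insert that sentence (and restrict your first clause to $y\in\pP$), your case analysis is complete: intervals with $y\in\pP$ or $y=\hat 1$ are spherical by Theorem~\ref{thm:GL building properties}, and intervals with $y=\infty$ are contractible. With this fix your proof coincides with the paper's.
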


\begin{proof}
This follows from Theorem~\ref{thm:GL building properties} and the fact that any interval containing the added maximal element is contractible. 
\end{proof}

\subsection{A join decomposition of the relative building}\label{sec:join}

The reduced homology of the building,
\[ \St(V)=\tilde H_{\dim V-2}(\PP(V)), \]
is known as the {\em Steinberg module} over $\GL(V)$.
It is well-known that the $\G(V)$-coinvariants of this module vanish when $\dim V\geq2$:
If $\dim(V)\geq2$, then
\[ \St(V)_{\G(V)}=0;  \]
this is e.g.~a special case of~\cite[Thm.~1.1]{APS}. 
In Section~\ref{sec:vanishing of relative coinvariants}, we will prove the analogous result for the relative building $\PP(V,V_0)$, using a join decomposition of the building which we now describe.

Given two posets $\pP$ and $\mathcal{Q}$, their {\em join} $\pP*\mathcal{Q}$ is the poset 
which, as a set, is the disjoint union $\pP\sqcup \mathcal{Q}$, with the usual order relations of $\pP$ and $\mathcal{Q}$, plus the additional relation that $a<b$ for all $a\in \pP$ and all $b\in \mathcal{Q}$.  
Note that the realization of a join of two posets is the (topological) join of their realizations.

\begin{thm}\label{thm:join decomposition} 
For any subspace $U\le V_0$, there is poset map 
\[ \phi:\PP(V,V_0)\to \PP(V/U,V_0/U)\ast \PP(V,U) \]
which is natural with respect to triples $(V\geq V_0\geq U)$ and induces a homotopy equivalence on realizations.
\end{thm}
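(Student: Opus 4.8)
The plan is to construct $\phi$ directly from the data of the subspace $W < V$ with $W + V_0 = V$, by recording two things about $W$: how it sits relative to $U$ inside the quotient $V/U$, and the intersection $W \cap U$ (or rather $W$ viewed inside the relative building $\PP(V,U)$). Concretely, I would define $\phi(W)$ to be $(W+U)/U \in \PP(V/U, V_0/U)$ when $W + U \neq V$, and to be $W \in \PP(V,U)$ when $W + U = V$ (i.e.\ when $W$ already satisfies the stronger non-degeneracy condition with respect to $U$). One first checks this is well-defined: if $W+V_0 = V$ then $(W+U)/U + V_0/U = V/U$, so $(W+U)/U$ does lie in $\PP(V/U, V_0/U)$ whenever it is a proper nonzero subspace; and if $W+U = V$ then certainly $W \in \PP(V,U)$. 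One then checks $\phi$ is a poset map: if $W_1 < W_2$, either both have $\phi$-value in the first factor, both in the second, or $W_1$ is in the first factor and $W_2$ in the second — and in the join $\PP(V/U,V_0/U) \ast \PP(V,U)$ every element of the first factor is below every element of the second, so the required inequality holds in all three cases. Naturality with respect to triples $(V \ge V_0 \ge U)$ is then immediate from the functoriality of $+U$, $\cap$, and passage to quotients.

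The substantive part is showing $\phi$ induces a homotopy equivalence on realizations. Here the natural tool is Quillen's Theorem A (fiber lemma): it suffices to show that for every element $x$ of the target join, the fiber $\phi^{-1}(\PP(V/U,V_0/U)\ast \PP(V,U)_{\le x})$ (or $_{\ge x}$, depending on orientation) is contractible. I would split into the two kinds of $x$. For $x = W$ in the second factor $\PP(V,U)$, the relevant fiber consists of all $W'$ with $\phi(W')$ below $x$ in the join, which (since everything in the first factor is below $x$, and in the second factor $W' \le W$ with $W'+U=V$) should be describable as a cone or as a poset with a minimum/maximum, hence contractible. For $x$ in the first factor $\PP(V/U,V_0/U)$, say $x = \bar Y = Y/U$ with $U \le Y \le V$, the fiber consists of all $W'$ with $(W'+U)/U \le Y/U$, i.e.\ $W' + U \le Y$ and $W' + V_0 = V$; I expect this poset to be identifiable with a relative building of the form $\PP(Y, \cdot)$ augmented suitably, or to deformation-retract onto something contractible by a standard ``push up towards $Y$'' argument (e.g.\ $W' \mapsto W' + (U \cap Y')$ or a zig-zag through $W' \le W' + U' \ge U'$ style homotopy).

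The main obstacle I anticipate is precisely the verification that these fibers are contractible in the first-factor case: the condition $W' + V_0 = V$ must be kept alive throughout any proposed poset homotopy, and naive maps like $W' \mapsto W' + U$ can destroy properness or violate the relative condition, so one has to choose the contracting homotopy (likely a zig-zag $W' \le W' + Z \ge Z$ for a cleverly chosen fixed $Z$, or a sequence of such) carefully. A secondary subtlety is the boundary behavior: making sure the definition of $\phi$ doesn't run into degenerate cases such as $W + U = 0$ or $(W+U)/U$ failing to be a proper subspace of $V/U$ — these edge cases need to be checked to genuinely land in the stated join, possibly by observing that $W + V_0 = V$ together with $U \le V_0$ forces enough. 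Once the fiber contractibility is in hand, Theorem A gives the homotopy equivalence, and naturality has already been arranged by construction.
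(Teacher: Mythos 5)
Your map $\phi$ is exactly the one the paper uses, and the overall strategy (Quillen's fiber lemma, checking contractibility of the fibers $\phi^{-1}(\mathcal{Q}_{\le x})$ over the two kinds of elements of the join) is also the paper's. But there is a genuine gap, and you have moreover misjudged where the difficulty sits. The fiber over a first-factor element $x=Y/U\in\PP(V/U,V_0/U)$ is $\set{X\in\PP(V,V_0)}{X+U\le Y}$, and this is trivially contractible: the full preimage $Y$ itself lies in it (since $Y/U+V_0/U=V/U$ forces $Y+V_0=V$) and is its maximum. No ``push up towards $Y$'' homotopy or identification with a relative building is needed, and none of the obstacles you list (keeping $X+V_0=V$ alive, properness) arises here.

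The real work is in the case you dismiss, namely $x=W\in\PP(V,U)$ (so $W+U=V$). There the fiber is
\[ \phi^{-1}(\mathcal{Q}_{\le W})=\set{X\in\PP(V,V_0)}{X+U<V\ \text{or}\ X\le W}, \]
and your claim that this ``should be describable as a cone or as a poset with a minimum/maximum'' is false in general: already for $\dim V=3$, $V_0$ a plane, $U\le V_0$ a line and $W$ a plane with $U\not\le W$, the fiber contains several maximal elements ($W$ together with every plane containing $U$ other than $V_0$) and several minimal ones, so it is neither a cone nor bounded above or below. Contracting it is the substantive content of the theorem. The paper does it by two successive weakly monotone self-maps (each a homotopy equivalence onto its image): first $f(X)=X$ if $X\le W$ and $f(X)=X+U$ otherwise, whose image is $\set{Y}{U\le Y\ \text{or}\ Y\le W}$; then $g(Y)=Y\cap W$, where one must verify $(Y\cap W)+V_0=V$ — this uses the modular law together with $W+U=V$, via $(Y\cap W)+U+V_0=Y\cap(W+U)+V_0=Y+V_0=V$ when $U\le Y$ — after which the image $\set{X}{X\le W}$ has maximum $W$ and is contractible. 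Without an argument of this kind (your own ``zig-zag $W'\le W'+Z\ge Z$'' suggestion is pointed at the wrong fiber), the proof is incomplete at its only nontrivial step.
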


This result is stated  in Quillen's notes \cite{QuillenNotes} with $U$ a line. See also \cite[Lem 3.8]{GKRW}. 

\medskip

Let
\[ \St(V,V_0):=\tilde H_{\dim V_0-1}(\PP(V,V_0)) \]
denote the {\em relative Steinberg module}.  The proposition has the following consequence: 
\begin{cor}\label{cor:join-decomp-homology}
There is an isomorphism
\[ \St(V,V_0)\iso \St(V/U,V_0/U)\otimes_\bbZ \St(V,U)\]
which is natural with respect to triples $(V\geq V_0\geq U)$. 
\end{cor}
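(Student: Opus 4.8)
The plan is to deduce Corollary~\ref{cor:join-decomp-homology} from the homotopy equivalence in Theorem~\ref{thm:join decomposition} by passing to reduced homology. The key point is the classical fact that reduced homology of a join of spaces shifts degree and tensors: for spaces $X,Y$ one has $\tilde H_{n}(X \ast Y) \cong \bigoplus_{i+j=n-1} \tilde H_i(X)\otimes_\bbZ \tilde H_j(Y) \,\oplus\, \bigoplus_{i+j=n-2}\operatorname{Tor}^\bbZ_1(\tilde H_i(X),\tilde H_j(Y))$, i.e.~$\tilde C_*(X\ast Y)\simeq \tilde C_*(X)\otimes \tilde C_*(Y)[1]$ up to quasi-isomorphism. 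So first I would recall the dimensions from Theorem~\ref{thm:GL building properties}: $\dim \PP(V,V_0) = \dim V_0 - 1$, $\dim \PP(V/U, V_0/U) = \dim(V_0/U) - 1 = \dim V_0 - \dim U - 1$, and $\dim \PP(V,U) = \dim U - 1$. Adding the latter two and the shift by $1$ gives exactly $\dim V_0 - 1$, the top dimension of $\PP(V,V_0)$, which is the degree appearing in the definition of $\St(V,V_0)$.

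Next I would invoke that both factors $\PP(V/U,V_0/U)$ and $\PP(V,U)$ are Cohen-Macaulay, hence spherical (Theorem~\ref{thm:GL building properties}), so each has reduced homology concentrated in its top degree; the same holds for $\PP(V,V_0)$ itself. Therefore in the join formula only the single term $i = \dim V_0 - \dim U - 1$, $j = \dim U - 1$ survives in the direct sum, and the $\operatorname{Tor}$ terms all vanish (they would require reduced homology in two adjacent degrees in at least one factor, which cannot happen for a spherical poset of positive dimension, and the low-dimensional edge cases where a factor is $0$-dimensional or empty are handled directly — e.g.~if $U = 0$ then $\PP(V,U)$ is empty with $\tilde H_{-1} = \bbZ$ and the statement degenerates correctly, and similarly if $U = V_0$). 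This yields the abstract isomorphism $\St(V,V_0) \cong \St(V/U,V_0/U)\otimes_\bbZ \St(V,U)$.

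For naturality, I would note that the homotopy equivalence $\phi$ in Theorem~\ref{thm:join decomposition} is already asserted there to be natural with respect to triples $(V \ge V_0 \ge U)$, and the join homology isomorphism $\tilde H_*(X\ast Y)\cong \tilde H_{*-1}(X)\otimes \tilde H_{*-1}(Y)$ is natural in $X$ and $Y$ separately (it comes from the natural quasi-isomorphism of reduced simplicial/singular chain complexes, via the shuffle/Eilenberg–Zilber map for the join). Composing these two natural transformations gives naturality of the resulting isomorphism on $\St$. One should be slightly careful that the relevant morphisms of triples induce maps of posets in the correct variance — an isomorphism $V \to V'$ carrying $V_0 \to V_0'$ and $U \to U'$ induces poset isomorphisms on all three buildings covariantly, so there is no variance subtlety, and for the application (Section~\ref{sec:vanishing of relative coinvariants}) it is precisely this $\aut$-equivariance that is needed.

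The main obstacle I anticipate is not conceptual but bookkeeping: making sure the degree-shift conventions in the join formula are pinned down correctly (the "$+1$" in $\dim X \ast Y = \dim X + \dim Y + 1$ for simplicial complexes versus the "$-1$" in the definition of the relative Steinberg module via $\tilde H_{\dim V_0 - 1}$), and checking the boundary cases $U = 0$ and $U = V_0$ where one of the buildings becomes empty or a point so that "reduced homology in top degree" must be interpreted via the augmented chain complex ($\tilde H_{-1}(\emptyset) = \bbZ$). These edge cases are genuinely needed since in the intended application $U$ will often be a line and $V_0$ can be small. Everything else is a direct citation of Theorem~\ref{thm:join decomposition}, Theorem~\ref{thm:GL building properties}, and the standard Künneth-for-joins computation.
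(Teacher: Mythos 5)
Your proposal is correct and follows essentially the same route as the paper: the published proof likewise deduces the corollary from Theorem~\ref{thm:join decomposition} via the K\"unneth theorem for reduced homology of joins, together with the sphericality of $\PP(V/U,V_0/U)$ and $\PP(V,U)$ from Theorem~\ref{thm:GL building properties}. Your extra bookkeeping (dimension count, vanishing of the $\operatorname{Tor}$ terms, the edge cases with $\tilde H_{-1}(\emptyset)=\bbZ$, and naturality of both the equivalence and the K\"unneth map) just makes explicit what the paper leaves implicit.
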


\begin{proof}
This follows from Theorem~\ref{thm:join decomposition} 
using the K\"unneth Theorem 
for reduced homology of joins~\cite[eq.~2.3]{Hall},
along with the fact that both $\PP(V/U,V_0/U)$ and $\PP(V,U)$ are spherical (Theorem~\ref{thm:GL building properties}).
\end{proof}

To prove Theorem~\ref{thm:join decomposition}, we will make use of the following (probably well-known) lemma:

\begin{lem}\label{lem:monotonic}
Let $f:\pP\to \pP$ be a poset map which is weakly monotonic, i.e., $f(x)\geq x$ for all $x\in \pP$ or $f(x)\leq x$ for all $x\in \pP$.  Then $f$ induces a homotopy equivalence
\[ |\pP|\xrightarrow{\sim}|\im(f)|. \]
\end{lem}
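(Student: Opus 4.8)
\textbf{Proof plan for Lemma~\ref{lem:monotonic}.}

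The plan is to exhibit an explicit homotopy between the composite $\pP \xrightarrow{f} \im(f) \hookrightarrow \pP$ and the identity of $\pP$, and symmetrically a homotopy on $\im(f)$, all at the level of posets, so that realization gives the desired homotopy equivalence. I will treat the case $f(x)\geq x$ for all $x$; the case $f(x)\leq x$ is dual (apply the result to $f^{\mathrm{op}}$ on $\pP^{\mathrm{op}}$, using that realization is insensitive to reversing the order).

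First I would set up the standard poset-homotopy principle: if $g,h:\pP\to\mathcal{Q}$ are poset maps with $g(x)\leq h(x)$ for all $x\in\pP$, then $|g|\simeq|h|$; this is because $x\mapsto(g(x)\leq h(x))$ defines a poset map $\pP\times[1]\to\mathcal{Q}$ (where $[1]=\{0<1\}$) restricting to $g$ and $h$ on the two ends, and $|\pP\times[1]|\cong|\pP|\times[0,1]$. Now let $i:\im(f)\hookrightarrow\pP$ denote the inclusion and write $f$ also for the corestriction $\pP\to\im(f)$. For the composite $i\circ f:\pP\to\pP$ we have $(i\circ f)(x)=f(x)\geq x=\id_\pP(x)$ for all $x$, so $|i\circ f|\simeq|\id_\pP|=\id_{|\pP|}$ by the principle above. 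For the other composite, consider $f\circ i:\im(f)\to\im(f)$. Every element of $\im(f)$ has the form $f(x)$, and since $f(x)\leq f(f(x))$ (apply $f(-)\geq(-)$ to the element $f(x)$, using monotonicity of $f$), we get $y\leq (f\circ i)(y)$ for all $y\in\im(f)$; hence $|f\circ i|\simeq|\id_{\im(f)}|$ again by the principle, now applied with source $\im(f)$. Therefore $|f|:|\pP|\to|\im(f)|$ is a homotopy equivalence with homotopy inverse $|i|$.

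The only genuine point requiring care — and the step I would flag as the main obstacle, though it is mild — is verifying that $f$ actually restricts to a poset \emph{endo}map of $\im(f)$, i.e.\ that $f(y)\in\im(f)$ for $y\in\im(f)$ and that $f|_{\im(f)}$ is order-preserving; the first is immediate and the second is inherited from $f$. One should also note that $\im(f)$ is given the subposet order, and that the inequality $f(x)\leq f(f(x))$ used above is exactly where weak monotonicity (rather than mere order-preservation) of $f$ enters. Everything else is the formal machinery that a natural transformation of poset maps (equivalently, a ``homotopy'' $\pP\times[1]\to\mathcal{Q}$) induces a homotopy on realizations, which is standard.
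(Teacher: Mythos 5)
Your argument is correct and is essentially the paper's own proof: both show that $|f|$ and the realization of the inclusion $i:\im(f)\hookrightarrow\pP$ are homotopy inverse, using the standard fact that poset maps related by a pointwise inequality ($i\circ f\geq\id_\pP$ and $f\circ i\geq\id_{\im(f)}$, reversed in the other case) become homotopic on realizations. You merely spell out that standard homotopy principle (via $\pP\times[1]$) in more detail than the paper does.
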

\begin{proof}
Let $i:\im(f)\to \pP$ be the inclusion.  Then $|f|$ and $|i|$ are homotopy inverses, because
$f\circ i\geq\id_{\im(f)}$ and $i\circ f\geq\id_{\pP}$ in the first case, and the same but reversing the inequalities in the second case.
\end{proof}

\begin{proof}[Proof of Theorem~\ref{thm:join decomposition}]
The map $\phi\colon \PP(V,V_0)\to \PP(V/U,V_0/U)\ast \PP(V,U)$ is defined by
\[ \phi(W)=\begin{cases}
W/U\in \PP(V/U,V_0/U) & \textrm{if }W+U<V, \\
W\in \PP(V,U) & \textrm{if }W+U=V.
\end{cases} \]
The map $\phi$ is clearly well-defined as $W+V_0=V$ implies $W/U+V_0/U=V/U$. 
Let us verify that it is a poset map.  Suppose that $W\leq W'$.
It is clear that $\phi(W)\leq\phi(W')$ if both $W+U=V$ and $W'+U=V$, or if neither hold.  If exactly one holds, it must be that $W'+U=V$ and $W+U<V$: in this case $\phi(W)<\phi(W')$ in the join poset, by definition thereof.

As it does not appear straightforward to describe a homotopy inverse to $\phi$, we will show that $\phi$ is a homotopy equivalence using the 
Quillen fiber lemma \cite[Prop.~1.6]{QuillenHomotopy}. 
In our situation, $\mathcal{Q}=\PP(V/U,V_0/U)\ast \PP(V,U)$, so there are two cases.  If $q=W\in \PP(V/U,V_0/U)$,
then $\phi\inv(\mathcal{Q}_{\leq W})$ is contractible because it has a maximal element: $W+U$.  (In this case, $W+U<V$.)

Next suppose that $q=W\in \PP(V,U)$.  That is, assume $W+U=V$.
As a subposet of $\PP(V,V_0)$,
\[ \phi\inv(\mathcal{Q}_{\leq W})=\set{X\in \PP(V,V_0)}{X+U<V\textrm{ or }X\leq W} \]
We will show this poset is contractible by replacing it with its image under a monotonic self-map twice, which does not change the homotopy type by Lemma~\ref{lem:monotonic}, 
after which it will become clearly contractible.
Define \[ f:\phi\inv(\mathcal{Q}_{\leq W})\to\phi\inv(\mathcal{Q}_{\leq W}) \]
by 
\[ f(X)=\begin{cases}
X & \textrm{if } X\leq W. \\
X+U & \textrm{if } X+U<V\textrm{ but }X\not\leq W ,
\end{cases} \]
In checking that $f$ is a poset map, the only interesting case is when $X\leq X'$, and $X\leq W$ but $X'\not\leq W$ (the other way around being impossible).  In that case
\[ f(X)=X\leq X'+U=f(X'), \]
as needed. Now $f$ is clearly monotonic, so $\phi\inv(\mathcal{Q}_{\leq W})\simeq \im(f)$ by Lemma~\ref{lem:monotonic}. 

We have that 
\[ \im(f)=\set{Y\in \PP(V,V_0)}{U\leq Y\textrm{ or }Y\leq W}. \]
Indeed,  if $U\leq Y$ or $Y\leq W$ with $Y\in \PP(V,V_0)$, then $Y\in\phi\inv(\mathcal{Q}_{\leq W})$ and $Y=f(Y)$.
Now we define a new map
\[ g:\im(f)\to\im(f) \]
by  $g(Y)=Y\cap W$. 
Here it would be clear that $g$ is a monotone poset map.  However, this time we must verify that $g(Y)\in \PP(V,V_0)$ so that $g(Y)$ is indeed inside $\im(f)$. So we need to check that 
\[ (Y\cap W)+V_0=V. \]
In case $Y\leq W$, this is obvious (as $Y+V_0=V$), so assume instead that $U\leq Y$.  Now recall that $W+U=V$.
Then we have
\begin{align*}
(Y\cap W)+V_0 &= (Y\cap W)+U+V_0 \\
&= Y\cap(W+U)+V_0 \ = Y+V_0 \ = V.
\end{align*}
So $g$ is a well-defined monotonic map.  The image is
$\im(g)=\set{X\in\PP(V,V_0)}{X\leq W}$
which is contractible because it has a maximal element: $W$.
\end{proof}

\subsection{Vanishing of relative Steinberg coinvariants}\label{sec:vanishing of relative coinvariants}
The goal of this section is to prove a relative analogue of the vanishing of the Steinberg invariants. 
The relative Steinberg module is the module $ \St(V,V_0):=\tilde H_{\dim V_0-1}(\PP(V,V_0)) $ considered in the previous section. 
Before stating the relative vanishing result, we define the groups acting on this relative module.   

\medskip

Let $V_0\le V$ be a subspace as above, and consider  the group
\begin{align*}
\A^T(V,V_0)&=\set{g\in \GL(V)}{gV_0=V_0\textrm{ and }g\textrm{ induces }\id_{V/V_0} \textrm{ on } V/V_0}. 
\end{align*}
This group can be equivalently described as that subgroup of $\GL(V)$ of elements $g$ such that $g(v)-v\in V_0$ for all $v\in V$. And if we identify $V$ with $V/V_0\op V_0$, in
block matrix form it is the group 
$$\begin{bmatrix}I_{V/V_0}&0 \\ \ast&\GL(V_0)\end{bmatrix}.$$
This group acts on the relative complex $\PP(V,V_0)$ as it preserves $V_0$. The main result of the section is the following: 

\begin{thm}\label{thm:relcoinvars}
If $0<V_0\leq V$ are vector spaces over a field $\F\neq\F_2$, then
\[ \St(V,V_0)_{\A^T(V,V_0)}=0. \]
\end{thm}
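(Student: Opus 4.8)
The strategy is to reduce to the non-relative vanishing of Steinberg coinvariants, $\St(W)_{\GL(W)}=0$ for $\dim W\geq 2$, by exploiting the join decomposition from Corollary~\ref{cor:join-decomp-homology}. First I would pick a line $U\leq V_0$ (which is possible since $V_0\neq 0$). Corollary~\ref{cor:join-decomp-homology} then gives a natural isomorphism $\St(V,V_0)\iso\St(V/U,V_0/U)\otimes_\bbZ\St(V,U)$. The key observation is that $\St(V,U)$, the relative Steinberg module for a \emph{line} $U$, is easy to analyze: $\PP(V,U)$ is the poset of hyperplanes-and-smaller complementing $U$, i.e.\ the poset of proper nonzero subspaces $W<V$ with $W\oplus U=V$... actually $W+U=V$, meaning $W$ is a complement to $U$ or contains a complement; since $\dim U=1$ this poset has dimension $\dim U-1=0$, so $\St(V,U)=\tilde H_0(\PP(V,U))$ is the free abelian group on the set of hyperplanes $H<V$ with $H\oplus U=V$, modulo the diagonal (reduced homology of a discrete set). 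I would make this description precise and identify the $\A^T(V,U)$-action on it.

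Next I would set up the group action compatibly with the join decomposition. The point is to choose $U$ so that a large enough subgroup of $\A^T(V,V_0)$ acts on the factors. Concretely, I would argue it suffices to find, for each generator of $\St(V,V_0)$, an element $g\in\A^T(V,V_0)$ fixing that generator's class in an appropriate tensor factor while acting nontrivially — using that in the non-relative case $\St(W)_{\GL(W)}=0$ means every element of $\St(W)$ is a sum of elements of the form $(h-1)x$. So I would lift such relations through the tensor product: given $\St(V,V_0)\iso\St(V/U,V_0/U)\otimes\St(V,U)$, an element of the left is a sum of simple tensors $a\otimes b$; I would produce $g\in\A^T(V,V_0)$ acting as a chosen $\GL(U^\perp\text{-complement})$-type element on the first factor (via block-matrix elements supported appropriately) and trivially on the second, writing $a\otimes b$ as $(g-1)(\text{something})$ in the coinvariants. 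The induction would be on $\dim V_0$: the base case $\dim V_0=1$ uses the explicit $\tilde H_0$ description together with $\F\neq\F_2$ (there are at least two hyperplanes complementary to $U$, in fact at least $|\F|\geq 3$ of them, and $\A^T(V,U)=\GL(V)$-transvection-type elements permute them transitively enough to kill the coinvariants); the inductive step feeds $\St(V/U,V_0/U)$ with $\dim(V_0/U)=\dim V_0-1$ into the hypothesis after checking the relevant subgroup of $\A^T(V,V_0)$ surjects onto $\A^T(V/U,V_0/U)$.

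The \textbf{main obstacle} I anticipate is the bookkeeping in the inductive step: $\A^T(V,V_0)$ does not act on the two join factors separately (only a parabolic-type subgroup does), so I must be careful that the subgroup I use to kill coinvariants on the $\St(V/U,V_0/U)$ factor actually lifts to $\A^T(V,V_0)$ and respects the tensor decomposition — this requires choosing $U$ and a splitting $V\iso V/V_0\oplus V_0$ and then $V_0\iso (V_0/U)\oplus U$ compatibly, and verifying the block-matrix subgroup $\begin{bmatrix}I&0\\ \ast&\GL(V_0)\end{bmatrix}$ contains elements inducing any prescribed element of $\A^T(V/U,V_0/U)$ acting trivially on the $U$-direction. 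The other delicate point is the base case: one must check that the $\tilde H_0$ of the discrete poset of $U$-complements, as a $\GL(V)\supseteq\A^T(V,U)$-module, genuinely has vanishing coinvariants when $|\F|\geq 3$ — this is where the hypothesis $\F\neq\F_2$ enters, exactly as flagged in the theorem statement, since over $\F_2$ there can be too few complements for the averaging/cancellation argument to work.
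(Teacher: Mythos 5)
Your opening move (pick a line $U\le V_0$ and apply Corollary~\ref{cor:join-decomp-homology}) is exactly the paper's, but the engine you propose for exploiting it does not work. You want elements $g\in\A^T(V,V_0)$ that act as a prescribed automorphism on the factor $\St(V/U,V_0/U)$ while acting \emph{trivially} on $\St(V,U)$. Since $\PP(V,U)$ is the discrete set of hyperplanes complementary to $U$, acting trivially on its reduced homology forces $g$ to fix every such hyperplane setwise (if $gH=H'\neq H$ then $g([H]-[H'])=[H']-[gH']\neq[H]-[H']$ in the free abelian group). Writing the complements as graphs of maps $\phi\in\hom(H_0,U)$ for a fixed complement $H_0$, one checks that an element of $\GL(V)$ preserving $U$ and every complementary hyperplane must be a scalar, hence inside $\A^T(V,V_0)$ it is essentially the identity; in particular no such $g$ realizes a nontrivial element of $\A^T(V/U,V_0/U)$, and the step "write $a\otimes b$ as $(g-1)(\cdot)$" collapses. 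Note also that fixing the $U$-direction is much weaker than fixing the set of $U$-complements, so compatible splittings cannot rescue this. The repair is to run the reduction the other way, which is what the paper does: the subgroup $\A^T(V,U)\le\A^T(V,V_0)$ (contained because $U\le V_0$) acts trivially on the \emph{first} factor $\St(V/U,V_0/U)$, since its elements induce the identity on $V/U$; hence $\St(V,V_0)_{\A^T(V,U)}\iso\St(V/U,V_0/U)\otimes\St(V,U)_{\A^T(V,U)}$, and the whole theorem reduces to the line case. No induction on $\dim V_0$, no appeal to the non-relative vanishing $\St(W)_{\GL(W)}=0$, and no surjectivity bookkeeping is needed.

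Your base case also has a genuine gap: transitivity of $\A^T(V,U)$ on the ($\ge 3$) complementary hyperplanes does not kill the coinvariants of the augmentation kernel; for instance $\bbZ/3$ acting on itself by translation has coinvariants of the augmentation ideal equal to $\bbZ/3\neq 0$. Indeed the coinvariants of $\St(V,L)$ under the transvection subgroup $\F^{n-1}\le\A^T(V,L)$ are isomorphic to $\F^{n-1}\neq 0$; what kills them is the further scalar action of $\F^\times$ in $\A^T(V,L)\cong\F^{n-1}\rtimes\F^\times$, via the vanishing $(\F^{n-1})_{\F^\times}=0$, which holds precisely because one can choose $x\in\F$ with $x\neq 0,1$. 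This, not a count of complements (over $\F_2$ there are still $2^{\dim V-1}$ of them), is where $\F\neq\F_2$ enters. The paper implements the line case with the short exact sequence $0\to\St(V,L)\to\bbZ\PP(V,L)\to\bbZ\to 0$, Shapiro's lemma, and the Lyndon--Hochschild--Serre spectral sequence for $\F^{n-1}\rtimes\F^\times$; some such use of the semidirect product structure (or the direct computation of the augmentation ideal modulo its square, followed by $\F^\times$-coinvariants) is needed to make your base case correct.
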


In the case of $\F=\F_{p^r}$ a finie field, Lemma 5.2 in \cite{GKRW} gives the more general vanishing of $H_d(\A^T(V,V_0);\St(V,V_0)\otimes \F_p)$ in the range $d<r(p-1)-1$. 

\begin{proof}
Let $n=\dim V$ and assume first that $V_0=L$ is a line.  In that case, $\PP(V,L)$ is discrete (the set of complementary hyperplanes), and its reduced homology is given by a short exact sequence
\[ 0\to \St(V,L)\to \bbZ \PP(V,L)\to\bbZ\to0. \]
We have \[ \A^T(V,L)\iso\begin{bmatrix}I_{n-1}&0\\\ast&\GL_1\F\end{bmatrix}
=\F^{n-1}\semidirect \F^\times. \]
Now, $\F^{n-1}\semidirect \F^\times$ acts on $\bbZ \PP(V,L)$ via its action on the set $\PP(V,L)$ of hyperplanes complementary to $L$.  The latter action is transitive with isotropy subgroup $\F^\times$.
In other words,
\[ \bbZ \PP(V,L)\iso\bbZ\uparrow_{\F^\times}^{\F^{n-1}\semidirect \F^\times}. \]
So by Shapiro's lemma
\[ H_i(\F^{n-1}\semidirect \F^\times;\bbZ \PP(V,L))=H_i(\F^\times;\bbZ). \]
Hence, by applying $H_*(\F^{n-1}\semidirect \F^\times;-)$ to the above short exact sequence
we get a long exact sequence
\begin{align*}
\cdots \to H_1(\F^\times;\bbZ))\xrightarrow{a}
H_1(\F^{n-1}\semidirect \F^\times;\bbZ) &\xrightarrow{b}
H_0(\F^{n-1}\semidirect \F^\times;\St(V,L)) \\
&\xrightarrow{c}
H_0(\F^\times;\bbZ)\xrightarrow{d}
H_0(\F^{n-1}\semidirect \F^\times;\bbZ) \to0.
\end{align*}
Clearly $d$ is an isomorphism, so $c=0$ and $b$ is surjective.  It will suffice to show that $a$ is surjective; then $b=0$ and we can conclude $H_0(\F^{n-1}\semidirect \F^\times;\St(V,L))=0$ as desired.

To that end, consider the spectral sequence for the group extension
\[ 1\to \F^{n-1} \to \F^{n-1}\semidirect \F^\times\to \F^\times\to1. \]
It has
\begin{align*}
E^2_{01}&=H_0(\F^\times; H_1(\F^{n-1};\bbZ)) =(\F^{n-1})_{\F^\times}.
\end{align*}
Now the action of $\F^\times$ on $\F^{n-1}$ is via its $\F$--vector space structure, and we claim that for any $\F$--vector space $M$, we have 
that $M_{\F^\times}=0$.
Indeed, since $\F\neq\F_2$, there exists a scalar $x\neq0,1$.  That is: $x,(1-x)\in \F^\times$.
Then for any $y\in M_{\F^\times}$,
\[ y=x\cdot y+(1-x)\cdot y=y-y=0. \]
It now follows that projection $\F^{n-1}\semidirect \F^\times\to \F^\times$ and hence also inclusion $\F^\times\to \F^{n-1}\semidirect \F^\times$ induces an isomorphism on $H_1(-;\bbZ)$.  So the map $a$ above is indeed surjective as we needed.

We consider now the case where $\dim(V_0)>1$.  Pick a line $L<V_0$ and note that 
\[ \A^T(V,L)\leq\A^T(V,V_0). \]
Hence it suffices to show that $\St(V,V_0)_{\A^T(V,L)}=0$.  But by Corollary~\ref{cor:join-decomp-homology},
\[ \St(V,V_0)\iso \St(V/L,V_0/L)\otimes \St(V,L), \]
equivariantly with respect to $\A^T(V,L)$, whose elements preserve both $L$ and $V_0$.
Since $\A^T(V,L)$ acts trivially on $\St(V/L,V_0/L)$,
\[ \St(V,V_0)_{\A^T(V,L)}\iso \St(V/L,V_0/L)\otimes \St(V,L)_{\A^T(V,L)}. \]
However, $ \St(V,L)_{\A^T(V,L)}=0$
by the previously checked case.
\end{proof}

\section{The buildings of a formed space}\label{sec:isotropicbuilding}
Recall that a formed space $E=(E,q)$ is a vector space $E$ equipped with a form $q$. 
We will study the automorphism groups of formed spaces through their building of isotropic subspaces. 
Just as in the case of vector spaces, we will need a relative version of the building.  
We start by defining and giving the first basic properties of the buildings. In Section~\ref{sec:isoCMboth}, we will prove that the building and relative building are Cohen-Macaulay.

\smallskip

Recall from Definition~\ref{def:kerneletc} that $\R(E)$ denotes the radical of $E$, and that a subspace $U\le E$ is isotropic if $q$ vanishes on $U$.

\begin{Def}\label{def:iso buildings}
Let $E=(E,q)$ be a formed space as in Definition~\ref{def:form}.
We define its building to be the poset
\[ \PPi(E) = \set{W<E}{W\textrm{ isotropic},\ \R(E)<W} \]
ordered by inclusion.
For an isotropic subspace $U\in\PPi(E)$, we define the relative building to be
\[ \PPi(E,U) = \set{W<E}{W\textrm{ isotropic},\ \R(E)<W,\ W+U^\perp=E}. \]
\end{Def}
Note that $\PPi(E,U)$ is an upper-closed subposet of $\PPi(E)$, 
i.e.~$\PPi(E,U)\subset\PPi(E)$ and for any  $W\in \PPi(E,U)$, the upper interval $\PPi(E)_{>W}\subset \PPi(E,U)$.

Note also that the buildings $\PP(V)$ and $\PP(V,V_0)$ associated to vector spaces are not special cases of this construction.  Indeed, if $E=(E,0)$ is a formed space with trivial form, then $\R(E)=E$ and $\PPi(E)$ is empty.

\begin{lem}\label{lem:modR(E)} The buildings of isotropic subspaces  are insensitive to the radical of $E$:
\begin{align*}
\PPi(E) \cong  \PPi(E/\R(E)) \ \ \ \textrm{and}\ \ \ \PPi(E,U) \cong  \PPi(E/\R(E), U/\R(E))
\end{align*}
where $E/\R(E)$ is equipped with the unique form such that the projection $E\to E/\R(E)$ is an isometry. 
\end{lem}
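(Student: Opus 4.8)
The key observation is that the projection $\pi\colon E\to E/\R(E)$ is an isometry which induces a bijection between the set of subspaces of $E$ containing $\R(E)$ and the set of all subspaces of $E/\R(E)$, via $W\mapsto W/\R(E)$ (with inverse $\overline{W}\mapsto\pi^{-1}(\overline{W})$). The plan is to check that this bijection restricts to an isomorphism on the relevant subposets, which amounts to three compatibilities: that it matches up the isotropy conditions, the radical conditions, and (in the relative case) the transversality conditions $W+U^\perp=E$.

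First I would note that $\R(E/\R(E))=0$: since $Q_q$ and $\omega_q$ descend along $\pi$, the kernel and radical of $E/\R(E)$ are the images of $\K(E)/\R(E)$ and $\R(E)/\R(E)=0$ respectively — more carefully, one uses that $\R(E)$ is precisely the subspace killed by passing to the quotient, so $E/\R(E)$ has trivial radical. Consequently the condition ``$\R(E/\R(E))<W/\R(E)$'' is vacuous, matching the fact that on the left side ``$\R(E)<W$'' becomes, after quotienting, ``$0<W/\R(E)$'', i.e.\ $W/\R(E)$ nontrivial. Second, since $\pi$ is an isometry, $q|_W=0$ if and only if $q|_{W/\R(E)}=0$ (here one uses $\R(E)\le W$, so $W$ is isotropic iff its image is; the reverse implication uses that $\R(E)$ is isotropic and $Q_q,\omega_q$ vanish on it). Third, for the proper-inclusion condition $W<E$ this clearly corresponds to $W/\R(E)<E/\R(E)$. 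This already gives $\PPi(E)\cong\PPi(E/\R(E))$ as posets, the map being order-preserving in both directions by construction.

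For the relative statement, the remaining point is that, under this correspondence, $W+U^\perp=E$ in $E$ corresponds to $(W/\R(E))+(U/\R(E))^{\perp}=E/\R(E)$. The main thing to verify — and the step I expect to require the most care — is that orthogonal complements are compatible with the quotient, i.e.\ that $U^\perp/\R(E)=(U/\R(E))^\perp$ for $U$ containing $\R(E)$. This holds because $\R(E)\le\K(E)\le U^\perp$ (the radical is orthogonal to everything), so $U^\perp$ contains $\R(E)$ and its image is computed by the descended form $\omega_q$ on $E/\R(E)$; since $\pi$ is surjective and an isometry, $v\in U^\perp$ iff $\omega_q(v,U)=0$ iff $\omega_q(\pi v,\pi U)=0$ iff $\pi v\in(U/\R(E))^\perp$. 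Granting this, $W+U^\perp=E$ pushes forward to $(W/\R(E))+(U/\R(E))^\perp=E/\R(E)$ and, since $\pi^{-1}$ of the right-hand equation recovers the left-hand one (both sides containing $\R(E)$), the correspondence is an isomorphism of posets $\PPi(E,U)\cong\PPi(E/\R(E),U/\R(E))$. The verification that the form on $E/\R(E)$ making $\pi$ an isometry exists and is unique is exactly the defining property of the radical and can be cited; it may already be recorded in Section~\ref{sec:forms} or Appendix~\ref{app:isotropic}.
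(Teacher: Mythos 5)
Your proposal is correct and follows essentially the same route as the paper: invoke the unique induced form on $E/\R(E)$ (Lemma~\ref{lem:induced form1}), use that the projection is an isometry to match isotropic subspaces containing $\R(E)$ with isotropic subspaces of the quotient, and check that $W+U^\perp=E$ corresponds to $W/\R(E)+(U/\R(E))^\perp=E/\R(E)$, for which your observation that $U^\perp$ contains $\R(E)$ and descends to $(U/\R(E))^\perp$ is exactly the point the paper leaves implicit. No gaps.
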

\begin{proof}
The existence of a unique from on $E/\R(E)$ such that the projection $E\to E/\R(E)$ is an isometry is given by  Lemma~\ref{lem:induced form1}. As the projection is an isometry, it takes isotropic subspaces to isotropic subspaces. And if $U/\R(E)\subset E/\R(E)$ is isotropic, then so is $U+\R(E)$, as $\R(E)$ is the radical of $E$. Finally, for $\R(E)<W\le E$ isotropic, $W+U^\perp=E$ if and only if 
$W/\R(E) + (U/\R(E))^\perp=E/\R(E)$. 
\end{proof}

The following result is essentially~\cite[Prop 1]{Tits}:
\begin{lem}\label{lem:dimmax} In a formed space $(E,q)$, all maximal isotropic subspaces have the same dimension.
\end{lem}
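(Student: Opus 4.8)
The claim is a form of Witt's extension/cancellation theorem adapted to the possibly degenerate setting of $(\sigma,\eps,\Lambda)$-quadratic forms, so the plan is to reduce to the non-degenerate case and then run a dimension-counting argument. First I would invoke Lemma~\ref{lem:modR(E)} (or rather its source, the existence of the induced form on $E/\R(E)$) to reduce to the case where $\R(E)=0$: a subspace $U\le E$ is isotropic if and only if $U+\R(E)$ is, and the radical is always isotropic and contained in every maximal isotropic subspace, so maximal isotropic subspaces of $E$ correspond bijectively (preserving codimension-type data) to maximal isotropic subspaces of $E/\R(E)$ containing nothing extra; thus it suffices to prove the statement when $\R(E)=0$. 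Even then $E$ may be degenerate in the sense that $\K(E)\neq 0$; but $\K(E)$ is contained in every maximal isotropic subspace as well in characteristic $\neq 2$, and in characteristic $2$ one argues with $\R(E)=\K(E)$ directly — in any case the genuinely delicate part lives in $E/\R(E)$, whose kernel, by the counterexample remark, need not vanish, so I cannot simply assume non-degeneracy. Instead I would keep track of the radical/kernel explicitly.

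\textbf{Key steps.} After the reduction, let $U$ and $U'$ be two maximal isotropic subspaces; the goal is $\dim U=\dim U'$, and by symmetry it is enough to show $\dim U\le \dim U'$. The standard move is: if $U\not\subseteq U'^\perp$ (i.e. $\omega_q$ does not vanish between them) one produces a hyperbolic-type reduction, and if $U\subseteq U'^\perp$ one shows $U\oplus U'$ is isotropic unless they interact, contradicting maximality. Concretely I would argue by induction on $\dim E$: pick a nonzero $v\in U'$; since $U'$ is maximal isotropic and (after the reduction) $v\notin\R(E)$, there exists $w\in E$ with $\omega_q(v,w)\ne 0$, and by scaling and adjusting $w$ by a multiple of $v$ one can arrange that the span $\langle v,w\rangle$ is a hyperbolic plane $P\cong\HH$ (here I use that one can solve $Q_q(w)=0$ after subtracting a suitable multiple of $v$, using that $\omega_q(v,w)\neq 0$ lets us hit any value in $\F/\Lambda$ via equation~(\ref{equ:Qomega}) — the map $t\mapsto Q_q(w+tv)$ is affine in the appropriate sense). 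Then $E=P\oplus P^\perp$ as formed spaces (Lemma-type statements about orthogonal complements of non-degenerate subspaces, cf. the appendix Section~\ref{sec:complements}), the intersections $U\cap P^\perp$ and $U'\cap P^\perp$ are maximal isotropic in $P^\perp$ (each dropping dimension by exactly one relative to $U$, $U'$ — this uses $v\in U'$ and a short linear-algebra check that $U$ meets $P$ or $P^\perp$ in codimension one), and $\dim P^\perp<\dim E$, so the inductive hypothesis applies. Unwinding the dimension bookkeeping gives $\dim U=\dim U'$.

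\textbf{Alternative and main obstacle.} Alternatively, since the excerpt explicitly attributes this to \cite[Prop~1]{Tits}, the cleanest write-up may just be to cite Tits after performing the reduction to $E/\R(E)$; but I will assume a self-contained argument is wanted. The main obstacle is the bookkeeping in the degenerate-but-radical-free case: when $\K(E)\neq\R(E)$ is impossible (that only happens in characteristic $2$), but $E/\R(E)$ can still have nonzero kernel whose vectors $v$ satisfy $Q_q(v)\neq 0$, so the "find $w$ with $\omega_q(v,w)\neq 0$" step can fail for such $v$ — one must first dispose of the part of $U'$ lying in $\K(E)$. The fix is to note that $\K(E)\cap U'$, being isotropic and in the kernel, lies in $\R(E)=0$ after the reduction \emph{only in characteristic $\neq 2$}; in characteristic $2$, $\K(E)$ may contain anisotropic vectors, but then $U'\cap\K(E)=0$ automatically since $U'$ is isotropic and $\R(E)=\set{v\in\K(E)}{Q_q(v)=0}=0$. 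Either way $U'\cap\K(E)=0$, so every nonzero $v\in U'$ does admit a partner $w$, and the induction goes through. Thus the real work is this case analysis on the characteristic and on $\K$ versus $\R$; the hyperbolic-plane splitting and the induction are routine once it is in place.
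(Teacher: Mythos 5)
Your overall strategy (reduce modulo the radical, then induct on $\dim E$ by splitting off a hyperbolic plane) is a legitimate alternative to the paper's argument, and your handling of the kernel is right: since $U'$ is isotropic, $U'\cap\K(E)\subseteq\R(E)$, so after the reduction every nonzero $v\in U'$ admits a partner $w$ with $\omega_q(v,w)\neq 0$. But the key induction step fails as stated. If $w$ is an \emph{arbitrary} partner of $v$ (made isotropic by adding a multiple of $v$), it is simply not true that $U\cap P^\perp$ is a maximal isotropic subspace of $P^\perp$ of codimension one in $U$. Concretely, in $\HH^{\op 2}$ with hyperbolic basis $e_1,f_1,e_2,f_2$, take $U=\langle e_1,e_2\rangle$, $U'=\langle f_1,f_2\rangle$, $v=f_1$ and $w=e_1+f_2$ (isotropic, with $\omega_q(v,w)\neq0$): then $U\cap P=0=U\cap P^\perp$, so the dimension drops by two and $U\cap P^\perp=0$ is not maximal isotropic in $P^\perp$, which still contains isotropic vectors. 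So your parenthetical ``short linear-algebra check that $U$ meets $P$ or $P^\perp$ in codimension one'' is false for a general choice of $w$, and the induction does not go through.

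The gap is repairable, but it needs an idea you did not state: choose $w$ \emph{inside} $U$. Either $U'\subseteq U^\perp$, in which case $U+U'$ is isotropic and maximality forces $U'\subseteq U$, hence $U=U'$; or there exist $v\in U'$ and $w\in U$ with $\omega_q(w,v)\neq0$. In the latter case $w$ is automatically isotropic, and $U\subseteq w^\perp$, $U'\subseteq v^\perp$ (each being isotropic and containing $w$, resp.\ $v$), so $U\cap P^\perp=U\cap v^\perp$ and $U'\cap P^\perp=U'\cap w^\perp$ have codimension exactly one, and their maximality in $P^\perp$ follows by adjoining $\langle w\rangle$, resp.\ $\langle v\rangle$, and invoking maximality of $U$, resp.\ $U'$. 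With that choice your induction works. For comparison, the paper's route is much shorter and avoids the case analysis entirely: assuming $\dim A<\dim B$, it takes any linear injection $A\to B$ fixing $\R(E)$ (using $A\cap\K(E)=\R(E)=B\cap\K(E)$), notes it is trivially an isometry because both subspaces are isotropic, extends it to a bijective isometry of $E$ by Witt's Lemma \cite[Thm 3.4]{Forms}, and pulls back $B$ to get an isotropic subspace properly containing $A$, contradicting maximality; your proposal essentially re-proves the relevant piece of Witt's theorem by hand.
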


\begin{proof} Suppose that there are two maximal isotropic subspaces $A,B\subset V$ with $\dim A<\dim B$.  Both $A$ and $B$ contain $\R(E)$, since e.g.~$A+\R(E)$ is isotropic.  Since they are also isotropic, we have
\[ A\cap\K(E)=\R(E)=B\cap\K(E). \]
Choose an injection $f:A\to B$ which is the identity on $\R(E)$.
Then we have a bijective map $A\to\im(f)$ which is trivially an isometry, and restricts to the identity on $\R(E)$.  So by Witt's Lemma \cite[Thm 3.4]{Forms} it may be extended to a bijective isometry 
$\tilde{f}$ of $E$. But then $\tilde{f}\inv(B)$ is an isotropic subspace properly containing $A$, contradicting maximality.
\end{proof}

We compute the dimensions of the buildings:
\begin{lemma}\label{lem:building dimensions}
Let $(E,q)$ be a formed space, and let $U\in\PPi(E)$.  Then
both $\PPi(E)$ and $\PPi(E,U)$ are graded posets, with dimensions
\begin{align*}
\dim\PPi(E) = \g(E)-1, \ \  \ \textrm{and} \ \ \ \dim\PPi(E,U) = \g(E)-\dim U+\dim\R(E) = \g(U^\perp).
\end{align*}
\end{lemma}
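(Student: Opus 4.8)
The statement to prove is Lemma~\ref{lem:building dimensions}, computing the dimensions of $\PPi(E)$ and $\PPi(E,U)$. By Lemma~\ref{lem:modR(E)}, both buildings are isomorphic to the corresponding buildings of $E/\R(E)$, which has trivial radical; since the genus, the dimension of $U$, and the dimension of the radical all behave correctly under this quotient (the genus is unchanged, $\dim U - \dim\R(E) = \dim(U/\R(E))$, and $\dim\R(E/\R(E))=0$), it suffices to treat the case $\R(E)=0$. So I would first reduce to that case, which makes the poset $\PPi(E)$ simply the poset of all nonzero isotropic subspaces.

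The plan for the non-relative statement is to show $\PPi(E)$ is graded with all maximal chains of length $\g(E)-1$, equivalently that every maximal isotropic subspace has dimension $\g(E)$ (which holds by definition of genus since $\R(E)=0$) and that every flag of isotropic subspaces inside a maximal one can be refined to a complete flag — i.e. any isotropic subspace of dimension $d$ sits in a chain $0 < W_1 < \dots < W_d$ with $\dim W_i = i$, which is immediate since any subspace of an isotropic subspace is isotropic, and likewise extends upward to a maximal isotropic subspace by Lemma~\ref{lem:dimmax} (all maximal isotropic subspaces have the same dimension $\g(E)$). Hence maximal chains have length $\g(E)-1$, giving $\dim\PPi(E)=\g(E)-1$.

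For the relative building $\PPi(E,U)$, the key observation I would use is the identification of the interval below a vertex. One expects $\PPi(E,U)$ to be graded with maximal chains corresponding to complete flags in a maximal isotropic subspace $W$ of $U^\perp$ containing $U$ — more precisely, I would want to show $\PPi(E,U)$ is (after forgetting down to $U^\perp$, where $U$ becomes part of the radical) essentially $\PPi(U^\perp)$ shifted, so that $\dim\PPi(E,U) = \g(U^\perp) - 1 + 1$? Let me be careful: the claimed value is $\dim\PPi(E,U) = \g(U^\perp)$, which is $\dim\PPi(U^\perp) + 1$. The extra $+1$ comes because elements of $\PPi(E,U)$ are isotropic subspaces $W$ with $W + U^\perp = E$, so $W \not\subseteq U^\perp$ unless $E = U^\perp$; the top-dimensional such $W$ has dimension one more than a maximal isotropic subspace of $U^\perp$. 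I would establish this by the standard argument: a maximal chain in $\PPi(E,U)$ ends at a maximal isotropic subspace $W$ of $E$; using $W+U^\perp = E$ and Witt's lemma (Lemma~\ref{lem:dimmax} and \cite[Thm 3.4]{Forms}), one shows $\dim W = \g(E/\R(E))$ appropriately and $\dim(W \cap U^\perp) = \g(U^\perp)-\text{(correction)}$, and then show the chain from any vertex down is refinable, while the chain from any vertex up to the top is also refinable (each step adding a $1$-dimensional increment while maintaining the complement condition $W + U^\perp = E$, which is an open condition preserved under enlargement). Combining: $\dim\PPi(E,U) = \g(E) - \dim U + \dim\R(E)$, and the equality with $\g(U^\perp)$ follows from the formula relating the genus of $U^\perp$ to that of $E$ (this uses that $U$ is isotropic, so $U \subseteq U^\perp$, $\R(U^\perp) \supseteq U + \R(E)$, and $\g(U^\perp) = \g(E) - \dim U$ when $\R(E)=0$, as established in the orthogonal-complement appendix material).

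**Main obstacle.** The routine part is verifying gradedness (refinability of chains), which follows from the fact that subspaces of isotropic subspaces are isotropic together with Lemma~\ref{lem:dimmax}. The genuinely delicate point is the second equality $\g(E)-\dim U + \dim\R(E) = \g(U^\perp)$: this requires carefully controlling how the genus and the radical interact with taking orthogonal complements of an isotropic subspace, which is exactly the content of the appendix on orthogonal complements (Lemmas like \ref{lem:capperp}). I would expect to need to invoke that $\R(U^\perp) = U \oplus \R(E)$ (or at least that its dimension is $\dim U + \dim\R(E)$) and that maximal isotropic subspaces of $U^\perp$ containing $U$ have dimension $\g(E)$ — the latter again via Witt's lemma applied inside $E$ — so that $\g(U^\perp) = \g(E) - \dim U$ in the $\R(E)=0$ case, matching the stated formula after adding back $\dim\R(E)$. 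Assembling these appendix facts correctly is where care is needed; everything else is bookkeeping.
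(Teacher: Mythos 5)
Your overall route is the paper's route: reduce mod the radical, compute the dimensions of the minimal and maximal elements, get gradedness by refining chains, and then identify $\g(E)-\dim U+\dim\R(E)$ with $\g(U^\perp)$ via the radical of $U^\perp$ (Lemma~\ref{lem:radical of perp} / Proposition~\ref{prop:induced form}). The non-relative half and the final genus identity are fine. But in the relative case the one concrete piece of bookkeeping you commit to is wrong, and the piece that actually drives the count is missing. You assert that the ``$+1$'' in $\dim\PPi(E,U)=\dim\PPi(U^\perp)+1$ comes from the top, i.e.\ that a maximal $W\in\PPi(E,U)$ has dimension one more than a maximal isotropic subspace of $U^\perp$. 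That is false: an isotropic subspace of $E$ containing $U$ automatically lies in $U^\perp$, so maximal isotropic subspaces of $U^\perp$ are exactly the maximal isotropic subspaces of $E$ containing $U$, and by Lemma~\ref{lem:dimmax} both have dimension $\g(E)+\dim\R(E)$. The discrepancy is at the bottom: the minimal elements of $\PPi(E,U)$ have dimension exactly $\dim U$ (one less than the minimal elements of $\PPi(U^\perp)$, which must strictly contain $\R(U^\perp)=U$). Indeed $W+U^\perp=E$ and $\R(E)\le W\cap U^\perp$ force $\dim W\ge \operatorname{codim}U^\perp+\dim\R(E)=\dim U$ (using Lemma~\ref{lem:dimension of perp} and $U\cap\K(E)=\R(E)$), and the minimal elements are precisely the $L\op\R(E)$ with $L$ an isotropic linear complement to $U^\perp$. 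Your sketch never pins this down (the unresolved ``$\dim(W\cap U^\perp)=\g(U^\perp)-(\text{correction})$'' is where it should have appeared), yet it is exactly this lower bound, together with $\dim(\text{max element})=\g(E)+\dim\R(E)$, that yields $\dim\PPi(E,U)=\g(E)+\dim\R(E)-\dim U$.

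A second, related gap: the existence of elements of dimension $\dim U$ (and hence nonemptiness of $\PPi(E,U)$, without which the dimension formula is vacuously wrong) is not a consequence of chain refinement; it needs the existence of an \emph{isotropic} complement to $U^\perp$, which is Lemma~\ref{lem:isotropic complement} and is the ingredient the paper invokes at this point. (Downward refinement inside a given $W$ is easy -- any linear complement of $W\cap U^\perp$ in $W$ is isotropic -- but you cannot get off the ground without some element to start from.) Finally, a small slip: $\R(U^\perp)=U+\R(E)=U$ since $\R(E)\le U$, so its dimension is $\dim U$, not $\dim U+\dim\R(E)$; this is harmless after your reduction to $\R(E)=0$ but wrong as stated in general. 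With these points repaired your argument becomes the paper's proof.
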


\begin{proof}
For $\PPi(E)$, this follows from the definitions and Lemma~\ref{lem:dimmax}.
For $\PPi(E,U)$, observe that maximal elements of $\PPi(E,U)$ are in particular maximal isotropic subspaces of $E$, having dimension $\g(E)+\dim\R(E)$.
By Lemma~\ref{lem:isotropic complement}, 
the minimal elements have the form $L\op\R(E)$ where $E=L\op U^\perp$.  By Lemma~\ref{lem:dimension of perp} and using that $\R(E)\leq U$, 
we have 
$$\dim(U^\perp)=\dim(E)-\dim(U)+\dim(\R(E))$$
So the minimal elements have dimension
$\dim U$, and 
\[ \dim\PPi(E,U) = \g(E)+\dim\R(E)-\dim U. \]
The last equality in the statement is given by Proposition~\ref{prop:induced form}. 
\end{proof}

The following result shows that the lower intervals of the buildings  and relative buildings of isotropic subspaces are buildings and relative buildings of vector spaces, and that the upper intervals are isotropic buildings of smaller formed spaces. 

\begin{lemma}\label{lem:intervals}
Let $(E,q)$ be a formed space, and let $W,U\in \PPi(E)$, and $W'\in\PPi(E,U)$. Then
$$\begin{array}{l|l}
(1) \ \PPi(E)_{<W}\cong \PP(W/\R(E))\ &\  (3)\ \PPi(E,U)_{<W'}\cong \PP(W'/\R(E),(U^\perp\cap W')/\R(E))\\
(2) \ \PPi(E)_{>W}\cong \PPi(W^\perp)\  &\  (4) \ \PPi(E,U)_{>W'}\cong \PPi((W')^\perp).
\end{array}$$
Moreover
$$
\rank(W) = \dim W-\dim\R(E)-1 \ \ \ \textrm{and} \ \ \ \rank(W') = \dim(W'\cap U^\perp)-\dim\R(E) = \dim W'-\dim U.
$$
\end{lemma}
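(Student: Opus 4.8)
The plan is to verify the four poset isomorphisms and the rank formula directly, reducing everything to the vector-space statements of Theorem~\ref{thm:GL building properties} via the orthogonal-complement machinery of Appendix~\ref{app:isotropic}. By Lemma~\ref{lem:modR(E)} we may, after quotienting by $\R(E)$, assume $\R(E)=0$ throughout; this reduces the bookkeeping to the non-degenerate case and turns the claimed rank formula into $\rank(W)=\dim W-1$ and $\rank(W')=\dim(W'\cap U^\perp)=\dim W'-\dim U$.

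For (1), an element of $\PPi(E)_{<W}$ is an isotropic subspace strictly between $\R(E)$ and $W$; since $W$ is itself isotropic, \emph{every} subspace with $\R(E)<X<W$ is automatically isotropic, so $\PPi(E)_{<W}=\PP(W/\R(E))$ by definition of the latter. For (2), if $W<X$ is isotropic then $X\le W^\perp$ (isotropy forces $\omega_q$ to vanish on $X$, in particular between $X$ and $W$), and conversely any isotropic $X$ with $W<X\le W^\perp$ lies in $\PPi(E)_{>W}$; moreover $\R(W^\perp)=\R(E)$ since $\R(E)\le W\le W^\perp$ by Appendix~\ref{app:isotropic} (the radical of $W^\perp$ is computed using Lemma~\ref{lem:capperp}-type facts), so $\PPi(E)_{>W}\cong\PPi(W^\perp)$ as posets, the isomorphism being the identity on subspaces. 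For (4), the relative condition $X+U^\perp=E$ is inherited: if $W'<X$ with $X$ isotropic then $X\le (W')^\perp$, and $X+U^\perp=E$ translates to the corresponding relative condition inside the formed space $(W')^\perp$ — here I would use that $U^\perp\cap (W')^\perp = (U+W')^\perp$ together with $W'\le (W')^\perp$ to identify $\PPi(E,U)_{>W'}$ with $\PPi((W')^\perp)$, noting that the radical of $(W')^\perp$ again agrees with $\R(E)$. For (3), an element $X$ of $\PPi(E,U)_{<W'}$ is a subspace with $\R(E)<X<W'$, automatically isotropic as before, subject to $X+U^\perp=E$; the argument of Theorem~\ref{thm:GL building properties} showing $\PP(V,V_0)_{<W'}\cong\PP(W',W'\cap V_0)$ applies verbatim with $V=E$ and $V_0=U^\perp$ once we restrict to subspaces of $W'$, yielding $\PPi(E,U)_{<W'}\cong\PP(W'/\R(E),(W'\cap U^\perp)/\R(E))$.

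For the rank formula, by definition $\rank(W)$ is the length of a maximal chain in $\PPi(E)_{\le W}$, which by (1) equals $1+\dim\PP(W/\R(E))=1+(\dim W-\dim\R(E)-2)+1=\dim W-\dim\R(E)-1$ using Theorem~\ref{thm:GL building properties}. For $\rank(W')$, a maximal chain below $W'$ in $\PPi(E,U)$ has the form of a minimal element followed by a maximal chain in $\PPi(E,U)_{<W'}$; by Lemma~\ref{lem:building dimensions} and Lemma~\ref{lem:isotropic complement} the minimal elements have dimension $\dim U$, and (3) together with Theorem~\ref{thm:GL building properties} gives that $\PPi(E,U)_{<W'}\cong\PP(W'/\R(E),(W'\cap U^\perp)/\R(E))$ has dimension $\dim((W'\cap U^\perp)/\R(E))-1$. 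Adding $1$ for the step up from the minimal element gives $\rank(W')=\dim(W'\cap U^\perp)-\dim\R(E)$, and the identity $\dim W'-\dim U$ follows from $\dim(U^\perp)=\dim E-\dim U+\dim\R(E)$ (Lemma~\ref{lem:dimension of perp}) applied inside $W'$, since $W'+U^\perp=E$ forces $\dim(W'\cap U^\perp)=\dim W'-\dim U+\dim\R(E)$.

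The main obstacle is the careful handling of the radical in the relative upper interval (4): one must check both that $X$ isotropic with $X>W'$ really does land in $(W')^\perp$ and, more delicately, that the relative condition $X+U^\perp=E$ is equivalent to the relative condition defining $\PPi((W')^\perp, ?)$ for the correct $U'\le (W')^\perp$ — in fact the cleanest route is to observe that since $W'\in\PPi(E,U)$ already satisfies $W'+U^\perp=E$, for $X\ge W'$ the condition $X+U^\perp=E$ is \emph{automatic}, so $\PPi(E,U)_{>W'}$ is simply all isotropic $X$ with $W'<X$, i.e.\ all of $\PPi((W')^\perp)$ with no relative constraint, matching the stated $\PPi((W')^\perp)$. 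Once this simplification is spotted, the remaining verifications are the same routine linear algebra with orthogonal complements used throughout Appendix~\ref{app:isotropic} and the proof of Theorem~\ref{thm:GL building properties}.
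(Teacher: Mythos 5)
Your route is in outline the same as the paper's: the isomorphisms are induced by the identity (or the quotient by $\R(E)$) on subspaces, (1) and (3) reduce to the general linear statements via the equivalence $X+U^\perp=E\Leftrightarrow X+(U^\perp\cap W')=W'$ for $X<W'$, (4) reduces to (2) because the condition $X+U^\perp=E$ is automatic for $X>W'$ (the paper phrases this as $\PPi(E,U)$ being upper-closed in $\PPi(E)$), and the ranks follow from the lower intervals together with Lemma~\ref{lem:dimension of perp}. However, your justification of (2) --- and hence of (4) --- contains a genuine error: you assert that $\R(W^\perp)=\R(E)$. This is false for every $W\in\PPi(E)$: by Lemma~\ref{lem:radical of perp}, $\R(W^\perp)=W+\R(E)=W$ (so after your reduction to $\R(E)=0$ it equals $W\neq 0$). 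This is not a harmless slip, because the radical of $W^\perp$ is exactly what statement (2) turns on: by definition $\PPi(W^\perp)$ consists of the isotropic subspaces of $W^\perp$ strictly containing $\R(W^\perp)$, and it is the equality $\R(W^\perp)=W+\R(E)$ that makes ``contains the radical of $W^\perp$'' the same as ``strictly contains $W$'', so that the identity map identifies $\PPi(E)_{>W}$ with $\PPi(W^\perp)$. Under your claimed radical, $\PPi(W^\perp)$ would also contain isotropic subspaces of $W^\perp$ that do not contain $W$ (e.g.\ any proper nonzero subspace of $W$ when $\dim W\ge 2$), so the identity map from $\PPi(E)_{>W}$ would be injective but not surjective and the isomorphism would fail as you justify it. The same correction is needed for $\R((W')^\perp)$ in (4).

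The repair is exactly what the paper does: invoke Lemma~\ref{lem:radical of perp}, whose proof uses $\K(Z^\perp)=Z^\perp\cap Z^{\perp\perp}=Z+\K(E)$ via Lemma~\ref{lem:perpperp}, not merely Lemma~\ref{lem:capperp}-type identities. Two smaller remarks: reducing to $\R(E)=0$ by Lemma~\ref{lem:modR(E)} does not put you in the non-degenerate case ($\K(E)$ may still be nonzero in characteristic $2$), though you never actually use non-degeneracy; and if you do reduce, you should also explain why the right-hand sides are unchanged, e.g.\ that the perp of $W/\R(E)$ in $E/\R(E)$ is $W^\perp/\R(E)$ and that $\PPi(W^\perp)\cong\PPi(W^\perp/\R(E))$ --- bookkeeping the paper avoids by arguing directly in $E$. (Also, your displayed arithmetic for $\rank(W)$ has a stray $+1$ in the middle expression, though the final value $\dim W-\dim\R(E)-1$ is correct.) Parts (1), (3) and the rank computations otherwise agree with the paper's proof.
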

\begin{proof}
Statement (1) follows from identifying the right hand side with the set of proper subspaces of $W$ properly containing $\R(E)$.
For (2), $\PPi(W^\perp)$ is the set of isotropic subspaces of $W^\perp$ properly containing $W+\R(E)$ (by Lemma~\ref{lem:radical of perp}), which gives an inclusion of the right hand side into the left hand side in (2).  This inclusion is surjective because any isotropic subspace containing $W$ lies in $W^\perp$, which proves (2).
Statement (3) follows if we can check that for $X<W'$, 
\[ X+U^\perp=E \ \ \ \textrm{if and only if}\ \ \  X+(U^\perp\cap W')=W'. \]
One direction holds because 
$ (X+U^\perp)\cap W'=X+(U^\perp\cap W')$, 
and the other holds since $W'+U^\perp=E$.
Finally (4) follows from (2), since $\PPi(E,U)\subset\PPi(E)$ is upper-closed.

The ranks computations follow from the lower intervals computations and their earlier computation of the dimensions of the lower intervals (Theorem~\ref{thm:GL building properties}), 
using Lemma~\ref{lem:dimension of perp} to compute $\dim(W'\cap U^\perp)$.
\end{proof}

\subsection{Cohen-Macaulay property}\label{sec:isoCMboth}
In this section, we prove that the building and relative building of a formed space are Cohen-Macaulay.
The arguments presented here, most particularly in the relative case,  are adapted from Vogtmann~\cite[Sec~1]{Vogtmann}.   (Vogtmann acknowledges Igusa for some parts of the argument, so our acknowledgements go to him too, by transitivity.) 
The goal of the paper \cite{Vogtmann} is to prove homological
stability in the case of non-degenerate orthogonal groups over a field
of characteristic 0, using the same buildings as those considered
here. However in the process of proving that these buildings are
Cohen-Macaulay, Vogtmann also uses buildings associated to certain
degenerate formed space. The paper \cite{Vogtmann} does not use the
language of formed spaces, and sometimes uses coordinates that are
specific to the example considered. Also a different convention is used
for the interaction with the radical in the degenerate case. But,
modulo this, the argument presented here is a rather direct generalization of the one given in \cite{Vogtmann}.\footnote{Vogtmann only obtains a slope 3 stability for the groups $\OO_{n,n}(\F)$ in \cite{Vogtmann}. The reason for this is that she does not use the optimal spectral sequence argument.}

First we handle the posets $\PPi(E)$. 

\begin{thm}\label{thm:PE is C-M}  
Let $E=(E,q)$ be a formed space.
Then $\PPi(E)$ is Cohen-Macaulay. 
\end{thm}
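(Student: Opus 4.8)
The plan is to prove that $\PPi(E)$ is Cohen-Macaulay by induction on $\dim E$, reducing first to the non-degenerate case via Lemma~\ref{lem:modR(E)} (which gives $\PPi(E)\cong \PPi(E/\R(E))$), so we may assume $\R(E)=0$. By Lemma~\ref{lem:intervals}, the lower interval $\PPi(E)_{<W}$ is the general linear building $\PP(W)$, which is Cohen-Macaulay by the Solomon--Tits theorem (Theorem~\ref{thm:GL building properties}), and the upper interval $\PPi(E)_{>W}$ is $\PPi(W^\perp)$, which is a formed space of strictly smaller dimension (since $W\neq 0$ is isotropic, $W\not\subset W^\perp$ forces... actually $W\subset W^\perp$ but $\dim W^\perp<\dim E$ when $E$ is non-degenerate and $W\neq 0$), hence Cohen-Macaulay by induction. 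So the only thing left to prove is that $\PPi(E)$ itself is spherical, i.e.~$(\g(E)-2)$-connected, since the intervals between two genuine elements are handled by the interval computations combined with the two cases just described.

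The core of the argument is therefore the sphericity statement, and this is where I would follow Vogtmann~\cite[Sec~1]{Vogtmann} closely. The strategy is a "link-of-a-vertex" / bad-simplices argument: one fixes a suitable isotropic line $L$ (or a hyperbolic pair), considers the star of $L$ and the complementary part of the building, and uses a Mayer--Vietoris or a nerve-type argument to build up connectivity. More concretely, I expect to filter $\PPi(E)$ by distance-from-$L$ data and show that adding the "bad" subspaces (those not comparable to $L$, or not contained in $L^\perp$) does not change connectivity below the critical dimension, because the relevant links are joins of smaller buildings of both types — general linear buildings $\PP(\cdot)$ and isotropic buildings $\PPi(\cdot)$ of smaller formed spaces — whose connectivity we control by induction and by the Solomon--Tits theorem. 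The join appears because a typical link splits as "subspaces inside $W$" joined with "isotropic subspaces of $W^\perp$ containing $W$", matching the interval decomposition of Lemma~\ref{lem:intervals}. Witt's Lemma (\cite[Thm~3.4]{Forms}) and the fact that all maximal isotropic subspaces have the same dimension (Lemma~\ref{lem:dimmax}) are needed to guarantee that these links have the expected dimension, so that the connectivity bounds line up.

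The main obstacle, as in Vogtmann's original argument, is the bookkeeping in the bad-simplices / Mayer--Vietoris step: one must carefully identify the links of the various strata, verify that each is highly enough connected (which requires the inductive hypothesis applied to formed spaces $W^\perp$ of possibly still-degenerate type, hence the reduction via $\R$ must be interleaved with the induction), and check that the dimension count is exactly right so that the connectivity estimates combine to give $(\g(E)-2)$-connectivity and not something weaker. A secondary subtlety is the low-dimensional base cases (e.g.~$\g(E)\le 2$), where "spherical" means non-empty or connected and one must check this by hand, using that $\F\neq\F_2$ is \emph{not} needed here — unlike in the relative case — so genuinely any field works. Once sphericity is established, feeding it into the interval analysis via Lemma~\ref{lem:intervals} and induction completes the Cohen-Macaulay claim.
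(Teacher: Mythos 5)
Your reduction is exactly the paper's: pass to $\R(E)=0$ via Lemma~\ref{lem:modR(E)}, note by Lemma~\ref{lem:intervals} that lower intervals are general linear buildings $\PP(W)$ (Cohen--Macaulay by Theorem~\ref{thm:GL building properties}) and upper intervals are $\PPi(W^\perp)$ with strictly smaller genus/dimension (Cohen--Macaulay by induction), so everything hinges on $\PPi(E)$ itself being $(\g(E)-2)$-connected. But that sphericity step is precisely where your proposal stops being a proof: you describe a ``bad-simplices / distance-from-$L$ filtration with Mayer--Vietoris'' plan, do not specify the filtration or compute any links, and explicitly flag the bookkeeping and connectivity verifications as the unresolved obstacle. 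Observing that links decompose as joins matching the interval computation of Lemma~\ref{lem:intervals} does not by itself produce a connectivity bound for the whole poset; one needs a concrete covering or a concrete contractible piece with controlled complement, and that ingredient is missing.

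The paper's actual mechanism for sphericity (inducting on the genus $\g(E)$, not on $\dim E$) is simpler and different from what you sketch: fix a nonzero isotropic vector $u$ and consider the subposet $\pP=\{W\in\PPi(E): W\cap u^\perp\neq 0\}$. This $\pP$ is contractible via the zigzag of weakly monotone maps $W\geq W\cap u^\perp\leq (W\cap u^\perp)+\langle u\rangle\geq\langle u\rangle$ (Lemma~\ref{lem:monotonic}), and since $u^\perp$ is a hyperplane, every element of dimension at least $2$ lies in $\pP$, so $\PPi(E)\setminus\pP$ is a discrete set of lines. Lemma~\ref{lem:discrete complement} then gives $|\PPi(E)|/|\pP|\simeq\bigvee_{L}\Sigma|\PPi(E)_{>L}|=\bigvee_L\Sigma|\PPi(L^\perp)|$, and since $\g(L^\perp)=\g(E)-1$ the induction hypothesis makes each wedge summand a wedge of $(\g(E)-1)$-spheres; contractibility of $\pP$ finishes the argument. (The Mayer--Vietoris-with-a-covering style argument you gesture at is closer to what the paper needs for the \emph{relative} posets $\PPi(E,U)$ with $\dim(U/\R(E))=1$.) One small side correction: the hypothesis $\F\neq\F_2$ is not used in the relative Cohen--Macaulay statement either (Theorem~\ref{thm:PEU is CM} holds over any field); it only enters the Steinberg coinvariant vanishing and the stability argument.
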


\begin{proof}
We will prove the theorem by induction on the genus $\g(E)$, starting from $\g(E)=0$, in  which case the result trivially holds.  
So let $E=(E,q)$ be a formed space of genus $\g(E)\ge 1$. By Lemma~\ref{lem:modR(E)}, we may assume that $\R(E)=0$. 

The lower and upper intervals
\begin{align*}
\PPi(E)_{>W} \cong \PPi(W^\perp) \ \ \ \textrm{and}\ \ \ \PPi(E)_{<W} \cong \PP(W/\R(W))
\end{align*}
are already known to be Cohen-Macaulay by Theorem~\ref{thm:GL building properties} and by the induction hypothesis (as $W>0$, and $\g(W^\perp)=\g(E)-\dim W$ by Proposition~\ref{prop:induced form}).
So it suffices to check that $\PPi(E)$ is spherical (of dimension $\g(E)-1$).
Fix a nonzero isotropic vector $u\in E$, which exists by our assumption on the genus.  
Consider the subposet
\[ \pP=\set{W\in\PPi(E)}{W\cap u^\perp>0}. \]
Its realization is contractible, since there is a homotopy between the identity map of $\pP$ and the constant map at $\langle u\rangle$, given via monotone maps by
\[ W \geq W\cap u^\perp \leq W\cap u^\perp+\langle u\rangle \geq \langle u \rangle \]
(using Lemma~\ref{lem:monotonic} for each inequality). 
On the other hand, every $W\in\PPi(E)$ of dimension at least 2 is contained in $\pP$, since $u^\perp$ is a hyperplane in $E$.  Hence the complement $\PPi(E)-\pP$ is discrete, consisting only of minimal elements.
Therefore, by Lemma~\ref{lem:discrete complement}, there is a homotopy equivalence
\[ \abs{\PPi(E)}/\abs{\pP} \simeq \bigvee_{L\in \PPi(E)-P} \Sigma\abs{\PPi(E)_{>L}} \]
Since $\PPi(E)_{>L} = \PPi(L^\perp)$ 
and
$\g(L^\perp) = \g(E)-1$ 
(by Proposition~\ref{prop:induced form}),
we conclude using the induction hypothesis
that the quotient $\abs{\PPi(E)}/\abs{\pP}$
is homotopy-equivalent to a wedge of spheres of dimension $\g(E)-1$.  This proves the theorem, since $\abs{\pP}$ is contractible.
\end{proof}

We now turn to the case of the posets $\PPi(E,U)$.

\begin{thm}\label{thm:PEU is CM}   
For all formed spaces $E$ and all $U\in\PPi(E)$, the poset $\PPi(E,U)$ is Cohen-Macaulay.
\end{thm}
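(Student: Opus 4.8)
The strategy is to mimic the argument for $\PPi(E)$ in Theorem~\ref{thm:PE is C-M}, proceeding by induction on the dimension of $\PPi(E,U)$, which by Lemma~\ref{lem:building dimensions} equals $\g(U^\perp)$. By Lemma~\ref{lem:modR(E)} we may first reduce to the case $\R(E)=0$, so that $U$ is simply a nonzero isotropic subspace of $E$ and $\PPi(E,U)=\{W<E \mid W \text{ isotropic},\ W+U^\perp=E\}$. The base case $\g(U^\perp)=0$ is trivial (the poset is empty or a point). For the inductive step, we must check that all intervals are spherical of the correct dimension and that the whole poset $\PPi(E,U)$ is spherical.

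\textbf{Intervals.} The lower and upper intervals are handled by Lemma~\ref{lem:intervals}: we have $\PPi(E,U)_{<W'}\cong \PP(W'/\R(E),(U^\perp\cap W')/\R(E))$, which is Cohen-Macaulay by Theorem~\ref{thm:GL building properties}, and $\PPi(E,U)_{>W'}\cong \PPi((W')^\perp)$, which is Cohen-Macaulay by Theorem~\ref{thm:PE is C-M}. (One should note that an upper interval in $\PPi(E,U)$ is just an upper interval in $\PPi(E)$, since $\PPi(E,U)$ is upper-closed in $\PPi(E)$.) So the real content is sphericity of $\PPi(E,U)$ itself, of dimension $\g(U^\perp)-1$.

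\textbf{Sphericity of the whole poset.} Here is where I expect the main difficulty, and where Vogtmann's relative argument must be adapted. The idea is to pick a suitable auxiliary vector or subspace and run a Quillen-style poset surgery argument as in Theorem~\ref{thm:PE is C-M}. Concretely, one wants to choose a nonzero isotropic vector $u$ (lying in $U$, or chosen relative to $U$ in a compatible way), consider the subposet $\pP=\{W\in\PPi(E,U)\mid W\cap u^\perp > \text{(something)}\}$ or an analogous "bad set'', show its realization is contractible by a zig-zag of monotone maps (Lemma~\ref{lem:monotonic}) contracting toward $\langle u\rangle$ plus whatever is forced, and then show the complement $\PPi(E,U)-\pP$ consists only of minimal elements, so that Lemma~\ref{lem:discrete complement} gives $|\PPi(E,U)|/|\pP| \simeq \bigvee \Sigma|\PPi(E,U)_{>L}|$ over the minimal $L$'s. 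Since each $\PPi(E,U)_{>L}=\PPi(L^\perp)$ (or its relative analogue) is spherical of dimension $\g(U^\perp)-2$ by the induction hypothesis together with Proposition~\ref{prop:induced form}, and $|\pP|$ is contractible, the quotient is a wedge of $(\g(U^\perp)-1)$-spheres, forcing $\PPi(E,U)$ itself to be spherical.

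\textbf{The main obstacle.} The delicate point is the relative condition $W+U^\perp=E$: when one modifies $W$ by intersecting with $u^\perp$ or adding $\langle u\rangle$, one must ensure the resulting subspace still satisfies $W+U^\perp=E$ and is still isotropic, which constrains how $u$ may be chosen (it should presumably be chosen inside $U$, or so that $u\in U^\perp$ appropriately, to keep the monotone maps landing inside $\PPi(E,U)$). Getting a choice of $u$ (or of the auxiliary subspace) that makes \emph{both} the contraction of $\pP$ \emph{and} the identification of the complement as minimal elements work simultaneously — while keeping the relative defining condition intact at every stage — is the crux; I expect this requires the same care with orthogonal complements (Lemmas from Appendix~\ref{app:isotropic}, e.g.~\ref{lem:capperp}, \ref{lem:dimension of perp}, \ref{lem:isotropic complement}) that Vogtmann uses, possibly handling separately the cases $\dim U=1$ and $\dim U>1$, or reducing to $\dim U=1$ via a join decomposition as in the $\GL$ case.
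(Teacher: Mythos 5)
Your reduction to $\R(E)=0$ and your treatment of the intervals (via Lemma~\ref{lem:intervals}, Theorem~\ref{thm:GL building properties} and Theorem~\ref{thm:PE is C-M}, using that $\PPi(E,U)$ is upper-closed in $\PPi(E)$) are fine, but the heart of the theorem --- sphericity of $\PPi(E,U)$ itself --- is not actually proved in your proposal: you describe a hoped-for surgery argument and then explicitly flag its crucial step as unresolved. Worse, the specific surgery you gesture at does not survive the relative condition. If $u\in U$ then $U\le U^\perp\le u^\perp$, so $\langle u\rangle+U^\perp\le U^\perp<E$, i.e.\ $\langle u\rangle$ (even after adding $\R(E)$) is never an element of $\PPi(E,U)$; and for any $W\in\PPi(E,U)$ the modular law gives $(W\cap u^\perp)+U^\perp=(W+U^\perp)\cap u^\perp=u^\perp\neq E$, so the very first move of the zig-zag $W\ \ge\ W\cap u^\perp\ \le\ W\cap u^\perp+\langle u\rangle\ \ge\ \langle u\rangle$ already leaves the poset. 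So a direct mimicry of the absolute argument, with a single isotropic vector and the ``bad set'' $\pP$, cannot work as stated; something genuinely new is needed, and your proposal does not supply it.

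For comparison, the paper's proof runs an induction on $\dim E$ split into two cases according to $\dim(U/\R(E))$. When $\dim(U/\R(E))>1$ it chooses $L<U$ of codimension one, introduces the auxiliary poset $\PPi'(L^\perp,U)$ of isotropic subspaces \emph{disjoint} from the radical, proves that poset Cohen--Macaulay by comparison with a general-linear building of Vogtmann, and then uses a rank-preserving map $j\colon\PPi(E,U)\to\PPi'(L^\perp,U)$, $W\mapsto W\cap L^\perp$, together with Quillen's poset-fiber criterion, identifying the fibers $j\inv\bigl(\PPi'(L^\perp,U)_{\geq Y}\bigr)$ with smaller relative buildings $\PPi(E',U')$ covered by the induction. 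When $\dim(U/\R(E))=1$ it fixes $u$ spanning $U$ mod $\R(E)$ and an isotropic $v\in U^\perp-U$, covers $\PPi(E,U)$ by the subposets $\pP_\lambda=\{W:(W\cap v_\lambda^\perp)+U^\perp=E\}$ for $v_\lambda=v-\lambda u$, shows $\pP_\lambda\simeq\PPi(v_\lambda^\perp,U+\langle v_\lambda\rangle)$ and $\pP_\lambda\cap\pP_\mu=\PPi(E,U+\langle v\rangle)$ (the latter handled by the first case), proves a surjectivity statement on homology and $\pi_1$ by constructing explicit isometries $\phi(x)=x-\omega(c,x)v_\lambda$, and concludes by Mayer--Vietoris, van Kampen and a transfinite induction over subsets of $\F$. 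None of these mechanisms appears in your outline, so the gap is not a matter of routine care with orthogonal complements but of missing the two key constructions.
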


We will prove the theorem by induction on $\dim E$ and we make two different arguments depending on whether $\dim U/\R(E)$ is one or greater than one.  We treat the latter case first.

\subsubsection{Case $\PPi(E,U)$ with $\dim(U/\R(E))>1$} 
\begin{lemma}[Induction step 1]\label{lem:C-M for big U}
Suppose that $\PPi(E,U)$ is Cohen-Macaulay for all formed spaces $E$ with dimension less than $d$ and for all $U\in\PPi(E)$.  Then $\PPi(E,U)$ is Cohen-Macaulay for all formed spaces $E$ with dimension $d$ and for all $U\in\PPi(E)$ with $\dim(U/\R(E))>1$.
\end{lemma}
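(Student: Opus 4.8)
The plan is to adapt the deformation argument from the proof of Theorem~\ref{thm:PE is C-M}, using the hypothesis $\dim(U/\R(E))>1$ precisely to keep the deformation inside the \emph{relative} building. First I would reduce to the case $\R(E)=0$: if $\R(E)\neq 0$ then Lemma~\ref{lem:modR(E)} gives $\PPi(E,U)\cong\PPi(E/\R(E),U/\R(E))$ with $\dim(E/\R(E))<d$, so the hypothesis of the lemma applies directly. Assuming $\R(E)=0$, the poset $\PPi(E,U)$ has dimension $\g(E)-\dim U$ by Lemma~\ref{lem:building dimensions}, and by Lemma~\ref{lem:intervals} its lower intervals $\PPi(E,U)_{<W}\cong\PP(W,U^\perp\cap W)$ are relative buildings of vector spaces, Cohen-Macaulay by Theorem~\ref{thm:GL building properties}, while its upper intervals $\PPi(E,U)_{>W}\cong\PPi(W^\perp)$ are isotropic buildings of formed spaces of dimension $\dim W^\perp=\dim E-\dim W<d$, Cohen-Macaulay by Theorem~\ref{thm:PE is C-M}. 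Hence the whole problem reduces to showing that $\PPi(E,U)$ is spherical, i.e.\ $(\g(E)-\dim U-1)$-connected.

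For that, I would fix a minimal element $M_0\in\PPi(E,U)$ — one exists since the poset is nonempty (Lemma~\ref{lem:isotropic complement}) — so that $M_0$ is isotropic with $\dim M_0=\dim U$ and $M_0\op U^\perp=E$, and consider the subposet
\[ \pP=\set{W\in\PPi(E,U)}{W\cap M_0^\perp\in\PPi(E,U)} \]
(the precise cut-out condition may need adjusting, combining the two subspaces $M_0^\perp$ and $U^\perp$). One checks that $\pP$ is contractible, via the chain of monotone self-maps
\[ W\ \geq\ W\cap M_0^\perp\ \leq\ (W\cap M_0^\perp)+M_0\ \geq\ M_0, \]
each a homotopy equivalence by Lemma~\ref{lem:monotonic}, so that $\abs{\pP}$ retracts onto the contractible poset $\{W\in\PPi(E,U)\mid W\supseteq M_0\}$. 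One then checks that the complement $\PPi(E,U)\setminus\pP$ is discrete, consisting only of minimal elements $M$ (the failure of the relative condition for $W\cap M_0^\perp$ forcing $\dim W=\dim U$). Then Lemma~\ref{lem:discrete complement} gives
\[ \abs{\PPi(E,U)}\big/\abs{\pP}\ \simeq\ \bigvee_{M}\Sigma\abs{\PPi(E,U)_{>M}}\ =\ \bigvee_{M}\Sigma\abs{\PPi(M^\perp)}, \]
and each $\PPi(M^\perp)$ is spherical of dimension $\g(M^\perp)-1=\g(E)-\dim U-1$ by Theorem~\ref{thm:PE is C-M} and Proposition~\ref{prop:induced form}. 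So the right-hand side is $(\g(E)-\dim U-1)$-connected, and since $\abs{\pP}$ is contractible, $\abs{\PPi(E,U)}$ is $(\g(E)-\dim U-1)$-connected as well, which is the desired sphericity.

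The main obstacle is the precise choice of the good subposet $\pP$: the obvious candidate maps — $W\mapsto W\cap U^\perp$, which never satisfies the relative condition, or $W\mapsto W\cap M_0^\perp$ on its own — tend to leave $\PPi(E,U)$, so one has to engineer the coning subspace together with the defining condition so that simultaneously the zig-zag above stays inside $\pP$ and the complement collapses onto the minimal elements. This is exactly the step that Vogtmann carries out in a coordinatised special case in \cite[Sec 1]{Vogtmann}, and generalising it is where the real work lies, together with some bookkeeping about $\R$ versus $\K$ when $\car\F=2$ and the orthogonal-complement dimension identities of Appendix~\ref{app:isotropic} (Lemmas~\ref{lem:dimension of perp}, \ref{lem:radical of perp} and Proposition~\ref{prop:induced form}).
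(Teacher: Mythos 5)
Your outline reduces the lemma to sphericity of $\PPi(E,U)$ (the interval analysis is fine) and then hopes to mimic the cone/discrete-complement trick from Theorem~\ref{thm:PE is C-M}, but the step you yourself flag as ``the real work'' is exactly where the argument breaks, and the specific candidate you propose does not work. Take $E=\HH^{\op 3}$ (symplectic, say) with hyperbolic basis $e_1,f_1,e_2,f_2,e_3,f_3$, $U=\langle e_1,e_2\rangle$, and the minimal element $M_0=\langle f_1,f_2\rangle$ of $\PPi(E,U)$. The subspace $W=\langle f_1+e_2,\ f_2+e_1,\ e_3\rangle$ is isotropic, satisfies $W+U^\perp=E$, and has rank $1$ in $\PPi(E,U)$, so it is not minimal; yet $W\cap M_0^\perp=\langle e_3\rangle$ and $\langle e_3\rangle+U^\perp=U^\perp\neq E$, so $W$ lies in the complement of your subposet $\pP$. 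Hence $\PPi(E,U)\setminus\pP$ contains non-minimal elements, it is not discrete, and Lemma~\ref{lem:discrete complement} cannot be applied; your claim that failure of the relative condition for $W\cap M_0^\perp$ forces $\dim W=\dim U$ is false. Since you explicitly leave the construction of a correct coning subposet open, the proposal is an unproved strategy rather than a proof; note also that it would never invoke the induction hypothesis of the lemma in any essential way, which should already raise suspicion.

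For comparison, the paper's proof of this induction step takes a genuinely different route and uses $\dim(U/\R(E))>1$ exactly once: after reducing to $\R(E)=0$, choose $L<U$ of codimension one (nonzero precisely because $\dim U>1$), so that $L^\perp<E$ is a proper subspace; introduce the auxiliary poset $\PPi'(L^\perp,U)$ of isotropic subspaces meeting the radical $L=\R(L^\perp)$ trivially, which is Cohen--Macaulay by Lemma~\ref{lem:P'} (this is where Vogtmann's connectivity result for a general linear building enters); and analyse the rank-preserving poset map $j:\PPi(E,U)\to\PPi'(L^\perp,U)$, $W\mapsto W\cap L^\perp$. Its upper poset fibres are identified with posets of the form $\PPi(E',U')$ for proper subspaces $E'<E$, hence are Cohen--Macaulay by the induction hypothesis, and Quillen's fibre criterion \cite[Cor.~9.7]{QuillenHomotopy} then gives Cohen--Macaulayness of $\PPi(E,U)$ directly, with no separate sphericity step. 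A covering argument closer in spirit to what you attempt does occur in the paper, but only in the second induction step (Lemma~\ref{lem:C-M for small U}), and there it is a Mayer--Vietoris/van Kampen analysis of the cover by the subposets $\pP_\lambda$, not a single contractible subposet with discrete complement.
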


To prove this first induction step, we will need a variant of the building of isotropic subspaces. For $U\in \PPi(E)$, let 
\[ \PPi'(E,U) = \set{W< E}{W\ \text{ isotropic}\text,\quad  W\cap \R(E)=0\text,\quad W+U^\perp= E}. \]
So the elements $W$ of $\PPi'(E,U)$ are required to be disjoint from the radical instead of containing it. 

\begin{lem}\label{lem:P'} Suppose $\R(E)=0$, $U\in \PPi(E)$ of dimension at least $2$ and $L<U$ is codimension 1 in $U$. Then 
$\PPi'(L^\perp,U)$ is Cohen-Macaulay of the same dimension as $\PPi(L^\perp,U)$.
\end{lem}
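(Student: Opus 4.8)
The plan is to prove the lemma by induction on $\dim E$, following Vogtmann~\cite{Vogtmann} and the template of the proof of Theorem~\ref{thm:PE is C-M}. Since $\R(E)=0$ and $L$ is isotropic, Lemma~\ref{lem:radical of perp} gives $\R(L^\perp)=L$; the hypothesis $\dim U\geq 2$ forces $L\neq 0$ and makes $U/L$ a line in $L^\perp$, and $\dim L^\perp<\dim E$ because $L\neq 0$. First I would record the dimension count: every maximal chain of $\PPi'(L^\perp,U)$ runs from an isotropic line not contained in the hyperplane $U^\perp$ of $L^\perp$ up to a maximal element $M$; since adding $L$ to $M$ produces a maximal isotropic subspace of $L^\perp$ (as $L=\R(L^\perp)$ is perpendicular to everything), Lemma~\ref{lem:dimmax} gives $\dim M=\g(L^\perp)$, so $\PPi'(L^\perp,U)$ is graded of dimension $\g(L^\perp)-1$, which equals $\dim\PPi(L^\perp,U)$ by Lemma~\ref{lem:building dimensions} (using $\dim U=\dim L+1$).

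Next I would identify the intervals, so that Cohen--Macaulayness reduces to sphericity of the whole poset. For $W\in\PPi'(L^\perp,U)$ and $X<W$, the only nontrivial condition for $X\in\PPi'(L^\perp,U)$ is $X+U^\perp=L^\perp$, which by the modular law (using $W+U^\perp=L^\perp$) is equivalent to $X+(U^\perp\cap W)=W$; hence $\PPi'(L^\perp,U)_{<W}\cong\PP(W,U^\perp\cap W)$, a relative building of a vector space, Cohen--Macaulay by Theorem~\ref{thm:GL building properties}. For the upper interval, the spanning condition is automatic for any $X\supseteq W$, so $\PPi'(L^\perp,U)_{>W}=\{X\mid W<X<L^\perp,\ X\text{ isotropic},\ X\cap L=0\}$; such an $X$ lies in $F:=(W+L)^\perp=W^\perp\cap L^\perp$ (Lemma~\ref{lem:capperp}), which by Lemma~\ref{lem:radical of perp} has radical $W+L$, and since $W\leq\R(F)$ the projection $F\to F/W$ is an isometry (Lemma~\ref{lem:induced form1}). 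Using $W\cap L=0$ one checks that $X\mapsto X/W$ identifies $\PPi'(L^\perp,U)_{>W}$ with the poset of nonzero proper isotropic subspaces of $F/W$ transverse to $\R(F/W)$; this is a primed isotropic building of a formed space of dimension $<\dim E$, hence Cohen--Macaulay by the inductive hypothesis, which one should set up for this slightly more general class of primed buildings (allowing nonzero radical).

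It then remains to show $\PPi'(L^\perp,U)$ is spherical of dimension $\g(L^\perp)-1$. I would argue as for Theorem~\ref{thm:PE is C-M}: pick an isotropic vector $u$ with $\langle u\rangle\in\PPi'(L^\perp,U)$ (equivalently, $u$ isotropic and $u\notin U^\perp$), let $\pP\subseteq\PPi'(L^\perp,U)$ be the subposet of those $W$ for which the relevant truncations by $u^\perp$ stay in $\PPi'(L^\perp,U)$, contract $\pP$ onto $\langle u\rangle$ through a monotone zig-zag of the shape $W\geq W\cap u^\perp\leq(W\cap u^\perp)+\langle u\rangle\geq\langle u\rangle$ (Lemma~\ref{lem:monotonic}), check that $\PPi'(L^\perp,U)\setminus\pP$ consists only of minimal elements (since $u^\perp$ is a hyperplane), and apply Lemma~\ref{lem:discrete complement} to see that $|\PPi'(L^\perp,U)|/|\pP|$ is a wedge of suspensions of the upper intervals $|\PPi'(L^\perp,U)_{>L'}|$, each a sphere of dimension $\g(L^\perp)-1$ by the previous paragraph. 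Since $|\pP|$ is contractible, this yields the claim.

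The main obstacle is precisely this coning step: because elements of $\PPi'(L^\perp,U)$ must be transverse to $L=\R(L^\perp)$ — unlike in Theorem~\ref{thm:PE is C-M}, where the radical was trivial — the naive choice of $\pP$ fails, since $(W\cap u^\perp)+\langle u\rangle$ need not be transverse to $L$ (it fails exactly when $u\in W+L$). One must therefore cut $\pP$ down further and/or modify the zig-zag so that all intermediate subspaces remain transverse to $L$ while still spanning with $U^\perp$, and so that the complement stays discrete. Making these transversality and spanning verifications go through — together with the interval identifications above — is where the appendix lemmas on orthogonal complements, radicals and induced forms (Lemmas~\ref{lem:radical of perp}, \ref{lem:dimension of perp}, \ref{lem:induced form1}, \ref{lem:isotropic complement}) are needed, and where the argument follows Vogtmann~\cite{Vogtmann} closely.
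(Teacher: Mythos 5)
There is a genuine gap: your proposal reduces the lemma to a sphericity argument for $\PPi'(L^\perp,U)$ modeled on Theorem~\ref{thm:PE is C-M}, but that step is precisely what you do not carry out. As you note yourself, the zig-zag $W \geq W\cap u^\perp \leq (W\cap u^\perp)+\langle u\rangle \geq \langle u\rangle$ leaves the primed building whenever $u\in (W+L)\setminus W$, and the truncation $W\cap u^\perp$ can also violate the spanning condition $X+U^\perp=L^\perp$ (it can even be zero when $W$ is a line). Once you shrink $\pP$ to avoid these failures, its contractibility is no longer clear, and the complement $\PPi'(L^\perp,U)\setminus\pP$ contains non-minimal elements (any $W$ of dimension $\geq 2$ with $u\in W+L$, $u\notin W$), so Lemma~\ref{lem:discrete complement} no longer applies; handling exactly this kind of failure is what forces the elaborate covering by the subposets $\pP_\lambda$ in the codimension-one case (Lemmas~\ref{lem:union of Plambda}--\ref{lem:C-M for small U}), so it cannot be waved through as a verification. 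In addition, your identification of upper intervals produces posets of isotropic subspaces transverse to a \emph{nonzero} radical with no spanning condition; these are not of the form $\PPi(E',U')$ or $\PPi'(E',U')$, hence are not covered by the induction hypothesis available at this point of the proof of Theorem~\ref{thm:PEU is CM}, and the separate induction you would need for this enlarged family is not set up. As written, the proposal is a plan whose decisive steps are missing.

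The paper's proof avoids any new connectivity argument for an isotropic building. It compares the two posets directly via the rank-preserving poset map $p:\PPi'(L^\perp,U)\to\PPi(L^\perp,U)$, $p(W)=W\op L$ (ranks $\dim(W\cap U^\perp)$ and $\dim((W\op L)\cap U^\perp)-\dim L$), and applies Quillen's poset-fiber criterion~\cite[Cor.~9.7]{QuillenHomotopy}: since $\PPi(L^\perp,U)$ is Cohen--Macaulay (available in the ambient induction of Lemma~\ref{lem:C-M for big U}, as $L\neq0$ makes $L^\perp$ proper), it suffices that each fiber
$p\inv\bigl(\PPi(L^\perp,U)_{\leq Y}\bigr)=\set{W<Y}{W\cap L=0,\ W+(U^\perp\cap Y)=Y}$
is Cohen--Macaulay; this is a purely general linear poset, namely Vogtmann's $^LT^{U^\perp\cap Y,Y}$, and its Cohen--Macaulayness is~\cite[Cor.~1.5]{Vogtmann} (using $\dim L\geq1$ and that $U^\perp\cap Y$ is a hyperplane in $Y$). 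If you want to keep your route you must prove the Cohen--Macaulay property for your generalized primed buildings from scratch; otherwise the comparison map $W\mapsto W\op L$ is the efficient fix.
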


Note that $L\leq U$ implies $U\leq L^\perp$, and $U$ contains $\R(L^\perp)=L$, so the above posets make sense. 

\begin{proof}
Define a map
\[ p:\PPi'(L^\perp,U)\to\PPi(L^\perp,U) \]
by setting 
$p(W)=W\op L$. 
This is a strictly increasing, in fact rank preserving, poset map,  the rank in the source being $\dim (W\cap U^\perp)$ and in the target 
$\dim((W\op L)\cap U^\perp)-\dim L$. 
Using~\cite[Cor.~9.7]{QuillenHomotopy}, it follows that the domain will be Cohen-Macaulay of the same dimension as the codomain, if for each $Y\in\PPi(L^\perp,U)$, the poset fiber 
$p\inv(\PPi(L^\perp,U)_{\leq Y})$ under $Y$, is Cohen-Macaulay. 
Now 
\begin{align*}
 p\inv\left(\PPi(L^\perp,U)_{\leq Y}\right) &= \set{W<L^\perp}{W\cap L=0,\ W+L\le Y,\ W+U^\perp=L^\perp} \\
&= \set{W<Y}{W\cap L=0,\ W+(U^\perp\cap Y)=Y} 
\end{align*}
identifies with the general linear building denoted $^LT^{U^\perp\cap Y,Y}$ in \cite[Sec 1]{Vogtmann}. We have $\dim L\ge 1$ and $\dim(U^\perp\cap Y)=\dim Y-1$ because  $U^\perp<L^\perp$ has codimension one by Lemma~\ref{lem:dimension of perp}
(since $U\cap\K(E)=\R(E)=0$).  Hence we can apply~\cite[Cor.~1.5]{Vogtmann}, which gives that $^LT^{U^\perp\cap Y,Y}=p\inv\left(\PPi(L^\perp,U)_{\leq Y}\right)$ 
is Cohen-Macaulay, as required.  
\end{proof}

\begin{proof}[Proof of Lemma~\ref{lem:C-M for big U}]
Suppose $E$ has dimension $d$ and $U\in\PPi(E)$ is such that $\dim(U/\R(E))>1$.  We may assume without loss of generality that
$\R(E)=0$, 
since $\PPi(E,U)\cong\PPi(E/\R(E),U/\R(E))$ (Lemma~\ref{lem:modR(E)}). 

As above, choose $L<U$ of codimension one.  
Because $\dim U>1$ by assumption,
$L$ is nonzero, which  implies that $L^\perp<E$ is a proper subspace by Lemma~\ref{lem:dimension of perp}.
Hence we are given, by assumption, that $\PPi(L^\perp,U)$ is Cohen-Macaulay. By Lemma~\ref{lem:P'}, this implies that also $\PPi'(L^\perp,U)$ is Cohen-Macaulay.
Consider the poset map
\begin{align*}
j:\PPi(E,U) \to \PPi'(L^\perp,U),  \ \ \ \ \ \ W \mapsto W\cap L^\perp.
\end{align*}
The map makes sense because
\[ U^\perp+(W\cap L^\perp) = (U^\perp+W)\cap L^\perp = L^\perp, \]
 since
 $U^\perp<L^\perp$, and $W\cap L=0$,
because  $W+L^\perp=E$, so $\K(E) = W^\perp\cap(L+\K(E))$ contains $W\cap L$, which is isotropic so lies in $\R(E)=0$.
Moreover $j$ is strictly increasing,  
in fact rank preserving, as the rank of $W\in \PPi'(L^\perp,U)$ is $\dim(W\cap U^\perp)$, just like in $\PPi(E,U)$. 

Let $Y\in \PPi'(L^\perp,U)$.
We claim that the corresponding upper poset fiber of $j$
has the form
\[ j\inv\left(\PPi'(L^\perp,U)_{\geq Y}\right) \iso \PPi(E',U') \]
where $E'<E$ and $U'\leq U$ are certain subspaces, with $E'$ proper.
This poset will then by hypothesis be Cohen-Macaulay, 
showing by~\cite[Cor.~9.7]{QuillenHomotopy}
that $\PPi(E,U)$ is indeed Cohen-Macaulay, which will prove the result.

We start by associating an $E'$ and $U'$ to any given  $Y\in \PPi'(L^\perp,U)$. 
As  $Y\op L$ is an isotropic subspace of $L^\perp$,  Lemma~\ref{lem:isotropic complement},
shows that there exists an isotropic space $Z$ such that
\[ L^\perp = (Y\op L)^\perp\op Z. \]
Then
\[ E=Y^\perp\op Z, \]
since $Y^\perp\cap L^\perp = (Y\op L)^\perp$ showing that $Y^\perp\cap Z=0$, 
and $Y^\perp+L^\perp=E$
as $\dim Y^\perp=\dim E-\dim Y$, $\dim L^\perp=\dim E-\dim L$ and $\dim (Y^\perp\cap L^\perp)=\dim E-\dim Y-\dim L$ by Lemma~\ref{lem:dimension of perp} 
and the previous computation, using the fact that $\R(E)=0$ so that $Y\cap \K(E)=0=L\cap \K(E)$. 
Hence $Y\op Z$ is nondegenerate by Lemma~\ref{lem:isotropic complement}, 
and $$E = Y\op Z\op  (Y\op Z)^\perp.$$
Let 
\[ E' = (Y\op Z)^\perp \ \ \ \textrm{and} \ \ \ U' = U\cap E'. \]
Note that  $E'$ is a proper subspace of $E$, since $Y>0$.
We have \[ U'\geq L>0 \]
since $Y,Z\leq L^\perp$.
Because $\R(E')=\R(E)=0$ by Lemma~\ref{lem:perp of nondegenerate},
it makes sense to speak of $\PPi(E',U')$.

Now we define poset maps
\[ f: j\inv\left(\PPi'(L^\perp,U)_{\geq Y}\right) \leftrightarrows \PPi(E',U'): g \]
by setting \[ f(W)=W\cap E' \+ g(X)=X\op Y. \]
The map $f$ makes sense because
$\R(E')=0$
and
\[ W+{U'}^\perp \geq W+U^\perp = E \geq E',  \]
while $(W\cap E')\cap (U')^\perp=(W\cap U^\perp)\cap E'$ since any element of $E'$ is $U\cap (Y\op Z)$. In particular, $W\cap E'<E'$. (This also shows that $j$ is degree preserving.)
The map $g$ makes sense because
\[ X< E'\leq Y^\perp \]
showing that $X\op Y$ is an isotropic strict subspace of $E$, and
\[ (X\op Y)+U^\perp = E\]
 because $Y+U^\perp = L^\perp \geq Z$, 
so
\begin{align*}
X+Y+U^\perp &= X+Y+Z+U^\perp  = X+ E'^\perp + U^\perp= X+{U'}^\perp \geq E',
\end{align*}
while $Y+Z+E'=E$. 

It remains to check that the two maps are inverses of each other. Starting with $X\in \PPi(E',U')$, we have $f\circ g(X)=(X\op Y)\cap E'=X$. And starting with $W\in  j\inv\left(\PPi'(L^\perp,U)_{\geq Y}\right)$, 
we have $W\ge Y$, which implies that $W\le Y^\perp=Y\op E'$ as $E=Z\op Y^\perp$. Hence 
\[ g\circ f(W)=(W\cap E')\op Y =W. \qedhere \]
\end{proof}

\subsubsection{Case $\PPi(E,U)$ with $\dim(U/\R(E))=1$} 
We assume now that $U$ be an isotropic subspace of $E$ such that $\R(E)<U$ has codimension one. 
In this situation, we can pick a $u\in U$ such that
\[ U=\R(E)\op\langle u\rangle. \]
Then $U^\perp=u^\perp<E$ is a hyperplane.

If it happens that $U^\perp=U$, then $\PPi(E,U)$ is zero-dimensional, so Cohen-Macaulay.
Hence we may assume $U^\perp>U$, and fix some isotropic vector
\[ v\in U^\perp-U. \]
(This is possible by Proposition~\ref{prop:induced form}, which insures that $\g(U^\perp)\ge 1$ given that $\g(E)\ge 2$; if $\g(E)=1$ there is nothing to prove.) 
For each constant $\lambda\in\F$, define
\[ v_\lambda = v-\lambda u. \]
This $v_\lambda$ is an isotropic vector not contained in $\R(E)$, so $v_\lambda^\perp$ is likewise  a hyperplane in $E$.  Also,
\[ U\leq v_\lambda^\perp. \]
The hyperplanes $u^\perp$ and $v_\lambda^\perp$ are distinct, because $v_\lambda\notin U=u^{\perp\perp}$.

Now for $U$, $u$, $v$ and $\lambda$ as above, define
\[ \pP_\lambda = \pP_\lambda(E,u,v)
= \set{W\in\PPi(E,U)}{(W\cap v_\lambda^\perp)+U^\perp=E}. \]
The idea of the proof of the Cohen-Macaulay property of $\PPi(E,U)$ in the present case, is that $\PPi(E,U)$ is covered by the subposets $\pP_\lambda$ for various $\lambda$, while they and their intersections are already known to be Cohen-Macaulay by induction. We prove these facts in the next three lemmas. 

\begin{lemma}\label{lem:union of Plambda}
For all $W\in\PPi(E,U)$, there is some $\lambda$ such that $W\in \pP_\lambda$.
\end{lemma}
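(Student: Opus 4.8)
The claim is that the subposets $\pP_\lambda$, as $\lambda$ ranges over $\F$, cover $\PPi(E,U)$. Fix $W\in\PPi(E,U)$; by definition $W$ is isotropic, $\R(E)<W$, and $W+U^\perp=E$. We need to find $\lambda\in\F$ with $(W\cap v_\lambda^\perp)+U^\perp=E$. The plan is to analyze $W$ relative to the two hyperplanes $u^\perp=U^\perp$ and $v_\lambda^\perp$, and to count dimensions.

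First I would note that, since $W+U^\perp=E$, the subspace $W$ is not contained in the hyperplane $U^\perp$, so $W\cap U^\perp$ has codimension one in $W$. The condition $(W\cap v_\lambda^\perp)+U^\perp=E$ is equivalent, by the same reasoning, to $W\cap v_\lambda^\perp$ not being contained in $U^\perp$, i.e. to $(W\cap v_\lambda^\perp)\not\le U^\perp$, which in turn (intersecting with $W\cap U^\perp$, a codimension-one subspace of $W$) is equivalent to $W\cap v_\lambda^\perp\ne W\cap v_\lambda^\perp\cap U^\perp$, i.e. to $W\cap v_\lambda^\perp$ being strictly larger than $W\cap v_\lambda^\perp\cap U^\perp$. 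So the failure of $W\in\pP_\lambda$ means precisely that $W\cap v_\lambda^\perp\le U^\perp$, equivalently $W\cap v_\lambda^\perp\le W\cap u^\perp$. Thus it suffices to find $\lambda$ with $W\cap v_\lambda^\perp\not\le u^\perp$.

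Now I would translate this into linear algebra on $W$ itself. The functionals $\omega_q(-,u)$ and $\omega_q(-,v)$ restrict to linear functionals $f,g\colon W\to\F$; their kernels are $W\cap u^\perp$ and $W\cap v^\perp$. The functional cutting out $W\cap v_\lambda^\perp$ is $g-\bar\lambda f$ (up to the $\sigma$-twist coming from the convention $v_\lambda=v-\lambda u$ and the sesquilinearity of $\omega_q$ — I would compute $\omega_q(w,v_\lambda)=\omega_q(w,v)-\omega_q(w,u)\lambda=g(w)-f(w)\lambda$, so the relevant functional on $W$ is $g-f(\cdot)\lambda$). We know $f\ne 0$ on $W$ (as $W\not\le u^\perp$). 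The bad case $W\cap v_\lambda^\perp\le u^\perp=\ker f$ happens iff $\ker(g-f\lambda)\le\ker f$, iff $g-f\lambda$ is a scalar multiple of $f$, iff $g\equiv\mu f$ for some scalar $\mu$ (with $\lambda$ then forced). So at most one value of $\lambda$ is bad if $g$ is proportional to $f$, and \emph{no} value of $\lambda$ is bad if $g$ is not proportional to $f$. Since $\F\ne\F_2$ — indeed even $|\F|\ge 2$ suffices here since there is always at least one $\lambda$, but to be safe I would just observe $\F$ is infinite or has $\ge 3$ elements, hence certainly nonempty with a good $\lambda$ available — there exists $\lambda$ that is not bad, and for that $\lambda$ we get $W\in\pP_\lambda$.

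The main (minor) obstacle is just bookkeeping the $\sigma$-conjugation in the functional $w\mapsto\omega_q(w,v_\lambda)$ and making sure the ``at most one bad $\lambda$'' count is stated correctly; there is no real geometric difficulty, and in fact the argument shows that \emph{every} $\lambda$ works unless $\omega_q(-,v)|_W$ and $\omega_q(-,u)|_W$ are proportional, in which case all but one $\lambda$ works. I would close by remarking that this also immediately gives that the $\pP_\lambda$ have large pairwise intersections, which is what the next lemmas will exploit.
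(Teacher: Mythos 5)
Your reduction is fine up to the point where you characterize the bad values of $\lambda$: indeed $W\in\pP_\lambda$ iff $W\cap v_\lambda^\perp\not\le U^\perp$, and with $f=\omega_q(-,u)|_W$ and $g=\omega_q(-,v)|_W$ the relevant subspace is $\ker(g-f\lambda)$. But your case analysis is inverted. If $g-f\lambda=0$, then $\ker(g-f\lambda)=W$, which is \emph{not} contained in $\ker f$ (since $f\neq0$), so that $\lambda$ is good, not bad; whereas if $g-f\lambda$ is a \emph{nonzero} multiple of $f$, then $\ker(g-f\lambda)=\ker f$ and that $\lambda$ is bad. Hence: if $f$ and $g$ are not proportional, every $\lambda$ is good; but if $g=\mu f$, the \emph{unique} good value is $\lambda=\mu$ and every other $\lambda$ is bad --- the opposite of your ``at most one value of $\lambda$ is bad,'' and your closing remark that ``all but one $\lambda$ works'' in the proportional case is false. (Concretely, if $\omega_q(W,v)=0$, e.g.\ $W=\langle f_1\rangle$, $u=e_1$, $v=e_2$ in a symplectic space with standard basis $e_1,f_1,e_2,f_2$, then $W\in\pP_\lambda$ only for $\lambda=0$.) Because of this inversion, your appeal to the size of $\F$ is both unnecessary and aimed the wrong way: the lemma holds over every field, and what saves the proportional case is not that there are few bad values but that the single good one, $\lambda=\mu$, exists.

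The statement itself survives, and your setup is repaired by simply exhibiting the good $\lambda$: choose $w\in W$ with $\omega_q(w,u)\neq0$ (possible since $W+U^\perp=E$ forces $W\not\le U^\perp$) and set $\lambda=\omega_q(w,v)/\omega_q(w,u)$; then $\omega_q(w,v_\lambda)=0$, so $w\in W\cap v_\lambda^\perp$ while $w\notin U^\perp$, whence $(W\cap v_\lambda^\perp)+U^\perp=E$. This is exactly the paper's one-line proof, which avoids the case distinction altogether and requires no hypothesis on $\F$.
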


\begin{proof}
Since $W+U^\perp=E$, there exists a complement to $U^\perp$ lying in $W$, generated by some vector $w$.  Set
\[ \lambda = \frac{\omega(w,v)}{\omega(w,u)}, \]
which is possible since $w\notin U^\perp$.  Then $w\in v_\lambda^\perp$, so $W\in \pP_\lambda$.
\end{proof}

\begin{lemma}\label{lem:Plambda}
Under the above assumptions, there is a homotopy equivalence
\[ \pP_\lambda \simeq \PPi(v_\lambda^\perp, U+\langle v_\lambda\rangle). \]
\end{lemma}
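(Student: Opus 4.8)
The plan is to exhibit a poset map between $\pP_\lambda$ and $\PPi(v_\lambda^\perp, U+\langle v_\lambda\rangle)$ and show it induces a homotopy equivalence, most likely via the Quillen fiber lemma. The natural candidate is intersection with the hyperplane $v_\lambda^\perp$: given $W\in\pP_\lambda$, set $r(W) = W\cap v_\lambda^\perp$. First I would check that this lands in the target poset. Since $W$ is isotropic and contained in... well, $W\cap v_\lambda^\perp$ is isotropic and contains $\R(E)$ (as $\R(E)\le U\le v_\lambda^\perp$); it contains $v_\lambda$? Not necessarily, so more care is needed about which relative building on $v_\lambda^\perp$ is the right target — note $\R(v_\lambda^\perp)$ contains $\langle v_\lambda\rangle$ by Lemma~\ref{lem:radical of perp}, so $U+\langle v_\lambda\rangle$ plays the role of ``$U$ shifted into $v_\lambda^\perp$''. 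The condition $(W\cap v_\lambda^\perp)+U^\perp = E$ defining $\pP_\lambda$ is precisely what guarantees that $r(W)$ satisfies the relative condition $r(W) + (U+\langle v_\lambda\rangle)^{\perp_{v_\lambda^\perp}} = v_\lambda^\perp$, using that $(U+\langle v_\lambda\rangle)^\perp = U^\perp\cap v_\lambda^\perp$ and $U^\perp\cap v_\lambda^\perp$ has the right codimension inside $v_\lambda^\perp$ (two hyperplanes $u^\perp\ne v_\lambda^\perp$).

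Next I would analyze the fibers of $r$. For $Y\in \PPi(v_\lambda^\perp, U+\langle v_\lambda\rangle)$, the fiber $r^{-1}(\PPi(v_\lambda^\perp,\dots)_{\ge Y})$ consists of the $W\in\pP_\lambda$ with $W\cap v_\lambda^\perp \ge Y$. Because $v_\lambda^\perp$ is a hyperplane in $E$, any such $W$ is either contained in $v_\lambda^\perp$ (then $W\ge Y$, $W = W\cap v_\lambda^\perp$) or is obtained from an element of $v_\lambda^\perp$ by adjoining one vector outside $v_\lambda^\perp$. This should let me identify the fiber, after possibly passing through a monotone self-map as in the proof of Theorem~\ref{thm:join decomposition}, with something visibly contractible — e.g. a poset with a maximal or minimal element, or a cone. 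The vector $v_\lambda$ itself (or $Y+\langle v_\lambda\rangle$ if $v_\lambda\notin Y$) is the natural candidate for the cone point, since $v_\lambda$ is isotropic, lies in $v_\lambda^\perp$, is orthogonal to $U$, and $Y + \langle v_\lambda\rangle$ is still isotropic and still satisfies the relative condition. One has to check $Y+\langle v_\lambda\rangle \in \pP_\lambda$, i.e. that adding $v_\lambda$ doesn't destroy the defining condition of $\pP_\lambda$ — but $v_\lambda\in v_\lambda^\perp$, so $(Y+\langle v_\lambda\rangle)\cap v_\lambda^\perp = Y+\langle v_\lambda\rangle$ still surjects onto $E/U^\perp$ as $Y$ already does. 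This would give contractible fibers and hence the homotopy equivalence by Quillen's Theorem A / \cite[Prop.~1.6]{QuillenHomotopy}.

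The main obstacle I expect is bookkeeping with the three relevant orthogonal complements ($U^\perp$ in $E$, $v_\lambda^\perp$ in $E$, and the complement of $U+\langle v_\lambda\rangle$ inside $v_\lambda^\perp$) and making sure the dimension/codimension count works so that the target poset is nonempty of the expected dimension — this is where the explicit choice $v\in U^\perp - U$ and the distinctness $u^\perp\ne v_\lambda^\perp$ get used. A secondary subtlety is handling the degenerate possibilities (small genus, $U^\perp = U$) so that the statement is vacuous or trivial there, exactly as flagged in the parenthetical remark preceding the lemma. Once the fiber identification is set up correctly, showing contractibility of each fiber should be routine, along the lines of the monotone-map arguments already used for $\PPi(E)$ in Theorem~\ref{thm:PE is C-M} and in Theorem~\ref{thm:join decomposition}.
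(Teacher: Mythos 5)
Your proposed map $r(W)=W\cap v_\lambda^\perp$ does not land in the target poset, and this is not a side issue you can postpone: every element of $\PPi(v_\lambda^\perp,U+\langle v_\lambda\rangle)$ must contain the radical $\R(v_\lambda^\perp)=\R(E)+\langle v_\lambda\rangle$ (Lemma~\ref{lem:radical of perp}), while $W\cap v_\lambda^\perp$ need not contain $v_\lambda$. You flag exactly this point, but since the target is fixed by the statement of the lemma, the fix has to go into the map, and that fix is the one idea your sketch is missing: the paper uses $f(W)=(W\cap v_\lambda^\perp)+\langle v_\lambda\rangle$, which is still isotropic because $v_\lambda$ is isotropic and orthogonal to $W\cap v_\lambda^\perp$. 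Without this correction your fiber computation $r^{-1}\bigl((\cdot)_{\geq Y}\bigr)$ is not taken over a genuine poset map, and your parenthetical ``$Y+\langle v_\lambda\rangle$ if $v_\lambda\notin Y$'' is symptomatic of the same confusion: every $Y$ in the target contains $v_\lambda$ automatically. A further verification your sketch omits (and would need on any route) is properness, i.e.\ $f(W)<v_\lambda^\perp$: the paper argues that $v_\lambda^\perp$ cannot itself be isotropic, since it would then equal $\R(E)+\langle v_\lambda\rangle$, contradicting $U\neq U^\perp$ via Witt's lemma.

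Once the map is corrected, the Quillen fiber lemma is unnecessary and heavier than what the paper actually does. The paper takes $g(V)=V$, which includes the target as a subposet of $\pP_\lambda$ (here one uses that $v_\lambda\in V$ forces $V+U^\perp\supseteq v_\lambda^\perp+U^\perp=E$, the two hyperplanes being distinct), observes $f\circ g=\id$ because $v_\lambda\in V$, and gets $g\circ f\simeq\id_{\pP_\lambda}$ from the zigzag of comparable poset maps $g f(W)\geq W\cap v_\lambda^\perp\leq W$, all of whose values stay in $\pP_\lambda$; hence $f$ is a deformation retraction. Your fiber approach could be completed with the corrected map (each fiber retracts onto its elements containing $Y$, a poset with minimum $Y$), but the retraction inside each fiber is the same zigzag, so the extra machinery buys nothing.
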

Note that the right hand side makes sense because, by Lemma~\ref{lem:radical of perp}, 
\[ \R(v_\lambda^\perp) = \R(E)+\langle v_\lambda\rangle \leq U+\langle v_\lambda\rangle. \]

\begin{proof}
We define maps
\[ f: \pP_\lambda \longleftrightarrow \PPi(v_\lambda^\perp,U+\langle v_\lambda\rangle) :g \]
by setting
\[ f(W) = W\cap v_\lambda^\perp + \langle v_\lambda\rangle \ \ \ \textrm{and}\ \ \  g(V)=V. \]
The map $f$ lands in the claimed codomain because $f(W)$ is isotropic,  $\R(v_\lambda^\perp)=\R(E)+\langle v_\lambda\rangle< f(W)<v_\lambda^\perp$, the last inequality following from the fact that $v_\lambda^\perp$ is not isotropic because it would otherwise be equal to $\R(E)+\langle v_\lambda\rangle$, contradicting, through Witt's lemma, our assumption that $U\neq U^\perp$. 
Finally, $(W\cap v_\lambda^\perp)+U^\perp=E$ implies
$(W\cap v_\lambda^\perp)+(U^\perp\cap v_\lambda^\perp)=v_\lambda^\perp$. 
The map $g$ lands in the claimed codomain because $v_\lambda\in V$ (as $v_\lambda\in\R(v_\lambda^\perp)$), so 
$V+U^\perp$ contains $v_\lambda^\perp$, and hence contains $v_\lambda^\perp+U^\perp$, which is $E$ since
$v_\lambda^\perp$ and $U^\perp$ are two distinct hyperplanes.

The maps $f$ and $g$ are both poset maps as they both preserve inclusions.
The compositions is $f\circ g(V)=V$, because $v_\lambda\in V$.
The other composition satisfies
\[ g\circ f(W)\geq W\cap v_\lambda^\perp \leq W \]
in $\pP_\lambda$, since $W\in \pP_\lambda$ gives that also  $W\cap v_\lambda^\perp\in \pP_\lambda$. This gives a homotopy 
$g\circ f\sim\id_{\pP_\lambda}$.  Therefore $f$ is a deformation retraction.
\end{proof}

\begin{lemma}\label{lem:Plambda cap Pmu}
If $\lambda\neq\mu$, then
\[ \pP_\lambda\cap \pP_\mu = \PPi(E,U+\langle v\rangle), \]
as subposets of $\PPi(E,U)$.
In particular, the intersection does not depend on $\lambda$ or $\mu$.
\end{lemma}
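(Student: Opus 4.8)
The plan is to prove the equality $\pP_\lambda\cap \pP_\mu = \PPi(E,U+\langle v\rangle)$ by a double inclusion, expressing membership in each side as a concrete condition on how a subspace $W$ meets the hyperplanes $u^\perp$, $v_\lambda^\perp$, $v_\mu^\perp$. First recall that $W\in \PPi(E,U)$ already records that $W$ is isotropic, $\R(E)<W$, and $W+U^\perp=E$, i.e.\ $W\not\leq u^\perp$; so throughout we may work with a fixed such $W$ and only worry about the extra conditions. The key linear-algebra observation is that $U+\langle v\rangle$ has orthogonal complement $(U+\langle v\rangle)^\perp = U^\perp\cap v^\perp$, and — crucially — since $v_\lambda = v-\lambda u$ with $u\in U$, we have $U^\perp\cap v_\lambda^\perp = U^\perp\cap v^\perp$ for \emph{every} $\lambda$ (because on $U^\perp$ the functional $\omega(-,u)$ vanishes). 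Thus $(U+\langle v\rangle)^\perp = U^\perp\cap v_\lambda^\perp = U^\perp\cap v_\mu^\perp$, and membership of $W$ in $\PPi(E,U+\langle v\rangle)$ amounts to $W+(U^\perp\cap v_\lambda^\perp)=E$, a codimension-two condition.

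For the forward inclusion, suppose $W\in \pP_\lambda\cap\pP_\mu$, so $(W\cap v_\lambda^\perp)+U^\perp=E$ and $(W\cap v_\mu^\perp)+U^\perp=E$. I want to deduce $W+(U^\perp\cap v_\lambda^\perp)=E$. Here is the decisive point: $(W\cap v_\lambda^\perp)+U^\perp = E$ says $W\cap v_\lambda^\perp \not\leq U^\perp$, i.e.\ there is a vector $w_\lambda\in W$ with $w_\lambda\in v_\lambda^\perp$ but $w_\lambda\notin u^\perp$; similarly $w_\mu\in W\cap v_\mu^\perp$, $w_\mu\notin u^\perp$. Rescaling, we may assume $\omega(w_\lambda,u)=\omega(w_\mu,u)=1$. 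Then $w_\lambda - w_\mu\in W$ satisfies $\omega(w_\lambda-w_\mu,u)=0$, so $w_\lambda-w_\mu\in u^\perp$, and $\omega(w_\lambda-w_\mu, v) = \omega(w_\lambda,v)-\omega(w_\mu,v) = \lambda\omega(w_\lambda,u) - \mu\omega(w_\mu,u) = \lambda-\mu\neq 0$ (using $w_\lambda\in v_\lambda^\perp$ means $\omega(w_\lambda,v)=\lambda\omega(w_\lambda,u)=\lambda$, and similarly for $\mu$). Hence $w_\lambda-w_\mu\in W$ is a vector in $u^\perp$ but not in $v^\perp$, i.e.\ a vector in $W\cap U^\perp$ not lying in the codimension-one subspace $U^\perp\cap v^\perp = U^\perp\cap v_\lambda^\perp$ of $U^\perp$. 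Combined with $W+U^\perp = E$ (which holds since $W\in\PPi(E,U)$), this gives $W+(U^\perp\cap v_\lambda^\perp) = E$: indeed $W$ together with a complement of $U^\perp$ and the vector $w_\lambda-w_\mu$ span everything, and $w_\lambda-w_\mu$ already lies in $W$, so $W+(U^\perp\cap v_\lambda^\perp)$ has codimension $0$. Thus $W\in\PPi(E,U+\langle v\rangle)$.

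For the reverse inclusion, suppose $W\in\PPi(E,U+\langle v\rangle)$, i.e.\ $W+(U^\perp\cap v_\lambda^\perp)=E$ (recall this subspace is independent of $\lambda$). Then certainly $W+U^\perp=E$, so $W\in\PPi(E,U)$; I must check $W\in\pP_\lambda$, i.e.\ $(W\cap v_\lambda^\perp)+U^\perp=E$, equivalently $W\cap v_\lambda^\perp\not\leq U^\perp$. Pick $x\in W$ and $y\in U^\perp\cap v_\lambda^\perp$ with $x+y\notin u^\perp$ (possible since $W+(U^\perp\cap v_\lambda^\perp)=E$ and $u^\perp\neq E$); then $\omega(x+y,u)\neq 0$, and since $y\in U^\perp$ we get $\omega(x,u)=\omega(x+y,u)\neq 0$, so $x\notin u^\perp$. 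On the other hand, if additionally we can arrange $x\in v_\lambda^\perp$ we are done. This needs a small adjustment: choose a vector $z$ spanning a complement of $v_\lambda^\perp$ in $E$; since $W+(U^\perp\cap v_\lambda^\perp)=E$ and $U^\perp\cap v_\lambda^\perp\leq v_\lambda^\perp$, the projection of $W$ to $E/v_\lambda^\perp$ is surjective, so there is $x'\in W$ with $x'\notin v_\lambda^\perp$; then for a suitable combination $x\in W$ of $x'$ and any $x$ as above we land in $v_\lambda^\perp$ while keeping $\omega(x,u)\neq 0$ — more cleanly, $\dim(W\cap v_\lambda^\perp) = \dim W - 1$ and $\dim(W\cap U^\perp) = \dim W - 1$ (both hyperplane sections of $W$, since $W\not\leq v_\lambda^\perp$ and $W\not\leq U^\perp$), whereas $W\cap v_\lambda^\perp\cap U^\perp = W\cap(U^\perp\cap v_\lambda^\perp)$ has dimension $\dim W - 2$ because $W+(U^\perp\cap v_\lambda^\perp)=E$ forces codimension $2$; therefore $W\cap v_\lambda^\perp\neq W\cap v_\lambda^\perp\cap U^\perp$, giving a vector in $W\cap v_\lambda^\perp$ not in $U^\perp$, which is exactly $(W\cap v_\lambda^\perp)+U^\perp = E$.

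The whole argument is elementary linear algebra about three hyperplanes in $E$ and their traces on $W$; the only genuinely load-bearing facts are the identity $U^\perp\cap v_\lambda^\perp = U^\perp\cap v^\perp$ (independence of $\lambda$, coming from $v_\lambda\equiv v\bmod U$ together with $u\in U$) and the two-versus-one dimension count. I expect the main obstacle to be purely bookkeeping: making sure all the subspaces in play are distinct (e.g.\ $u^\perp\neq v_\lambda^\perp$, which was already noted since $v_\lambda\notin U = u^{\perp\perp}$) and that the codimension counts are exactly what they should be, using Lemma~\ref{lem:dimension of perp} where needed; there is no deep input. In the write-up I would phrase both inclusions uniformly via the single numerical criterion ``$W\not\leq u^\perp$ and $W+(U^\perp\cap v^\perp)=E$'' characterizing $\PPi(E,U+\langle v\rangle)$ inside $\PPi(E,U)$, and the criterion ``$W\cap v_\lambda^\perp\not\leq u^\perp$'' characterizing $\pP_\lambda$, and then show the intersection over two distinct $\lambda,\mu$ of the latter equals the former, the forward direction being the vector-subtraction trick above and the reverse being the dimension count.
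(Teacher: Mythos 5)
Your proposal is correct and takes essentially the same route as the paper's proof: both reduce membership in $\PPi(E,U+\langle v\rangle)$ to the single condition $W+\langle u,v\rangle^\perp=E$ (using $\langle u,v\rangle^\perp=U^\perp\cap v_\lambda^\perp$ for every $\lambda$) and argue by double inclusion, your difference vector $w_\lambda-w_\mu$ with $\omega(w_\lambda-w_\mu,v)=\lambda-\mu\neq0$ being exactly the paper's nonvanishing $2\times 2$ determinant showing that $W$ surjects onto ${}^\sigma\langle u,v\rangle^*$. The only cosmetic difference is in the other containment, where the paper exhibits for each $\lambda$ an explicit vector $w\in W$ with $(\omega(u,w),\omega(v,w))=(1,\bar\lambda)$, while you obtain the required vector of $W\cap v_\lambda^\perp$ outside $U^\perp$ by a codimension count; the content is the same.
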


\begin{proof}
If $W\in \PPi(E,U+\langle v\rangle)$, then $W$ contains a complement to $\langle u,v\rangle^\perp$, that is, the map
\[ f:W\to\F^2,\quad w\mapsto(\omega(u,w),\omega(v,w)) \]
is surjective.
In particular, for any $\alpha\in\F$, there exists $w\in W$ with
\[ (\omega(u,w),\omega(v,w)) = (1,\bar\alpha). \]
This $w$ lies in $W\cap v_\alpha^\perp$ but not in $U^\perp$, showing that $W\in \pP_\alpha$, for all $\alpha$.

Conversely, suppose $W\in \pP_\lambda\cap \pP_\mu$.  Then there exist $a,b\in W$ with
$a\in v_\lambda^\perp$, $b\in v_\mu^\perp$, and neither $a$ nor $b$ in $U^\perp$.  Scaling appropriately, we may assume
$\omega(a,u)=\omega(b,u)=1$.
Then
\[ \omega(a,v)=\lambda \+ \omega(b,v)=\mu. \]
Since
\[ \det\begin{bmatrix}1&\lambda\\1&\mu\end{bmatrix}\neq0, \]
this shows that
$W\xrightarrow{\flat} {}^\sigma E^*\to {}^\sigma \langle u,v\rangle^*$
is surjective, i.e.~that $W$ contains a complement to $\langle u,v\rangle^\perp$, so
$W\in \PPi(E,U+\langle v\rangle)$. 
\end{proof}

For each $\lambda\in\F$, let
\[ i_\lambda:\PPi(E, U+\langle v\rangle)\rar \pP_\lambda \]
be the inclusion map, recalling from Lemma~\ref{lem:Plambda cap Pmu} that $\PPi(E,\langle U,v\rangle)$ is the intersection of $\pP_\lambda$ with $\pP_\mu$ for any $\mu\neq\lambda$.

\begin{lemma}\label{lem:Plambda Pmu surjectivity}
Let $\lambda,\mu\in\F$ be distinct.  Then\\
(1)~the map
$\tilde H_*(\pP_\lambda\cap \pP_\mu)\xrightarrow{({i_\lambda}_*,{i_\mu}_*)} \tilde H_* \pP_\lambda \op \tilde H_* \pP_\mu$
is surjective;

\smallskip

\noindent (2)~the image of the set map
$\pi_1(\pP_\lambda\cap \pP_\mu) \to \pi_1 \pP_\lambda \ast \pi_1 \pP_\mu$ 
sending $x$ to ${i_\lambda}_*(x){i_\mu}_*(x)\inv$ contains $\pi_1 \pP_\lambda$.
\end{lemma}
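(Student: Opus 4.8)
The plan is to prove both statements by exhibiting, in each poset $\pP_\lambda$, elements that are automatically contained in the intersection $\pP_\lambda \cap \pP_\mu = \PPi(E,U+\langle v\rangle)$, so that homotopy and homology classes of $\pP_\lambda$ can be pushed down. More precisely, I would first record the following key observation: by Lemma~\ref{lem:Plambda}, $\pP_\lambda \simeq \PPi(v_\lambda^\perp, U+\langle v_\lambda\rangle)$, and under this equivalence the subposet $\PPi(E,U+\langle v\rangle) = \pP_\lambda \cap \pP_\mu$ corresponds to the upper-closed subposet of those $W \le v_\lambda^\perp$ with $W + \langle u,v\rangle^\perp = v_\lambda^\perp$, i.e.\ to a \emph{relative} building inside $\PPi(v_\lambda^\perp, U+\langle v_\lambda\rangle)$. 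So the real content is: the inclusion of a relative isotropic building $\PPi(v_\lambda^\perp, U+\langle v\rangle)$ into the larger building $\PPi(v_\lambda^\perp, U+\langle v_\lambda\rangle)$ is surjective on $\tilde H_*$ and on $\pi_1$ up to the stated twist. I expect the cleanest route is not to go through Lemma~\ref{lem:Plambda} at all, but to argue directly with a deformation retraction or a Quillen-fiber argument inside $\pP_\lambda$.

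Concretely, for (1) I would try to build a poset map $r_\lambda\colon \pP_\lambda \to \PPi(E,U+\langle v\rangle)$ together with a (weakly monotone) homotopy from $i_\lambda\circ r_\lambda$ to $\id_{\pP_\lambda}$, using the vector $v$ (or $v_\lambda$). The natural candidate is something like $W \mapsto (W\cap v_\lambda^\perp\cap v_\mu^\perp) + \langle v\rangle$ or a zig-zag of the form $W \ge W\cap v_\lambda^\perp \le (W\cap v_\lambda^\perp)+\langle v_\lambda\rangle \ge \dots$, checking at each stage via Lemma~\ref{lem:monotonic} that the intermediate posets still lie in $\pP_\lambda$ and that the endpoint lands in $\pP_\lambda\cap\pP_\mu$ — the condition to verify being that the modified subspace still surjects onto $\langle u,v\rangle^\perp$'s complement. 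If such a retraction $r_\lambda$ exists, then $i_{\lambda*}$ is split surjective on $\tilde H_*$, and symmetrically $i_{\mu*}$ is, which immediately gives surjectivity of $({i_\lambda}_*,{i_\mu}_*)$ onto $\tilde H_*\pP_\lambda \oplus \tilde H_*\pP_\mu$; one checks the two splittings are compatible on the common domain $\pP_\lambda\cap\pP_\mu$. For (2), the same retraction $r_\lambda$ gives that $i_{\lambda*}\colon \pi_1(\pP_\lambda\cap\pP_\mu)\to\pi_1\pP_\lambda$ is surjective, so for every $\gamma\in\pi_1\pP_\lambda$ we may pick $x\in\pi_1(\pP_\lambda\cap\pP_\mu)$ with $i_{\lambda*}(x)=\gamma$; the element $i_{\lambda*}(x)i_{\mu*}(x)^{-1}$ then projects to $\gamma$ in $\pi_1\pP_\lambda\ast\pi_1\pP_\mu$, which is what is claimed (the second coordinate being killed in the stated image).

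The order of steps I would follow: (a) translate both $\pP_\lambda$ and $\pP_\lambda\cap\pP_\mu$ into honest isotropic/relative buildings via Lemma~\ref{lem:Plambda} and Lemma~\ref{lem:Plambda cap Pmu}, recording that $\pP_\lambda\cap\pP_\mu$ sits inside $\pP_\lambda$ as an upper-closed relative sub-building; (b) construct the monotone retraction $r_\lambda$ and verify well-definedness (the only subtle point being the surjectivity-onto-complement condition that defines membership in $\pP_\lambda$ versus $\pP_\lambda\cap\pP_\mu$); (c) deduce split surjectivity on $\tilde H_*$ and on $\pi_1$; (d) assemble (1) and (2) from the two retractions $r_\lambda, r_\mu$. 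The main obstacle will be step (b): making sure the explicit zig-zag (or direct retraction) stays inside $\pP_\lambda$ at every intermediate poset — i.e.\ that adding or intersecting with $\langle v\rangle$, $\langle v_\lambda\rangle$, or hyperplanes $v_\mu^\perp$ never destroys the condition $(W\cap v_\lambda^\perp)+U^\perp=E$ — and checking that the resulting endpoint genuinely lands in $\PPi(E,U+\langle v\rangle)$, i.e.\ surjects onto the $2$-dimensional quotient $\langle u,v\rangle^\perp\backslash E$ rather than just onto the line. This is the same kind of bookkeeping with orthogonal complements of isotropic subspaces as in the proofs of Lemmas~\ref{lem:C-M for big U} and~\ref{lem:Plambda}, using Lemma~\ref{lem:dimension of perp} and Lemma~\ref{lem:isotropic complement}, and I do not anticipate any genuinely new difficulty beyond care with these intersections; the hypothesis $\F\neq\F_2$ should not even be needed here, since it will only be invoked later when these surjectivity statements are fed into a Mayer--Vietoris / van Kampen argument.
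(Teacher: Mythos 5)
There is a genuine gap, and it sits precisely where you located the ``main obstacle.'' The retraction you hope to build does not exist in the simple form you propose: since $v_\lambda^\perp\cap v_\mu^\perp=\langle u,v\rangle^\perp$ and $v\in\langle u,v\rangle^\perp$, any assignment of the shape $W\mapsto (W\cap v_\lambda^\perp\cap v_\mu^\perp)+\langle v\rangle$ (or any variant that only intersects with $v_\lambda^\perp$, $v_\mu^\perp$ and adds $\langle v\rangle$ or $\langle v_\lambda\rangle$) produces a subspace contained in $\langle u,v\rangle^\perp$, which therefore cannot contain a complement to $\langle u,v\rangle^\perp$ and so never lies in $\pP_\lambda\cap\pP_\mu=\PPi(E,U+\langle v\rangle)$. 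The paper's proof gets around this by introducing a new auxiliary isotropic vector $c\in v_\mu^\perp$ with $\omega(c,v_\lambda)=1$ (hence $c\notin u^\perp$), the isometry $\phi(x)=x-\omega(c,x)v_\lambda$ of $v_\lambda^\perp$ pushing $W\cap v_\lambda^\perp$ into $c^\perp$, and then setting $j_\lambda(W)=\phi(W\cap v_\lambda^\perp)+\langle c\rangle$. This extra vector and transvection-type isometry are the missing idea; without something like them your step (b) cannot be completed.

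Moreover, even granting a map $r_\lambda$ with $i_\lambda\circ r_\lambda\simeq\id_{\pP_\lambda}$, your deductions of (1) and (2) are not valid as stated. Split surjectivity of $i_{\lambda*}$ and of $i_{\mu*}$ separately does not give surjectivity of $({i_\lambda}_*,{i_\mu}_*)$ onto the direct sum (compare the diagonal $\bbZ\to\bbZ\op\bbZ$, each coordinate split surjective); ``compatibility of the two splittings'' is not the relevant condition. What the paper actually needs, and proves, is that the cross-composites are null-homotopic: $i_\mu\circ j_\lambda\simeq\mathrm{const}$ (because $j_\lambda(W)\geq\langle c\rangle+\R(E)\in\pP_\mu$), and symmetrically, so that the composed matrix is the identity. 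The same point kills your argument for (2): the target is the free product $\pi_1\pP_\lambda\ast\pi_1\pP_\mu$, not a product, so $i_{\lambda*}(x)\,i_{\mu*}(x)\inv$ lies in $\pi_1\pP_\lambda$ only if $i_{\mu*}(x)=1$ on the nose; there is no ``projection killing the second coordinate.'' In the paper this is exactly supplied by taking $x=j_{\lambda*}(y)$ and using the null-homotopy of $i_\mu\circ j_\lambda$. So your overall strategy (homotopy sections of the inclusions) is the right one, but both the construction of the section and the vanishing of the cross-composites, which carry the real content of the lemma, are missing from your proposal.
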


\begin{proof}
First, observe that $v_\lambda^\perp$ and $v_\mu^\perp$ are distinct hyperplanes in $E$.
Indeed, if $v_\lambda^\perp=v_\mu^\perp$, then $v_\lambda\in\langle v_\mu\rangle+\R(E)$ by Lemma~\ref{lem:perpperp}, contradicting that $\langle u,v\rangle\cap\R(E)=0$. 
Hence we can pick an isotropic vector 
$c\in v_\mu^\perp$
such that
$\omega(c,v_\lambda)=1$.
This is possible by applying Lemma~\ref{lem:isotropic complement} to $\langle v_\lambda\rangle\leq v_\mu^\perp$, since $v_\lambda^\perp\cap v_\mu^\perp$ has codimension one in $v_\mu^\perp$.
The above properties imply that
$c\notin u^\perp$
since
\[ v_\mu^\perp\cap u^\perp = \langle u,v\rangle^\perp \leq v_\lambda^\perp. \]
Consequently, 
$\langle c\rangle+\R(E)\in \pP_\mu$.
Now define
$\phi:v_\lambda^\perp\to v_\lambda^\perp$
by
\[ \phi(x)=x-\omega(c,x)v_\lambda. \]
This is an isometry of $v_\lambda^\perp$ (though not of $E$).
Notice that
$\im(\phi)\leq c^\perp$
and
$\phi|_{\R(E)}=id_{\R(E)}$.
Also
$x\in u^\perp$ if and only if $\phi(x)\in u^\perp$.

We now define a map
$ j_\lambda:\pP_\lambda \rar \PPi(E,U+\langle v\rangle)$
by
\[ j_\lambda(W) = \phi(W\cap v_\lambda^\perp)+\langle c\rangle. \]
This subspace is indeed isotropic, since $c$ is isotropic and orthogonal to $\im(\phi)$.
Also it contains $\R(E)$.
Lastly, since $W\in \pP_\lambda$, $W\cap v_\lambda^\perp$ contains a vector $w$ which is not in $u^\perp$.
Then $\phi(w)$ is in $v_\lambda^\perp$ but not in $u^\perp$.
As $\phi(w)$ and $c$ are distinct, given that $c\notin v_\lambda^\perp$, this implies that $\langle \phi(w), c\rangle$ is a complement to $\langle u,v\rangle^\perp$.
Hence $j_\lambda(W)\in \PPi(E,\langle U,v\rangle)$ as claimed.

Now, $j_\lambda$ is a poset map.
We claim that $i_\lambda\circ j_\lambda$ is homotopic to the identity map of $\pP_\lambda$, and that
$i_\mu\circ j_\lambda$ is null-homotopic.
For the latter claim, observe that
\[ \phi(W\cap v_\lambda^\perp)+\langle c\rangle \geq \langle c\rangle+\R(E), \]
while $\langle c\rangle+\R(E)\in \pP_\mu$.
For the former claim, observe that
\begin{align*}
j_\lambda(W) \ \geq\  \phi(W\cap v_\lambda^\perp) \ \leq\  W\cap v_\lambda^\perp+\langle v_\lambda\rangle \ \geq\  W\cap v_\lambda^\perp \ \leq\  W.
\end{align*}
All of the intermediate terms are contained in $\pP_\lambda$, because both $W\cap v_\lambda^\perp$ and $\phi(W\cap v_\lambda^\perp)$ contain a vector complementary to $u^\perp$.  This completes the claims.

Exchanging the roles of $\lambda$ and $\mu$ also produces a map 
$j_\mu:\pP_\mu \to \PPi(E,U+\langle v\rangle)$
such that $i_\mu\circ j_\mu$ is homotopic to the identity map of $\pP_\mu$, and that
$i_\lambda\circ j_\mu$ is null-homotopic.  These together prove part (1) of the lemma, since
\[ \begin{bmatrix} {i_\lambda}_* & {i_\mu}_* \end{bmatrix}
\begin{bmatrix} {j_\lambda}_* \\ {j_\mu}_* \end{bmatrix}
= \begin{bmatrix} I & 0 \\ 0 & I \end{bmatrix}. \]

For part (2), let $y\in\pi_1 \pP_\lambda$ and $x= {j_\lambda}_*(y)$.  Then, using the homotopy relations established above, we get that 
${i_\lambda}_*(x){i_\mu}_*(x)\inv$ maps to $y$.
\end{proof}

Now we can handle the case where $\dim(U/\R(E))=1$:
\begin{lemma}[Induction step 2]\label{lem:C-M for small U}
Suppose that $\PPi(E,U)$ is Cohen-Macaulay for all formed spaces $E$ and all $U\in\PPi(E)$ such that either $\dim E<d$, or $\dim E=d$ and $\dim(U/\R(E))>1$.  Then $\PPi(E,U)$ is Cohen-Macaulay for all formed spaces $E$ and all $U\in\PPi(E)$ such that $\dim E=d$ and $\dim(U/\R(E))=1$.
\end{lemma}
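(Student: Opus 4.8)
The proper intervals of $\PPi(E,U)$ were already identified in Lemma~\ref{lem:intervals}: its lower intervals are relative general linear buildings, which are Cohen--Macaulay by Theorem~\ref{thm:GL building properties}, and its upper intervals are isotropic buildings $\PPi((W')^\perp)$, which are Cohen--Macaulay by Theorem~\ref{thm:PE is C-M}. Hence all proper intervals are Cohen--Macaulay, and it remains only to prove that $\PPi(E,U)$ itself is spherical, i.e.\ $(n-1)$-connected, where $n:=\dim\PPi(E,U)=\g(E)-\dim U+\dim\R(E)$ by Lemma~\ref{lem:building dimensions}. By Lemma~\ref{lem:modR(E)} we may assume $\R(E)=0$, so $n=\g(E)-1$ and $\dim U=1$; if $\g(E)\leq 1$ there is nothing to prove (the poset is then zero-dimensional and nonempty by Lemma~\ref{lem:isotropic complement}), so assume $\g(E)\geq 2$, i.e.\ $n\geq1$. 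Fix an isotropic $v\in U^\perp-U$ and the subposets $\pP_\lambda=\pP_\lambda(E,u,v)\subseteq\PPi(E,U)$, $\lambda\in\F$, as in the discussion preceding Lemma~\ref{lem:union of Plambda}. They cover $\PPi(E,U)$ (Lemma~\ref{lem:union of Plambda}); each $\pP_\lambda$ is homotopy equivalent to $\PPi(v_\lambda^\perp,U+\langle v_\lambda\rangle)$ (Lemma~\ref{lem:Plambda}), which is Cohen--Macaulay by hypothesis since $\dim v_\lambda^\perp<\dim E$, of dimension $n-1$; and every intersection of two or more of the $\pP_\lambda$ equals the fixed subposet $\mathcal{Q}:=\PPi(E,U+\langle v\rangle)$ (Lemma~\ref{lem:Plambda cap Pmu}, together with $\mathcal{Q}\subseteq\pP_\nu$ for all $\nu$), which is Cohen--Macaulay by hypothesis since $\dim((U+\langle v\rangle)/\R(E))=2>1$, of dimension $n-1$. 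In particular $\pP_\lambda$ and $\mathcal{Q}$ are $(n-2)$-connected, with reduced homology concentrated in degree $n-1$.

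The plan is to build $\PPi(E,U)$ up one page at a time. Enumerate $\F=\{\lambda_0,\lambda_1,\dots\}$ and set $X_k=\pP_{\lambda_0}\cup\dots\cup\pP_{\lambda_k}$; then $X_k\cap\pP_{\lambda_{k+1}}=\bigcup_{i\leq k}(\pP_{\lambda_i}\cap\pP_{\lambda_{k+1}})=\mathcal{Q}$, so $X_{k+1}=X_k\cup_{\mathcal{Q}}\pP_{\lambda_{k+1}}$ is a union of subcomplexes glued along $\mathcal{Q}$. I claim by induction that each $X_k$ with $k\geq1$ is $(n-1)$-connected. For the base $X_1=\pP_{\lambda_0}\cup_{\mathcal{Q}}\pP_{\lambda_1}$ this is read off the Mayer--Vietoris sequence: surjectivity of $\tilde H_*(\mathcal{Q})\to\tilde H_*(\pP_{\lambda_0})\oplus\tilde H_*(\pP_{\lambda_1})$ (Lemma~\ref{lem:Plambda Pmu surjectivity}(1)) forces $\tilde H_j(X_1)=0$ for all $j\leq n-1$, including $\tilde H_0$ so that $X_1$ is connected. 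When $n\geq2$ the intersection $\mathcal{Q}$ is connected, van Kampen gives $\pi_1(X_1)=\pi_1(\pP_{\lambda_0})\ast_{\pi_1(\mathcal{Q})}\pi_1(\pP_{\lambda_1})$, and Lemma~\ref{lem:Plambda Pmu surjectivity}(2), applied with both orderings of $\lambda_0,\lambda_1$, shows every element of $\pi_1(\pP_{\lambda_0})$ and of $\pi_1(\pP_{\lambda_1})$ is of the form $\iota_{\lambda_0}(x)\iota_{\lambda_1}(x)^{-1}$ for some $x\in\pi_1(\mathcal{Q})$; reduced-word considerations in the free product force such an element to have trivial component in one factor, so both generating subgroups die in the amalgam and $\pi_1(X_1)=1$; Hurewicz then upgrades this to $(n-1)$-connectivity. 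The inductive step passing from $X_k$ to $X_{k+1}=X_k\cup_{\mathcal{Q}}\pP_{\lambda_{k+1}}$ is the identical Mayer--Vietoris/van Kampen computation with $X_k$ in place of $\pP_{\lambda_0}$ (using $\tilde H_*(X_k)=0$ below degree $n$ by induction, the surjection $\tilde H_{n-1}(\mathcal{Q})\twoheadrightarrow\tilde H_{n-1}(\pP_{\lambda_{k+1}})$, and Lemma~\ref{lem:Plambda Pmu surjectivity}(2) with $\lambda=\lambda_{k+1},\mu=\lambda_0$ to kill $\pi_1(\pP_{\lambda_{k+1}})$). If $\F$ is infinite one concludes by passing to the filtered colimit $\PPi(E,U)=\varinjlim_k X_k$, with which homology and $\pi_1$ commute. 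This shows $\PPi(E,U)$ is $(n-1)$-connected, hence spherical, hence Cohen--Macaulay.

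The genuinely hard inputs — the $\tilde H_*$- and $\pi_1$-surjectivity of Lemma~\ref{lem:Plambda Pmu surjectivity}, and the coincidence of all higher intersections of the pages (Lemma~\ref{lem:Plambda cap Pmu}) — are already in hand, so the remaining difficulty in this lemma is organizational: verifying that $X_k\cap\pP_{\lambda_{k+1}}$ equals $\mathcal{Q}$ on the nose so that the two-piece Mayer--Vietoris/van Kampen argument iterates cleanly over all of $\F$, and handling the low-dimensional cases separately — $n=1$, where the pages and $\mathcal{Q}$ need not be connected so only the joint $\tilde H_0$-surjectivity is available, and $n=2$, where the pages may fail to be simply connected, which is precisely the scenario that part~(2) of Lemma~\ref{lem:Plambda Pmu surjectivity} is designed to address.
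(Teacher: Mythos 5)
Your argument is, in substance, the paper's own: the same cover of $\PPi(E,U)$ by the pages $\pP_\lambda$, the same identification of each page (Lemma~\ref{lem:Plambda}) and of all pairwise intersections (Lemma~\ref{lem:Plambda cap Pmu}) with posets covered by the induction hypothesis, and the same surjectivity inputs from Lemma~\ref{lem:Plambda Pmu surjectivity} feeding a Mayer--Vietoris/van Kampen computation. The only organizational difference is that you establish full $(n-1)$-connectivity of each partial union (Mayer--Vietoris plus van Kampen plus Hurewicz at every stage), whereas the paper computes $\pi_1$ of the whole union in one van Kampen application over all pages and then runs its induction for homology alone; either bookkeeping is fine, and your amalgamated-product argument for killing $\pi_1$ via Lemma~\ref{lem:Plambda Pmu surjectivity}(2) is correct.

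The one genuine flaw is the indexing. You ``enumerate $\F=\{\lambda_0,\lambda_1,\dots\}$'' and finish by taking the colimit of the $X_k$ over $k\in\bbN$; this exhausts $\PPi(E,U)$ only when $\F$ is countable. For $\F=\bbR$, $\bbC$, or any other uncountable field the union of your $X_k$ is a proper subcomplex, so the argument as written says nothing about $\PPi(E,U)$ itself, while the lemma is asserted for arbitrary fields. The repair is immediate and uses only what you already proved: your inductive step (adjoining one new page to a union, with intersection exactly $\mathcal{Q}$ since all pairwise intersections coincide) shows that $X_S=\bigcup_{\lambda\in S}\pP_\lambda$ is $(n-1)$-connected for every finite $S\subset\F$ with at least two elements, and $\PPi(E,U)$ is the filtered colimit of these subcomplexes over finite subsets of $\F$, with which homotopy and homology commute because compact subsets of a CW complex lie in finite subunions. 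The paper handles the same point by a transfinite induction over arbitrary subsets $S\subset\F$, invoking at limit stages that homology commutes with directed colimits of subcomplexes; either formulation closes the gap.
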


\begin{proof}
Once again we may assume that $\R(E)=0$.   
The upper and lower intervals
\begin{align*}
\PPi(E,U)_{<W} = \PP(W/\R(E),(W\cap U^\perp)/\R(E)) \ \ \ \textrm{and}\ \ \ \PPi(E,U)_{>W} = \PPi(W^\perp)
\end{align*}
are already known to be Cohen-Macaulay by Theorem~\ref{thm:GL building properties} and Theorem~\ref{thm:PE is C-M}.
Hence we need only show that $\PPi(E,U)$ is spherical of dimension
\[ \dim\PPi(E,U) = \g(U^\perp) = \g(E)-\dim U = \g(E)-1 \]
(using Proposition~\ref{prop:induced form} for the last equality).
By our hypothesis and Lemma~\ref{lem:Plambda},
each subposet $\pP_\lambda$ is homotopy equivalent to a wedge of spheres of dimension
\[ \g(U+\langle v_\lambda\rangle^\perp) = \g(E)-2 \]
(which is one less than its actual dimension).
The same holds for the intersection $\pP=\pP_\lambda\cap \pP_\mu$,
for any $\lambda\neq \mu$ using now Lemma~\ref{lem:Plambda cap Pmu}.

By Lemma~\ref{lem:union of Plambda}, we have an equality of sets 
\[ \PPi(E,U)=\bigcup_{\lambda\in\F}\pP_\lambda. \]
Since $\pP_\lambda$ is upper-closed in $\PPi(E,U)$, this actually implies
$\abs{\PPi(E,U)}=\bigcup_{\lambda\in\F}\abs{\pP_\lambda}$
as well.

Assuming $\dim\PPi(E,U)\geq1$, it is therefore path connected, being a union of path-connected spaces with nonempty intersection.

When $\dim\PPi(E,U)\geq2$, we need to show it is 1-connected.  If $\dim\PPi(E,U)\geq3$, each $\pP_\lambda$, and their intersection $\pP$, is 1-connected.  By Seifert-van Kampen's theorem~\cite[Thm.~1.20]{Hatcher} (choosing a basepoint within $\pP$), $\PPi(E,U)$ is then 1-connected.
If $\dim\PPi(E,U)=2$, the $\pP_\lambda$'s and $\pP$ are still path-connected, so Seifert-van Kampen still applies to compute $\pi_1(\PPi(E,U))$, and gives zero because of Lemma~\ref{lem:Plambda Pmu surjectivity}(2).

Now it suffices to show that $\PPi(E,U)$ has its reduced homology concentrated in degree $\g(E)-1$.
We begin with $\abs{\pP_0}\cup\abs{\pP_1}$.  Its Mayer-Vietoris sequence  establishes that
$\tilde H_i(\abs{\pP_0}\cup\abs{\pP_1})=0$ for all $i$ except $i=\g(E)-1$ and $\g(E)-2$.
In those degrees, we get
\begin{align*}
0\to \tilde H_{\g(E)-1}(\abs{\pP_0}\cup\abs{\pP_1}) &\to
\tilde H_{\g(E)-2}(\abs{\pP_0}\cap\abs{\pP_1})\xrightarrow{\phi}
\tilde H_{\g(E)-2}\abs{\pP_0} \op \tilde H_{\g(E)-2}\abs{\pP_1} \\
&\to \tilde H_{\g(E)-2}(\abs{\pP_0}\cup\abs{\pP_1})\to 0.
\end{align*}
The map $\phi$ is surjective
by Lemma~\ref{lem:Plambda Pmu surjectivity}(1), establishing that the reduced homology of 
$\abs{\pP_0}\cup\abs{\pP_1}$
is concentrated in degree $\g(E)-1$. 

Now we argue with transfinite induction that
\[ P_S=\bigcup_{\lambda\in S}\abs{\pP_\lambda} \]
has its reduced homology concentrated in degree $\g(E)-1$ for any $S\subset\F$ of cardinality at least 2.  This will complete the proof, since
$P_\F=\abs{\PPi(E,U)}$.
It is necessary to check that
\[ P_{S\cup\lambda} = P_S\cup\abs{\pP_\lambda} \] has the desired property if $P_S$ does, and that
the union of a chain of $P_S$'s has the desired property if all of its terms do.  The latter is true because homology commutes with directed colimits of sub-simplicial complexes.
The former is true by a Mayer-Vietoris sequence:
\begin{align*}
0\to \tilde H_{\g(E)-1}(P_S)\to
\tilde H_{\g(E)-1}(P_S\cup\abs{\pP_\lambda}) &\to
\tilde H_{\g(E)-2}(P_S\cap\abs{\pP_\lambda})\xrightarrow{\phi} 
\tilde H_{\g(E)-2}\abs{\pP_\lambda} \\
&\to \tilde H_{\g(E)-2}(P_S\cup\abs{\pP_\lambda})\to 0.
\end{align*}
We have again used that the map $\phi$ is surjective by Lemma~\ref{lem:Plambda Pmu surjectivity}(1).
\end{proof}

\begin{proof}[Proof of Theorem~\ref{thm:PEU is CM}]
We prove the theorem by induction on the dimension of $E$. The poset $\PPi(E,U)$ can only exist if $\dim(E)\ge 2$, so that there may exist $U\in \PPi(E)$. If $\dim(E)=2$ and $\PPi(E)\neq\emptyset$, then $\R(E)=0$ and any $U\in \PPi(E)$  necessarily has dimension 1. In this case $\PPi(E,U)$ has dimension 0 by Lemma~\ref{lem:building dimensions} and the Cohen-Macaulay condition follows trivially. 
The induction step is given by  Lemmas~\ref{lem:C-M for big U} and~\ref{lem:C-M for small U} combined. 
\end{proof}

\section{Groups acting on the buildings}\label{sec:groupsandbuildings}

In this section, we define families of groups acting on the buildings and relative buildings of subspaces and isotropic subspaces. We study the relationships between these groups as well as properties of the actions. 

\begin{Def}\label{def:G and A}
(1) Let $V$ be a vector space and $V_0\leq V$ a subspace.  Define
\[ \A(V,V_0)=\set{g\in\G(V)}{g|_{V_0}=\textrm{id}_{V_0}} \]
to be the subgroup of the general linear group $\G(V)$ of automorphisms fixing $V_0$ pointwise.

\smallskip

\noindent (2)
Let $E=(E,q)$ be a formed space and $U\in\PPi(E)$. 
Define 
\[ \Ai(E,U) := \set{g\in\Gi(E)}{g|_{U}=\textrm{id}_{U}} \]
to be the subgroup of the bijective isometries $\Gi(E)$ of automorphisms fixing $U$ pointwise.
\end{Def}

The group $\A(V,V_0)$ just defined is closely related to the group 
\[ \A^T(V,V_0)=\set{g\in \GL(V)}
{gV_0=V_0\textrm{ and }g\textrm{ induces }\id_{V/V_0} \textrm{ on } V/V_0} \]
of Section~\ref{sec:vanishing of relative coinvariants}.
Indeed, these are isomorphic via duality, justifying the notation $\A^T$:
\begin{lemma}\label{lem:A and AT}
Let $V_0\leq V$ be vector spaces.  Dualizing gives an isomorphism
\[ \A^T(V,V_0) \cong \A(V^*,(V/V_0)^*),\quad g\mapsto (g^*)\inv. \]
\end{lemma}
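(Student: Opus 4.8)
The plan is to unwind the two definitions and check that the assignment $g \mapsto (g^*)^{-1}$ carries one group isomorphically onto the other. Recall that for $g \in \GL(V)$, the dual map $g^* : V^* \to V^*$ is defined by $g^*(\phi) = \phi \circ g$, and $(g^*)^{-1} = (g^{-1})^*$, so the assignment $g \mapsto (g^*)^{-1}$ is a group homomorphism $\GL(V) \to \GL(V^*)$ (it is contravariant on $g \mapsto g^*$, so composing with the inverse makes it covariant); it is moreover bijective, being its own inverse up to the canonical identification $V \cong V^{**}$.

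First I would recall the standard fact that for a subspace $W \le V$, the annihilator $W^\circ = \{\phi \in V^* : \phi|_W = 0\}$ is canonically isomorphic to $(V/W)^*$, and $\dim W^\circ = \dim V - \dim W$. The key point is to translate the defining conditions. An element $g \in \GL(V)$ lies in $\A^T(V,V_0)$ iff $gV_0 = V_0$ and $g$ induces the identity on $V/V_0$, equivalently iff $g(v) - v \in V_0$ for all $v \in V$. I would show this is equivalent to the condition that $(g^*)^{-1}$ fixes the subspace $(V/V_0)^* \cong V_0^\circ \le V^*$ pointwise, i.e.\ that $(g^*)^{-1} \in \A(V^*, V_0^\circ)$. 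Concretely: $(g^*)^{-1}\phi = \phi$ for all $\phi \in V_0^\circ$ means $\phi \circ g^{-1} = \phi$, i.e.\ $\phi(g^{-1} v) = \phi(v)$ for all $v \in V$ and all $\phi$ vanishing on $V_0$; this says $g^{-1}v - v \in V_0$ for all $v$, which (replacing $v$ by $gv$ and noting this is a two-sided condition on a linear automorphism) is exactly $g(v) - v \in V_0$ for all $v$, i.e.\ $g \in \A^T(V,V_0)$.

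Then I would assemble these observations: $g \mapsto (g^*)^{-1}$ restricts to a group homomorphism $\A^T(V,V_0) \to \A(V^*, (V/V_0)^*)$, it is injective since $g \mapsto (g^*)^{-1}$ is injective on all of $\GL(V)$, and the equivalence of conditions above shows it is surjective onto $\A(V^*,(V/V_0)^*)$ (any $h \in \A(V^*,(V/V_0)^*)$ is of the form $(g^*)^{-1}$ for a unique $g \in \GL(V)$, and that $g$ then lies in $\A^T(V,V_0)$). This gives the claimed isomorphism.

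I do not expect a genuine obstacle here — the statement is essentially bookkeeping with duals and annihilators. The only mild subtlety worth being careful about is the direction of the equivalence between ``$g$ fixes $V_0$, induces identity on $V/V_0$'' and ``$g - \id$ has image in $V_0$'': one should note that an automorphism $g$ with $g(v) - v \in V_0$ for all $v$ automatically satisfies $gV_0 \subseteq V_0$, hence $gV_0 = V_0$ by dimension count, so the two descriptions of $\A^T(V,V_0)$ really do coincide (this is already asserted in Section~\ref{sec:vanishing of relative coinvariants}). Everything else is formal.
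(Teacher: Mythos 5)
Your proposal is correct and follows essentially the same route as the paper: both use that $g\mapsto (g^*)^{-1}$ is an isomorphism $\G(V)\to\G(V^*)$ and then translate the defining conditions of $\A^T(V,V_0)$ into pointwise fixing of $(V/V_0)^*$, identified with the annihilator of $V_0$ in $V^*$. Your explicit use of the single condition $g(v)-v\in V_0$ is just a minor repackaging of the paper's two-step check (preservation of $V_0$, identity on $V/V_0$).
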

\begin{proof}
The map $\G(V)\to\G(V^*)$ sending $g\mapsto (g^*)\inv$ is an isomorphism, 
so we just need to check that
$g\in \A^T(V,V_0)$ if and only if $(g^*)\inv\in \A(V^*,(V/V_0)^*)$.
But, since $V_0$ is the intersection of the kernels of all elements in $(V/V_0)^*$,
$g$ preserves $V_0$ if and only if $g^*$ preserves $(V/V_0)^*$.
Furthermore, $g$ induces the identity map modulo $V_0$ if and only if 
$g^*$ equals the identity map on $(V/V_0)^*$.
\end{proof}

\begin{rem}\label{rem:AAT}
By definition, forgetting the form on $E$ allows us to consider $\Ai(E,U)$ as a subgroup of $\A(E,U)$.
But \cite[Lem~3.8]{Forms} shows that $\Ai(E,U)$ is a subgroup of $\A^T(E,U^\perp)$ as well. So there is a diagram of group inclusions: 
\vspace{-2mm}
$$\xymatrix@R-2pc{& \A(E,U)\ar@{^(->}[dr] \ \ \ & \\
\Ai(E,U)\ \ \ \ar@{^(->}[ur]\ar@{^(->}[dr] && \G(E)\\
& \A^T(E,U^\perp)\ar@{^(->}[ur] & 
}$$
\end{rem}

For vector spaces $V_0\le V$, the group $\G(V)$ acts on $\PP(V)$ and the group $\A(V,V_0)$ acts on $\PP(V,V_0)$ 
of Definition~\ref{def:GL building}. 
For a formed space $E$,
$\Gi(E)$ acts on the poset $\PPi(E)$,
and $\Ai(E,U)$ acts on the poset $\PPi(E,U)$ of Definition~\ref{def:iso buildings}.
This is because an isometry of $E$ preserves the isotropic subspaces and preserves $\R(E)$, and also preserves $U^\perp$ if it preserves $U$.

\begin{prop}\label{prop:transitive action}
Let $V_0\leq V$ be  vector spaces,  $E=(E,q)$ a formed space and $U\in\PPi(E)$.
The actions of  $\G(V)$ on $\PP(V)$, of $\A^T(V,V_0)$ on $\PP(V,V_0)$, of $\G(E)$ on $\PPi(E)$ and of $\Ai(E,U)$ on $\PPi(E,U)$ are all transitive on the elements of any given dimension.
\end{prop}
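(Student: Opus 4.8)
The four transitivity statements divide into two linear cases and two formed-space cases, and in each family the second statement is essentially the first one relativized. The plan is to treat $\G(V)$ on $\PP(V)$ first, then bootstrap to $\A^T(V,V_0)$ on $\PP(V,V_0)$; and similarly to treat $\Gi(E)$ on $\PPi(E)$ first, then deduce the statement for $\Ai(E,U)$ on $\PPi(E,U)$. Throughout, ``transitive on elements of a given dimension'' means: given $W,W'$ in the relevant poset with $\dim W=\dim W'$, some group element carries $W$ to $W'$.

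\textbf{Linear case.} For $\G(V)$ acting on $\PP(V)$: given proper nontrivial subspaces $W, W'$ of $V$ of equal dimension, pick bases of $W$ and $W'$, extend each to a basis of $V$, and let $g\in\G(V)$ be the linear map matching the first basis to the second; then $gW=W'$. For $\A^T(V,V_0)$ acting on $\PP(V,V_0)$: here $W,W'<V$ satisfy $W+V_0=V=W'+V_0$ and $\dim W=\dim W'$, hence (by the rank computation in Theorem~\ref{thm:GL building properties}, or directly by dimension count) $\dim(W\cap V_0)=\dim(W'\cap V_0)$. The idea is to build $g\in\G(V)$ with $gW=W'$ that moreover fixes $V/V_0$ pointwise. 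Choose a basis of $W\cap V_0$, extend it to a basis of $W$; do the same for $W'$; then since $W$ is a complement to $V_0$ modulo $W\cap V_0$ — more precisely, since $W/(W\cap V_0)\xrightarrow{\sim} V/V_0$ — we may, after rescaling the ``new'' basis vectors of $W$ and of $W'$, arrange that corresponding new basis vectors of $W$ and $W'$ are congruent modulo $V_0$. Finally extend by a common basis of $V_0$. The resulting $g$ sends $W$ to $W'$, preserves $V_0$, and induces the identity on $V/V_0$, so $g\in\A^T(V,V_0)$.

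\textbf{Formed-space case.} For $\Gi(E)$ acting on $\PPi(E)$: given isotropic $W,W'$ with $\R(E)<W,W'<E$ and $\dim W=\dim W'$, both contain $\R(E)$ and both have the property that $W\cap\K(E)=\R(E)=W'\cap\K(E)$ (as in the proof of Lemma~\ref{lem:dimmax}), so any linear isomorphism $W\to W'$ that restricts to the identity on $\R(E)$ is trivially an isometry between isotropic subspaces; by Witt's Lemma \cite[Thm~3.4]{Forms} it extends to $\tilde g\in\Gi(E)$, and $\tilde g W=W'$. For $\Ai(E,U)$ acting on $\PPi(E,U)$: given $W,W'\in\PPi(E,U)$ with $\dim W=\dim W'$, the rank formula in Lemma~\ref{lem:intervals} gives $\dim(W\cap U^\perp)=\dim(W'\cap U^\perp)$, and all of $W,W',U$ lie in $U^\perp$ except for a complement. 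One wants an isometry of $E$ fixing $U$ pointwise and carrying $W$ to $W'$. Construct a linear isometry $W+U\to W'+U$ that is the identity on $U$ and sends $W$ to $W'$ — this is possible because $W\cap U$ and $W'\cap U$ both equal $\R(E)$ (using $W+U^\perp=E$ and the argument in the proof of Lemma~\ref{lem:C-M for big U} that $W\cap U\le\R(E)$), after matching up compatible bases as in the linear relative case; then apply Witt's Lemma to extend it to $g\in\Gi(E)$. Since $g$ fixes $U$ pointwise, $g\in\Ai(E,U)$, and $gW=W'$.

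\textbf{Main obstacle.} The routine linear constructions and the two clean applications of Witt's Lemma are not where the difficulty lies; the delicate point is, in each \emph{relative} case, producing the ``local'' isomorphism ($W+V_0\to$ corresponding subspace, resp. $W+U\to W'+U$) that simultaneously (i) sends $W$ to $W'$, (ii) is the identity on the relevant fixed part ($V_0$, resp. $U$), and (iii) in the formed case is an isometry to which Witt's Lemma applies. Verifying that the intersections with the fixed subspace agree ($W\cap V_0$ vs. $W'\cap V_0$; $W\cap U=\R(E)=W'\cap U$) and that the quotient data match up — so that the rescaling step actually succeeds — is the crux, and it is exactly here that the hypotheses $W+V_0=V$ (resp. $W+U^\perp=E$) and the rank computations of Theorems~\ref{thm:GL building properties} and Lemma~\ref{lem:intervals} get used.
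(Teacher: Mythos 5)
Your treatment of the first three actions matches the paper's proof in substance: the absolute linear case is extension from a subspace, the relative linear case is exactly the paper's construction (the paper identifies chosen complements of $W\cap V_0$ and $W'\cap V_0$ via their canonical isomorphisms to $V/V_0$, which is your ``congruent modulo $V_0$'' condition — note this is achieved by re-choosing the lifts in $W'$, not by mere rescaling), and transitivity of $\Gi(E)$ on $\PPi(E)$ is Witt's Lemma in both treatments. Where you genuinely diverge is the fourth case: the paper disposes of it in one line by citing the relative Witt lemma \cite[Cor 3.9]{Forms}, whereas you derive it from the absolute Witt lemma \cite[Thm 3.4]{Forms} by first building an isometry $W+U\to W'+U$ that is the identity on $U$ and carries $W$ to $W'$. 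This route does work and has the advantage of not needing the relative statement, but your sketch leaves the decisive step only gestured at, and ``matching up compatible bases as in the linear relative case'' is not the right recipe: an arbitrary linear isomorphism fixing $U$ and sending $W$ to $W'$ is in general \emph{not} an isometry of $W+U$, because the form on $W+U$ is exactly the pairing between $W$ and $U$. What makes it work is that for $W\in\PPi(E,U)$ the map $W\to {}^{\sigma}(U/\R(E))^*$, $w\mapsto \omega_q(-,w)$, has kernel $W\cap U^\perp$ and restricts to an isomorphism on any complement $C$ of $W\cap U^\perp$ in $W$ (this is where $W+U^\perp=E$ and Lemma~\ref{lem:dimension of perp} enter); choosing the isomorphism $C\to C'$ through these two isomorphisms, extending by any isomorphism $W\cap U^\perp\to W'\cap U^\perp$ fixing $\R(E)$ (using $W\cap U=\R(E)=W'\cap U$) and by the identity on $U$, one checks that both $\omega_q$ and $Q_q$ are preserved, since $W$, $W'$, $U$ are isotropic. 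This is essentially the dual-pairing trick the paper uses in the proof of Proposition~\ref{prop:restriction split}, and it is the content your ``main obstacle'' paragraph points to without supplying.

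One further point you should not skip: you apply Witt's Lemma to the non-isotropic subspace $W+U$, while every use of \cite[Thm 3.4]{Forms} in this paper is for isotropic subspaces, where the compatibility with $\R(E)$ and $\K(E)$ is automatic. Here it still holds, but it requires a remark: any $k=w+u\in (W+U)\cap\K(E)$ has $w\perp U$ (pair $k$ against $U$), hence $Q_q(k)=Q_q(w)+Q_q(u)+\omega_q(w,u)=0$ in $\F/\Lambda$, so $(W+U)\cap\K(E)=\R(E)$, on which your map is the identity; the same holds for $W'+U$. With these two verifications added, your argument is a correct, more hands-on alternative to the paper's citation of \cite[Cor 3.9]{Forms}.
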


\begin{proof}
Transitivity of the action of $\GL(V)$ on the elements of a given rank in  $\PP(V)$  follows from the fact that any isomorphism of a subspace $W$ can be extended to an isomorphism of the whole space using any choice of complement to $W$ inside $V$. For the action of $A^T(V,V_0)$ on ~$\PP(V,V_0)$, recall that the rank of $W\in \PP(V,V_0)$ is $\dim(W\cap V_0)$. Given two elements $W,W'$ of the same rank, we pick an isomorphism $W\cap V_0\to W'\cap V_0$ and extend it to a isomorphism of $V_0$. Now extend this isomorphism to the rest of $V$ by picking a complement $W_0^{(')}$ of $W^{(')}\cap V_0$ inside $W^{(')}$ and identify these complements via their canonical isomorphism to $V/V_0$.   
For the action of $\Gi(E)$ on $\PPi(E)$, transitivity is given by Witt's Lemma \cite[Thm 3.4]{Forms} and for the action of $\Ai(E,U)$ on $\PPi(E,U)$, it is given by \cite[Cor 3.9]{Forms}. 
\end{proof}

Witt's Lemma \cite[Thm 3.4]{Forms} gives that any desired linear automorphism of an isotropic subspace $W\in\PPi(E)$ preserving the radical can be achieved by an  isometry of a  $E$.  But in fact something much stronger is true, as we will see now. We start by fixing some notation.

For $X$ an element in a building $\pP$ of subspaces of a vector space, and $\GA$ a subgroup of the general linear group acting on $\pP$, we denote by $\GA_X$ the stabilizer of $X$, that is the subgroup of elements of $\GA$ preserving $X$ as a set, and $\GA_{id_X}$ the subgroup of elements fixing $X$ elementwise.

\begin{prop}\label{prop:restriction split}
Let $E=(E,q)$ be a formed space.\\
(1) For any $W\in\PPi(E)$, the map
\[ \Gi(E)_W \to \G(W)_{\R(E)}\times\Gi(W^\perp/W) \]
sending $f$ to its restriction to $W$ and its induced map on $W^\perp/W$,
is split surjective. 

\smallskip

\noindent
(2) For any $U\in\PPi(E)$ and $W\in\PPi(E,U)$, the analogous map
\[ \Ai(E,U)_W \to \A^T(W,W\cap U^\perp)_{id_{\R(E)}}\times\Gi(W^\perp/W) \]
is split surjective. 
\end{prop}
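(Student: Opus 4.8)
The plan is to verify that both maps are well defined and then to build explicit homomorphic sections from a hyperbolic decomposition of $E$ adapted to $W$, and (for~(2)) also to $U$. For well-definedness, note that any bijective isometry of $E$ preserves the radical $\R(E)$, and if it stabilizes $W$ it also stabilizes $W^\perp$; so it restricts to an element of $\G(W)_{\R(E)}$ and induces a bijective isometry of the formed space $W^\perp/W$, whose radical is trivial by Lemma~\ref{lem:radical of perp} and which carries the induced form of Lemma~\ref{lem:modR(E)} (the relevant genus being as in Proposition~\ref{prop:induced form}); this gives the map in~(1). For~(2), an $f\in\Ai(E,U)_W$ fixes $U\supseteq\R(E)$ pointwise, hence restricts to $\G(W)_{id_{\R(E)}}$; it stabilizes $U^\perp$, hence $W\cap U^\perp$; and it lies in $\A^T(E,U^\perp)$ by Remark~\ref{rem:AAT} (i.e.\ \cite[Lem~3.8]{Forms}), so it induces the identity on $W/(W\cap U^\perp)\iso E/U^\perp$. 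Thus $f|_W\in\A^T(W,W\cap U^\perp)_{id_{\R(E)}}$, so the map in~(2) is defined.

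For the section in~(1), write $\overline W=W/\R(E)$. Since $\Phi(\overline W)$ is non-degenerate (it is $\iso\HH^{\op\dim\overline W}$, with trivial kernel by Lemma~\ref{lem:props of sum}), Lemma~\ref{lem:isotropic complement} provides a decomposition $E=\R(E)\op\Phi(\overline W)\op C$ with $\Phi(\overline W)=\overline W\op D$ ($\overline W$ and $D$ the two transverse Lagrangians), $W=\R(E)\op\overline W$, $W^\perp=W\op C$, and $C\to W^\perp/W$ an isometry. Identifying $D\iso{}^\s\overline W^*$ via the hyperbolic pairing, I would define, for $(g,h)$ with $\overline g\in\GL(\overline W)$ the map induced by $g$ on $W/\R(E)$, the section $s(g,h)$ to act as $g$ on $W$, as $(\overline g^{-1})^*$ on $D$, and as $h$ on $C$. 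That $s(g,h)$ is a bijective isometry will follow because the components of $g$ landing in the radical $\R(E)$ affect neither $\omega_q$ nor $Q_q$, and because $\overline g\op(\overline g^{-1})^*=\Phi(\overline g)$ is an isometry of $\Phi(\overline W)$ by functoriality of $\Phi$; that $s$ is a group homomorphism follows from the contravariance of $V\mapsto{}^\s V^*$. By construction $s(g,h)$ stabilizes $W$, restricts to $g$ there, and induces $h$ on $W^\perp/W$, so it splits the map in~(1).

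For~(2), I would choose the Lagrangian $D$ above compatibly with $U$. Write $W=\R(E)\op A\op B$ with $\R(E)\op A=W\cap U^\perp$, and let $\overline{U}'$ be a complement of $\R(E)$ in $U$, so $U=\R(E)\op\overline{U}'$. Using that $\overline{U}'^{\,\perp}=U^\perp$ (since $\R(E)\le\K(E)$) and $B\cap U^\perp\le B\cap(W\cap U^\perp)=0$, one checks that $\omega_q$ pairs $B$ with $\overline{U}'$ perfectly and $\dim B=\dim\overline{U}'$, so $B\op\overline{U}'\iso\Phi(B)$; similarly $A\op\overline{U}'$ is isotropic, and applying Lemma~\ref{lem:isotropic complement} inside $(B\op\overline{U}')^\perp$ produces a Lagrangian $A^*$ with $A\op A^*\iso\Phi(A)$ orthogonal to $B\op\overline{U}'$. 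One may therefore take $\overline W=A\op B$ and $D=A^*\op\overline{U}'$, with $A^*$ dual to $A$ and $\overline{U}'$ dual to $B$ in $\Phi(\overline W)=\Phi(A)\op\Phi(B)$. Now if $g\in\A^T(W,W\cap U^\perp)_{id_{\R(E)}}$, then $\overline g$ preserves $A$ and induces the identity on $\overline W/A\iso B$, so $(\overline g^{-1})^*$ is the identity on the summand of $D$ dual to $B$, namely $\overline{U}'$; as $g$ also fixes $\R(E)$ pointwise, the section $s$ of~(1), restricted to $\A^T(W,W\cap U^\perp)_{id_{\R(E)}}\times\Gi(W^\perp/W)$, fixes $U$ pointwise and hence lands in $\Ai(E,U)_W$, splitting the map in~(2).

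I expect the main obstacle to be the bookkeeping around the adapted decomposition: arranging $D$ to be simultaneously a Lagrangian of $\Phi(\overline W)$, complementary to $W^\perp$, and (in~(2)) to contain $\overline{U}'$ as the sub-Lagrangian dual to $B$, which rests on the orthogonal-complement lemmas of Appendix~\ref{app:isotropic}, on Lemma~\ref{lem:isotropic complement}, and on Witt's lemma \cite[Thm~3.4]{Forms}; and, secondarily, the verification that the blockwise map $s(g,h)$ is a genuine isometry — respecting $Q_q$ and not merely $\omega_q$ — where the key point is once more that $\R(E)$ is the radical, so $Q_q$ vanishes on everything that maps into it.
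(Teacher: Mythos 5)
Your proposal is correct and follows essentially the paper's own route: the splitting is the blockwise map $g\oplus(\overline g^{\,-1})^*\oplus h$ with respect to a decomposition $E=W\oplus D\oplus C$, where $D$ is an isotropic complement to $W^\perp$ identified with ${}^\sigma(W/\R(E))^*$ via $\omega_q$ and $C$ maps isometrically onto $W^\perp/W$, and in case (2) the complement is adapted so as to contain a complement of $\R(E)$ in $U$, with the same annihilator-of-$(W\cap U^\perp)$ argument showing the section fixes $U$ pointwise. The only deviation is cosmetic: you construct the adapted complement $D=A^*\oplus\overline U'$ by hand (re-deriving the content of Lemma~\ref{lem:decomp} along the way), where the paper simply invokes the clause of Lemma~\ref{lem:isotropic complement} that the isotropic complement may be chosen to contain a prescribed isotropic $Z_0\le U$ meeting $W^\perp$ trivially.
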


The map in the second part of the statement makes sense because,
by \cite[Lem 3.8]{Forms} (see also Remark~\ref{rem:AAT}), 
restriction from $E$ to $W$ defines a homomorphism from $\Ai(E,U)_W$ to 
\[ \A(W,W\cap U)\cap \A^T(W,W\cap U^\perp), \] 
which is equal to $\A^T(W,W\cap U^\perp)_{id_{\R(E)}}$ given 
that $W\cap U=\R(E)$
by \cite[Lem 3.10]{Forms}.  

The proof of the proposition will make use of the following lemma: 

\begin{lemma}\label{lem:decomp}
Let $E=(E,q)$ be a formed space and $W\in\PPi(E)$.
For any complement $C$ to $W^\perp$ in $E$, 
there exists $E'\leq E$ such that
\begin{enumerate}
\item $E=W\op C\op E'$,
\item $W^\perp=W\op E'$,
\item $E'$ is orthogonal to $W\op C$.
\end{enumerate}
In particular, $E'$ projects isomorphically and isometrically onto $W^\perp/W$.
\end{lemma}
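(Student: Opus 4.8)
# Proof Proposal for Lemma \ref{lem:decomp}

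The plan is to build $E'$ by a dimension count combined with the orthogonality structure coming from the nondegenerate pairing between $W$ and $C$. First I would observe that since $C$ is a complement to $W^\perp$ in $E$, the pairing $\omega_q$ restricts to a perfect pairing between $W$ and $C$ in the following sense: the composite $C \hookrightarrow E \xrightarrow{\flat_q} {}^\sigma E^* \to {}^\sigma W^*$ is injective (its kernel is $C \cap W^\perp = 0$) and, by Lemma~\ref{lem:dimension of perp} applied to $W$ (using $W \cap \K(E) = \R(E)$, and noting $\R(E) \le W$ since $W$ is isotropic and contains $\R(E)$), we have $\dim(W^\perp) = \dim E - \dim W + \dim\R(E)$, hence $\dim C = \dim E - \dim W^\perp = \dim W - \dim\R(E)$. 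So $C$ pairs faithfully with $W/\R(E)$; in particular $W \cap C = 0$ (any vector of $W\cap C$ lies in $W \cap W^\perp \supseteq W$... more carefully: $C \cap W^\perp = 0$ and $W \subseteq W^\perp$, so $W \cap C = 0$), and $W \oplus C$ has dimension $2\dim W - \dim\R(E)$.

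Next I would define $E' := (W \oplus C)^\perp$ and verify the three properties. Property (3) holds by definition. For property (1), I need $E = W \oplus C \oplus E'$: the sum $W \oplus C$ meets $E'$ trivially because if $x \in (W\oplus C) \cap (W\oplus C)^\perp$ then $x$ lies in the radical of the form restricted to $W \oplus C$, but that restricted form is nondegenerate --- its matrix in a basis adapted to $W$ and $C$ is block-antidiagonal-ish with the perfect $W$-$C$ pairing off the block and the (possibly degenerate) form on $W$, which is zero since $W$ is isotropic, and on $C$; one checks the radical of this is zero precisely because the $W$-$C$ pairing is perfect and $W \supseteq \R(E) = W \cap \K(E)$. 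Actually the cleanest route: $W \oplus C$ is nondegenerate as a formed space since $\K(W \oplus C) \subseteq \K(E) \cap (W\oplus C)$, and an element of $\K(E)$ in $W \oplus C$ that pairs trivially with all of $W$ must (by perfectness of the $C$-$W^*$ map) have zero $C$-component, hence lie in $W \cap \K(E) = \R(E) \subseteq W$, but $\R(E)$ is exactly killed... hmm, $\R(E)$ need not be zero. The right statement is that $W \oplus C$ modulo its radical is nondegenerate and the radical of $W\oplus C$ is $\R(E)$; then by Lemma~\ref{lem:isotropic complement} (or the standard orthogonal-decomposition lemma for a nondegenerate subspace, applied after quotienting the radical) one gets $E = (W \oplus C) + (W\oplus C)^\perp$ with intersection $\R(W\oplus C) = \R(E) \subseteq W$. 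This gives $\dim E' = \dim E - \dim(W \oplus C) + \dim\R(E)$, and then a dimension count combined with property (2) makes property (1) precise; alternatively one adjusts $E'$ to a complement of $\R(E)$ inside $(W\oplus C)^\perp$ so that the sum becomes direct.

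For property (2), $W^\perp = W \oplus E'$: the inclusion $\supseteq$ is clear since $W \subseteq W^\perp$ and $E' = (W\oplus C)^\perp \subseteq W^\perp$; for $\subseteq$ and directness I would use dimensions, $\dim W + \dim E' = \dim W + (\dim E - \dim(W\oplus C) + \dim\R(E))$, substitute $\dim(W\oplus C) = 2\dim W - \dim\R(E)$ and $\dim W^\perp = \dim E - \dim W + \dim\R(E)$, and check equality (taking care that, if $E'$ was chosen as a complement of $\R(E)$ in $(W\oplus C)^\perp$, one first absorbs $\R(E) \subseteq W$). The final sentence --- that $E'$ projects isomorphically and isometrically onto $W^\perp/W$ --- is immediate from property (2) (isomorphism) plus property (3) together with Proposition~\ref{prop:induced form} / the definition of the induced form on $W^\perp/W$, since the quotient form on $W^\perp/W$ pulls back along $E' \hookrightarrow W^\perp$ to $q|_{E'}$ because the discrepancy is measured by pairings against $W$, which vanish on $E'$.

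\textbf{Main obstacle.} The one genuinely delicate point is handling the radical $\R(E)$ correctly: when $\R(E) \neq 0$ the ``nondegenerate summand + orthogonal complement'' decomposition does not split off cleanly, and I will need to be careful about whether $E'$ should literally be $(W\oplus C)^\perp$ or a chosen complement of $\R(E)$ inside it, so that $E = W \oplus C \oplus E'$ is an honest direct sum while still having $E'$ orthogonal to $W \oplus C$ (this is consistent since $\R(E) \subseteq W$, so removing $\R(E)$ from the third summand doesn't destroy orthogonality or the equation $W^\perp = W \oplus E'$). Pinning down the bookkeeping here --- and invoking the right form of Lemma~\ref{lem:isotropic complement} and Lemma~\ref{lem:dimension of perp} --- is where the actual work lies; everything else is linear algebra and dimension counting.
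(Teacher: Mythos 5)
Your proposal is correct and, after your mid-course correction, lands on exactly the paper's construction: $E'$ is taken to be a complement of $\R(E)$ inside $(W\op C)^\perp$, with the key facts $(W\op C)\cap\K(E)=\R(E)$ and $W\cap C^\perp=\R(E)$ doing the work. The only differences are cosmetic: you verify (1) and (2) by dimension counts via Lemma~\ref{lem:dimension of perp} and a perfect-pairing observation, whereas the paper uses the identities $C^\perp\cap W=\R(E)$, $C^\perp+W=E$ and $W^\perp=(W+C)^\perp+W$ from Lemmas~\ref{lem:capperp} and~\ref{lem:perpperp}; also, the splitting result you want to cite is Lemma~\ref{lem:perp of nondegenerate} (applied after quotienting the radical), not Lemma~\ref{lem:isotropic complement}.
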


\begin{proof}
Using Lemmas~\ref{lem:capperp} and~\ref{lem:perpperp} and the fact that 
$C+W^\perp=E$, we get 
\[ C^\perp\cap(W+\K(E))=C^\perp\cap (W^\perp)^\perp=(C+W^\perp)^\perp=\K(E) \]
and so
$C^\perp\cap W=\R(E)$. 
Also
$C\cap W^\perp=0$
implies
\[ C^\perp+W=C^\perp+(W^\perp)^\perp=(C\cap W^\perp)^\perp=E. \]
Let $E'$ be a complement to $\R(E)$ in $(W\op C)^\perp$.
We have
\[ W\cap (W\op C)^\perp\leq W\cap C^\perp=\R(E), \]
so $E'$ is disjoint from $W$.
In fact,
$W\op E'=W^\perp$, 
since the left is contained in the right and
\[ W^\perp=W^\perp\cap(C^\perp+W)=(W+C)^\perp+W. \]
It then follows that
$E=(W\op C)\op E'$, 
with $W\op C$ orthogonal to $E'$.
\end{proof}

\begin{proof}[Proof of Proposition~\ref{prop:restriction split}]
We will prove part (2); part (1) is similar (simply set $U=\R(E)$, and skip the last paragraph).
Using that $U\cap W=\R(E)$ by \cite[Lem 3.10]{Forms}, 
and applying Lemma~\ref{lem:isotropic complement} 
 to an arbitrary complement $Z_0$ of $\R(E)$ in $U$, we can 
choose a linear complement $C$ of $W^\perp$, 
\[ C\op W^\perp=E \]
which is isotropic, and such that  $U\leq C+\R(E)$.

Let $E'$ be a subspace of $E$ with the properties guaranteed in Lemma~\ref{lem:decomp}: 
$E=W\op C\op E'$, $W\op E'=W^\perp$, and 
$E'$ projects isomorphically and isometrically onto $W^\perp/W$ (where the latter is given the induced form of Proposition~\ref{prop:induced form}).
Identifying $E'$ with this quotient then defines a homomorphism
\[\Gi(W\op C)_W\times\Gi(W^\perp/W) \rar  \Gi(E)_W. \]
Furthermore, since $U\leq C+\R(E)\leq C\op W$,
this homomorphism restricts to a map
\[\Ai(W\op C,U)_W\times\Gi(W^\perp/W) \rar  \Ai(E,U)_W. \]
This map and the one in the statement fit in the following commuting triangle: 
\[\xymatrix{ \Ai(E,U)_W \ar[rr]&& \A^T(W,W\cap U^\perp)_{id_{\R(E)}}\times\Gi(W^\perp/W) \ar@{-->}@/^1pc/[d]\\
&& \Ai(W\op C,U)_W\times\Gi(W^\perp/W) \ar[llu]\ar[u] }\]
where the map $\Ai(W\op C,U)_W\to \A^T(W,W\cap U^\perp)_{id_{\R(E)}}$ is the restriction. We will construct a section of the top horizontal map by constructing one of the vertical map.

Observe that the map
\[ \phi:C\to {^\sigma(W/\R(E))^*},\quad v\mapsto\omega_q(-,v) \]
is injective as $C\cap W^\perp=0$.
It is bijective by a dimension count using Lemma~\ref{lem:dimension of perp}:
\[ \dim C = \codim W^\perp = \dim W-\dim(\R(E)). \]
We define a homomorphism
\[ \Psi:\A^T(W,W\cap U^\perp)_{id_{\R(E)}}\to \Ai(W\op C,U)_W\]
by sending $f$ to $f\op\tilde f$, where
\[ \tilde f=\phi\inv\circ {f^*}\inv\circ\phi. \]
(Here $f$ preserves $\R(E)$ and hence induces a map on $W/\R(E)$.)
The map $\Psi$ will define a homomorphism
if we can check that it is well-defined: that is, that $f\op\tilde f$ is an isometry of $W\op C$ that fixes $U$---by construction, $\Psi(f)$ preserves $W$.
Since both $W$ and $C$ are isotropic, $\Psi(f)$ is an isometry if and only if for all $w\in W$, $c\in C$,
\[ \omega_q(f(w),\tilde f(c)) =
\omega_q(w,c), \]
and $\tilde f$ is precisely defined so that this holds. 
Lastly we need to check that $\Psi(f)$ is the identity on $U$ if $f$ the identity modulo $W\cap U^\perp$ and the identity on $\R(E)$.  
Since $U=(U\cap C)\op\R(E)$, it suffices to show that $\tilde f$ is the identity on $U\cap C$.  Let $u\in U\cap C$.  Then
\[ \phi(u)|_{U^\perp}=0. \]
Since $f$ is the identity modulo $W\cap U^\perp$,
\[ f^*:{^\sigma(W/\R(E))^*}\to{^\sigma(W/\R(E))^*} \]
is the identity on the subspace of those elements which vanish on 
$(W\cap U^\perp)/\R(E)$.
But $\phi(u)$ is such an element.
It follows that $\tilde f(u)=u$, as needed.
\end{proof}

Finally, we will need the following result in Section~\ref{sec:stability}:

\begin{lemma}\label{lem:ker of restriction}
Let $E=(E,q)$ be a formed space, $U\in\PPi(E)$, and $W\in\PPi(E,U)$.  Then
\[ \Ai(U^\perp/U,W\cap U^\perp/\R(E)) \cong \ker\left(\Ai(E,U)_W\xrightarrow{res}\G(W)\right), \]
naturally with respect to isometries $E\to E'$ inducing a bijection on radicals.
\end{lemma}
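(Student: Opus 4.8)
The plan is to exhibit an explicit isomorphism between the two groups by sending an isometry $g\in\Ai(E,U)_W$ with $g|_W=\id_W$ to the isometry it induces on $U^\perp/U$. First I would check that this is well-defined: any $g\in\Gi(E)$ with $gU=U$ preserves $U^\perp$, and since $g$ also preserves $\R(E)$ (being an isometry) and fixes $W$ pointwise, it induces an isometry $\bar g$ on the formed space $U^\perp/U$ (with the induced form from Proposition~\ref{prop:induced form}), and this $\bar g$ fixes $(W\cap U^\perp)/\R(E)$... wait, one must be careful: the image of $W\cap U^\perp$ in $U^\perp/U$ is $(W\cap U^\perp)/(W\cap U)=(W\cap U^\perp)/\R(E)$ using $W\cap U=\R(E)$ from \cite[Lem 3.10]{Forms}, so indeed $\bar g$ lies in $\Ai(U^\perp/U,(W\cap U^\perp)/\R(E))$. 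This gives a homomorphism $\Ai(E,U)_W\to \Ai(U^\perp/U,(W\cap U^\perp)/\R(E))$, and I would restrict attention to its kernel on the subgroup where $g|_W=\id$, i.e.\ construct the map
\[ \Theta:\ker\left(\Ai(E,U)_W\xrightarrow{res}\G(W)\right)\rar \Ai(U^\perp/U,(W\cap U^\perp)/\R(E)). \]

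Next I would prove $\Theta$ is injective. Suppose $g$ fixes $W$ pointwise and induces the identity on $U^\perp/U$; I want $g=\id_E$. Using Lemma~\ref{lem:decomp} (or rather the decomposition set up in the proof of Proposition~\ref{prop:restriction split}), pick an isotropic complement $C$ to $W^\perp$ in $E$ with $U\leq C\oplus\R(E)\subseteq C\oplus W$, and pick $E'$ with $E=W\oplus C\oplus E'$, $W^\perp=W\oplus E'$, $E'\perp(W\oplus C)$, $E'\cong W^\perp/W$ isometrically. Since $g$ fixes $W$ and $W^\perp=W\oplus E'\subseteq U^\perp$ (because $W\in\PPi(E,U)$ gives $W+U^\perp=E$, hence $W^\perp\cap U^{\perp\perp}$... more directly: $W\leq U^\perp$ forces $U\leq W^\perp$, so $W^\perp\leq U^\perp$), the induced map on $U^\perp/U$ records $g$ on $W^\perp$ modulo $U\cap W^\perp$; combined with $g|_W=\id$ this should pin down $g|_{W^\perp}=\id$, and then $g$ acting trivially on $W$, on $W^\perp/W$, and preserving the isotropic $C$ forces, via the pairing $C\cong{}^\sigma(W/\R(E))^*$, that $g|_C=\id$ too, whence $g=\id$. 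Then I would prove surjectivity: given $\bar h\in\Ai(U^\perp/U,(W\cap U^\perp)/\R(E))$, lift it along $E'\cong W^\perp/W$ and the analysis of $U^\perp$ to an isometry of $E$ that fixes $W$, using the same $\phi$-trick from Proposition~\ref{prop:restriction split}'s proof to define the action on $C$ so as to make the whole thing an isometry; one checks the lift fixes $U$ (because $\bar h$ fixes the image of $W\cap U^\perp$, and $U\leq C\oplus\R(E)$ with $U\cap C$ controlled by duality with $(W\cap U^\perp)/\R(E)$, exactly as in the last paragraph of that proof).

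Finally, naturality: given an isometry $\alpha:E\to E'$ inducing a bijection on radicals, it sends $U$, $W$, $U^\perp$ compatibly, hence induces isomorphisms of all the groups in sight, and $\Theta$ visibly commutes with conjugation by $\alpha$ since it is defined purely by ``restrict/pass to subquotient.''

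The main obstacle I expect is the surjectivity step: producing, from an abstract isometry $\bar h$ of the subquotient $U^\perp/U$ fixing $(W\cap U^\perp)/\R(E)$, an honest isometry of all of $E$ fixing $W$ pointwise and fixing $U$ pointwise. This requires carefully juggling the three-way decomposition $E=W\oplus C\oplus E'$ together with the constraint $U\leq C\oplus\R(E)$ and the identification $C\cong{}^\sigma(W/\R(E))^*$, in order to both (a) extend the action on the "$E'$-part" to the "$C$-part" so that the result is an isometry, and (b) verify the extension genuinely fixes $U$ and not merely $W$. This is essentially a refinement of the construction in the proof of Proposition~\ref{prop:restriction split}, so I would model it closely on that argument; the bookkeeping about which vectors of $U$ lie in $C$ versus $\R(E)$, and the vanishing condition $\phi(u)|_{U^\perp}=0$, is the delicate part.
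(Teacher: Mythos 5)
Your forward map is the same as the paper's, but both halves of your bijectivity argument run into trouble, and the root cause is that you chose the wrong decomposition. Concretely, your injectivity step rests on the claim that $W\leq U^\perp$, hence $W^\perp\leq U^\perp$. This is false: since $U$ is isotropic and strictly contains $\R(E)$, $U\not\leq\K(E)$, so $U^\perp$ is a \emph{proper} subspace of $E$, and the defining condition $W+U^\perp=E$ for $W\in\PPi(E,U)$ then forces $W\not\leq U^\perp$; nor does $W^\perp\leq U^\perp$ hold in general (already for $E=\HH^{\op 2}$, $U=\langle e_1\rangle$, $W=\langle f_1\rangle$ one has $f_1\in W^\perp\setminus U^\perp$). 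What one can actually extract from ``$g$ fixes $W$ pointwise and induces the identity on $U^\perp/U$'' is that $g(x)-x\in U\cap W^\perp$ for $x\in U^\perp$, and $U\cap W^\perp\leq \K(E)\cap U=\R(E)$ because $(W+U^\perp)^\perp=\K(E)$. So it is the radical, not any inclusion $W^\perp\leq U^\perp$, that controls the kernel: when $\R(E)=0$ injectivity follows at once from $E=W+U^\perp$, but in the degenerate case one must genuinely rule out isometries of the form $x\mapsto x+r(x)$ with $r$ valued in $\R(E)$, and your argument as written does not address this point at all.

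Second, and more structurally, you apply Lemma~\ref{lem:decomp} to $W$ (so $E=W\op C\op E'$ with $E'\cong W^\perp/W$), whereas the group you must produce elements of, and lift from, is an isometry group of $U^\perp/U$, not of $W^\perp/W$; ``lifting $\bar h$ along $E'\cong W^\perp/W$'' does not parse, which is exactly why your surjectivity step looks delicate. The paper's proof instead applies the decomposition to $U$, with the crucial extra choice that the complement $C$ of $U^\perp$ is taken \emph{inside} $W$ (possible precisely because $W+U^\perp=E$): then $E=U\op C\op E'$ with $U^\perp=U\op E'$, $E'$ orthogonal to $U\op C$ and isometric to $U^\perp/U$, and the inverse homomorphism is simply ``extend by the identity on $U\op C$.'' No analogue of the dual-basis $\phi$-trick from Proposition~\ref{prop:restriction split} is needed, and the lift automatically fixes $W$ pointwise because $W=(W\cap U^\perp)\op C$. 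The missing idea in your plan is thus to decompose with respect to $U$ and to place $C$ inside $W$; with the decomposition relative to $W$ the construction of the inverse does not go through as sketched.
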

Recall here that $U\cap W=\R(E)$ by \cite[Lem 3.10]{Forms}. 

\begin{proof}
The kernel of the restriction to $W$ in the statement is 
\begin{align*}
K 
&= \set{g\in\Gi(E)}{g|_{W+U}=id_{W+U}}
\end{align*}
There is a homomorphism 
\[ \phi:K\to\Ai(U^\perp/U,W\cap U^\perp/\R(E)) \]
because each $g\in K$ preserves $U$, and hence also $U^\perp$, and fixes $W$.
We claim that this map is an isomorphism; we will define an inverse.   

To describe a map in the other direction, choose a complement $C$ to $U^\perp$ in $E$, which is contained in $W$; this is possible since $W+U^\perp=E$.
Apply Lemma~\ref{lem:decomp}
to get a subspace $E'\leq E$ such that
$E=U\op C\op E'$, with 
$U^\perp=U\op E'$ and 
$E'$ is orthogonal to $U\op C$.
As $E'$ projects isomorphically and isometrically onto $U^\perp/U$, 
this determines an injection
\[ \psi:\Gi(U^\perp/U)\cong \Gi(E')\hookrightarrow\Gi(E), \]
by extending with the identity map of $U\oplus C$.
By construction $\psi(f)$ is the identity on $U$ (which contains $\R(E)$).
When $f$ is the identity on $W\cap U^\perp$, $\psi(f)$ is also the identity on $W$, since
\[ W=W\cap(U^\perp\op C)=(W\cap U^\perp)\op C. \]
Therefore, $\psi$ restricts to a homomorphism
\[ \Ai(U^\perp/U,W\cap U^\perp)\to K. \]
One then verifies that $\psi$ is an inverse to $\phi$.  Hence the kernel of restriction to $W$ is as claimed.
\end{proof}

\section{Homological stability}\label{sec:stability}

In this section, we consider the  group inclusions
\begin{equation}\label{equ:GLstab}
 \G(V)\rar \G(V\op \F) \ \ \ \textrm{and}\ \ \ \A(V,V_0)\rar \A(V\op \F,V_0) 
\end{equation}
as well as 
\begin{equation}\label{equ:Gistab}
\ \ \ \ \Gi(E)\to \Gi(E\op \HH) \ \ \ \textrm{and}\ \ \ \Ai(E,U)\rar \Ai(E\op \HH,U) 
\end{equation}
given by extending an automorphism to be the identity on the added summand $\F$ or $\HH$, where $\HH$ is   the hyperbolic space of Definition~\ref{def:hyperbolic plane}. 
These maps  are our main {\em stabilization maps}. 
We will also need the inclusion
\begin{equation}\label{equ:ATstab}
\A^T(V,V_0)\rar \A^T(V\op \F,V_0\op\F), 
\end{equation}
likewise by extending by the identity of the added summand; it corresponds to the above stabilization map
$\A(V,V_0)\to\A(V\op \F,V_0)$ via the identification of Lemma~\ref{lem:A and AT}.

Our main result for this section is the following stability result for
these maps, stated in terms of the vanishing in a range of the
corresponding relative homology groups. The bounds are given in terms of the dimension of the vector spaces and the genus, and dimension of the radical, of the formed space, as defined in Section~\ref{sec:forms} (Definition~\ref{def:kerneletc}). 

\begin{thm}\label{thm:mainstability}
(1) Let $V$ be a finite-dimensional vector space over a field $\F\neq \F_2$, and $V_0\le V$ a subspace.
Then
$$\begin{array}{ll}
H_d(\G(V\op \F),\G(V))=0  & \textrm{for}\ \ d\le \dim V\\ 
H_d(\A(V\op \F,V_0),\A(V,V_0))=0 \hspace{10mm} &\textrm{for}\ \ d\le \dim V-\dim V_0. \hspace{10mm}
\end{array}$$
Moreover, if $\F=\F_{\ell^r}$ for $\ell$ a prime with $\ell^r\neq 2$, 
$$\begin{array}{ll}
H_d(\G(V\op \F_{\ell^r}),\G(V);\F_\ell)=0  & \textrm{for}\ \ d\le \dim V+r(\ell-1)-2\\ 
H_d(\A(V\op \F_{\ell^r},V_0),\A(V,V_0);\F_\ell)=0 \hspace{10mm} &\textrm{for}\ \ d\le \dim V-\dim V_0+r(\ell-1)-2. \hspace{10mm}
\end{array}$$

\noindent
(2) Let $E=(E,q)$ be a formed space over a field $\F\neq \F_2$, and let $U\in\PPi(E)$. Then 
$$\begin{array}{ll}
H_d(\Gi(E\op \HH),\Gi(E))=0 & \textrm{for}\ \ d\le \g(E)\\
H_d(\Ai(E\op \HH,U),\Ai(E,U))=0  &\textrm{for}\ \ d\le \g(E)-(\dim(U)-\dim\R(E)).
\end{array}$$ 
\end{thm}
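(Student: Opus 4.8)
The plan is to run Quillen's spectral sequence argument uniformly for all four families of maps, using the Cohen--Macaulay buildings established above. Let me describe the common skeleton, then indicate the four inputs that make it go.

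\textbf{Setup of the double complex.} For the pair $(\Gi(E\op\HH),\Gi(E))$, consider the augmented building $\PPi(E\op\HH)$ acted on by $\Gi(E\op\HH)$. Using Appendix~\ref{app:CM}, the Cohen--Macaulay property (Theorem~\ref{thm:PE is C-M}) gives a chain complex $C_*$ built from the poset whose homology is concentrated in the top degree $\g(E\op\HH)-1=\g(E)$, and which resolves (up to that top class) the trivial module. Crossing with the bar resolution for $\Gi(E\op\HH)$ produces a first-quadrant double complex, hence a spectral sequence converging to $H_*(\Gi(E\op\HH))$ in the range where $C_*$ is exact, i.e.\ for total degree $\le \g(E)-1$, with an extra contribution in the critical degree. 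The $E^1$-page has the form $E^1_{p,q}=\bigoplus_{[\sigma]} H_q(\operatorname{stab}(\sigma); \text{coeff})$, the sum over orbits of $p$-simplices; by Proposition~\ref{prop:transitive action} there is one orbit per dimension, and by Proposition~\ref{prop:restriction split} and Lemma~\ref{lem:ker of restriction} the stabilizer of a rank-$k$ isotropic $W$ maps (split surjectively) onto $\G(W)\times\Gi(W^\perp/W)$ with kernel an affine group $\Ai$; crucially $\Gi(W^\perp/W)\cong\Gi(E\op\HH\ominus\text{(hyperbolic of rank }k))$, which is $\Gi(E')$ for a smaller formed space still containing an $\HH$-summand when $k\le\g(E)$. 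The coefficient system is twisted by the Steinberg module $\St(W/\R)$ of the lower interval $\PP(W/\R(E))$ (Theorem~\ref{thm:GL building properties}).

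\textbf{Comparing with the stabilization map.} One sets up the analogous spectral sequence for $\Gi(E)$ acting on $\PPi(E)$ and a map of spectral sequences induced by the stabilization inclusion; the relative homology $H_*(\Gi(E\op\HH),\Gi(E))$ is then computed from a relative spectral sequence whose $E^1$-terms involve (i) the relative terms $H_*(\Gi(E'\op\HH),\Gi(E'))$ for smaller $E'$ — handled by induction on $\dim E$ (or on $\g(E)$) — and (ii) terms of the shape $H_*(\A^T(W,W\cap U^\perp)\text{-type group}; \St(W,\text{something})\text{-coinvariants})$ coming from the columns near the fiber. The whole argument is by a double induction: the outer induction is on $\g(E)$ (equivalently $\dim E$), and within each stage one runs the spectral sequence comparison. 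The relative building $\PPi(E\op\HH, -)$ and Theorem~\ref{thm:PEU is CM} are what make the affine ($\A$-group) statements go, in exact parallel; and the $\GL$-statements in part~(1) are the degenerate case where the building is the ordinary Tits building $\PP(V)$ and one uses Theorem~\ref{thm:GL building properties} plus Theorem~\ref{thm:relcoinvars}.

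\textbf{The key vanishing.} The heart of the slope-$1$ range — and the step I expect to be the main obstacle to get cleanly — is showing that the ``off-diagonal'' $E^1$-entries that would otherwise obstruct stability in degree $d\le\g(E)$ actually vanish. These are precisely the entries where the Steinberg-twisted coefficients appear with the affine group acting, and they die because of Theorem~\ref{thm:relcoinvars}: $\St(V,V_0)_{\A^T(V,V_0)}=0$ over any field $\neq\F_2$ (and, over finite fields at the characteristic, the stronger higher-degree vanishing from \cite[Lem~5.2]{GKRW}, which pushes the range up by $r(\ell-1)-2$ and yields the second halves of parts~(1) and~(2)). Feeding this vanishing into the spectral sequence kills the relevant column in the right range, so that the relative spectral sequence has all entries zero below the line $p+q\le\g(E)$ except those forced to vanish by the inductive hypothesis — giving $H_d(\Gi(E\op\HH),\Gi(E))=0$ for $d\le\g(E)$. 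The base case $\g(E)=0$ is trivial (the statement is vacuous, as the building $\PPi(E)$ may be empty), and the genus bookkeeping ($\g(E\op\HH)=\g(E)+1$, $\R(E\op\HH)=\R(E)$ from Lemma~\ref{lem:properties of HH}) ensures the numerics line up. The relative/affine case uses the same mechanism with the relative building $\PPi(E\op\HH,U)$ and the shifted bound $\g(E)-(\dim U-\dim\R(E))$, which is exactly $\g(U^\perp)$ by Lemma~\ref{lem:building dimensions}, so the induction closes on the pair $(E\op\HH,U)$ in the same way. The careful part throughout is tracking that the identifications of stabilizers (Propositions~\ref{prop:restriction split} and Lemma~\ref{lem:ker of restriction}) are compatible with the stabilization maps, so that the map of spectral sequences is what one expects on $E^1$; this is routine but is where all the bookkeeping lives.
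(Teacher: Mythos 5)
The central step of your outline does not work as stated, and it is precisely the step where the paper has to do the real work. In the spectral sequence of Theorem~\ref{thm:SS} for the pair $(\Gi(E),\Gi(E\op\HH))$, the column $p=0$ is $E^1_{0,q}=H_q\big(\Gi(E\op\HH)_{W_0},\Gi(E)_{W_0}\big)$ for $W_0$ of rank $0$, and by Proposition~\ref{prop:stabilizers} and Lemmas~\ref{lem:buildingstab1}--\ref{lem:buildingstab2} this stabilizer pair is \emph{comparable to the pair you are trying to kill}: it does not vanish in the critical degrees. So it is false that ``all entries are zero below the line $p+q\le\g(E)$,'' and the convergence of the spectral sequence to zero therefore only yields that the differential $d^1\colon E^1_{0,d}\to E^1_{-1,d}=H_d(\Gi(E\op\HH),\Gi(E))$ is an isomorphism in a range --- this is the paper's statement (I$_d$), a comparison, not a vanishing. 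To get the absolute vanishing (II$_d$) in the full range $d\le\g(E)$ the paper needs an extra ingredient that your proposal never mentions: the two lower-suspension maps $s_1,s_2$ are injective (via the injectivity of $d^1$ in the suspended spectral sequence), while their composite is zero by the block-swap conjugation swindle as in \cite[Prop.~4.22]{RWW}; only the combination of these gives $H_d(\Gi(E\op\HH),\Gi(E))=0$. Without this, your argument gives at best the comparison isomorphism, or a vanishing range one degree short of the one claimed in the theorem.

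A second, related problem is your induction scheme. The terms $E^1_{p,q}$ for $1\le p\le\dim\pP$ are analyzed through the extensions of Proposition~\ref{prop:stabilizers}, and the kernels appearing there are affine groups $\Ai(E\op\HH,W_p)$, $\Ai(E,W_p)$ attached to the \emph{same} ambient spaces (only the relative building is smaller), so an outer induction on $\g(E)$ or $\dim E$ does not apply to them; moreover, on the critical diagonal one needs the comparison statement (I) \emph{in the same homological degree} in order to replace $H_b(A_0',A_0)$ by the homology of rank-zero stabilizer pairs on which the quotient $\A^T(W_p,-)$ acts trivially (Proposition~\ref{prop:stabilizers}(3)); only after that do the Steinberg coinvariants split off and vanish by Theorem~\ref{thm:relcoinvars} (or \cite[Lem~5.2]{GKRW} at the characteristic). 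This interlocking is why the paper inducts on the homological degree $d$, proving (I$_d$) and (II$_d$) simultaneously for all vector spaces, formed spaces and affine pairs at once, rather than on $\g(E)$ with the affine statements ``in parallel.'' (A small further point: the general linear case is run on the augmented, contractible buildings $\bP(V)$ and $\bP(V,V_0)$, not on $\PP(V)$; this is what makes the linear spectral sequence converge to zero in all degrees and is what the improved finite-field range exploits.)
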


The proof of the theorem is given in Section~\ref{sec:proof}. To prepare for it, we first assemble in Section~\ref{sec:props} the properties of the groups and buildings that will be needed for the proof, 
building on Section~\ref{sec:groupsandbuildings}, and construct in Section~\ref{sec:SS} the relevant spectral sequences, building on the short Appendix~\ref{app:CM}. 

\begin{rem}\label{rem:range}
(1)~Note that $r(\ell-1)-2>0$ whenever $q=\ell^r>4$, and equals $0$ for $q=3$ or $4$. So the second part of (1) in the theorem gives an improvement for $\G(\F_{q})$ for all $q>4$. This improvement was first found by 
Galatius--Kupers--Randal-Williams \cite{GKRW}. Our proof here is really different, though we use one lemma from their paper, about the vanishing of the homology  $H_*(A^T(V,V_0);\St(V,V_0)\otimes \F_\ell)$ in the range $*\le  r(\ell-1)-2$ (see Lemma 5.2 in that paper). This lemma is an independent computation, that does not use the rest of the paper. 

Note that the map $H_2(\GL_2(\F_4)) \to H_2(\GL_{3}(\F_4))$ is not injective, so the stability range cannot be improved in that case.  
Indeed, the latter group being 0 with $\F_2$ coefficients by the theorem and
Quillen's vanishing result \cite{QuillenK}, while one can check that
the first is not.  Indeed, the $16$--dimensional representation $\rho$
induced by the $C_2$ subgroup generated by
$\begin{pmatrix}1&1\\0&1\end{pmatrix}$,  permuting the elements of
$\F_4^2$, splits as a sum of 10 trivial represpentations and 6 copies of the sign representations $\sigma$. Hence the Steiffel-Withney class 
$$w(\rho)=w(6\sigma)=w(\sigma)^6=(1+t)^6=1+t^2+t^4+t^6,$$
with $t$ the generator in $H^1(C_2)=H^1(\bbR P^\infty)$. As it has a non-trivial component $t^2$ of degree 2,  and is the restriction to a subgroup of a representation of $\GL_2(\F_4)$, 
this shows that $H^2(\GL_2(\F_4);\F_2)$ must be non-zero. But then the same holds for $H_2(\GL_2(\F_4);\F_2)$ by the universal coefficient theorem\footnote{This argument was suggested to us by Oscar Randal-Williams.}. 

\noindent
(2)~It is natural to wonder whether an analogous improvement of the offset also holds in the case of the automorphism groups of formed spaces for finite fields at the characteristics. 
The argument given here in the case of the linear groups does not transfer to these other groups because of one main differences between the buildings used in the two cases: for the linear groups, there is a natural augmented version of the building, $\bP(V)$ and $\bP(V,V_0)$, that is contractible, and this is the building we use for the proof. Such an augmented version (with nice properties!) does not exist in the isotropic case.  
An argument in the set-up of \cite{GKRW} instead could potentially work (if such an improvement is possible!), but requires the study the connectivity of new buildings. 
\end{rem}

\subsection{Properties of the groups and buildings}\label{sec:props}

Let $V$ be a finite-dimensional vector space and $V_0$ a subspace.
Recall from Definitions~\ref{def:GL building} and \ref{def:GL relative building bar} the building and relative building
\begin{align*}
 \PP(V)=\{0<W<V\},  \ \ \ \ &\PP(V, V_0) = \set{W<V}{W+V_0=V}, \\
 \bP(V)=\{0<W\leq V\}, \ \ \ \  &\bP(V, V_0) = \set{W\leq V}{W+V_0=V}.
\end{align*}
Note that, just as we have two choices $\A$ and $\A^T$ for the definition of the relative groups, there are two choices for the relative buildings, the other choice being
\[ \bP^T(V,V_0):=\set{0\leq W< V}{W\cap V_0=0}. \]
Sending $W$ to $(V/W)^*$ defines an isomorphism 
$$\bP^T(V,V_0)\stackrel{\cong}\rar \bP(V^*,(V/V_0)^*)$$
which is order preserving if we define the order on $\bP^T(V,V_0)$ to be given by reversed inclusion. 
Both $\A(V,V_0)$ and $\A^T(V,V_0)$ act on both posets $\bP(V,V_0)$ and $\bP^T(V,V_0)$.
To prove the stability theorem, we will use the action of $\A^T(V,V_0)$
on $\bP(V,V_0)$ and, dually, the action of $\A(V,V_0)$ on $\bP^T(V,V_0)$. 

\begin{lem}\label{lem:P and PT}
The isomorphism $\bP^T(V,V_0)\cong \bP(V^*,(V/V_0)^*)$ given above is equivariant with respect to the action of  $\A(V,V_0)$ on $\bP^T(V,V_0)$ and of $\A^T(V,V_0)$
on $\bP(V,V_0)$, where the groups are identified via the isomorphism of Lemma~\ref{lem:A and AT}. 
\end{lem}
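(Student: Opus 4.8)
The plan is to unwind both identifications in the statement and reduce the required equivariance to one routine identity relating annihilators to transposes.

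First I would recall, from the discussion immediately preceding the lemma, the annihilator description of the poset isomorphism in question: for $W\in\bP^T(V,V_0)$ (so $W\le V$ with $W\cap V_0=0$) its image in $\bP(V^*,(V/V_0)^*)$ is the annihilator $W^\circ=\{\phi\in V^*:\phi|_W=0\}$, identified canonically with $(V/W)^*$; here the condition $W\cap V_0=0$ is exactly what guarantees $W^\circ+(V/V_0)^*=V^*$ (by dualizing the intersection), and reversed inclusion on $\bP^T(V,V_0)$ corresponds to inclusion on $\bP(V^*,(V/V_0)^*)$. So the map to analyze is simply $W\mapsto W^\circ$.

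Next I would make the group identification explicit. Applying Lemma~\ref{lem:A and AT} to the pair $(V^*,(V/V_0)^*)$, together with the canonical isomorphisms $V^{**}\cong V$ and $\bigl(V^*/(V/V_0)^*\bigr)^*\cong V_0$, gives an isomorphism
\[ \A(V,V_0)\xrightarrow{\ \cong\ }\A^T(V^*,(V/V_0)^*),\qquad g\longmapsto (g^*)^{-1}, \]
where $g^*(\phi)=\phi\circ g$ is the transpose; this is the identification meant in the statement, and the action to be compared on the target side is the natural action of $\A^T(V^*,(V/V_0)^*)$ on $\bP(V^*,(V/V_0)^*)$. If one prefers not to invoke Lemma~\ref{lem:A and AT} for the dual pair, one can verify this by hand instead: $g|_{V_0}=\mathrm{id}_{V_0}$ forces $g^*\phi-\phi$ to vanish on $V_0$ for every $\phi\in V^*$, so $(g^*)^{-1}$ preserves $(V/V_0)^*$ and induces the identity on $V^*/(V/V_0)^*$, i.e.\ lies in $\A^T(V^*,(V/V_0)^*)$.

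With both translations in place, the equivariance is immediate. For $g\in\A(V,V_0)$ and $W\in\bP^T(V,V_0)$, the action on $\bP^T(V,V_0)$ sends $W$ to $gW$, while $(g^*)^{-1}$ sends $W^\circ$ to $(g^*)^{-1}(W^\circ)$; and one checks at once that $(gW)^\circ=(g^*)^{-1}(W^\circ)$, since $\phi\in(g^*)^{-1}(W^\circ)$ iff $g^*\phi$ kills $W$ iff $\phi$ kills $gW$. As the poset isomorphism is precisely $W\mapsto W^\circ$, this is the asserted equivariance. I do not expect a genuine obstacle here; the only point demanding care is keeping the double-dual and quotient-dual identifications straight when applying Lemma~\ref{lem:A and AT} to $(V^*,(V/V_0)^*)$, which is why I would either spell that application out explicitly or give the hands-on verification of the group identification described above.
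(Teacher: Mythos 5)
Your proposal is correct and follows essentially the same route as the paper: the heart of both arguments is the single computation $(gW)^\circ=(g^*)^{-1}(W^\circ)$, verified by the chain $\phi\in(g^*)^{-1}(W^\circ)\iff g^*\phi|_W=0\iff \phi|_{gW}=0$, which is exactly the paper's check that $(V/g(W))^*=(g^{-1})^*(V/W)^*$. Your additional care in spelling out the identification $\A(V,V_0)\cong\A^T(V^*,(V/V_0)^*)$, $g\mapsto (g^*)^{-1}$, is accurate but does not change the argument.
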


\begin{proof}
We need to check that for any $W\in \PP^T(V,V_0)$ and $g\in \A(V,V_0)$, we have that $(V/g(W))^*=(g^{-1})^*(V/W)^*$, which holds because 
$f\in (g^{-1})^*(V/W)^*$ if and only if $g^*f\in (V/W)^*$, that is if and only if $f\circ g$ vanishes on $W$, and is the case if and only if $f$ vanishes on $g(W)$. 
\end{proof}

For $E=(E,q)$ a formed spaced and $U$ an isotropic subspace,
recall from Definition~\ref{def:iso buildings} the building and relative building
\begin{align*}
\PPi(E)&=\{\R(E)< W< E\ |\ W \ \textrm{isotropic}\}, \\
\PPi(E,U)&=\{\R(E)<W< E\ | \ W \ \textrm{isotropic}, W+U^\perp=E \}
\end{align*}
where $\R(E)$ is the radical of $E$ of Definition~\ref{def:kerneletc}. 
Note that $\PPi(E,\R(E))=\PPi(E)$.
There are no distinct ``augmented'' variants of $\PPi(E)$ and $\PPi(E,U)$, because $E$ itself is never an isotropic subspace unless 
$E=\R(E)$, in which case all variants of the building are empty.

The following result summarizes properties of the buildings that will be needed in the stability argument. 

\begin{prop}\label{prop:buildings}
Let $V$ be a finite dimensional vector space and $V_0\leq V$ be a subspace.  Let $E=(E,q)$ be a formed space and $U\in\PPi(E)$. 

\smallskip

\noindent
(1)~The posets $\bP(V),\bP(V,V_0),\PPi(E)$ and $\PPi(E,U)$ are Cohen-Macaulay with:  

\begin{center}
\begin{tabular}{l|l|l|l}
 Poset $P$ &  $\dim(P)$ & rank($W$) for $W\in P$& lower intervals\\
\hline
&&&\\
 $\bP(V)$      & $\dim V-1$    & $\dim W-1$ & $\bP(V)_{<W} \ \ \ \ \cong\ \  \PP(W)$\\
  $\bP(V,V_0)$ &  $\dim V_0$    & $\dim(W\cap V_0)$ & $\bP(V,V_0)_{<W}  \, \cong\ \  \PP(W,W\cap V_0)$\\
  $\PPi(E)$    &  $\g(E)-1$     & $\dim W-\dim\R(E)-1$ &$\PPi(E)_{<W} \ \ \ \ \cong \ \ \PP(W/\R(E))$\\
 $\PPi(E,U)$  &  $\g(U^\perp)$ & $\dim W-\dim U$ &$\PPi(E,U)_{<W} \, \cong \PP\big(W/\R(E),(W\cap U^\perp)/\R(E)\big)$. 
\end{tabular}
\end{center}
(Note that $\g(U^\perp)=g(E)-\dim U+\dim \R(E)$ by Lemma~\ref{prop:induced form}.)

\smallskip

\noindent
(2)~The natural inclusions of vector spaces (resp.~formed spaces) induce rank-preserving maps of posets 
\begin{center}
\begin{tabular}{rclcrcl}
$\bP(V)$ &$\rar$&$\bP(V\op \F)$ & \ \ \  \ and\ \ \   \  &  $\PPi(E)$&$\rar$&$ \PPi(E\op \HH)$\\
$\bP(V,V_0)$ &$\rar$&$ \bP(V\op \F,V_0\op \F)$ & &  $\PPi(E,U)$&$\rar$& $\PPi(E\op \HH,U)$,\\
\end{tabular}
\end{center}
In each case, the dimension of the codomain is exactly one more than that of the domain. \\
Moreover, the maps are equivariant with respect to the stabilization maps 
\begin{center}
\begin{tabular}{rclcrcl}
$\G(V)$ &$\rar$&$ \G(V\op \F)$ &\ \ \ \ and \ \ \ \ & $\Gi(E)$ &$\rar$&$ \Gi(E\op \HH)$\\
$\A^T(V,V_0)$ &$\rar$&$ \A^T(V\op \F,V_0\op\F)$ && $\Ai(E,U)$ &$\rar$&$ \Ai(E\op \HH,U)$.\\
\end{tabular}
\end{center}
\end{prop}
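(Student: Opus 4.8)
The plan is to treat part~(1) as a compilation of results already established in the excerpt, and part~(2) as an elementary verification that the stabilization inclusions are compatible with the building structure.

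For part~(1) there is nothing new to prove. The Cohen--Macaulay property of $\bP(V)$ and $\bP(V,V_0)$ is Theorem~\ref{thm:barCM}, that of $\PPi(E)$ is Theorem~\ref{thm:PE is C-M}, and that of $\PPi(E,U)$ is Theorem~\ref{thm:PEU is CM}. The dimension and lower-interval columns for $\bP(V)$ and $\bP(V,V_0)$ are read off from Theorem~\ref{thm:barCM}, and for $\PPi(E)$ and $\PPi(E,U)$ from Lemma~\ref{lem:building dimensions} and Lemma~\ref{lem:intervals}; the rank columns come from Theorem~\ref{thm:GL building properties} and Lemma~\ref{lem:intervals}, noting that in $\bP(V)$ a maximal chain of nonzero subspaces ending at $W$ has length $\dim W-1$, and similarly in the relative cases. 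The parenthetical identity $\g(U^\perp)=\g(E)-\dim U+\dim\R(E)$ is part of Lemma~\ref{lem:building dimensions} (via Proposition~\ref{prop:induced form}). So I would simply assemble these.

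For part~(2) the relevant poset map is in each case $W\mapsto W$, viewing a subspace of $V$ (resp.\ $E$) as a subspace of $V\op\F$ (resp.\ $E\op\HH$); it is obviously order preserving, so the work is to check it lands in the target poset, preserves rank, and raises dimension by one. The case $\bP(V)\to\bP(V\op\F)$ is immediate. For $\bP(V,V_0)\to\bP(V\op\F,V_0\op\F)$, if $W+V_0=V$ then $W+(V_0\op\F)\supseteq(W+V_0)+\F=V\op\F$, while $W\le V$ gives $W\cap(V_0\op\F)=W\cap V_0$, so the rank $\dim(W\cap V_0)$ is unchanged, and $\dim\bP(V\op\F,V_0\op\F)=\dim V_0+1=\dim\bP(V,V_0)+1$. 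For $\PPi(E)\to\PPi(E\op\HH)$ one uses that $W$ stays isotropic and that $\R(E\op\HH)=\R(E)$, $\K(E\op\HH)=\K(E)$ by Lemmas~\ref{lem:properties of HH} and~\ref{lem:props of sum}; then the rank $\dim W-\dim\R(E)-1$ is unchanged and $\dim\PPi(E\op\HH)=\g(E\op\HH)-1=\g(E)=\dim\PPi(E)+1$, again by Lemma~\ref{lem:properties of HH}. For $\PPi(E,U)\to\PPi(E\op\HH,U)$ the only extra point is that, since the form on $E\op\HH$ is the orthogonal direct sum and $U\le E$, one has $U^\perp=U^{\perp_E}\op\HH$ inside $E\op\HH$ (immediate from the definition of the direct-sum form; cf.\ Lemma~\ref{lem:capperp}); hence $W+U^{\perp_E}=E$ forces $W+U^\perp=E\op\HH$, the rank $\dim W-\dim U$ is unchanged, and $\dim\PPi(E\op\HH,U)=\g(U^{\perp_E}\op\HH)=\g(U^{\perp_E})+1=\dim\PPi(E,U)+1$ by Lemma~\ref{lem:properties of HH}. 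In particular each of these arguments also shows that $U\in\PPi(E\op\HH)$ whenever $U\in\PPi(E)$, so the target posets are defined. Equivariance is then formal: the stabilization map on groups sends $g$ to $g\op\id$, and $(g\op\id)(W)=g(W)$ for any subspace $W$ of the first summand, which is exactly commutativity of the square for each of the four pairs (poset map, group map).

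I do not expect a genuine obstacle: the proposition is a bookkeeping statement pulling together Theorems~\ref{thm:barCM}, \ref{thm:PE is C-M}, \ref{thm:PEU is CM} and Lemmas~\ref{lem:building dimensions}, \ref{lem:intervals}, \ref{lem:properties of HH}. The only slightly delicate point is the relative isotropic case, where one must correctly identify $U^\perp$ in $E\op\HH$ as $U^{\perp_E}\op\HH$ and invoke additivity of the genus under adding a hyperbolic plane (Lemma~\ref{lem:properties of HH}) to see both that the relative condition $W+U^\perp=E$ survives stabilization and that the dimension goes up by exactly one.
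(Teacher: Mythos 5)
Your proposal is correct and follows essentially the same route as the paper: part (1) is assembled from Theorems~\ref{thm:barCM}, \ref{thm:GL building properties}, \ref{thm:PE is C-M}, \ref{thm:PEU is CM} and Lemmas~\ref{lem:building dimensions}, \ref{lem:intervals}, and part (2) is verified using $\R(E\op\HH)=\R(E)$, the identification $U^\perp=(U^\perp\cap E)\op\HH$ inside $E\op\HH$, and $\g(-\op\HH)=\g(-)+1$ from Lemma~\ref{lem:properties of HH}, with equivariance being the formal compatibility $(g\op\id)(W)=g(W)$. Your explicit check that the stabilized $W$ still satisfies the relative condition $W+U^\perp=E\op\HH$ is a point the paper leaves implicit, but the arguments coincide.
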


\begin{proof}
Statement~(1) is given by 
Theorems~\ref{thm:barCM}, \ref{thm:GL building properties}, \ref{thm:PE is C-M} and~\ref{thm:PEU is CM}, and
Lemmas~\ref{lem:building dimensions} and~\ref{lem:intervals}.

For (2), we have that the natural inclusion, taking $W$ in the left hand poset to itself as an element of the right hand poset, is a poset map. The map is rank-preserving by (1), where 
in the case of $\PPi(E)$, we use that
$\R(E\oplus\HH)=\R(E)$ by Lemmas~\ref{lem:properties of HH}, 
 and in the case of $\PPi(E,U)$, also that  
\[ W\cap U^\perp = W\cap (U^\perp\cap E), \]
as subspaces of $E\oplus\HH$, which is true since $W\leq E$, noting also that $U^\perp\cap E$ identifies with the orthogonal complement of $U$ as a subspace of $E$.

The claim about dimensions follows from part (1), together with the fact that
$\g(E\op\HH)=\g(E)+1$ by Lemma~\ref{lem:properties of HH},
and also the observation that, as subspaces of $E\oplus\HH$,
\[ U^\perp = (U^\perp\cap E)\oplus\HH, \]
so that $\g(U^\perp)=\g(U^\perp\cap E)+1$ by Lemma~\ref{lem:properties of HH}.
The equivariance of the maps follows from the compatibility of the stabilization maps defined on the groups and the buildings. \qedhere
\end{proof}

The following proposition gives a description of the stabilizers of the action of the groups 
$\G(V),\Gi(E),\A^T(V,V_0)$ and $\Ai(E,U)$ on their associated buildings.
In each case, the stabilizer is an extension with kernel another group of one of these types, making an inductive argument possible. 
In the cases of $\G(V)$ and $\A^T(V,V_0)$, the kernel is actually a group of type $\A(-,-)$ rather than $\A^T(-,-)$, but these are isomorphic by Lemma~\ref{lem:A and AT}.

Recall that, for a group $G$ acting on a building $P$, we use the notation $G_W$ for the stabilizer of $W\in P$, and $G_{id_W}$ for the subgroup of elements that restrict to the identity on $W$. 

\begin{prop}\label{prop:stabilizers}
Let $V_0\leq V$ be finite-dimensional vector spaces.
Let $E=(E,q)$ be a formed space and $U\in\PPi(E)$. \\
(1) The groups
$\G(V)$, $\A^T(V,V_0)$, $\Gi(E)$, $\Ai(E,U)$
act (respectively) on the posets
$\bP(V)$,$\bP(V,V_0)$, $\PPi(E)$, $\PPi(E,U)$, 
transitively on the elements of a given rank.

\smallskip

\noindent 
(2) For an element $W$ in each of these posets, there is a split short exact sequence
\[ \begin{array}{ccrccclcc}
1&\rar& 
\A(V,W) &\rar& \G(V)_W  &\sta{res\ }\rar& \G(W) &\rar& 1
, \\
1&\rar& 
\A(V_0,W\cap V_0) &\rar& \A^T(V,V_0)_W &\sta{res\ }\rar& \A^T(W,W\cap V_0) &\rar& 1
, \\
1&\rar& 
\Ai(E,W) &\rar& \Gi(E)_W  &\sta{res\ }\rar& \G(W)_{\R(E)} &\rar& 1
, \\
1&\rar& 
\Ai(U^\perp/U,W\cap U^\perp) &\rar& \Ai(E,U)_W 
  &\sta{res\ }\rar& \A^T(W,W\cap U^\perp)_{id_{\R(E)}} &\rar& 1.
\end{array} \]
Each of these short exact sequences commutes with the stabilization homomorphisms (\ref{equ:GLstab}), (\ref{equ:Gistab}), (\ref{equ:ATstab}) in the left two terms of the sequence, 
and the identity on the right term. 
When $W=W_0$ is of rank 0, the second and fourth sequences degenerate to isomorphisms
\begin{equation}\label{equ:G=A}
\begin{aligned}
\G(V_0) \sta{\cong}\rar \A^T(V,V_0)_{W_0} \ \ \ \ \textrm{and}\ \ \ \  \Gi(U^\perp/U) \sta{\cong}\rar \Ai(E,U)_{W_0}.
\end{aligned}
\end{equation}

\smallskip

\noindent
(3) 
Let $X$ be a rank 0 element in the poset $\PP^T(V,W), \PP^T(V_0,W\cap V_0), \PPi(E,W)$ or $\PPi(U^\perp/U,W\cap U^\perp)$ associated to the kernel groups 
in each of the four exact sequences in (2). 
The splitting in (2) may be chosen so that it commutes with the stabilizer of $X$ in the kernel. 
(By (\ref{equ:G=A}), this stabilizer identifies with $\G(X)$ in the general linear cases and  with $\Gi(W^\perp/W)$ in the isotropic cases.) 
\end{prop}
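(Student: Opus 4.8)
The plan is to make the splittings of part~(2) completely explicit, each from a choice of complement, and then to take that complement to be $X$ itself (in the general linear cases) or the non-radical summand of $X$ (in the isotropic cases); with this choice the image of the splitting and the stabiliser of $X$ in the kernel act on complementary summands of an explicit direct sum decomposition of the ambient (formed) space, so they commute elementwise. Recall how the splittings arise. In the two general linear cases a splitting of $1\to\A(V,W)\to\G(V)_W\to\G(W)\to1$, respectively of $1\to\A(V_0,W\cap V_0)\to\A^T(V,V_0)_W\to\A^T(W,W\cap V_0)\to1$, is $g\mapsto g\op\id_C$ for a linear complement $C$ to $W$ in $V$ (in the relative case one checks, exactly as in~(2), that $g\op\id_C$ preserves $V_0$ and induces $\id$ on $V/V_0$). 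In the two isotropic cases the splitting is the one of Proposition~\ref{prop:restriction split}, determined by an isotropic complement $C$ to $W^\perp$ in $E$ and the subspace $E'\le E$ of Lemma~\ref{lem:decomp}, so $E=W\op C\op E'$ with $W^\perp=W\op E'$, with $E'$ orthogonal to $W\op C$, and with $E'$ mapping isometrically onto $W^\perp/W$; the splitting sends $g$ to the isometry equal to $g$ on $W$, to $\tilde g=\phi\inv\circ(g^*)\inv\circ\phi$ on $C$ (with $\phi\colon C\to{^\sigma(W/\R(E))^*}$, $v\mapsto\omega(-,v)$ as in that proof), and to the identity on $E'$; the relative isotropic splitting is the same after passing to $U^\perp/U$ via Lemma~\ref{lem:ker of restriction}.

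Next I would identify the rank-$0$ element $X$ and fix the choice $C:=X$. In the general linear cases the duality $\bP^T(V,W)\cong\bP(V^*,(V/W)^*)$ of Lemma~\ref{lem:P and PT} together with the rank formula of Proposition~\ref{prop:buildings}(1) shows that a rank-$0$ element of $\bP^T(V,W)$, respectively of $\bP^T(V_0,W\cap V_0)$, is precisely a linear complement to $W$ in $V$, respectively to $W\cap V_0$ in $V_0$; taking $C=X$ gives $V=W\op X$ (for the second case, $V=W+V_0=W+(W\cap V_0)+X=W\op X$ since $W\cap X\le(W\cap V_0)\cap X=0$). In the isotropic cases the description of minimal elements in the proof of Lemma~\ref{lem:building dimensions} (via Lemma~\ref{lem:isotropic complement}) shows a rank-$0$ element of $\PPi(E,W)$ has the form $X=L\op\R(E)$ with $L$ an isotropic complement to $W^\perp$; reducing to $\R(E)=0$ we have $X=L$, we take $C=X$, and Lemma~\ref{lem:decomp} gives $E=W\op X\op E'$ with $E'=(W\op X)^\perp\cong W^\perp/W$.

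Then I would check commutation. In the general linear cases the image of the splitting is $\{g\op\id_X\}$, while the stabiliser of $X$ in the kernel — the automorphisms fixing $W$ pointwise and preserving $X$ — is $\{\id_W\op h:h\in\G(X)\}$ (recovering its identification with $\G(X)$); block diagonal automorphisms with disjoint supports commute. In the isotropic cases the image of the splitting is $\{g\op\tilde g\op\id_{E'}\}$, the identity on $E'$; an isometry $h$ fixing $W$ pointwise and preserving $X$ also fixes $X$ pointwise, since for $l\in X$ and all $w\in W$ one has $\omega(w,h(l))=\omega(w,l)$ and $h(l)\in X$, so $h(l)-l\in X\cap W^\perp=0$ by perfectness of the $W$--$X$ pairing (bijectivity of $\phi$); hence $h$ fixes $W\op X$ pointwise, preserves $E'=(W\op X)^\perp$, and has the form $\id_{W\op X}\op f$ with $f\in\Gi(E')\cong\Gi(W^\perp/W)$ (matching the claimed identification), which again commutes with the image of the splitting by disjointness of supports. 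The relative isotropic case is this same computation carried out inside $U^\perp/U$.

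The delicate point, as usual, is the isotropic case: the reduction to $\R(E)=0$ and the resulting clean description of the stabiliser. Without that reduction, an isometry fixing $W$ pointwise and preserving $X=L\op\R(E)$ need only satisfy $h|_L=\id_L+\rho$ for a linear map $\rho\colon L\to\R(E)$, and such an $h$ commutes with the image of the splitting only if $\rho=0$; forcing $\rho=0$ is where the hypothesis $\F\ne\F_2$ is genuinely used — the only $L\to\R(E)$ invariant under the $\tilde g$-action is the zero map, by the scalar argument already used in the proof of Theorem~\ref{thm:relcoinvars}. Everything else — that $g\op\id_C$ lands in the relative groups, that the listed direct sums hold, and that the assignments are group homomorphisms — is the routine bookkeeping already implicit in~(2) and in the proof of Proposition~\ref{prop:restriction split}.
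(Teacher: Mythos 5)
Your construction in parts (2)–(3) is the same route the paper takes: for the two general linear sequences the paper also identifies a rank~$0$ element $X$ with a complement, writes $V=W\op X$ (resp.\ $V=W'\op(W\cap V_0)\op X$), and gets commutation from the block decomposition; for the isotropic sequences it also uses the splitting of Proposition~\ref{prop:restriction split} via the decomposition $E=W\op C\op E'$ of Lemma~\ref{lem:decomp}. Your verification that, when $\R(E)=0$, an element of the kernel preserving $X$ must fix $X$ pointwise (via the perfect pairing between $C$ and $W$) is a correct explicit version of this, and the general linear cases are complete.

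The gap is your treatment of the radical in the isotropic cases, and your proposed repair of it. First, ``reducing to $\R(E)=0$'' is not available here: the proposition concerns the groups $\Gi(E)$, $\Ai(E,W)$, $\Ai(E,U)$ themselves, and these, together with the stabilizers inside them, are not determined by $E/\R(E)$ --- an isometry may translate vectors by elements of the radical, and such isometries become invisible in $E/\R(E)$. (In the fourth sequence the reduction is automatic, since $\R(U^\perp/U)=0$ because $\R(E)\leq U$; the problem is the third sequence, where $E$ itself may be degenerate.) Second, the fix you propose is not valid: Proposition~\ref{prop:stabilizers} carries no hypothesis $\F\neq\F_2$, and no field hypothesis can ``force $\rho=0$''. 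Indeed, over \emph{any} field and for \emph{any} linear $\rho\colon C\to\R(E)$, the map which is the identity on $W\op E'$ and sends $c\mapsto c+\rho(c)$ preserves $\omega_q$ and $Q_q$ (all correction terms land in the radical), hence is an isometry lying in $\Ai(E,W)$ and preserving $X=C\op\R(E)$; these are honest elements, and the coinvariant vanishing used in Theorem~\ref{thm:relcoinvars} is a statement about a quotient module which cannot make individual elements disappear. So the last paragraph of your proposal, where the degenerate case is supposed to be handled, does not work, and it also smuggles in a hypothesis absent from the statement. The paper's proof does not reduce modulo the radical and does not use $\F\neq\F_2$ at this point: it keeps the radical, writing $X=C\op\R(E)$ with $\R(E)\leq W$, and controls the stabilizer inside $\Ai(E,W)$ by invoking \cite[Lem~3.8]{Forms} (elements of $\Ai(E,W)$ are the identity modulo $W^\perp$). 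To complete your argument you must engage with exactly this point --- the interaction of the setwise stabilizer of $X$ with maps into $\R(E)$, with the radical kept inside $W$ --- rather than assume it away; your own $\rho$-maps show that this, not a field hypothesis, is the genuine content of the degenerate case.
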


\begin{proof}
Statement (1) follows from Proposition~\ref{prop:transitive action}. 

The identifications of the kernels in the four sequences in (2) follows directly from the definitions of the groups in the first three cases, and is given by Lemma~\ref{lem:ker of restriction} in the last case. 
Surjectivity of the right hand map will follow from the existence of a splitting. We will in all cases construct a splitting satisfying the extra condition in (3), proving (2) and (3) at the same time. 

In the first case, a rank 0 element $X$ in $\bP^T(V,W)$ is a complement of $W$ in $V$. Writing $V=W\op X$ defines the required splitting $\G(W)\times \G(X)\to \G(V)$, noting that $\G(X)$ identifies indeed canonically with the stabilizer of $X$ for the action of $\A(V,W)$ on $\bP^T(V,W)$. 
In the second case, a rank 0 element $X$ in $\bP^T(V_0,W\cap V_0)$ is a complement to $W\cap V_0$ in $V_0$. Picking a complement $W'$ of $W\cap V_0$ inside $W$, we have $V=W'\op (W\cap V_0)\op X$. 
This gives an inclusion
\[ \A^T(W,W\cap V_0)\times\G(X)\rar \A^T(W\op X,V_0), \]
likewise giving the required splitting.

A rank 0 element $X$ in $\PPi(E,W)$ can be written as $X=C\op \R(E)$ with $C$ a complement of $W^\perp$ in $E$. As in the proof of 
Proposition~\ref{prop:restriction split}, we write $E=W\op C\op E'$ with $E\cong W^\perp/W$. The lemma shows that this gives a splitting 
$$\G(W)_{\R(E)} \x \Gi(W^\perp/W) \rar \Gi(E)$$
of the restriction map. As elements of $\Ai(E,W)$ are the identity
modulo $W^\perp$ (by \cite[Lem 3.8]{Forms}), 
we have that  
 $\Gi(W^\perp/W)$ identifies with the stabilizer subgroup $\Ai(E,W)_X$, which finishes the proof in this case.   
The result is similarly given by Proposition~\ref{prop:restriction split} in the last case too.

The compatibility with the stabilization maps is a direct check. 
\end{proof}

Finally we will need the following retract information:
\begin{lemma}\label{lem:buildingstab1}
Let $V$ be a finite-dimensional vector space and $E=(E,q)$ a formed space.
The natural inclusions
$$\G(V) \rar \G(\F\op V)_{\F} \  \ \ \textrm{and}\ \ \ \Gi(E)\rar \Gi(\HH\op E)_L$$
are split injective, compatibly with the stabilization maps.
Here $L\leq\HH$ is any choice of isotropic line in the formed space $\HH$ of Definition~\ref{def:hyperbolic plane}.
\end{lemma}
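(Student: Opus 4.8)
The plan is to exhibit, in each case, an explicit retraction homomorphism back onto the subgroup, built from a direct-sum decomposition of the ambient space. For the linear case, write $\F\op V$ and let $L=\F$ be the first summand. The stabilizer $\G(\F\op V)_{\F}$ consists of automorphisms preserving the line $\F$; in block form (with respect to the decomposition $\F\op V$) these are the matrices $\begin{bmatrix} a & * \\ 0 & g\end{bmatrix}$ with $a\in\F^\times$, $g\in\G(V)$, and $*\in\hom(V,\F)$. The assignment sending such a matrix to $g\in\G(V)$ (``project to the lower-right block'') is a group homomorphism, since the upper-triangular block structure is preserved under multiplication, and it is clearly a left inverse to the natural inclusion $\G(V)\to\G(\F\op V)_{\F}$ (which sends $g$ to $\begin{bmatrix} 1 & 0 \\ 0 & g\end{bmatrix}$). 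Compatibility with stabilization is immediate since both the inclusion and the retraction are defined summand-wise and the stabilization map $\G(V)\to\G(V\op\F)$ only touches the added summand.

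For the isotropic case, fix an isotropic line $L\le\HH$; by Lemma~\ref{lem:properties of HH} (and Witt's Lemma) we may take $L=\langle e_1\rangle$ in the standard basis of $\HH$, so that $L^\perp=L\op E'$ where, inside $\HH\op E$, one has $E'=E$ and the decomposition $\HH\op E = L \op C \op E$ with $C=\langle e_2\rangle$ the complementary isotropic line. This is exactly the situation of Lemma~\ref{lem:decomp} applied to $W=L\le \HH\op E$: we get $\HH\op E = L\op C\op E$ with $L^\perp = L\op E$ and $E$ orthogonal to $L\op C$, so $E$ projects isometrically onto $L^\perp/L$. By Proposition~\ref{prop:restriction split}(1), the restriction-and-induced-map homomorphism
\[
\Gi(\HH\op E)_L \rar \G(L)_{\R(\HH\op E)}\times \Gi(L^\perp/L)
\]
is split surjective; composing its second component with the isometry $\Gi(L^\perp/L)\cong\Gi(E)$ coming from the summand $E$ gives a homomorphism $r\colon\Gi(\HH\op E)_L\to\Gi(E)$. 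One then checks that $r$ is a left inverse to the natural inclusion $\Gi(E)\to\Gi(\HH\op E)_L$: an element $g\in\Gi(E)$, extended by the identity on $\HH$, stabilizes $L$, restricts to the identity on $L$, and induces $g$ on $L^\perp/L\cong E$, so $r$ sends it back to $g$.

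The only genuinely delicate point — and the one I expect to be the main obstacle in writing this out carefully — is the compatibility with the stabilization maps in the isotropic case: one must check that the chosen splittings/retractions for $E$ and for $E\op\HH$ fit into a commuting square with the stabilization homomorphisms $\Gi(E)\to\Gi(E\op\HH)$ and $\Gi(\HH\op E)_L\to\Gi(\HH\op E\op\HH)_L$. This comes down to verifying that the decomposition $\HH\op E = L\op C\op E$ used for $E$ is carried, under adding an orthogonal copy of $\HH$ on the right, to the decomposition $\HH\op(E\op\HH) = L\op C\op(E\op\HH)$ used for $E\op\HH$ — i.e.\ that the subspace $E'$ produced by Lemma~\ref{lem:decomp} can be chosen naturally in $E$. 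Since in our concrete model $E'$ is literally the summand $E$ (resp.\ $E\op\HH$), and the stabilization map is the identity on $\HH$, $L$, $C$, this is a routine check, but it is the place where one must be explicit rather than invoke the black boxes.
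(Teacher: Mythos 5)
Your proposal is essentially the paper's proof: in both cases the retraction is ``take the induced map on the quotient'' --- on $(\F\op V)/\F\cong V$ in the linear case (your block-matrix description of $\G(\F\op V)_{\F}$ is just this in coordinates), and on $L^\perp/L\cong E$ in the isotropic case, the key verification being $L^\perp=L\op E$ inside $\HH\op E$, which the paper checks via $L^\perp\cap\HH=L$ using that $\HH$ is $2$-dimensional and nondegenerate.

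One caveat: your appeal to Lemma~\ref{lem:decomp} and Proposition~\ref{prop:restriction split}(1) with $W=L$ is not strictly licensed when $E$ is degenerate, since those statements require $W\in\PPi(\HH\op E)$, i.e.\ $\R(\HH\op E)=\R(E)<W$, and the line $L\le\HH$ does not contain a nonzero $\R(E)$. This is harmless because you never need their full strength: all that is required is that $f\mapsto(\text{induced map on }L^\perp/L)$ is a homomorphism $\Gi(\HH\op E)_L\to\Gi(L^\perp/L)$, which follows from the induced form on $Z^\perp/Z$ for any isotropic $Z$ (Proposition~\ref{prop:induced form}), together with the elementary identity $L^\perp=L\op E$; with those citations replaced, your argument, including the compatibility with stabilization, goes through as in the paper.
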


\begin{proof}
The former splitting arises from viewing $V$ as $(\F\op V)/\F$, and the latter from
$E\iso L^\perp/L$.
For this last equality, we need the check that
$L\op E = L^\perp$
as subspaces of $\HH\op E$.  The left side is contained in the right; the reverse inclusion holds since
$L^\perp\cap\HH=L$
as $\HH$ is 2-dimensional and nondegenerate by Lemma~\ref{lem:properties of HH}. 
\end{proof}

\begin{lemma}\label{lem:buildingstab2}
Let $V_0\leq V$ be vector spaces, $E$ a formed space, and $U\in\PPi(E)$. Let $W_0$ be a rank 0 element in $\bP(V,V_0)$ (resp.~$\PPi(E,U)$). 
The inclusions 
\[ \G(V_0)\cong\A^T(V,V_0)_{W_0}\rar \A^T(V,V_0) \ \ \ \ \textrm{and}\ \ \ \ \Gi(U^\perp/U)\cong\Ai(E,U)_{W_0}\rar \Ai(E,U) \]
associated to the isomorphisms (\ref{equ:G=A}) are split, 
compatibly with the stabilization maps.
\end{lemma}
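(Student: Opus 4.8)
The plan is to mirror the proof of Lemma~\ref{lem:buildingstab1}: in each case the ambient relative group carries a tautological homomorphism onto the stabilizer group, and I would check that this homomorphism inverts the inclusion coming from~(\ref{equ:G=A}) and commutes with the stabilization maps.

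For the general linear statement, note that every $g\in\A^T(V,V_0)$ preserves $V_0$, so $g\mapsto g|_{V_0}$ defines a homomorphism $r\colon\A^T(V,V_0)\to\G(V_0)$. A rank~$0$ element $W_0\in\bP(V,V_0)$ is a complement, $V=W_0\op V_0$, and by the proof of Proposition~\ref{prop:stabilizers}(3) the isomorphism of~(\ref{equ:G=A}) identifies $\G(V_0)$ with $\A^T(V,V_0)_{W_0}$ by sending $h$ to $\id_{W_0}\op h$; restricting such a map back to $V_0$ recovers $h$, so $r$ is a splitting. Using the same $W_0\subseteq V\subseteq V\op\F$ over $V\op\F$ (it is still a complement to $V_0\op\F$), the stabilization map~(\ref{equ:ATstab}) sends $g$ to $g\op\id_\F$, whose restriction to $V_0\op\F$ is $(g|_{V_0})\op\id_\F$ --- exactly the stabilization of $r(g)$ --- so $r$ commutes with stabilization.

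For the isotropic statement I would take $\rho\colon\Ai(E,U)\to\Gi(U^\perp/U)$ to be the homomorphism induced by the action on $U^\perp/U$; this makes sense because every element of $\Ai(E,U)$ fixes $U$ pointwise, hence preserves $U^\perp$, and the induced map is an isometry for the form of Proposition~\ref{prop:induced form}. To see that $\rho$ splits the inclusion $\Gi(U^\perp/U)\cong\Ai(E,U)_{W_0}\hookrightarrow\Ai(E,U)$, I would unwind the construction of this inclusion from Proposition~\ref{prop:restriction split}(2) and the proof of Proposition~\ref{prop:stabilizers}(3): there is a decomposition $E=W_0\op C\op E'$ with $C$ an isotropic complement to $W_0^\perp$ chosen so that $U\le W_0\op C$, with $W_0^\perp=W_0\op E'$ and $E'=(W_0\op C)^\perp$, and the inclusion extends an isometry $f$ of $E'$ by $\id_{W_0\op C}$. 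Since $U\le W_0\op C$ one has $E'\le U^\perp$, and a dimension count via Lemma~\ref{lem:dimension of perp} shows $\dim E'=\dim U^\perp-\dim U$; combined with $E'\cap(W_0\op C)=0$ this gives $U^\perp=U\op E'$, so $E'$ projects isometrically and isomorphically onto $U^\perp/U$ --- which is precisely the identification $\Gi(E')\cong\Gi(U^\perp/U)$. Then $\rho(\id_{W_0\op C}\op f)=f$, so $\rho$ inverts the inclusion. For compatibility with stabilization I would use, as in the proof of Proposition~\ref{prop:buildings}, that $U^\perp=(U^\perp\cap E)\op\HH$ inside $E\op\HH$, whence $U^\perp/U=\big((U^\perp\cap E)/U\big)\op\HH$ there and $\rho(g\op\id_\HH)=\rho(g)\op\id_\HH$, which is the stabilization~(\ref{equ:Gistab}) of $\rho(g)$.

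The only step that is not purely formal is the identity $U^\perp=U\op E'$ in the isotropic case: this requires going back to the explicit choice of the complement $C$ in Proposition~\ref{prop:restriction split} and running the dimension bookkeeping (using Lemma~\ref{lem:dimension of perp} and $\dim\R(E)$) to confirm that the summand $E'$ carrying the copy of $\Gi(U^\perp/U)$ is genuinely a linear --- and isometric --- complement of $U$ inside $U^\perp$. Everything else, in particular the naturality of both retractions and their compatibility with the stabilization maps, then follows immediately.
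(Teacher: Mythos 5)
Your splittings are exactly the ones the paper uses: restriction to $V_0$ in the linear case and the induced map on $U^\perp/U$ in the isotropic case, so this is essentially the paper's (two-line) proof with the verifications written out. The decomposition bookkeeping you flag as the one non-formal step is in fact already contained in the paper's identification of the kernel (Lemma~\ref{lem:ker of restriction}), where the isomorphism $\Gi(U^\perp/U)\cong\Ai(E,U)_{W_0}$ is constructed as the inverse of precisely the ``induced map on $U^\perp/U$'' homomorphism, so $\rho$ splits the inclusion by construction and your separate check that $U^\perp=U\op E'$ is not needed.
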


\begin{proof}
In the first case, $W_0$ is a complement to $V_0$ in  $V$ and the splitting 
is given by restriction to $V_0$.
In the second case, $W_0$ is a complement to $U^\perp$ in $E$ and the splitting 
sends $f\in\Ai(E,U)$ to its induced map on $U^\perp/U$.
\end{proof}

\subsection{Spectral sequences}\label{sec:SS} 
For $V_0\leq V$  finite-dimensional vector spaces, recall from Section~\ref{sec:join} the Steinberg module 
\[ \St(V)=\tilde H_{\dim\PP(V)}(\PP(V)) \+ \St(V,V_0)=\tilde H_{\dim\PP(V,V_0)}(\PP(V,V_0)), \]
 the top homology groups of the posets $\PP(V)$ and $\PP(V,V_0)$.

\begin{rem}
We use the convention that $\tilde H_{-1}(\emptyset)=\bbZ$,
so that $ \St(\F^1)=\bbZ$. 
\end{rem}

\begin{thm}\label{thm:SS} 
Consider a tuple $(\GA,\GA',\pP,\pP')$ of one of the following four types: 
$$\begin{array}{lllll}
(1)& \GA=\G(V)      & \GA'=\G(V\op \F)            & \pP=\bP(V)     & \pP'=\bP(V\op \F),\\
(2)& \GA=\Gi(E)     & \GA'=\Gi(E\op \HH)          & \pP=\PPi(E)    & \pP'=\PPi(E\op \HH),\\
(3)& \GA=\A^T(V,V_0) & \GA'=\A^T(V\op \F,V_0\op \F) & \pP=\bP(V,V_0) &\pP'=\bP(V\op \F,V_0\op \F),\\
(4)& \GA=\Ai(E,U)     & \GA'=\Ai(E\op \HH,U)          & \pP=\PPi(E,U)  & \pP'=\PPi(E\op\HH,U).
\end{array}$$
where $V$ is an $\F$--vector space of dimension at least 1, $0<V_0< V$ a subspace, $E=(E,q)$ a formed space,
and $U\in\PPi(E)$.
Then there is a spectral sequence converging to zero in all degrees in cases (1) and (3), and in degrees $p+q\leq \dim P$ in cases (2) and (4),  with
\[ E^1_{pq}=\begin{cases}
H_q(\GA',\GA)&p=-1,\\
H_q(\GA'_{W_p},\GA_{W_p};\St_p)&0\leq p\leq \dim \pP\\
H_q(\GA'_{W_p};\St_p)&p=\dim \pP+1\\
0 & p>\dim \pP+1.
\end{cases} \]
Here $W_p\in \pP'$ is any element of rank $p$, with $W_p\in \pP$ if $p\leq\dim \pP$.
Also
\[ \St_p = \begin{cases}
\St(W_p)                                       & \text{in case (1)}, \\
\St(W_p/\R(E))                                 & \text{in case (2)}, \\
\St(W_p,W_p\cap V_0)                           & \text{in case (3)}, \\
\St\big(W_p/\R(E),(W_p\cap U^\perp)/\R(E)\big) & \text{in case (4)}.  
\end{cases} \]
Moreover, if $\F\neq \F_2$,  we have  $$E^1_{\dim P+1,0}=0$$
in all cases. In case (1) and (3), if $\F=\F_{\ell^r}$ with $\ell^r\neq 2$ and we take homology with coefficients in $\F_\ell$, then we additionally have 
\begin{align*}E^1_{\dim P+1,q}=0 \ \ \ \  &  \textrm{for all}\ q \ \ \text{in case (1)},\\
&\textrm{for all}\ q\le r(\ell-1)-2 \ \ \text{in case (3)}. 
\end{align*}
\end{thm}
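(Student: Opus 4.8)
The plan is to build the spectral sequence from the action of $\GA'$ on the poset $\pP'$ via the small chain complex associated to a Cohen–Macaulay poset (Appendix~\ref{app:CM}). First I would recall that, since $\pP'$ is Cohen–Macaulay of dimension $\dim\pP+1$ (Proposition~\ref{prop:buildings}(1)–(2)), Appendix~\ref{app:CM} produces a chain complex $C_*(\pP')$ of free abelian groups, concentrated in degrees $0$ through $\dim\pP+1$, whose homology is the reduced homology of $|\pP'|$: this is a single copy of the top Steinberg module in the contractible cases (1),(3) it vanishes, and in cases (2),(4) it is $\St(\pP')$ in the top degree only. The term $C_p(\pP')$ decomposes $\GA'$-equivariantly as a direct sum over rank-$p$ elements $W_p$ of induced modules $\mathrm{Ind}_{\GA'_{W_p}}^{\GA'}\St_p$, by the computation of lower intervals in Proposition~\ref{prop:buildings}(1) (which identifies the relevant local homology with $\St(W_p/\R(E))$ etc.) together with transitivity of the action on rank-$p$ elements (Proposition~\ref{prop:stabilizers}(1)). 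I would then splice in, in degree $p=-1$, the mapping cone comparing $C_*(\pP)$ with $C_*(\pP')$ along the stabilization inclusion $\pP\hookrightarrow\pP'$ of Proposition~\ref{prop:buildings}(2), so that the $E^1$-page in column $-1$ reads $H_q(\GA',\GA)$; the precise bookkeeping is the standard ``relative'' hyperhomology double complex $C_*(\GA',\GA;-)\otimes_{\bbZ\GA'}C_*(\pP',\pP)$ (or equivalently a double complex of bar resolutions), whose two filtrations give: one spectral sequence with $E^1$ as stated, and another that collapses to the reduced homology of the pair $(|\pP'|,|\pP|)$, which by the connectivity statements vanishes in the claimed range. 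Applying Shapiro's lemma to each induced summand, and the transitivity again, gives $E^1_{pq}=H_q(\GA'_{W_p},\GA_{W_p};\St_p)$ for $0\le p\le\dim\pP$; for $p=\dim\pP+1$ there is no relative term (that top stratum of $\pP'$ has no preimage in $\pP$) so the entry is the absolute $H_q(\GA'_{W_p};\St_p)$.

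The convergence statement then follows: in cases (1),(3) the target $\tilde H_*(|\pP'|,|\pP|)$ is identically zero since both $|\pP|$ and $|\pP'|$ are contractible (they have maximal elements, or rather the $\bP$-variants do—see Definition~\ref{def:GL relative building bar} and Theorem~\ref{thm:barCM}), while in cases (2),(4) the relative homology vanishes through degree $\dim\pP$ by the Cohen–Macaulay (hence spherical) property and the fact that $\dim\pP'=\dim\pP+1$, so the spectral sequence converges to zero in total degrees $\le\dim\pP$.

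For the vanishing of the top row $E^1_{\dim\pP+1,*}$ I would argue as follows. The stabilizer $\GA'_{W_p}$ of a top-rank element fits, by Proposition~\ref{prop:stabilizers}(2), into a split extension with kernel one of the groups $\A(-,-)$ or $\Ai(-,-)$ and quotient $\G(W_p)$, $\A^T(W_p,W_p\cap V_0)$, etc.; crucially, in all four cases the coefficient module $\St_p$ is pulled back from this quotient (the kernel acts trivially on it), and the quotient is precisely the group $\G(W_p)$, resp.\ $\A^T(W_p,W_p\cap V_0)$, that appears in Theorem~\ref{thm:relcoinvars}. Feeding the Lyndon–Hochschild–Serre spectral sequence of that extension, $E^2_{st}=H_s(\text{quotient};H_t(\text{kernel};\St_p))$, and using that $\St_p$ is an inflated module so the inner coefficients are $H_t(\text{kernel})\otimes\St_p$, reduces the computation of $H_q(\GA'_{W_p};\St_p)$ to knowing $H_s(\text{quotient};\St_p\otimes M)$ for $M=H_t(\text{kernel})$. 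For $q=0$ this is just $(\St_p)_{\text{quotient}}$ and Theorem~\ref{thm:relcoinvars} (which needs $\F\ne\F_2$) gives $0$; in the general-linear cases one uses the non-relative analogue $\St(V)_{\G(V)}=0$ recalled in Section~\ref{sec:join}. For the $\F_\ell$-coefficient refinement in cases (1) and (3), I would instead invoke the stronger input cited right after Theorem~\ref{thm:relcoinvars}: in case (3), Lemma~5.2 of \cite{GKRW} gives $H_d(\A^T(V,V_0);\St(V,V_0)\otimes\F_\ell)=0$ for $d<r(\ell-1)-1$, i.e.\ for $d\le r(\ell-1)-2$; in case (1), Quillen's computation \cite{QuillenK} of $H_*(\GL_n(\F_{\ell^r});\F_\ell)$ (stable vanishing) combined with \cite[Lem~5.2]{GKRW}, or the statement in \cite[Cor~C]{GKRW}, yields $H_*(\G(W_p);\St(W_p)\otimes\F_\ell)=0$ in \emph{all} degrees. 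One must check that the inflation issue does not spoil this: the LHS spectral sequence of the kernel extension has $E^2_{st}=H_s(\text{quotient};H_t(\text{kernel};\F_\ell)\otimes\St_p)$, and since $H_t(\text{kernel};\F_\ell)$ is a (possibly infinite-dimensional) $\F_\ell$-vector space with trivial action on the $\St_p$-factor after tensoring, each $E^2_{st}$ is a direct sum of copies of $H_s(\text{quotient};\St_p\otimes\F_\ell)$, hence vanishes in the stated range of $s$; as $t$ contributes nonnegatively this gives vanishing of $H_q$ in the same range of $q$.

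The main obstacle I anticipate is not the construction of the double complex—that is routine—but rather the careful identification, \emph{with all the relative and radical decorations}, of the $E^1$ column $0\le p\le\dim\pP$ as a relative homology group of the pair of stabilizers with Steinberg coefficients: one must match the local-homology computation of Appendix~\ref{app:CM} with the lower-interval identifications of Proposition~\ref{prop:buildings}(1), check the stabilizer short exact sequences of Proposition~\ref{prop:stabilizers}(2) are compatible with stabilization (so that the relative group $H_q(\GA'_{W_p},\GA_{W_p};\St_p)$ even makes sense and Shapiro applies relatively), and verify that the $\St_p$ appearing is literally the \emph{same} Steinberg module for $\GA$ and $\GA'$ under the inclusion $W_p\hookrightarrow W_p$—which it is, since $W_p$ itself does not change under stabilization, only the ambient space does. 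The secondary subtlety is the inflation argument in the last paragraph: one must be slightly careful that $H_t(\text{kernel};\F_\ell)$ can be an infinite direct sum, so ``tensoring with $\St_p$ and taking quotient-homology'' commutes with that direct sum, which it does since group homology commutes with filtered colimits of coefficient modules.
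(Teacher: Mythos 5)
Your outline follows the paper's own route: the small chain complexes of Appendix~\ref{app:CM} (Theorem~\ref{les}) with the induced-module description coming from transitivity and the lower-interval identifications, a column-wise relative (mapping cone) double complex comparing the $\GA$- and $\GA'$-hyperhomology, the two filtrations, and absolute/relative Shapiro to read off $E^1$; the vanishing of the last column via Theorem~\ref{thm:relcoinvars}, \cite{APS} and the \cite{GKRW} input is also the paper's argument. Two points, however, do not go through as written. First, $\GA'$ does not act on $\pP$, so the expression $C_*(\GA',\GA;-)\otimes_{\bbZ\GA'}C_*(\pP',\pP)$ is not defined; the correct bookkeeping (as in the paper) is to choose resolutions $Q_\bullet$ over $\GA$ and $Q'_\bullet$ over $\GA'$ and take, for each $p$, the cone of $Q_\bullet\otimes_{\GA}C_p\to Q'_\bullet\otimes_{\GA'}C'_p$. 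With that construction the ``other'' spectral sequence does not abut to $\tilde H_*(|\pP'|,|\pP|)$; rather its $E^1$ is built from $H_p(C_\bullet)$ and $H_p(C'_\bullet)$, which vanish in the relevant range by Theorem~\ref{les} -- same numerical conclusion, but the justification should be phrased this way.

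Second, your argument for $E^1_{\dim\pP+1,q}$ at the characteristic contains a step that fails in general: in the Lyndon--Hochschild--Serre spectral sequence, $E^2_{st}=H_s\big(\text{quotient};H_t(\text{kernel};\F_\ell)\otimes\St_p\big)$ splits as a direct sum of copies of $H_s(\text{quotient};\St_p\otimes\F_\ell)$ only if the quotient acts trivially on $H_t(\text{kernel};\F_\ell)$, and the conjugation action gives no such triviality. The repair is the observation you missed (and which the paper makes): in cases (1) and (3) the unique rank-$(\dim\pP+1)$ element is $W_p=V\op\F$ itself, so $\GA'_{W_p}$ is the \emph{whole} group $\G(V\op\F)$, resp.\ $\A^T(V\op\F,V_0\op\F)$, the kernel of your extension is trivial, and the entry is directly the twisted homology handled by \cite[Sec 5.1]{GKRW} (the relevant input in case (1); Quillen's stable vanishing of $H_*(\GL_n(\F_{\ell^r});\F_\ell)$ is not what is needed here), resp.\ \cite[Lem 5.2]{GKRW}. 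Relatedly, in cases (2) and (4) for $q=0$ the quotient of Proposition~\ref{prop:stabilizers} is $\G(W_p)_{\R(E)}$, resp.\ $\A^T(W_p,W_p\cap U^\perp)_{id_{\R(E)}}$, not literally the group of Theorem~\ref{thm:relcoinvars}; one must add that it surjects onto $\G(W_p/\R(E))$, resp.\ $\A^T\big(W_p/\R(E),(W_p\cap U^\perp)/\R(E)\big)$, and that the action on $\St_p$ factors through this surjection, before quoting the coinvariant vanishing.
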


These spectral sequences come from the action of the groups  $\GA$ and $\GA'$ on chain complexes of modules associated to their corresponding posets $\pP$ and  $\pP'$, as given by the following theorem: 

\begin{thm}\label{les} 
Let $(\GA,\pP)$ be as in case (1), (2), (3) or (4) in Theorem~\ref{thm:SS}. 
Then there is a chain complex $C_\bullet=C_\bullet(\GA,\pP)$ of $\GA$-modules with 
$$C_p=\left\{\begin{array}{ll}
 \induction{\St_p}{\GA_{W_p}}{\GA}  & \ \  0\le p\le \dim(\pP), \\
\bbZ & \ \  p=-1
\end{array}\right.$$
where $W_p\in \pP$ has rank $p$ and $\St_p$ is as in Theorem~\ref{thm:SS}. 
This chain complex is exact in all cases and degrees, except for
\[ H_{\dim P}(C_\bullet)=\tilde H_{\dim \pP}(\pP) \]
in cases (2) and (4).
\end{thm}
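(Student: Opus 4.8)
The plan is to obtain $C_\bullet=C_\bullet(\GA,\pP)$ by feeding the poset $\pP$ into the general construction of Appendix~\ref{app:CM} and then keeping track of the $\GA$-action. By Proposition~\ref{prop:buildings}(1), each of the four posets in question is Cohen--Macaulay of the stated dimension $d=\dim\pP$, and the lower interval $\pP_{<W}$ below an element $W$ of rank $p$ is isomorphic to a general linear building (namely $\PP(W)$, $\PP(W,W\cap V_0)$, $\PP(W/\R(E))$ or $\PP(W/\R(E),(W\cap U^\perp)/\R(E))$ in the four cases), a poset of dimension $p-1$ whose top reduced homology is, by definition, the Steinberg module $\St_p$ of Theorem~\ref{thm:SS} (the convention $\tilde H_{-1}(\emptyset)=\bbZ$ covering the case $p=0$). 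The construction of Appendix~\ref{app:CM}, applied to the Cohen--Macaulay poset $\pP$, then produces a chain complex which is natural in $\pP$, has $p$-th term $\bigoplus_{\rank W=p}\tilde H_{p-1}(\pP_{<W})$ for $0\le p\le d$ together with an augmentation term $\bbZ$ in degree $-1$, and whose homology vanishes in every degree except $d$, where it is $\tilde H_d(\pP)$.

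First I would identify the summands of the $p$-th term with copies of $\St_p$ via the isomorphisms of Proposition~\ref{prop:buildings}(1). Next, the relevant group $\GA$ acts on $\pP$ through the actions set up in Section~\ref{sec:groupsandbuildings}, and hence acts on the above chain complex by naturality, making it a complex of $\GA$-modules; the setwise stabilizer $\GA_W$ preserves $W$ and its lower interval, so it acts on $\tilde H_{p-1}(\pP_{<W})$ compatibly with the identification of that group with $\St_p$. By Proposition~\ref{prop:stabilizers}(1) the action of $\GA$ on the set of rank-$p$ elements of $\pP$ is transitive, so fixing one such element $W_p$ yields, for each $0\le p\le d$, a $\GA$-module isomorphism $\bigoplus_{\rank W=p}\tilde H_{p-1}(\pP_{<W})\cong \induction{\St_p}{\GA_{W_p}}{\GA}$, while in degree $-1$ the term is $\bbZ$ with trivial $\GA$-action. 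This gives the chain complex $C_\bullet$ with exactly the terms in the statement, and its homology is as recalled above: zero away from degree $d$, and $\tilde H_d(\pP)$ in degree $d$.

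It then remains only to observe that in cases (1) and (3) the top-degree homology vanishes as well. Indeed $\bP(V)=\{0<W\le V\}$ and $\bP(V,V_0)=\{W\le V : W+V_0=V\}$ both contain $V$ as a maximum element (note $V+V_0=V$), hence are contractible, so $\tilde H_d(\pP)=0$ and $C_\bullet$ is exact in all degrees. In cases (2) and (4), on the other hand, $E$ is isotropic only when $E=\R(E)$, so $E\notin\PPi(E)$ and neither $\PPi(E)$ nor $\PPi(E,U)$ has a maximum; there the top homology $\tilde H_{\dim\pP}(\pP)$ genuinely survives and equals $H_{\dim P}(C_\bullet)$, as asserted.

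The only points requiring care — rather than being genuine obstacles — are that the construction of Appendix~\ref{app:CM}, together with its differentials and its homology computation, is genuinely natural in the poset, so that the $\GA$-action is defined on the nose and the orbit decomposition into the induced modules $\induction{\St_p}{\GA_{W_p}}{\GA}$ is $\GA$-equivariant, and that the grading conventions match up so that $\tilde H_{\dim\pP}(\pP)$ indeed lands in homological degree $\dim P$ of $C_\bullet$; both follow directly from the way that chain complex is assembled from the order complex and the reduced homology of intervals.
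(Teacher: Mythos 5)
Your proposal is correct and follows essentially the same route as the paper: apply the Appendix~\ref{app:CM} construction (Theorem~\ref{thm:cpxofHCM}) to the Cohen--Macaulay posets, identify the lower intervals via Proposition~\ref{prop:buildings}(1) to get the Steinberg modules, use transitivity of the action to rewrite each term as an induced module, and conclude exactness from contractibility of $\bP(V)$, $\bP(V,V_0)$ and from sphericity in the isotropic cases. The extra details you spell out (naturality of the construction giving the $\GA$-action, and the orbit decomposition yielding $\induction{\St_p}{\GA_{W_p}}{\GA}$) are exactly what the paper leaves implicit.
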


\begin{proof}[Proof of Theorem~\ref{les}]
From Theorems~\ref{thm:barCM}, \ref{thm:PE is C-M} and~\ref{thm:PEU is CM}
we have that $\bP(V)$, $\bP(V,V_0)$ 
$\PPi(E)$, and $\PPi(E,U)$ are all Cohen-Macaulay. 
Let $\pP$ be one of the posets $\bP(V)$, $\PPi(E)$, $\bP(V,V_0)$ or $\PPi(E,U)$. 
Applying Theorem~\ref{thm:cpxofHCM} to $\pP$ in each case gives the above associated chain complexes, using the identification of the lower intervals $\pP_{<W_p}$ given in Proposition~\ref{prop:buildings}. The homology of the chain complex is the reduced homology of $\pP$, which is trivial in the 
cases of $\bP(V)$ and $\bP(V,V_0)$ since they are contractible, and non-trivial only in its top degree in the cases of $\PPi(E)$ and $\PPi(E,U)$ since they are Cohen-Macaulay.
\end{proof}

\begin{proof}[Proof of Theorem~\ref{thm:SS}]. 
Let $\GA,\GA',\pP,\pP'$ be as in the theorem.
We have $\dim \pP'=\dim \pP+1$ by 
Proposition~\ref{prop:buildings}(2).
Consider the complexes $C_\bullet=C_\bullet(\GA,\pP)$ 
and $C'_\bullet=C_\bullet(\GA',\pP')$ 
be the complexes of $\GA'$ and $\GA$--modules given in Theorem~\ref{les}. 
Choose projective resolutions $Q_\bullet$ and $Q'_\bullet$ of $\bbZ$ over $\GA$ and $\GA'$. Then for every $p\ge -1$, stabilization by $\F$ (resp.~$\HH$) induces a map of complexes 
$$Q_\bullet\otimes_{G}C_p\rar Q'_\bullet\otimes_{G'}C'_p$$
and 
$$H_i(\GA',\GA;C',C)=H_i(\Cone(Q_\bullet\otimes_{\GA}C_p\to Q'_\bullet\otimes_{G'} C_p')).$$
Assembling these cones for every $p$,
we get a double complex 
$$\Cone_{p,q}=Q_{q-1}\otimes_{\GA}C_p\ \textstyle{\bigoplus} \ Q'_q\otimes_{\GA'} C_p'$$
with horizontal differential that of $C_\bullet$ and $C'_\bullet$, and vertical differential that of the cone. 
There are two spectral sequences associated to this double complex. The first one has 
$$^1E:\ \ E^1_{pq}=Q_{q-1}\otimes_{\GA}H_p(C_\bullet)\  \textstyle{\bigoplus}\   Q'_q\otimes_{\GA'} H_p(C'_\bullet).$$
Now by Theorem~\ref{les}, $H_*(C_\bullet')=0=H_*(C_\bullet)$ in cases (1) and (3). In cases (2) and (4), $H_p(C_\bullet)=0$ 
when $p<\dim \pP$, and $H_p(C'_\bullet)=0$ when $p\le \dim \pP$. Since there are no nonzero terms in the range $p+q\le \dim \pP$, the spectral sequence converges to 0 in those degrees. 

Hence the other spectral sequence associated to the same double complex converges to 0 in the same range of degrees. It has $E^1$--term 
$$^2E:\ \ E^1_{pq}=\begin{cases}
H_q(\GA',\GA)&p=-1,\\ 
H_i(\GA',\GA;C'_p,C_p)=H_i(\GA',\GA;\induction{\St_p}{\GA'_{W_p}}{\GA'},\induction{\St_p}{\GA_{W_p}}{\GA}) & 0\leq p\leq \dim\pP\\
H_q(\GA';C'_p)=H_i(\GA';\induction{\St_p}{\GA'_{W_p}}{\GA'}) &p=\dim \pP+1
\end{cases}
$$
This can be rewritten as in the statement of the theorem using Shapiro's lemma and its relative version. 

Finally we consider the terms 
 $$E^1_{p,q}=H_0(\GA'_{W_{p}};\St_p) \ \ \ \textrm{when}\ p=\dim \pP+1.$$
In case (1) and (3), $W_p=V\op \F$ as it is a maximal subspace, so $\GA'_{W_{p}}$ is the whole group $\GL(V\op \F)$ or $\A^T(V\op\F,V_0\op \F)$, and  $E^1_{p,0}$ is the corresponding Steinberg coinvariants when $q=0$. These vanish by Theorem~\ref{thm:relcoinvars} and its absolute version (see eg., \cite[Thm.~1.1]{APS}). 

 When $\F=\F_{\ell^r}$, taking homology with coefficients in $\F_\ell$, we moreover 
have in case (1) $E^1_{p,q}=H_q(\G(V\op \F);\St(V)\otimes \F_\ell)=0$
for all $q$ (see eg., \cite[Sec 5.1]{GKRW}), giving the additional
vanishing statement in that case, while in case (2), 
 $E^1_{p,q}=H_q(\A^T(V\op \F,V_0\op \F);\St(V\op \F,V_0\op \F)\otimes \F_\ell)=0$ for all $q\le r(\ell-1)-2$ by \cite[Lem 5.2]{GKRW}).

For cases (2) and (4), the action of $\GA'_{W_{p}}$ on 
\[ \St_p = \begin{cases}
\St(W_p/\R(E))                                 & \text{in case (2)}, \\
\St\big(W_p/\R(E),(W_p\cap U^\perp)/\R(E)\big) & \text{in case (4)}.  
\end{cases} \]
is through the quotients
\[ \begin{cases}
\G(W_p)_{\R(E)}                        & \text{in case (2)}, \\
\A^T(W_p,W_p\cap U^\perp)_{id_{\R(E)}} & \text{in case (4)}.
\end{cases} \]
of Proposition~\ref{prop:stabilizers},
which surject onto
\[ \begin{cases}
\G(W_p/\R(E))                           & \text{in case (2)}, \\
\A^T(W_p/\R(E),(W_p\cap U^\perp)/\R(E)) & \text{in case (4)}.
\end{cases} \]
The corresponding coinvariants therefore reduce to coinvariants of the same form as in the linear cases, and vanish by the same theorems. 
\end{proof}

\subsection{Proof of Theorem~\ref{thm:mainstability}}\label{sec:proof}

Recall from equation~(\ref{equ:G=A}) and Lemma~\ref{lem:buildingstab2} that there are split inclusions of groups
\begin{equation*}
\G(V_0)\cong \A^T(V,V_0)_{W_0} \to A^T(V,V_0) \ \ \textrm{and}\ \  \Gi(U^\perp/U)\cong \Ai(E,U)_{W_0}\to \Ai(E,U)
\end{equation*}
for $W_0$ a rank 0 element of $\bP(V,V_0)$ (resp.~$\PPi(E,U)$), which are compatible with the stabilization maps. 
Hence we get maps of pairs 
\begin{equation}\label{equ:G=Aagain}
\begin{aligned}
\big(\G(V_0\op \F),\G(V_0)\big)&\rar \big(\A^T(V\op \F,V_0\op \F),\A^T(V,V_0)\big) \\ 
\big(\Gi(U^\perp/U\op \HH),\Gi(U^\perp/U)\big)&\rar \big(\Ai(E\op \HH,U),\Ai(E,U)\big)
\end{aligned}
\end{equation}
inducing injective maps in homology. 

For the rest of the section, set $\al=0$ unless we are in the linear case, with $\F=\F_{\ell^r}$ and we take coefficients in $\F_\ell$, in which case we set 
$$\al=r(\ell-1)-2.$$
We will prove the result by showing that the following two statements hold for every $d\ge 0$: 
\begin{enumerate}
\item[(I$_d$)] Let $(G,G',A,A')$ be either of the following 
$$\begin{array}{lllll}
(1)& G=\G(V_0)&G'=\G(V_0\op \F)&A=\A^T(V,V_0) & A'=\A^T(V\op \F,V_0\op \F)\\ 
(2)&G=\Gi(U^\perp/U) &G'=\Gi(U^\perp/U\op \HH)& A=\Ai(E,U) & A'=\Ai(E\op \HH,U),
\end{array}$$
with $\pP=\bP(V,V_0)$ in the first case and $\pP=\PPi(E,U)$ in the second case. Then the homomorphism (\ref{equ:G=Aagain}) induces an isomorphism 
\[ H_d(G',G)\stackrel{\cong}\rar H_d(A',A) \ \ \ \textrm{if}\  d\leq \dim \pP+1+\al; \]
\item[(II$_d$)] Let $(G,G')$ be either of the following
$$\begin{array}{lll}
(1)&G=\G(V) & G'=\G(V\op \F)  \\ 
(2)&G=\Gi(E) & G'=\Gi(E\op \HH)
\end{array}$$
with $\pP=\bP(V)$ in the first case and $\pP=\PPi(E)$ in the second case. 
Then 
\[ H_d(G',G)=0 \ \ \ \textrm{if} \  d\leq \dim \pP+1+\al. \]
\end{enumerate}

We start by showing that (I$_d$) and (II$_d$) together imply
\begin{enumerate}
\item[(I$_d$')]  Let $(A,A')$ be either of the following 
$$\begin{array}{lll}
(1)&A=\A^T(V,V_0) & A'=\A^T(V\op \F,V_0\op \F)\\ 
(2)& A=\Ai(E,U) & A'=\Ai(E\op \HH,U),
\end{array}$$
with $\pP=\bP(V,V_0)$ in the first case and $\pP=\PPi(E,U)$ in the second case. Then 
\[ H_d(A',A)=0 \ \ \ \textrm{if} \  d\leq \dim \pP+\al. \]
\end{enumerate}

\begin{proof}[Proof that (I$_d$) and (II$_d$) imply (I'$_d$).]
Indeed, as $d\leq \dim \pP+\al<\dim \pP+1+\al$, we can apply (I$_d$) in each case to get an isomorphism  $H_d(G',G)\cong H_d(A',A)$ for $(G,G')$ as in (I$_d$). 
Now to apply (II$_d$) in case (1) we need $\dim \bP(V,V_0)\le \dim \bP(V_0)+1$, which holds as both sides of the inequality equal $\dim V_0$.  
In case (2), we need $\dim\PPi(E,U)\le \dim \PPi(U^\perp/U)+1$, which holds as both sides equal $\g(U^\perp)$ by Propositions~\ref{prop:buildings} and~\ref{prop:induced form}.
\end{proof}

Before proving that (I) and (II) hold, we start by checking that they do together prove Theorem~\ref{thm:mainstability}. 

\begin{proof}[Proof that (I$_d$) and (II$_d$) imply Theorem~\ref{thm:mainstability} in degree $d$.]
(II$_d$) gives the non-relative part of the statement in the theorem, as $\dim \pP+1=\dim V$ in case (1) and  $\dim \pP+1=\g(E)$ in case (2). 
The relative part of the statement is given by (I'$_d$) using also that $A(V,V_0)\cong A^T(V^*,(V/V_0)^*)$ in the general linear case, together with the computation  
$\dim\bP(V^*,(V/V_0)^*)=\dim(V/V_0)^*=\dim V-\dim V_0$. In the isotropic case, it follows using the computation $\dim\PPi(E,U)=\g(E)-\dim U+\dim \R(E)$ 
of Proposition~\ref{prop:buildings}. 
\end{proof}

We will prove the statements (I$_d$) and (II$_d$) together, by induction on the degree $d$. 
The two statements trivially hold for $d=0$.  So we assume for induction that both statements hold for all $G,G',A,A'$ for all $d<q$, and we will prove that (I$_q$) and (II$_q$) then also hold.  We start by proving  (I$_q$).

\begin{proof}[Proof that (I$_d$) and (II$_d$) for all $d<q$ imply (I$_q$).]
Let $G,G',A,A'$ be the groups and $\pP,\pP'$ the associated posets, as in (I$_q$) and Theorem~\ref{thm:SS}.
Theorem~\ref{thm:SS}(2) associates a spectral sequence to the tuple $(A,A',\pP,\pP')$, which converges to zero in degrees $p+q\leq \dim \pP$, in fact in all degrees in case (1), 
and has
\[ E^1_{pq}=\begin{cases}
H_q(A',A)&\textrm{if }p=-1,\\
H_q(A'_{W_p},A_{W_p};\St_p)&\textrm{if }0\leq p\leq \dim \pP\\
H_q(A'_{W_p};\St_p)&\textrm{if }p=\dim \pP+1\\
0 &\textrm{if }p>\dim \pP+1
\end{cases}. \]
where $W_p\in \pP$ (or $\pP'$) has rank $p$, and
$$\St_p=\St(W_p,W_p\cap V_0) \ \ \ \textrm{or}\ \ \  \St_p=\St(W_p/\R(E),W_p\cap U^\perp/\R(E))$$ in the first and second cases respectively.

Recall that the map $(G',G)\to (A',A)$ in the statement is induced by the inclusion of the stabilizer of an element $W_0$ of rank 0 in $P$, using the identification  
$G^{'} \cong A^{'}_{W_0}$ from (\ref{equ:G=A}). Hence 
the differential $d^1:E^1_{0,q}\to E^1_{-1,q}$ coincides with the map we are interested in, and (I$_q$) can be rephrased as saying that this differential is an isomorphism 
as long as 
\[ q\leq \dim \pP+1+\al. \]
By Lemma~\ref{lem:buildingstab2}, the pair of inclusions
\[ (G',G)\to (A',A) \]
is split. Hence the induced map in homology is automatically injective, so we only need to check its surjectivity.  Since $E^\infty_{-1,q}=0$ 
(as $q-1\le \dim \pP$ in case (2) as $\al=0$ in that case), 
and no differentials can leave this position, it suffices to check that no differentials except for $d^1$ can hit this position.
This will follow if we can show that
\[ E^1_{p,q-p}=0\textrm{ when }1\leq p\leq \dim \pP+1, \]
recalling that $ E^1_{p,q-p}=0$ if $p>\dim \pP+1$ for any $q$. The case $p=\dim \pP+1$ is given by Theorem~\ref{thm:SS} as $E^1_{p,q-p}=E^1_{\dim \pP+1,q-p}$ with $q-p=0$ in all cases with $\al=0$, and $q-p\le \al$ in the finite field case at the characteristic with $\al=r(\ell-1)-2$.

\medskip

So we consider the case $p\le \dim \pP$, in which case 
\[ E^1_{p,q-p} = H_{q-p}(A'_{W_p},A_{W_p};\St_p). \]
Write $W_p^0=W_p\cap V_0$ in the general linear case and $W_p^0=W_p\cap U^\perp$ in the isotropic case.
Recall from Proposition \ref{prop:stabilizers} that there are 
group extensions for $A_{W_p}$ and $A'_{W_p}$ 
\begin{align*}
1\rar A_0:=\A(V_0,W_p^0) \rar &\ A_{W_p} \rar \A^T(W_p,W_p^0) \rar 1\\
1\rar A_0':=\A(V_0\op \F,W_p^0) \rar &\ A'_{W_p} \rar \A^T(W_p,W_p^0) \rar 1
\end{align*}
in the general linear case
and 
\begin{align*}
1\rar A_0:=\Ai(U^\perp/U,W_p^0) \rar &\ A_{W_p} \rar 
  \A^T(W_p,W_p^0)_{id_{\R(E)}} \rar 1\\
1\rar A_0':=\Ai(U^\perp/U\op \HH,W_p^0) \rar & \ A'_{W_p} \rar 
  \A^T(W_p,W_p^0)_{id_{\R(E)}} \rar 1
\end{align*}
in the isotropic case. As these are compatible with the stabilization maps, we get an associated Leray-Hochschild-Serre spectral sequence of the form 
\begin{align*}
E'^2_{a,b}&= H_a\big(A^T(W_p,W_p^0)_{(id_{\R(E)})};H_b(A_0',A_0;\St_p)\big)\\
&= H_a\big(A^T(W_p,W_p^0)_{(id_{\R(E)})};H_b(A_0',A_0)\otimes \St_p\big), 
\end{align*}
converging to $H_{q-p}(A'_{W_p},A_{W_p};\St_p)$, 
where we have used that $\St_p$ is free abelian and that the action of $A_0'$ on it is trivial in both cases because the action is through the cokernel of the above exact sequence, where $A_0'$ is the kernel.  
We want to show that this $E'^2$ page is zero in total degree
\[ a+b=q-p\leq \dim \pP+1+\al-p \ \ \ \textrm{with}\ 1\le p\le \dim \pP. \]
We may here apply the induction hypothesis to the groups $H_b(A_0',A_0)$, because the degree
\[ b\leq q-p<q \]
is less than $q$. 

\medskip

In the general linear case, $A_0=\A(V_0,W_p^0)\cong \A^T(V_0^*,(V_0/W_p^0)^*)$ and by 
(I'$_b$), $H_b(A_0',A_0)=0$ as long as 
$b\le \dim V_0-\dim W_p^0+\al=\dim \pP+\al-p$. 
In the isotropic case (where $\al=0$ always), $A_0=\Ai(U^\perp/U,W_p^0)$ so  $H_b(A_0',A_0)=0$ by (I'$_b$) as long as
$b\le \dim \PPi(U^\perp/U,W_p^0)=g(U^\perp/U)-\dim W_p^0= \dim \pP-p$ in this case too, where we used that $\g(U^\perp/U)=\g(U^\perp)$ (Proposition~\ref{prop:induced form}). 

So in both cases $E'^2_{ab}=0$ whenever $b\le \dim \pP+\al-p$. 

The only remaining case is that
$b=\dim \pP+1+\al-p$, in which case $a=0$. Writing 
$$\pP(A_0)=\bP(V_0^*,(V_0/W_p^0)^*)\cong \bP^T(V_0,W_p^0)=\pP^T(A_0) \ \ \ \textrm{or}\ \ \ \pP(A_0)=\PPi(U^\perp/U,W_p^0)$$ in each respective case, where the isomorphism $\pP(A_0)\cong \pP^T(A_0)$ is that of Lemma~\ref{lem:P and PT}, 
the same computation as the one we just did gives that $b\le \dim \pP(A_0)+1+\al$ in that case, so we can apply (I$_b$). 
Consider first the linear case. 
 Let $X\in \pP^T(A_0)$ be a rank 0 element, that is a complement of $W_p^0$ inside $V_0$. Then its image $(V_0/X)^*$ in $\bP(V_0^*,(V_0/W_p^0)^*)$ is also rank 0 and (I$_b$) 
shows that there is an isomorphism 
\begin{align*}
H_b(G',G) \rar  H_b\big(\A^T(V_0^*\op F^*,(V_0/W_p^0)^*\op F^*),\A^T(V_0^*,(V_0/W_p^0)^*)\big)
\end{align*}
for
$$G=\A^T(V_0^*,(V_0/W_p^0)^*)_{(V_0/X)^*}\ \ \ \textrm{and}\ \ \ G'=\A^T(V_0^*\op \F^*,(V_0/W_p^0)^*\op \F^*)_{(V_0/X)^*} .$$
Recall from Lemma~\ref{lem:P and PT} that the isomorphism $\pP(A_0)\cong \pP^T(A_0)$ is equivariant with respect to the isomorphism $\A^T(V_0^*,(V_0/W_p^0)^*) \cong \A(V_0,W_p^0)=A_0$, 
so the stabilizers $G$ of $(V_0/X)^*\in \pP(A_0)$ identifies under this latter isomorphism with the stabilizer of $X\in \pP^T(A_0)$, and likewise for $G'$. 
Hence (I$_b$) gives an isomorphism 
$$H_b((A'_0)_X,(A_0)_X)\sta{\cong}\rar H_b(A'_0,A_0)$$
By Proposition~\ref{prop:stabilizers}(3), conjugation by $A^T(W_p,W_p^0)$ is the identity on the stabilizer subgroups $(A_0')_X$ and $(A_0)_X$.
Hence we get in this case 
\[ E'^2_{0,n-p} \cong H_0\big(A^T(W_p,W_p^0);\St(W_p,W_p^0)\big)\otimes H_{n-p}\big((A_0')_X,(A_0)_X\big) \]
which is zero because the first factor vanishes by Theorem~\ref{thm:relcoinvars}.

\medskip

In the isotropic case, let 
$X$ be a rank 0 element in $\PPi(U^\perp/U,W_p^0)$. Then (I$_b$) gives an isomorphism 
$$H_b((A'_0)_X,(A_0)_X)\sta{\cong}\rar H_b(A'_0,A_0)$$
while  Proposition~\ref{prop:stabilizers}(3) gives that $\A^T(W_p,W_p^0)_{id_{\R(E)}}$ acts trivially on $(A'_0)_X,(A_0)_X$. 
Hence we get that  
\[E'^2_{0,n-p} \cong H_0\big(A^T(W_p,W_p^0)_{id_{\R(E)}};\St(W_p/\R(E),W_p^0/\R(E))\big)\otimes H_{n-p}((A'_0)_X,(A_0)_X)\]  
which is also zero by Theorem~\ref{thm:relcoinvars}, as
\[ A^T(W_p,W_p^0)_{id_{\R(E)}}\to A^T\big(W_p/\R(E),W_p^0/\R(E)\big) \]
is surjective and the action on $\St(W_p/\R(E),W_p^0/\R(E)$ is through this surjection.
Hence all the necessary entries in the previous spectral sequence are also zero, which finishes the proof of (I$_q$). 
\end{proof}

\begin{proof}[Proof that (I$_d$) for all $d\le q$ and (II$_d$) for all $d<q$ imply (II$_q$).]
Fix now $G,G'$ as in (II$_q$), with associated posets $\pP$ and $\pP'$. Let $\Si G=\G(\F\op V)$, $\Si G'=\G(\F\op V\op \F)$ in the general linear case, and likewise
$\Si G=\Gi(\HH\op E)$ and $\Si G'=\Gi(\HH\op E\op \HH)$ in the isotropic case, and let $\Si \pP=\bP(\F\op V)$ or $\PPi(\HH\op E)$ be the poset associated to $\Si G$ in each case, and $\Si \pP'$ defined similarly. 
Theorem~\ref{thm:SS}(1) associates a spectral sequence to the data $(\Si G,\Si G',\Si \pP,\Si \pP')$ in each case, which converges to zero in degrees $p+q\leq \dim \Si \pP=\dim \pP+1$, and in all degrees in the general linear case.
It has
\[ E^1_{pq}=\begin{cases}
H_q(\Si G',\Si G)&\textrm{if }p=-1,\\
H_q(\Si G'_{W_p},\Si G_{W_p};\St_p)&\textrm{if }0\leq p\leq \dim \pP+1\\
H_q(\Si G'_{W_p};\St_p)&\textrm{if }p=\dim P+2\\
0 &\textrm{if }p>\dim P+2
\end{cases} \]
where $W_p\in \Si \pP$ (or $\Si \pP'$) has rank $p$ with 
$$\St_p=\St(W_p) \ \ \ \ \textrm{or}\ \ \ \  \St(W_p/\R(E))$$ in the general linear case and  isotropic case respectively, where we recall from Lemma~\ref{lem:properties of HH} that  $\R(\HH\op E)=\R(E)$. 
We have also used the fact given by Proposition~\ref{prop:buildings}(2) that $\dim \pP'=\dim \pP+1$.

Note that $W_p$ has dimension $p+1$ in the general linear case and dimension $p+1+\dim \R(E)$ in the isotropic case (see Proposition~\ref{prop:buildings}).  In both cases, $\St_0=\St(\F^1)$ is the constant module $\bbZ$.

\medskip

We assume $q\le \dim \pP+1+\al$ and need to show that $H_q(G',G)=0$. This group does not as such appear in the spectral sequence, 
but taking $W_0=\F\le \F\op V$ in the general linear case and $W_0=L\le \HH\le \HH\op E$ in the isotropic case, Lemma~\ref{lem:buildingstab1} shows that 
there is an injective map 
$$H_q(G',G)\to H_q(\Si G'_{W_0},\Si G_{W_0})=E^1_{0,q}.$$
We will show that the differential 
$$d^1: E^1_{0,q}= H_q(\Si G'_{W_0},\Si G_{W_0}) \rar E^1_{-1,q}=H_q(\Si G',\Si G)$$
is injective when $q\le \dim \pP+1$. 
It will follow that the composition 
$$s_1\colon H_q(G',G)\rar H_q(\Si G',\Si G)$$
is also injective in this range of degrees. This composition is known as the ``lower suspension'' and the same argument applied to $(\Si G',\Si G)$ will show that also the map 
$s_2: H_q(\Si G',\Si G)\to H_q(\Si^2 G',\Si^2 G)$ is injective in this range. Statement (II$_q$) will then follow if we show that the composition 
$$H_q(G',G)\stackrel{s_1}\rar H_q(\Si G',\Si G) \stackrel{s_2}\rar H_q(\Si^2 G',\Si^2 G)$$
is the zero map. This is a by now standard argument, which can be found e.g.~in
\cite[Prop.~4.22]{RWW}. 
We sketch it here for completeness. 

Consider the commutative diagram 
$$\xymatrix{\cdots \ar[r] & H_q(G') \ar[d]\ar[r]^-{f_1}& H_q(G',G)\ar[d]^{s_1} \ar[r]^-{f_2} & H_{q-1}(G)\ar[d]^-{l_2} \ar[r]^-{l_2'} & H_{q-1}(G') \ar@{-->}[dl]_{\phi_*} \cdots \\
\cdots \ar[r] & H_q(\Si G') \ar@{-->}[dl]_{\psi_*}\ar[d]_{l_1}\ar[r]^-{g_1}&H_q(\Si G',\Si G) \ar[d]^{s_2} \ar[r]^-{g_2} & H_{q-1}(\Si G)\ar[d] \ar[r] & \cdots\\
\cdots H_g(\Si^2 G) \ar[r]^-{l_1'} & H_q(\Si^2 G') \ar[r]^-{h_1}&H_q(\Si^2 G',\Si^2 G)\ar[r]^-{h_2} & H_{q-1}(\Si^2 G) \ar[r] & \cdots
}$$ 
where the vertical maps are all induced by the lower suspensions and the horizontal maps come from the long exact sequence associated to the ``upper suspension'', which is our stabilization map.

For convenience, write $(X,Z)=(E,\HH)$ in the isotropic case, or $(X,Z)=(V,\F)$ in the general linear case, so that $G$ (resp.~$G'$, $\Si G$) is the group of automorphisms of $X$ (resp.~$X\op Z$, $Z\op X$).
The natural isomorphism $X\op Z\to Z\op X$ induces an isomorphism $\phi:G'\to \Si G$ which commutes with the respective inclusions of $G$ into these groups. This shows that $l_2$ and $l_2'$ have the same kernel, so the composition 
\[ g_2\circ s_1=l_2\circ f_2=0 \]
in the diagram. Hence the image of $s_1$ lies in the image of $g_1$, and thus the image of $s_2\circ s_1$ lies in the image of $s_2\circ g_1=h_1\circ l_1$. 

Now, let $\psi:\Si G'\to\Si^2 G$ be the isomorphism induced by the isometry 
$Z\op X\op Z\to Z\op Z\op X$ sending $(z,x,z')\mapsto(z',z,x)$.  
This time the triangle
\[ \xymatrix{ & \Si G' \ar[d]_{\Si}\ar[dl]_{\psi} \\
\Si^2 G \ar[r]^-{\textrm{incl}} & \Si^2 G' } \]
does not quite commute.  But it does commute up to conjugation in the target: specifically, conjugation by the automorphism of
$Z\op Z\op X\op Z$ switching the first and last factors.  Therefore it induces a commutative triangle in homology, showing that the induced maps $l_1$ and $l_1'$ have the same image.
It follows that $h_1\circ l_1=0$.  Combining this with the previous paragraph, we can conclude that $s_2\circ s_1=0$ as claimed.

\medskip

It remains only to check injectivity of the differential $d^1:E^1_{0,q}\to E^1_{-1,q}$ in the above spectral sequence. We are assuming
\[ q\le \dim \pP+1+\al, \] 
so $E^\infty_{0,q}=0$. The differential $d^1$ is the only one that can leave that position. It will necessarily be injective once we have shown that no other differential hits that position. 
We will do this by showing that 
$$E^1_{p,q-p+1}=0 \ \ \ \textrm{for all} \ 1\le p\le \dim \pP+2.$$ 
When $p=\dim \pP +2=\dim \Sigma \pP +1$, we have $q-p+1\le \al$ 
and $E^1_{p,q-p+1}=0$ by Theorem~\ref{thm:SS} as either $q-p+1=\al=0$ or we are in a case where this homology group  vanishes for any $q$.  So we are left to consider the case $p\le \dim \pP+1$. 
We have 
$$E^1_{p,q-p+1}=H_{q-p+1}(\Si G'_{W_p},\Si G_{W_p};\St_p).$$
We use the Leray-Hochschild-Serre spectral sequence associated to the short exact sequences of Proposition~\ref{prop:stabilizers}. The $E^2$--term of that spectral sequence has the form 
$$E'^2_{a,b}=H_a\big(\G(W_p)_{(\R(E))};H_b(A',A;\St_p)\big)$$
converging to $H_{a+b}(\Si G'_{W_p},\Si G_{W_p};\St_p)$, 
with
\[ A=\A(\F\op V,W_p),\quad A'=\A(\F\op V\op \F,W_p) \]
in the general linear case, and 
\[ A=\Ai(\HH\op E,W_p),\quad A'=\Ai(\HH\op E\op \HH,W_p) \]
in the isotropic case. We want to show that $E'^2_{a,b}=0$ for all $a+b=q-p+1$ with $1\le p\le \dim \pP+1$ and $q\le \dim \pP+1+\al$.

\medskip 

We start by analysing the isotropic case (where $\al=0$). 
When $a=0$, we have 
\begin{align*}E'^2_{0,q-p+1}&=H_0\big(\G(W_p)_{\R(E)};H_{q-p+1}(A',A;\St(W_p/\R(E)))\big) \\
&=H_0\big(\G(W_p)_{\R(E)};H_{q-p+1}(A',A)\otimes\St(W_p/\R(E))\big). 
\end{align*}
By (I$_{q+1-p}$), which we may use as $q+1-p\le q$, there is an isomorphism 
$$H_{q-p+1}\big(\Gi(W_p^\perp/W_p\op \HH),\Gi(W_p^\perp/W_p)\big)\to H_{q-p+1}(A',A)$$ 
as, using Lemma~\ref{lem:properties of HH} and the dimension and rank calculations of Proposition~\ref{prop:buildings}, 
\begin{align*}
q-p+1 \le \dim \pP-p+2 &= g(E)-p+1 = g(\HH\op E) - p \\
&= g(\HH\op E) - \dim(W_p) + \dim\R(\HH\op E) + 1 = \dim\PPi(\HH\op E,W_p) + 1.
\end{align*}
Now $\Gi(W_p^\perp/W_p)$ and $\Gi(W_p^\perp/W_p\op \HH)$ are the stabilizers of rank 0 elements in the buildings associated to $A$ and $A'$, and 
 Proposition~\ref{prop:stabilizers}(3) shows that the action of $\G(W_p)_{\R(E)}$ on these stabilizer subgroups is trivial. Hence 
the $E^2$-term can be rewritten as 
\[ E'^2_{0,q-p+1}=H_0\big(\G(W_p)_{\R(E)};\St(W_p/\R(E))\big)\otimes H_{q-p+1}(A',A), \]
which is zero because $\G(W_p)_{\R(E)}$ surjects onto $\G(W_p/\R(E))$, whose coinvariants in the module $\St(W_p/\R(E))$ vanish by  \cite[Thm.~1.1]{APS}, after checking that
\[ \dim (W_p/\R(E))=p+1\geq2. \]

When $a\ge 1$, we have $b\le q-p<q$ and we may use (I$_b'$). 
We also have $b\le q-p\leq\dim\PPi(\HH\op E,W_p)$ just as above.
Hence $H_b(A',A)=0$ and hence $E'^2_{a,b}=0$ in that case. 
This shows that $E'^2_{a,b}=0$ for all $a+b=q-p+1$, which finishes the proof of (II$_q$) in the isotropic case. 

\medskip

In the general linear case, we have
$$A\cong \A^T(\F^*\op V^*,(\F\op V/W_p)^*), $$
and similarly for $A'$, fitting together into an isomorphism of pairs. 
Also,
\begin{align*}
\dim\bP(\F^*\op V^*,(\F\op V/W_p)^*) +\al
& = \dim V+1-(p+1)+\al = \dim \pP-p+1 +\al \geq q-p. 
\end{align*}
This shows just as in the previous case 
that $E'^2_{a,b}=0$ for all $a+b=q-p+1$ by (I'$_b$), as long as $a\ge 1$.

When $a=0$, we have 
\begin{align*}E'^2_{0,q-p+1}&=H_0\big(\G(W_p);H_{q-p+1}(A',A;\St(W_p))\big)\\
&=H_0\big(\G(W_p);H_{q-p+1}(A',A)\otimes \St(W_p)\big). 
\end{align*} 
By Lemma~\ref{lem:P and PT}, there is an isomorphism $\bP^T(\F\op V,W_p)\cong \bP(\F^*\op V^*,(\F\op V/W_p)^*)$, which is equivariant with respect to the isomorphism $A\cong \A^T(\F^*\op V^*,(\F\op V/W_p)^*)=A^T$, and likewise for the suspended versions. Pick a rank 0 element $X$ in $\bP^T(\F\op V,W_p)$, that is a complement of $W_p$ in $V$, 
with $(\F\op V/X)^*$ its image in $\bP(\F^*\op V^*,(\F\op V/W_p)^*)$. 
By (I$_{q-p+1}$), which applies since
\[ \dim\bP\left(\F^*\op V^*,(\F\op V/W_p)^*\right)+1+\al \ge q-p+1\]
as checked above, 
there is an isomorphism 
$$H_{q-p+1}\left(A^T_{(\F\op V/X)^*},A^T_{(\F\op V/X)^*}\right)\rar H_{q-p+1}\left((A^T)',A^T)\right).$$
Now under the isomorphism $\A^T\cong A$, the group $A^T_{(\F\op V/X)^*}$ identifies with the stabilizer of $X$, which is rank 0 in the building associated to $A$, and likewise for the stabilized version. The action of  $\G(W_p)$ is trivial on this stabilizer (as can be check directly, or via Proposition~\ref{prop:stabilizers}(3)). 
Hence we get a decomposition
\[ E'^2_{0,q-p+1}=H_0(\G(W_p);\St(W_p))\otimes H_{q-p+1}(A',A). \]
And this is zero because the coinvariants vanish by  \cite[Thm.~1.1]{APS} as $W_p$ as dimension $p+1\ge 2$.
This finishes the proof of (II$_q$), and of the theorem. 
\end{proof}

\begin{proof}[Proof of Corollary~\ref{cor:C}]
By \cite[Thm III.4.6]{FP},  the stable homology of the groups
$\Sp_{2n}(\F_{p^r})$ and $\UU_{n,n}(\F_{p^{2r}})$ vanishes at the characteristic, and that
of $\OO_{n,n}(\F_{p^r})$  vanishes at the characteristic when $p$ is odd. The result then follows from Theorem~\ref{thm:mainstability}. 
\end{proof}

\section{Euclidean form}\label{sec:eucli}
In this section, we will use our homological stability result
stabilizing by adding hyperbolic planes, to  deduce in some cases, a stability result for the unitary and orthogonal groups of the standard Euclidean form on $\F^n$, proving Theorems~\ref{thm:introeucli} and~\ref{thm:intronondg} as well as Corollary~\ref{cor:D}. The idea is to relate the stabilizations maps $\op E$, for $E$ a non-degenerate formed space, to the stabilization maps $\op \HH$, under specific assumptions on the field. 

\medskip

Let $\F$ be a field with involution $\sigma$.
We will assume that the pair $(\F,\sigma)$ satisfies one of the following properties:
\begin{enumerate}[(A)]
\item\label{A} There exists $a\in\F$ such that $\bar a a=-1$.
\item\label{B} There exist $a,b\in\F$ such that $\bar a a+\bar b b=-1$.
\end{enumerate}
Clearly (\ref{A}) implies (\ref{B}). Also (\ref{A}) always holds in
characteristic $2$ since in that case $-1=1=\bar 1\cdot 1$. 
When $\F=\mathbb{C}$ is the complex numbers, (\ref{A}) is satisfied when $\s$ is the identity but {\em not} when $\s$ is complex conjugation.

When $\s=\id$, Property (\ref{A}) (resp.~ (\ref{B})) says that $-1$ is a square (resp.~a sum of two squares) in $\F$. For finite fields of odd characteristic, we have the following:

\begin{lemma}\label{lem:sumsquares} 
Let $\F_{p^r}$ be a finite field of odd characteristic. 
Any nonsquare element in  $\F_{p^r}$ can be written as a sum of two squares. 
\end{lemma}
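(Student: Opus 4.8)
The statement is a classical counting fact, and the cleanest route is a pigeonhole argument on the number of squares in $\F_{p^r}$. Write $q=p^r$ and let $S=\{x^2 \mid x\in\F_q\}$ be the set of squares, including $0$. Since $p$ is odd, the squaring map is two-to-one on $\F_q^\times$, so $|S| = \frac{q-1}{2}+1 = \frac{q+1}{2}$. Now fix a nonsquare $c\in\F_q$ (one exists, again because $p$ is odd). I want to show $c = s_1 + s_2$ for some $s_1,s_2\in S$. Consider the two sets $S$ and $c - S := \{c - s \mid s\in S\}$. Each has cardinality $\frac{q+1}{2}$, so their sizes sum to $q+1 > q = |\F_q|$; hence they must intersect. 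Any element in the intersection gives $s_1 = c - s_2$, i.e. $c = s_1 + s_2$ with both $s_i$ squares, as desired.

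The one point that needs a word of care: I should make sure this really produces a representation of $c$ itself and not a degenerate artifact — but there is nothing to check, the pigeonhole directly yields $s_1,s_2\in S$ with $s_1+s_2=c$, and that is exactly a sum of two squares. I also do not need $c$ nonzero for the argument; the lemma is stated for nonsquare $c$, which in particular forces $c\neq 0$, but the counting argument is indifferent to that.

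I expect \textbf{no serious obstacle} here; the only thing to be slightly careful about is the count $|S|=\frac{q+1}{2}$, which uses that $p$ is odd so that $-1\neq 1$ and the kernel of squaring on $\F_q^\times$ is exactly $\{\pm 1\}$ of order $2$. If one wanted to avoid even this, one could instead invoke the Chevalley–Warning theorem: the form $x^2+y^2-cz^2$ in three variables over a finite field has a nontrivial zero, and since $c$ is a nonsquare one checks $z\neq 0$ at such a zero, so dividing by $z^2$ writes $c$ as a sum of two squares. I would present the pigeonhole version as the main argument since it is the most elementary and self-contained, and perhaps remark on the Chevalley–Warning alternative. Either way the proof is short.

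Concretely, the writeup will be: introduce $q=p^r$ and $S$; compute $|S|=\tfrac{q+1}{2}$ using that the squaring map on $\F_q^\times$ is $2$-to-$1$ when $p$ is odd; given a nonsquare $c$, observe $|S| + |c-S| = q+1 > |\F_q|$, so $S \cap (c-S) \neq \emptyset$; and read off $c = s_1+s_2$ with $s_1,s_2$ squares.
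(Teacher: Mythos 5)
Your proof is correct, but it is a genuinely different argument from the one in the paper. Both start from the same count $|S|=\tfrac{q+1}{2}$ of squares (including $0$), but you then finish with a single pigeonhole step: $S$ and $c-S$ together have $q+1>q$ elements, so they meet, giving $c=s_1+s_2$ directly; note this in fact proves the stronger statement that \emph{every} element of $\F_q$ is a sum of two squares. The paper instead argues that the set of squares cannot be closed under addition (otherwise it would be an additive subgroup of $\F_q$ of order $\tfrac{q+1}{2}$, which does not divide $q$), so \emph{some} sum of two squares is a nonsquare, and then transports this to an arbitrary nonsquare $c$ by scaling, using that the quotient of two nonsquares is a square since $|\F_q^\times/(\F_q^\times)^2|=2$. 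Your route is shorter and avoids the scaling step; the paper's route trades the pigeonhole for a Lagrange-type divisibility observation plus the index-two structure of the square classes. Either argument fully establishes the lemma, and your remark that the kernel of squaring is $\{\pm1\}$ (so $|S|=\tfrac{q+1}{2}$ requires $p$ odd) is exactly the point where the hypothesis is used in both proofs.
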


In particular, for any finite field $\F_{p^r}$, the pair $(\F_{p^r},\s=\id)$ satisfies Property (\ref{B}). 

\begin{proof}
Let $q=p^r$.  Exactly half of the nonzero elements of $\F_q$ are squares because the multiplicative group $\F_q-\{0\}$ is cyclic of even order.  Zero is also a square, so the total number of squares is $(q+1)/2$.  If the set of squares were closed under addition, these elements would form a subgroup of the additive group $\F_q$, but that cannot be the case because $q$ is not divisible by $(q+1)/2$.  Hence there exist elements $a,b\in\F_q$ such that $a^2+b^2$ is not a square.  But we can then get any other nonsquare by scaling appropriately, as the quotient of any two nonsquare elements is a square
(since $|\F_q^\times/{\F_q^\times}^2|=2$).
\end{proof}

\begin{lemma}\label{lem:norm}
Let $(\F_{p^{2r}},\s)$ be a finite field with the non-trivial involution $\s=(-)^{p^r}$. Then the norm map
\[ N:\F^\times\to(\F^\sigma)^\times,\quad a\mapsto \bar a a \]
is surjective. In particular $(\F_{p^{2r}},\s\neq \id)$  always satisfies Property (\ref{A}). 
\end {lemma}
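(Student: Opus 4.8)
The plan is to make everything explicit using the cyclic structure of $\F_{p^{2r}}^\times$. First I would record that the fixed field of $\s=(-)^{p^r}$ is exactly $\F^\s=\F_{p^r}$: an element $x\in\F_{p^{2r}}$ satisfies $x^{p^r}=x$ precisely when it lies in the unique subfield of order $p^r$, and $\s$ is genuinely an involution since $\s^2(x)=x^{p^{2r}}=x$ for every $x\in\F_{p^{2r}}$. Hence the target of the norm map is $(\F^\s)^\times=\F_{p^r}^\times$, a cyclic group of order $p^r-1$, and one notes in passing that $N$ does land there, as $\overline{\bar a\,a}=\bar a\,a$.

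Next I would rewrite the norm as a power map: for $a\in\F^\times$ we have $N(a)=\bar a\,a=a^{p^r}a=a^{p^r+1}$, so $N\colon\F^\times\to(\F^\s)^\times$ is the endomorphism $a\mapsto a^{p^r+1}$ of the cyclic group $\F_{p^{2r}}^\times$, whose order is $p^{2r}-1=(p^r-1)(p^r+1)$. Its kernel $\{a : a^{p^r+1}=1\}$ therefore has order $\gcd(p^r+1,\,p^{2r}-1)=p^r+1$, so the image has order $(p^{2r}-1)/(p^r+1)=p^r-1$. Since the image is a subgroup of $(\F^\s)^\times$ of order equal to $|(\F^\s)^\times|$, it is all of $(\F^\s)^\times$; that is, $N$ is surjective.

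Finally, for the ``in particular'' clause I would just observe that $-1\in(\F^\s)^\times$ (it lies in $\F_{p^r}$ and is nonzero), so surjectivity of $N$ yields $a\in\F^\times$ with $\bar a\,a=-1$, which is exactly Property~(\ref{A}).

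There is no real obstacle here: the argument is the standard surjectivity-of-the-norm fact for finite fields. The only points that need care are the identification $\F^\s=\F_{p^r}$ (so that we know the size of the target) and the $\gcd$ computation giving the kernel's order; everything else is a counting argument in a cyclic group.
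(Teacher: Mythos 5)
Your proof is correct and follows essentially the same route as the paper: both rewrite $N$ as the power map $a\mapsto a^{p^r+1}$ on the cyclic group $\F_{p^{2r}}^\times$ and count to see the image has order $p^r-1$, the full order of $(\F^\s)^\times$. Your version just spells out the identification $\F^\s=\F_{p^r}$ and the kernel-size computation, which the paper leaves implicit.
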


Note that the norm map is not surjective in general, as exemplified for instance by $\F=\bbC$ and $\sigma$ complex conjugation. 

\begin{proof} Set $q=p^r$. 
The map $N$ is a group homomorphism of multiplicative groups given explicitly by the formula  $N(a)=a^{q+1}$.
Its domain is a cyclic group of order $q^2-1$.  So the image is cyclic of order
$\frac{q^2-1}{q+1}=q-1$, 
the same as the order of the codomain.
\end{proof}

The following result allows us to compare stabilization by $E$, for $E$  non-degenerate, with stabilization by $\HH$:

\begin{prop}\label{prop:++++}
Let $E=(E,q)$ be a nondegenerate formed space over  $(\F,\sigma)$.
\begin{enumerate}
\item If $(\F,\sigma)$ satisfies Property (\ref{A}), then $E^{\op 2}\iso\HH^{\op \dim E}$. 
\item If $(\F,\sigma)$ satisfies Property (\ref{B}), then $E^{\op 4}\iso\HH^{\op 2\dim E}$.
\end{enumerate}
\end{prop}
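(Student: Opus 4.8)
The plan is to deduce both parts from Proposition~\ref{prop:hat+-} together with Lemma~\ref{lem:--++}, which are exactly the two ingredients already assembled in the excerpt. First I would recall that Proposition~\ref{prop:hat+-} gives, for $(E,q)$ nondegenerate, an isomorphism
\[ (E,q)\oplus(E,-q)\iso \HH^{\op \dim E}. \]
For part (1), Property~(\ref{A}) provides $a\in\F$ with $\bar a a=-1$, so Lemma~\ref{lem:--++}(1) gives $(E,-q)\cong(E,q)$, hence $(E,q)\oplus(E,q)\cong(E,q)\oplus(E,-q)\cong\HH^{\op\dim E}$, which is the desired $E^{\op 2}\iso\HH^{\op\dim E}$.

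For part (2), I would apply Proposition~\ref{prop:hat+-} twice: $E^{\op 4}=\big((E,q)\oplus(E,-q)\big)^{\op 2}\iso \HH^{\op 2\dim E}$, provided I can identify $(E,q)^{\op 2}\oplus(E,-q)^{\op 2}$ with $\big((E,q)\oplus(E,-q)\big)^{\op 2}$ (true by commutativity and associativity of $\oplus$ in the symmetric monoidal category of formed spaces) and then replace $(E,q)^{\op 2}\oplus(E,q)^{\op 2}$, say, by a copy of $(E,q)^{\op 2}\oplus(E,-q)^{\op 2}$. Concretely: $E^{\op 4}=(E,q)^{\op 4}$, and by Lemma~\ref{lem:--++}(2) applied to the formed space $(E,q)$ we have $(E\op E,-q\op -q)\cong(E\op E,q\op q)$, i.e.\ $(E,q)^{\op 2}\cong(E,-q)^{\op 2}$. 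Therefore
\[ (E,q)^{\op 4}\cong (E,q)^{\op 2}\oplus (E,-q)^{\op 2}\cong \big((E,q)\oplus(E,-q)\big)^{\op 2}\iso \big(\HH^{\op\dim E}\big)^{\op 2}=\HH^{\op 2\dim E}, \]
where the middle isomorphism rearranges summands and the last uses Proposition~\ref{prop:hat+-}.

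I do not expect a genuine obstacle here; the statement is essentially a formal consequence of the two cited results plus the monoidal structure, and the proof is a short chain of isomorphisms. The only point requiring a word of care is the bookkeeping of summands in part (2): one must make sure the rearrangement $(E,q)^{\op 2}\oplus(E,-q)^{\op 2}\cong\big((E,q)\oplus(E,-q)\big)^{\op 2}$ is justified by the symmetry isomorphism of $\textbf{Forms}(\F,\sigma,\eps,\Lambda)$, and that each application of Lemma~\ref{lem:--++} and Proposition~\ref{prop:hat+-} is to a formed space that is indeed nondegenerate (which $(E,q)$ is by hypothesis, and nondegeneracy is preserved under $\oplus$ by Lemma~\ref{lem:props of sum}(\ref{lem:props of sum:K})). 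If naturality of these isomorphisms with respect to isometries of $E$ is wanted downstream, it follows from the stated naturality in Proposition~\ref{prop:hat+-} and the evident naturality of the maps in Lemma~\ref{lem:--++}, but the bare statement of Proposition~\ref{prop:++++} does not require tracking it.
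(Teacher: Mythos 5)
Your proposal is correct and follows essentially the same route as the paper: both parts are deduced from Proposition~\ref{prop:hat+-} giving $(E,q)\oplus(E,-q)\iso\HH^{\op\dim E}$, combined with Lemma~\ref{lem:--++} (the paper re-derives part (1) inline by multiplying by $a$, which is exactly Lemma~\ref{lem:--++}(1), and for part (2) applies Lemma~\ref{lem:--++}(2) to two copies of the hyperbolic decomposition, just as you do). The only difference is your more explicit bookkeeping of the summand rearrangement, which the paper leaves implicit.
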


\begin{proof}
Let $E'=(E,-q)$.
By Proposition~\ref{prop:hat+-},
\[ E\op E'\iso\HH^{\op \dim E}. \]
If there exists $a\in\F$ such that $a\bar a=-1$, then multiplying by $a$ gives an isomorphism $E'\iso E$, proving the first case.
In the second case, Lemma~\ref{lem:--++} says that $E'\op E'\iso E\op E$.  Applying this to a sum of two copies of the above equation proves the result.
\end{proof}

Just as we did with $\HH$ earlier, we can consider stabilization by $E$, that is the map $$\Gi(F)\to \Gi(F\op E)$$ extending an isometry by defining it to be the identity on the added $E$.
We are now ready to prove Theorem~\ref{thm:intronondg} from the introduction, which says that, for any non-degenerate formed space $E=(E,q)$ and taking $F=E^{\op n}$, the induced map 
\[H_d(\Gi(E^{\op n}))\rar H_d(\Gi(E^{\op n+1})) \]
is injective in degrees $d\leq k(\lfloor n/2\rfloor-e)-1$,
and surjective in degrees $d\leq k(\lfloor(n-1)/2\rfloor-e)$, 
where $e=0$ in the case of Property (\ref{A}) and $e=1$ in the case of Property (\ref{B}), with $k=\dim E$.

\begin{proof}[Proof of Theorem~\ref{thm:intronondg}]
Let $E'=(E,-q)$.  Lemma~\ref{lem:--++}  and Proposition~\ref{prop:hat+-} give that 
\[ E'\op E'\iso E\op E \ \ \ \ \textrm{and}\ \ \ \ E\op E'\iso\HH^{\op k}. \]
Consequently, the stability map
$\Gi(E^{\op n})\rar\Gi(E^{\op n}\op\HH^{\op k})$
factors as
\[ \Gi(E^{\op n})\rar\Gi(E^{\op n}\op E)\rar\Gi(E^{\op n}\op E\op E')\cong \Gi(E^{\op n}\op\HH^{\op k}). \]
So by Theorem~\ref{thm:mainstability} (applied $k$ times),
the first map, which is the stabilization by $E$, 
induces an injection on homology in degrees $d<\g(E^{\op n})$.
Meanwhile, 
so long as $n\geq2$,
\[ E^{\op n}\iso E^{\op n-2}\op E'\op E', \]
so, using now that we also have $\HH\cong E'\op E$,  the stability map
\[ \Gi(E^{\op n-2}\op E')\rar\Gi(E^{\op n-2}\op E'\op\HH^{\op k})=\Gi(E^{\op n-2}\op E'\op E'\op E) \]
factors as
\[\xymatrix{ \Gi(E^{\op n-2}\op E')\ar[r]& \Gi(E^{\op n-2}\op E'\op E')\ar[r]\ar[d]_{\cong}&\Gi(E^{\op n-2}\op E'\op E'\op E) \ar[d]^\cong\\
& \Gi(E^{\op n}) \ar[r]& \Gi(E^{\op n+1}).  }\]
Hence by Theorem~\ref{thm:mainstability},
the stabilization map 
$\Gi(E^{\op n})\rar\Gi(E^{\op n+1})$ 
induces a surjection on homology in degrees $d\leq\g(E^{\op n-2}\op E')$.

Now we estimate the two genera mentioned above.
By Proposition~\ref{prop:++++},
\[ E^{\op n}\iso\HH^{\op k\lfloor n/2\rfloor}\op E^{\op e} 
\ \ \ \ \textrm{or}\ \ \ \ 
 E^{\op n}\iso\HH^{\op 2k\lfloor n/4\rfloor}\op E^{\op e}, \]
respectively in cases (\ref{A}) or (\ref{B}).
Using Lemma~\ref{lem:props of sum} and
the fact that any $x\in\bbR$ satisfies
$ 2\lfloor x/2\rfloor\geq\lfloor x\rfloor-1$,
we get
\[ \g(E^n)\geq k(\lfloor n/2\rfloor-e). \]
Similarly, for $n\geq3$,
\begin{align*}
E^{\op n-2}\op E' \ \iso\ E^{\op n-3}\op\HH^{\op k}\ \iso\ \HH^{\op k\left(\lfloor (n-3)/2\rfloor+1-e\right)}\op E^{\op e}
\end{align*}
So
\[ \g(E^{\op n-2}\op E')\geq k(\lfloor(n-1)/2\rfloor-e). \]
If $n=2$, then this last conclusion is vacuously true. 
If $n<2$, then all of the claims in the proposition statement are vacuous.
\end{proof}

We explain now how Theorem~\ref{thm:intronondg} gives a 
stability result (Theorem~\ref{thm:introeucli})
for the the unitary groups $\UU(n,\F,\sigma)$ and orthogonal groups $\OO(n,\F)$ defined using the standard Euclidean form.

\begin{Def}\label{def:Euclidean} 
Set $\Lambda=\Lambda_{min}$. 
The {\em Euclidean unitary group} is the group 
\[ \UU(n,\F,\sigma)=\Gi({\mathcal{E}}^{\op n}) \]
where ${\mathcal{E}}=(\F^1,q_{\mathcal{E}})$ with
\[ q_{\mathcal{E}}(a,b)=\bar ab,  \]
and the {\em Euclidean orthogonal group} is the special case 
\[ O(n,\F)=\UU(n,\F,\id). \]
\end{Def}

\begin{rem}\label{rem:eucli} (1) 
Note that $\omega_{q_{\mathcal{E}}}(a,b)=(1+\eps)\bar a b$, 
so that $q$ is nondegenerate as long as $\eps\neq -1$.
In particular this one-dimensional formed space $\mathcal{E}$ is always nondegenerate in the
orthogonal case ($\sigma=id$, $\eps=1$, $\Lambda=0$) and in the
unitary case ($\sigma\neq id$, $\eps=1$, $\Lambda=\F^\s$), 
provided that $\text{char}(\F)\neq2$.

\smallskip

\noindent
(2) (Unitary groups, $\operatorname{char}(\F)= 2$) In the proof of Theorem~\ref{thm:introeucli} below, we will show that the group 
$\UU(n,\F,\sigma)$ in characteristic 2 actually always identifies, as long as $\F\neq \F_2$, with the automorphism group of a non-degenerate formed space, with parameter $\eps\neq 1$, allowing to apply our results to these groups too.

\smallskip

\noindent
(3) (Orthogonal groups, $\operatorname{char}(\F)= 2$)  The formed space $\mathcal{E}$ in characteristic $2$ is degenerate in our sense, with kernel $\K(\mathcal{E})=\F$. However the radical $\R(\mathcal{E})=0$ and the formed space $\mathcal{E}$ is defined as ``non-degenerate'' in for example \cite{FP}.  For finite fields of even characteristic, \cite[Sec 7.8]{FP} states that, in the orthogonal case, there is exactly one formed space with trivial radical in each odd dimension, namely the formed space $\HH^{\op n}\op \mathcal{E}$. This formed space is degenerate in our sense, so does not satisfy the hypothesis of Theorem~\ref{thm:intronondg}, but it does however fit in our first stability result, namely Theorem~\ref {thm:stabilityintro'}, being a genus $n$ formed space.

\smallskip

\noindent
(4) (Symplectic groups) There are no ``Euclidian symplectic group'' since $\omega_{\mathcal{E}}$ is trivial when $\eps=-1$ and $\s=\id$, but also $Q_{\mathcal{E}}$ is trivial when $\Lambda=\F$. 
\end{rem}

\begin{proof}[Proof of Theorem~\ref{thm:introeucli}]
If $\F$ has characteristic not 2, we have seen in Remark~\ref{rem:eucli} that the Euclidian formed space $\mathcal{E}$ is non-degenerate in both the orthogonal and unitary case, and the result follows directly from Theorem~\ref{thm:intronondg} with $k=1$.  

If $\F\neq \F_2$ has characteristic 2 and $\s\neq \id$, choose any $\alpha\in\F-\F^\sigma$ and set $\eps=\bar\alpha/\alpha$. Then 
$\eps\bar\eps=1$ and $\eps\neq1=-1$.  From \cite[Lem 2.10]{Forms}, we have that  multiplication by $\frac{1}{\alpha}$ defines a natural isomorphism 
$\Forms(E,\s,1,\Lambda)\rar  \Forms(E,\s,\eps,\frac{1}{\alpha}\Lambda)$. Let $\mathcal{E}'=(\F^1,\frac{1}{\alpha}q_{\mathcal{E}})$ be the image of $\mathcal{E}$ under this map for $E=\F^1$, and note that  
$(\mathcal{E}')^{\op n}$ is likewise the image of $\mathcal{E}^{\op n}$ with $E=\F^n$. 
By naturality, the automorphism groups $\Gi(\mathcal{E}'^{\op n})$ and $\Gi(\mathcal{E}^{\op n})$ are the same subgroup of $\GL(E)$. The result then follows by applying Theorem~\ref{thm:intronondg} to the formed space $\mathcal{E}'$. 
\end{proof}

Another consequence of Property (\ref{B}) is that the stable isometry groups do not depend on the choice of nondegenerate form used to stabilize by.

\begin{thm}\label{thm:stable group}
Assume $(\F,\sigma)$ satisfies Property (\ref{B}).  Let $E=(E,q)$ be a nondegenerate formed space.
The stable group
\[ \colim_n\Gi(E^{\op n}) \]
does not depend on the choice of $E$, up to group isomorphism.
\end{thm}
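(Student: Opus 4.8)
The plan is to show that for any two nondegenerate formed spaces $E$ and $F$, the colimits $\colim_n\Gi(E^{\op n})$ and $\colim_n\Gi(F^{\op n})$ are isomorphic. The key observation is that Property~(\ref{B}) and Proposition~\ref{prop:++++}(2) give an isomorphism $E^{\op 4}\iso\HH^{\op 2\dim E}$, and likewise $F^{\op 4}\iso\HH^{\op 2\dim F}$. So it suffices to compare each stable group with the stable group built from $\HH$. First I would reduce to comparing $\colim_n\Gi(E^{\op n})$ with $\colim_n\Gi(\HH^{\op n})$, and for this it is enough to observe that the two cofinal sequences of groups
\[ \Gi(E^{\op 4})\rar \Gi(E^{\op 8})\rar\Gi(E^{\op 12})\rar\cdots \]
and
\[ \Gi(\HH^{\op 2k})\rar\Gi(\HH^{\op 4k})\rar\Gi(\HH^{\op 6k})\rar\cdots \qquad (k=\dim E) \]
are identified term-by-term via the isomorphisms $\Gi(E^{\op 4m})\iso\Gi((E^{\op 4})^{\op m})\iso\Gi((\HH^{\op 2k})^{\op m})=\Gi(\HH^{\op 2km})$ coming from Proposition~\ref{prop:++++}(2), provided these identifications commute with the stabilization maps.

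The main point to check is therefore \emph{compatibility of the chosen isomorphisms with the stabilization-by-$E$ (resp.\ by-$\HH$) maps}, so that one gets an isomorphism of directed systems and not merely of the individual groups. Concretely, writing $\psi_E\colon E^{\op 4}\iso\HH^{\op 2k}$ for the isomorphism of formed spaces from Proposition~\ref{prop:++++}(2), I would check that for each $m$ the square
\[
\xymatrix{
\Gi((E^{\op 4})^{\op m}) \ar[r]\ar[d]_{\Gi(\psi_E^{\op m})} & \Gi((E^{\op 4})^{\op m+1}) \ar[d]^{\Gi(\psi_E^{\op m+1})} \\
\Gi((\HH^{\op 2k})^{\op m}) \ar[r] & \Gi((\HH^{\op 2k})^{\op m+1})
}
\]
commutes, where the horizontal maps are the stabilization maps (extension by the identity on the last summand). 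This is immediate from the definition of the stabilization maps together with the naturality of $\oplus$: the map $\psi_E^{\op m+1}$ restricted to the first $m$ summands is $\psi_E^{\op m}$, and both stabilization maps are ``extend by $\id$'' on the extra block, which $\psi_E$ carries isomorphically to the extra block on the other side. Taking colimits along the (cofinal) subsequences indexed by multiples of $4$ on the $E$-side and multiples of $2k$ on the $\HH$-side then yields $\colim_n\Gi(E^{\op n})\iso\colim_n\Gi(\HH^{\op n})$; since the right-hand side is manifestly independent of $E$, this proves the theorem.

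The only genuine subtlety, and hence the step I expect to be the main obstacle, is verifying that the isomorphism $\psi_E$ can be chosen so that the directed-system square above really commutes on the nose rather than up to conjugation; but this follows because Proposition~\ref{prop:++++} produces $\psi_E$ as an honest isometry of formed spaces, built out of the natural isomorphisms of Proposition~\ref{prop:hat+-} and Lemma~\ref{lem:--++}, all of which respect direct sums. (Even if one only obtained commutativity up to conjugation by a fixed element, the resulting maps of colimits would still be isomorphisms, since conjugation is trivial on a colimit of groups along stabilization; so in the worst case one invokes that standard fact.) One subtlety worth flagging explicitly: to pass between $E$ and an arbitrary nondegenerate $F$ one uses the $\HH$-stable group as an intermediary, which is legitimate since $\Gi(\HH^{\op n})$ appears cofinally in both directed systems after reindexing, and cofinal subsystems compute the same colimit.
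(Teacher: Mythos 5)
Your proposal is correct and follows essentially the same route as the paper: Proposition~\ref{prop:++++}(2) gives $E^{\op 4}\iso\HH^{\op 2\dim E}$, and one identifies cofinal subsystems of the two directed systems, the paper stating this in one line while you spell out the compatibility of the isomorphisms with the stabilization maps. The extra verification you include (and the fallback remark about conjugation) is exactly the detail the paper leaves implicit, so there is no substantive difference in approach.
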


\begin{proof}
By Proposition~\ref{prop:++++}, $E^{\op 4}\iso\HH^{\op 2\dim E}$.
Hence 
$\colim_n\Gi(E^{\op n})\iso\colim_m\Gi(\HH^{\op m})$, 
as the two colimits have cofinal subsystems which coincide.
\end{proof}

Finally, we prove Corollary~\ref{cor:D}. The proof does not actually directly use Theorem~\ref{thm:intronondg}, but rather deduces the result from  Theorem~\ref{thm:mainstability} using the same ideas. 

\begin{proof}[Proof of Corollary~\ref{cor:D}]
For (1) with $n=2m$ even, we have that $\UU(2m,\F_{p^{2r}})\cong \UU_{m,m}(\F_{p^{2r}})$. By Corollary~\ref{cor:C}, the $d$th  homology of this group thus vanishes for $0<d<m=\frac{n}{2}$, giving the result. 
For $n=2m+1$, we have that $\UU(2m+1,\F_{p^{2r}})\cong \Gi(\mathcal{E}\op \HH^{\op m})$ as there is only one isomorphic class of non-degenerate Hermitian form in each dimension \cite[II.6.7]{FP}. The result then follows from combining the vanishing theorem \cite[Thm III.4.6]{FP} with Theorem~\ref{thm:mainstability}, as $\g(\mathcal{E}\op \HH^{\op m})=m=\frac{n-1}{2}$ (Lemma~\ref{lem:properties of HH}), given that $\lfloor\frac{n-1}{2}\rfloor=\lfloor\frac{n}{2}\rfloor$ when $n$ is odd. 

For (2), there is also only one isomorphism class of non-degerenare quadratic form over finite fields in odd dimensions \cite[II.4.5]{FP} and the same argument as above gives the vanishing of 
$H_d(\OO(n,\F_{p^r}),\F_p)$ when $p$ is odd, using  the vanishing theorem \cite[Thm III.4.6]{FP}  (which only works in odd characteristic for orthogonal groups) and Theorem~\ref{thm:mainstability}. 

In even dimension $n=2m$ there are two non-isomorphic quadratic forms \cite[II.4.5]{FP}, with the corresponding orthogonal groups denoted $\OO^+(2m,\F_{p^r})$ and $\OO^-(2m,\F_{p^r})$, though with only one stable group \cite[Prop II.4.11]{FP}. The adding rules for isomorphism classes of quadratic forms \cite[II.4.7]{FP} imply that we can always write such a quadratic form as $\HH^{\op m-1}\op \mathcal{E}\op 
\mathcal{E}^{\pm}$, where the last summand is either the Euclidean form, or the form with automorphism group $\OO^-(1,\F_{p^r})$. 
Combining \cite[Thm III.4.6]{FP} and Theorem~\ref{thm:mainstability} gives a vanishing for $0<d<m-1=\frac{n-2}{2}=\lfloor\frac{n-1}{2}\rfloor$ when $n$ is even. 
\end{proof}

\begin{rem}\label{rem:better}
The group $\OO_{m,m}(\F_{p^r})$ is isomorphic to $\OO^+(2m,\F_{p^r})$ or $\OO^-(2m,\F_{p^r})$ depending on whether its discriminant $(-1)^m$ is a square or not in $\F_{p^r}$ (see \cite[II.4.5]{FP}). Note that  $H_d(\OO_{m,m}(\F_{p^r}))=0$ when $0<d<m$, giving an improvement by one degree on the vanishing of the corresponding group $\OO^+(2m,\F_{p^r})$ or $\OO^-(2m,\F_{p^r})$ accordingly. 
\end{rem}

\appendix

\section{Forms and isotropic subspaces}\label{app:isotropic}

In this appendix, we study basic properties of subspaces of formed
spaces. Section~\ref{sec:complements} is concerned with the study of
orthogonal complements and Section~\ref{sec:modform} with the construction of a canonical form on the quotient $Z^\perp/Z$ given any isotropic subspace $Z$ of a formed space.

\subsection{Properties of orthogonal complements}\label{sec:complements}

In Section~\ref{sec:forms}, we have associated to a formed space $E=(E,q)$ the maps
$$\omega_q:E\x E\to \F, \ \ \ Q_q:E\to \F, \ \ \ \textrm{and}\ \  \ \flat_q=\omega_q(-,-):E\to E^*.$$ 
Recall from Definition~\ref{def:kerneletc} that ``orthogonality'' is defined using $\omega_q$, where one
has to remember that $\omega_q$ is typically here {\em not} an inner product, so that orthogonality does not behave in the usual manner. 
In particular, orthogonal complements are not actual vector space complements! 
Recall also from Definition~\ref{def:kerneletc} the kernel $\K(E)=\ker \flat_q$ and the radical $\R(E)\le \K(E)$, the subspace where also $Q_q$ vanishes.

\begin{lemma}\label{lem:dimension of perp}
Let $(E,q)$ be a finite dimensional formed space, and $A\leq E$ a subspace.  Then
\begin{align*}
\dim(A^\perp)
&= \dim(E)-\dim(A)+\dim(A\cap\K(E)).
\end{align*}
\end{lemma}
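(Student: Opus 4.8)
The statement is a dimension count for the orthogonal complement $A^\perp$ of a subspace $A \le E$, where $A^\perp = \ker\left(E \xrightarrow{\flat_q} {}^\sigma E^* \xrightarrow{\mathrm{incl}^*} {}^\sigma A^*\right)$. The plan is to analyze this composite map directly. First I would observe that $\flat_q \colon E \to {}^\sigma E^*$ has kernel exactly $\K(E)$ by definition, so its image has dimension $\dim E - \dim \K(E)$. The restriction map $\mathrm{incl}^* \colon {}^\sigma E^* \to {}^\sigma A^*$ is surjective (any skew-linear functional on $A$ extends to $E$), with kernel equal to the annihilator $\mathrm{Ann}(A) = \{\varphi \in {}^\sigma E^* : \varphi|_A = 0\}$, which has dimension $\dim E - \dim A$.

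Then $A^\perp = \flat_q^{-1}(\mathrm{Ann}(A))$, so there is a short exact sequence
\[
0 \to \K(E) \to A^\perp \to \flat_q(E) \cap \mathrm{Ann}(A) \to 0,
\]
giving $\dim A^\perp = \dim \K(E) + \dim\bigl(\flat_q(E) \cap \mathrm{Ann}(A)\bigr)$. The key point is then to compute the dimension of $\flat_q(E) \cap \mathrm{Ann}(A)$ inside ${}^\sigma E^*$. I would use the standard inclusion-exclusion formula for subspaces:
\[
\dim\bigl(\flat_q(E) \cap \mathrm{Ann}(A)\bigr) = \dim \flat_q(E) + \dim \mathrm{Ann}(A) - \dim\bigl(\flat_q(E) + \mathrm{Ann}(A)\bigr).
\]
We know $\dim \flat_q(E) = \dim E - \dim \K(E)$ and $\dim \mathrm{Ann}(A) = \dim E - \dim A$. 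The remaining ingredient is the dimension of the sum $\flat_q(E) + \mathrm{Ann}(A)$, equivalently the codimension of this sum in ${}^\sigma E^*$. I would identify this codimension: an element of ${}^\sigma E^*$ lies in $\mathrm{Ann}(A)$ iff it vanishes on $A$, and the quotient ${}^\sigma E^* / \bigl(\flat_q(E) + \mathrm{Ann}(A)\bigr)$ is naturally identified with ${}^\sigma A^* / \mathrm{incl}^*(\flat_q(E))$; the image $\mathrm{incl}^*(\flat_q(E))$ in ${}^\sigma A^*$ is precisely the set of functionals $\omega_q(-,v)|_A$ for $v \in E$, and its dimension is $\dim A - \dim(A \cap \K(E))$ by the rank-nullity theorem applied to the map $E \to {}^\sigma A^*$, $v \mapsto \omega_q(-,v)|_A$, whose kernel is exactly $\{v : \omega_q(A,v) = 0\}$ — but here I must be careful that this kernel is $A^\perp$ again (by symmetry of orthogonality), which risks circularity.

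To avoid that circularity, I would instead compute things on the other side: work with the map $A \to {}^\sigma E^* / \flat_q(E)^{\mathrm{perp}}$... actually the cleanest route is to restrict $\flat_q$ to $A$ and look at $\flat_q|_A \colon A \to {}^\sigma E^*$, whose kernel is $A \cap \K(E)$, so $\dim \flat_q(A) = \dim A - \dim(A \cap \K(E))$; then note $\mathrm{incl}^* \colon {}^\sigma E^* \to {}^\sigma A^*$ restricted to $\flat_q(E)$ has image of the needed dimension and I can read off the codimension of the sum directly. Assembling these identities gives
\[
\dim A^\perp = \dim \K(E) + (\dim E - \dim \K(E)) + (\dim E - \dim A) - \bigl(\dim E - (\dim A - \dim(A \cap \K(E)))\bigr),
\]
which simplifies to $\dim E - \dim A + \dim(A \cap \K(E))$, as claimed. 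The main obstacle is bookkeeping: keeping the various annihilators, images, and the skew-dual ${}^\sigma E^*$ straight without accidentally invoking the symmetry $v \in A^\perp \iff A \subseteq v^\perp$ in a way that makes the argument circular — I would be careful to derive the dimension of $\flat_q(E)|_A$ from rank-nullity on $A$ rather than on $E$.
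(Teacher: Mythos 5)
Your route differs from the paper's (which factors the defining composite $E\to{}^\sigma E^*\to{}^\sigma A^*$ through $E/\K(E)$ and simply observes that all the resulting maps are surjective), and it can be made to work, but as written it has a genuine gap at exactly the point you flagged. Everything hinges on the claim $\dim\bigl(\mathrm{incl}^*(\flat_q(E))\bigr)=\dim A-\dim(A\cap\K(E))$; by rank--nullity applied to $\beta\colon E\to{}^\sigma A^*$, $v\mapsto\omega_q(-,v)|_A$, whose kernel is $A^\perp$, this claim is literally equivalent to the lemma, so it cannot just be asserted. Your fix computes $\dim\flat_q(A)=\dim A-\dim(A\cap\K(E))$ and then says that $\mathrm{incl}^*$ restricted to $\flat_q(E)$ ``has image of the needed dimension'', but equating these two numbers is the one nontrivial step: it is the statement that the pairing $A\times E\to\F$, $(a,v)\mapsto\omega_q(a,v)$, has the same rank on both sides. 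You need to say why, e.g.\ by invoking $\omega_q(a,v)=\eps\,\overline{\omega_q(v,a)}$, which shows the two induced maps $A\to E^*$ and $E\to{}^\sigma A^*$ have images of the same dimension (row rank equals column rank, unaffected by the $\sigma$-twist since $\sigma$ is a field automorphism). Without this you are back in the circularity you identified. Note also that once you have this rank equality, the annihilator and inclusion--exclusion bookkeeping is superfluous: rank--nullity for $\beta$ gives $\dim A^\perp=\dim E-(\dim A-\dim(A\cap\K(E)))$ in one line.

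There is in addition an arithmetic slip in your final display: the subtracted term should be $\dim\bigl(\flat_q(E)+\mathrm{Ann}(A)\bigr)=(\dim E-\dim A)+(\dim A-\dim(A\cap\K(E)))=\dim E-\dim(A\cap\K(E))$ (kernel of $\mathrm{incl}^*$ plus the image of $\flat_q(E)$ in ${}^\sigma A^*$), not $\dim E-(\dim A-\dim(A\cap\K(E)))$. As written, your displayed sum evaluates to $\dim E-\dim(A\cap\K(E))$, which is not the lemma's formula; with the corrected term it does simplify to $\dim E-\dim A+\dim(A\cap\K(E))$. For comparison, the paper's proof sidesteps both issues by replacing the composite with $E\to E/\K(E)\to{}^\sigma(E/\K(E))^*\to{}^\sigma\bigl(A/(A\cap\K(E))\bigr)^*$, noting all three maps are surjective (the middle one because it is injective between spaces of equal dimension, using the same symmetry of $\omega_q$ to see that $\flat_q(v)$ vanishes on $\K(E)$), so the kernel visibly has dimension $\dim E-\dim\bigl(A/(A\cap\K(E))\bigr)$.
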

\begin{proof}
The orthogonal complement $A^\perp$ is the kernel of the composition
$ E\xrightarrow{\flat_q} {^\sigma E^*}\xrightarrow{incl^*}A^*$, 
which is the same as the kernel of the composition
\[ E\to E/\K(E)\to {^\sigma(E/\K(E))^*}\xrightarrow{incl^*}{^\sigma(A/A\cap \K(E))^*}, \]
in which all three maps are surjective.  So
$\dim(A^\perp) = \dim(E)-\dim(A/A\cap \K(E))$.
\end{proof}

\begin{lemma}\label{lem:perpperp}
Let $(E,q)$ be a finite dimensional formed space, and $A\leq E$ a subspace. Then $(A^\perp)^\perp=A+\K(E)$.
\end{lemma}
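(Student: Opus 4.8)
\textbf{Plan of proof for Lemma~\ref{lem:perpperp}.}
The strategy is to prove the two inclusions $A+\K(E)\subseteq (A^\perp)^\perp$ and $(A^\perp)^\perp\subseteq A+\K(E)$, the first by a direct unwinding of the definitions and the second by a dimension count using the formula of Lemma~\ref{lem:dimension of perp} (applied twice). For the first inclusion, note that if $v\in A$ then $\omega_q(v,w)=0$ for every $w\in A^\perp$ by the very definition of $A^\perp$, so $v\in(A^\perp)^\perp$; and if $v\in\K(E)$ then $\omega_q(v,-)$ is identically zero on all of $E$, so certainly $v\in(A^\perp)^\perp$. Using symmetry of orthogonality (Remark following Definition~\ref{def:kerneletc}: $\omega_q(v,w)=0\iff\omega_q(w,v)=0$) this gives $A+\K(E)\subseteq (A^\perp)^\perp$ with no difficulty.

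For the reverse inclusion the plan is purely numerical. By Lemma~\ref{lem:dimension of perp},
\[ \dim(A^\perp)=\dim E-\dim A+\dim(A\cap\K(E)). \]
Applying the same lemma to $B=A^\perp$ in place of $A$ gives
\[ \dim((A^\perp)^\perp)=\dim E-\dim(A^\perp)+\dim(A^\perp\cap\K(E)). \]
Since $\K(E)\subseteq A^\perp$ always (as $\K(E)\subseteq B^\perp$ for every subspace $B$), we have $A^\perp\cap\K(E)=\K(E)$, so combining the two displayed formulas yields
\[ \dim((A^\perp)^\perp)=\dim A-\dim(A\cap\K(E))+\dim\K(E)=\dim(A+\K(E)), \]
the last equality being the standard inclusion–exclusion identity for $\dim(A+\K(E))$. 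Since we already have the inclusion $A+\K(E)\subseteq(A^\perp)^\perp$ and the two subspaces have the same (finite) dimension, they must be equal.

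\textbf{Anticipated main obstacle.} There is essentially no hard step here: the only things to be careful about are (i) invoking Lemma~\ref{lem:dimension of perp} correctly in the $\s$-twisted-dual setting, where one must remember that $\dim{}^\s\! W^*=\dim W$ so the dimension bookkeeping is the same as in the untwisted case, and (ii) justifying $\K(E)\subseteq A^\perp$, which is immediate from the definition of $A^\perp$ as the kernel of $E\xrightarrow{\flat_q}{}^\s\! E^*\to{}^\s\! A^*$ since $\K(E)=\ker\flat_q$. The finite-dimensionality hypothesis is what makes the dimension-count argument valid, so that should be stated explicitly.
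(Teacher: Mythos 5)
Your proof is correct and is essentially the same as the paper's: the easy inclusion $A+\K(E)\subseteq(A^\perp)^\perp$ followed by a dimension count applying Lemma~\ref{lem:dimension of perp} to both $A$ and $A^\perp$ (with $A^\perp\cap\K(E)=\K(E)$). No gaps.
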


\begin{proof}
Clearly $A+\K(E)$ is always contained in $(A^\perp)^\perp$.  We will check that they have the same dimension.
Applying Lemma~\ref{lem:dimension of perp} to both $A^\perp$ and $A$ gives
\begin{align*}
\dim(A^{\perp\perp}) &= \dim(E)-\dim(A^\perp)+\dim(\K(E)) \\
&=  \dim(E)-[\dim(E)-\dim(A)+\dim(A\cap\K(E))]+\dim(\K(E)) = \dim(A+\K(E)). \qedhere
\end{align*}
\end{proof}

\begin{lem}\label{lem:capperp}
Let $A,B\le E$ be two subspaces of a formed space $(E,q)$. Then
\[ (A+B)^\perp=A^\perp\cap B^\perp. \]
If in addition $\K(E)\leq A$, then
\[ (A\cap B)^\perp=A^\perp + B^\perp. \]
\end{lem}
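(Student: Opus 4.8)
\textbf{Proof plan for Lemma~\ref{lem:capperp}.}

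The first identity is the easy one and should come first. The inclusion $(A+B)^\perp \subseteq A^\perp \cap B^\perp$ is immediate: if $v$ is orthogonal to everything in $A+B$, it is in particular orthogonal to $A$ and to $B$. Conversely, if $v \in A^\perp \cap B^\perp$, then $\omega_q(v,-)$ vanishes on $A$ and on $B$, hence on $A+B$ by biadditivity, so $v \in (A+B)^\perp$. No dimension count is needed here; this is purely formal from the definition of $U^\perp$ as $\ker(E \xrightarrow{\flat_q} {}^\sigma E^* \to {}^\sigma U^*)$.

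For the second identity, the plan is to reduce it to the first by applying $(-)^\perp$ and using Lemma~\ref{lem:perpperp}. One inclusion is again formal: $A^\perp \subseteq (A\cap B)^\perp$ and $B^\perp \subseteq (A\cap B)^\perp$ since $A \cap B$ is contained in both $A$ and $B$, so $A^\perp + B^\perp \subseteq (A\cap B)^\perp$ always. For the reverse, I would take orthogonal complements of both sides. By the first identity applied to $A^\perp$ and $B^\perp$, we have $(A^\perp + B^\perp)^\perp = A^{\perp\perp} \cap B^{\perp\perp}$, and by Lemma~\ref{lem:perpperp} this equals $(A + \K(E)) \cap (B + \K(E))$. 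Now using the hypothesis $\K(E) \leq A$, the first factor is just $A$, so $(A^\perp+B^\perp)^\perp = A \cap (B+\K(E))$. Since $\K(E) \le A$, the modular law gives $A \cap (B+\K(E)) = (A\cap B) + \K(E)$. Taking $(-)^\perp$ once more and applying Lemma~\ref{lem:perpperp} to $A\cap B$ — noting $\K(E) \le A$ so $\K(E)$ is also absorbed into $(A\cap B)+\K(E)$ — gives $\big((A^\perp+B^\perp)^\perp\big)^\perp = \big((A\cap B)+\K(E)\big)^\perp = (A\cap B)^\perp$. On the other hand $\big((A^\perp+B^\perp)^\perp\big)^\perp = (A^\perp+B^\perp) + \K(E)$ by Lemma~\ref{lem:perpperp}; but $\K(E) \le A \le A^\perp + B^\perp$ is not automatic — rather $\K(E) = E^\perp \le A^\perp$, so indeed $\K(E) \le A^\perp + B^\perp$ and the left side is just $A^\perp + B^\perp$. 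Comparing the two computations yields $A^\perp + B^\perp = (A\cap B)^\perp$.

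The main subtlety — really the only place any hypothesis is used — is keeping track of where $\K(E)$ sits: we need $\K(E) \le A$ (given) to collapse $A + \K(E)$ to $A$, and we need the automatic fact $\K(E) = E^\perp \le A^\perp$ to collapse $(A^\perp + B^\perp) + \K(E)$ to $A^\perp + B^\perp$. Both follow once one observes $\K(E) = E^\perp$ and that $U \le W \Rightarrow W^\perp \le U^\perp$. Everything else is the modular law for subspaces and two applications each of the first identity and of Lemma~\ref{lem:perpperp}; there are no delicate calculations, only careful bookkeeping of these inclusions.
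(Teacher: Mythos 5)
Your proof is correct and follows essentially the same route as the paper: the first identity is formal from the definitions, and the second is obtained by double orthogonal complements, Lemma~\ref{lem:perpperp}, the modular law with $\K(E)\le A$, and the first identity applied to $A^\perp$ and $B^\perp$. The only cosmetic remark is that the equality $\big((A\cap B)+\K(E)\big)^\perp=(A\cap B)^\perp$ is best justified by $\K(E)^\perp=E$ (adding the kernel never changes a perp) rather than by $\K(E)\le A$, but this does not affect the argument.
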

\begin{proof}
The first part follows directly from the definitions.  
For the second statement, we have  from Lemma~\ref{lem:perpperp}
$$A^\perp + B^\perp =(A^\perp+B^\perp)+\K(E) = \big((A^\perp+B^\perp)^\perp\big)^\perp.$$
On the other hand 
$$(A\cap B)^\perp =((A\cap B) +\K(E))^\perp = ((A+\K(E))\cap (B+\K(E)))^\perp = (A^{\perp\perp}\cap B^{\perp\perp})^\perp.$$
The result then follows from 
applying the first statement to the subspaces $A^\perp$ and $B^\perp$. 
\end{proof}

\begin{lemma}\label{lem:perp of nondegenerate}
Let $(E,q)$ be a formed space and $A\leq E$ a nondegenerate subspace.  Then
$$ E=A\op A^\perp  \ \ \ \textrm{with}\ \ \ \K(E) = \K(A^\perp) \ \ \ \textrm{and}\ \ \ \R(E) = \R(A^\perp).$$
\end{lemma}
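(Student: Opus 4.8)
\textbf{Plan for the proof of Lemma~\ref{lem:perp of nondegenerate}.} The statement has three parts: the direct sum decomposition $E = A \op A^\perp$, and then the two identifications $\K(E) = \K(A^\perp)$ and $\R(E) = \R(A^\perp)$. I would prove them in that order, since the latter two will lean on the first.

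\emph{Step 1: the decomposition $E = A\op A^\perp$.} Since $A$ is nondegenerate, $\K(A) = 0$, so in particular $A \cap \K(E) \le A \cap (\text{vectors orthogonal to all of } A) = \K(A) = 0$; more carefully, $A\cap\K(E) \subseteq \K(A)$ because any $v \in A$ that is $\omega_q$-orthogonal to all of $E$ is in particular orthogonal to all of $A$. Hence $A \cap \K(E) = 0$. Now $A \cap A^\perp$ consists of vectors in $A$ that are orthogonal to $A$, i.e.\ $A\cap A^\perp = \K(A) = 0$. For the dimension count, Lemma~\ref{lem:dimension of perp} gives $\dim A^\perp = \dim E - \dim A + \dim(A\cap\K(E)) = \dim E - \dim A$, so $\dim A + \dim A^\perp = \dim E$, and combined with $A \cap A^\perp = 0$ this yields $E = A \op A^\perp$ as vector spaces (and orthogonally, by definition of $A^\perp$).

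\emph{Step 2: $\K(E) = \K(A^\perp)$.} One inclusion is essentially free: since $A^\perp$ is a summand orthogonal to $A$, any $v \in \K(E)$ lies in $A^\perp$ (it is orthogonal to everything, hence to $A$) and is orthogonal to all of $A^\perp$, so $v \in \K(A^\perp)$; here I use Lemma~\ref{lem:props of sum}(1) applied to the orthogonal direct sum $E = A \op A^\perp$, which gives $\K(E) = \K(A)\op\K(A^\perp) = 0 \op \K(A^\perp) = \K(A^\perp)$. In fact this single application of Lemma~\ref{lem:props of sum}(1) already gives the equality outright. For the radical, Lemma~\ref{lem:props of sum}(2) applies directly: since $\K(A) = 0$, we get $\R(A\op A^\perp) = \R(A^\perp)$, i.e.\ $\R(E) = \R(A^\perp)$.

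\emph{Main obstacle.} There is really no hard step here; the lemma is a bookkeeping consequence of Lemmas~\ref{lem:dimension of perp} and~\ref{lem:props of sum}. The only point requiring a little care is making sure that the decomposition $E = A \op A^\perp$ is genuinely an \emph{orthogonal} direct sum in the sense needed to invoke Lemma~\ref{lem:props of sum} (i.e.\ that the form on $E$ restricted across the two summands vanishes), which is immediate from the definition of $A^\perp$, and that $A\cap\K(E)=0$ so that Lemma~\ref{lem:dimension of perp} gives the clean dimension formula. Once those are in place, both identifications drop out of Lemma~\ref{lem:props of sum}(1) and~(2).
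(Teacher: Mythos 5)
Your Step 1 is exactly the paper's argument: $A\cap A^\perp=\K(A)=0$, $A\cap\K(E)=0$, and the dimension count of Lemma~\ref{lem:dimension of perp} give $E=A\op A^\perp$. Where you diverge is the second half. The paper handles the kernel by two direct inclusions ($\K(E)\leq\K(A^\perp)$ because $\K(E)\leq A^\perp$, and $\K(A^\perp)\leq\K(E)$ because $E=A+A^\perp$), and then observes that both radicals are the kernel of $Q_q$ on this common subspace; you instead feed the decomposition into Lemma~\ref{lem:props of sum}(1) and (2). That route is correct, but it is not quite as free as your "immediate from the definition of $A^\perp$" suggests: Lemma~\ref{lem:props of sum} is about the external direct sum of formed spaces, so you must first know that the identity map identifies $(E,q)$ with $(A,q|_A)\op(A^\perp,q|_{A^\perp})$ as formed spaces. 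The definition of $A^\perp$ only gives vanishing of $\omega_q$ across the two summands, not vanishing of a chosen sesquilinear representative of $q$; one still has to check that $q$ and $q|_A\op q|_{A^\perp}$ agree as forms, e.g.\ by comparing their associated $\omega$ and $Q$ (the cross terms of $\omega$ vanish by orthogonality, and $Q_q(v_1+v_2)=Q_q(v_1)+Q_q(v_2)$ modulo $\Lambda$ by equation~(\ref{equ:Qomega})), or equivalently by noting that the difference of representatives lies in $X(E,\s,\eps,\Lambda_{min})$. This is a one-line verification, so your proof stands once it is recorded; the trade-off is that the paper's direct argument avoids the issue entirely and is shorter, while yours gets kernel and radical in one stroke from an already-proved general lemma and would generalize verbatim to any $\omega_q$-orthogonal internal decomposition with one nondegenerate summand.
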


\begin{proof}
If $x\in A\cap A^\perp$, then $x\in\K(A)$, which is zero by assumption, so $A$ and $A^\perp$ intersect trivially.
As also $A\cap \K(E)=0$, by Lemma~\ref{lem:dimension of perp} we get that 
$ \dim A^\perp= \codim A $
and hence $E=A\op A^\perp$. 
Now $\K(E) \leq \K(A^\perp)$ since $\K(E) \leq A^\perp$, but also  $\K(E) \geq \K(A^\perp)$ since $E=A+A^\perp$. 
Finally, the radicals are both the kernel of $Q_q$ on $\K(E)$.
\end{proof}

Finally, the following lemma is crucial in Section~\ref{sec:buildings}: it gives a description of the minimal elements in $\PPi(E,U)$, it is used in computing the connectivity of $\PPi(E,U)$, and finally it is used in defining splittings of stabilizer groups.

\begin{lemma}\label{lem:isotropic complement}
Let $(E,q)$ be a formed space, and let $U\leq E$ be isotropic.  Then  there exists an isotropic subspace $Z$ such that 
$$E=U^\perp\oplus Z.$$ 
The space $Z$ can be chosen to contain any isotropic subspace $Z_0$ with $W^\perp\cap Z_0=0$, and has the property that $Z\cap \R(E)=0$.
Furthermore, if also $U\cap \R(E)=0$, then 
\begin{enumerate}
\item $U\oplus Z$ is non-degenerate; 
\item $E=U\oplus Z^\perp=(U\oplus Z)\oplus (U\oplus Z)^\perp$; 
\end{enumerate}

\end{lemma}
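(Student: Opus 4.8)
\textbf{Proof proposal for Lemma~\ref{lem:isotropic complement}.}

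The plan is to build $Z$ by extending $Z_0$ inside a complement of $U^\perp$. First I would note that $U^\perp$ is the preimage of $^\sigma U^*$ under $\flat_q$ restricted appropriately, so by Lemma~\ref{lem:dimension of perp}, $U^\perp$ has codimension $\dim U - \dim(U\cap\K(E))$ in $E$. Since $U$ is isotropic, $U\cap\K(E)$ is exactly $U\cap\R(E)$ (a vector of $U\cap\K(E)$ has $Q_q=0$ because $q|_U=0$), so $\codim U^\perp = \dim U - \dim(U\cap\R(E))$. Now pick any subspace $Z$ with $E = U^\perp\op Z$ and $Z_0\le Z$; this is possible because $Z_0\cap U^\perp = 0$ by hypothesis. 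The issue is that such a $Z$ need not be isotropic, so the real work is to \emph{correct} $Z$ to an isotropic complement. This is the standard ``isotropic lift'' argument: given a basis $z_1,\dots,z_m$ of $Z$ (chosen to start with a basis of $Z_0$, which is already isotropic), I would inductively modify the $z_i$ by adding elements of $U^\perp$ — more precisely, elements chosen so as not to disturb the complementarity with $U^\perp$ — to kill the values $q(z_i,z_j)$. Here I expect to invoke the non-degeneracy structure of $E/\R(E)$ (or use $\flat_q$ directly) to solve the linear equations $\omega_q(u, \cdot) = \text{prescribed}$ for corrections $u$; one must be slightly careful in characteristic $2$ that the diagonal values $Q_q(z_i)$ can also be arranged to vanish, which uses that the field is arbitrary but the $Q_q$ condition on a single vector can be adjusted by rescaling or by the $\Lambda$-quotient structure. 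The fact that $Z_0$ is already isotropic means the induction can be started with those basis vectors fixed. I would also record that $Z\cap\R(E)=0$: any vector in $Z\cap\R(E)\subseteq Z\cap U^\perp = 0$ since $\R(E)\le\K(E)\le U^\perp$.

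Next, assume in addition $U\cap\R(E)=0$. For part (1), I claim $U\op Z$ is non-degenerate, i.e.\ $\K(U\op Z)=0$. A vector $v=u+z$ with $u\in U$, $z\in Z$ lying in $\K(U\op Z)$ satisfies $\omega_q(v,U)=0$ and $\omega_q(v,Z)=0$, hence $\omega_q(v, U\op Z)=0$. I would show $U$ and $Z$ pair non-degenerately: the map $Z\to {}^\sigma U^*$, $z\mapsto \omega_q(-,z)|_U$, is surjective because $\codim U^\perp = \dim U - \dim(U\cap\R(E)) = \dim U$ (using $U\cap\R(E)=0$) and $Z$ is a complement to $U^\perp$, so $\dim Z = \dim U$ and the pairing is a perfect pairing between $U$ and $Z$ modulo the respective radicals — but both $U$ and $Z$ are isotropic so their own internal forms vanish, forcing $v\in\K(E)$ once we also use that $\omega_q(v,\cdot)$ vanishes on a complement. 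Tracking this carefully: $v\in\K(U\op Z)$ gives $v$ orthogonal to $U\op Z$, and since $v\in U\op Z$ which is a complement-type situation we get $v\in (U\op Z)^\perp \cap (U\op Z)$; combined with the perfect pairing this kills the $U$- and $Z$-components separately, so $v=0$. For the cleanest route I would instead directly verify $E = U\op Z^\perp$ first (part (2)), and then deduce non-degeneracy of $U\op Z$ from Lemma~\ref{lem:perp of nondegenerate} run in reverse, or note that $U\op Z$ is a complement to its own orthogonal.

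For part (2): I want $E = U\op Z^\perp$. Since $Z$ is isotropic with $Z\cap\R(E)=0$, by the same dimension computation $\codim Z^\perp = \dim Z - \dim(Z\cap\K(E)) = \dim Z = \dim U$. So it suffices to show $U\cap Z^\perp = 0$. A vector $u\in U\cap Z^\perp$ is orthogonal to $Z$; it is also orthogonal to $U$ (as $U$ isotropic), hence orthogonal to $U\op Z = E' $ where $E = U^\perp\op Z$... here I should be careful — $U\op Z$ need not be all of $E$. Instead: $u\in U\cap Z^\perp$ means $u\perp Z$; also trivially $u\perp U^\perp$ is false in general. Let me use $u\in U$, so $u\in (U^\perp)^{\perp\perp}$-type reasoning: $\omega_q(u, U^\perp)$ — since $u\in U\subseteq U^{\perp\perp}$... actually the clean statement is $u\perp Z$ and we want to conclude $u\in\K(E)$: we'd need $u\perp U^\perp$ too. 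Since $u\in U$ and $U$ isotropic we don't immediately get $u\perp U^\perp$. So the right argument is via the perfect pairing of part (1): $U\to {}^\sigma Z^*$ is injective (perfect pairing $U\times Z$), so $u\perp Z$ forces $u=0$. Thus $U\cap Z^\perp=0$ and $E=U\op Z^\perp$. Then $U\op Z$ being non-degenerate (part 1), Lemma~\ref{lem:perp of nondegenerate} gives $E = (U\op Z)\op(U\op Z)^\perp$, completing part (2). The main obstacle is the inductive isotropic-lift step producing $Z$, particularly handling the quadratic refinement $Q_q$ in characteristic $2$ so that the corrected basis vectors are genuinely isotropic and not merely orthogonal; everything after that is bookkeeping with Lemma~\ref{lem:dimension of perp} and Lemma~\ref{lem:perp of nondegenerate}. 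I suspect the paper actually proves the isotropic-complement existence by a Witt-type argument (extending an isometry to a hyperbolic pairing), which I would adopt if the direct induction gets unwieldy.
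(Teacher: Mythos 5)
Your overall strategy---take a linear complement $C$ of $U^\perp$ containing $Z_0$ and correct it to an isotropic complement, then deduce (1) and (2) from the perfect pairing between $U$ and $Z$ together with Lemma~\ref{lem:perp of nondegenerate}---is the paper's strategy, and your handling of $Z\cap\R(E)=0$ and of parts (1) and (2) is essentially sound. One simplification you missed: any $u\in U$ is automatically orthogonal to $U^\perp$ (orthogonality is symmetric), so an element of $U\cap Z^\perp$ is orthogonal to $U^\perp\oplus Z=E$, hence lies in $U\cap\K(E)=0$; this is how the paper gets $E=U\oplus Z^\perp$ in one line, though your detour through the perfect pairing is also valid.

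The genuine gap is at the central step, the correction of $C$ to an isotropic complement, which you leave as a ``standard isotropic lift'' and whose only concrete ingredient---arranging the quadratic values $Q_q(z_i)$ to vanish in characteristic $2$ ``by rescaling or by the $\Lambda$-quotient structure''---fails: $Q_q(az)=a\bar a\,Q_q(z)$, so in the orthogonal case ($\sigma=\id$, $\Lambda=0$) over a field of characteristic $2$ a vector with $Q_q(z)\neq 0$ can never be rescaled to an isotropic one. Likewise ``invoking the non-degeneracy of $E/\R(E)$'' does not by itself produce correction vectors that solve the required equations while preserving complementarity with $U^\perp$ and not disturbing conditions already arranged. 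The paper's mechanism is more specific and does everything at once: after replacing $U$ by a complement of $U\cap\R(E)$ in $U$ (which leaves $U^\perp$ unchanged), the map $f:U\to{}^{\sigma}C^{*}$, $w\mapsto\omega_q(-,w)$, is bijective, so one may choose $w_1,\dots,w_n\in U$ dual to a basis $x_1,\dots,x_n$ of $C$ extending a basis of $Z_0$, and set $y_j=x_j-c_jw_j-\sum_{k<j}\omega_q(x_k,x_j)w_k$, where $Q_q(x_j)=[c_j]\bmod\Lambda$. Because the corrections lie in the isotropic subspace $U\leq U^\perp$ and are dual to the $x_i$, a direct check gives $\omega_q(y_i,y_j)=0$ and $Q_q(y_j)=0$ in every characteristic (the term $-c_jw_j$ kills the quadratic value precisely because $\omega_q(x_j,w_j)=1$), the $y_j$ still span a complement to $U^\perp$, and $y_j=x_j$ for the indices coming from $Z_0$. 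Without identifying $U$, via this duality, as the source of the correction vectors, your proposed induction has no reason to close up, so as written the proof of the main existence statement is incomplete.
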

\begin{proof}
For the first statement, we assume without loss of generality that $\R(E)\cap U=0$: if not, replace $U$ with a complement in $U$ to $\R(E)\cap U$, which does not change $U^\perp$.
Let $C$ be any linear complement to $U^\perp$, containing $Z_0$.
Consider the map
\[ f:U\to {^\sigma\! C^*} \]
sending $w$ to $\omega_q(-,w)$.
It is injective since an element of the kernel is orthogonal to both $C$ and $U^\perp$, hence lies in $\K(E)\cap U=\R(E)\cap U=0$.
It is surjective since, if not, there would be a nonzero element of $C$ orthogonal to $U$, which does not exist.
Hence $f$ is bijective.

Let $x_1,\dots,x_n$ be a basis for $C$, such that $x_1,\dots,x_\ell$ is a basis for $Z_0$.
Consider the dual basis for
${^\sigma\! C^*}$; 
via $f$, it induces a basis $w_1,\dots,w_n$ for $U$ with the property that 
\[ \omega_q(x_i,w_j)=\delta_{ij}. \]
For each $j$, let $c_j\in \F$ be such that $Q_q(x_j)=[c_j]\mod \Lambda$ and set
\[ y_j=x_j-c_jw_j-\sum_{k=1}^{j-1}\omega_q(x_k,x_j)w_k. \]
Set 
$Z=\langle y_1,\dots, y_n\rangle$. 
Since all $w_j\in U\leq U^\perp$, $Z$ is another linear complement to $U^\perp$.
Since $Z_0$ is isotropic, $y_j=x_j$ for $j\leq\ell$, so $Z_0\leq Z$ still.
Using that $\omega_q(x_i,w_j)=\delta_{ij}$ and $\omega_q(w_i,w_j)=0$, one gets 
\[ \omega_q(y_i,y_j) = \omega_q(x_i,x_j)-\omega_q(x_i,\omega_q(x_i,x_j)w_i) = 0, \]
for all $i<j$,
and
\[ Q_q(y_j) = Q_q(x_j)+0-\omega_q(x_j,c_jw_j)= 0 \mod \Lambda \]
for all $j$, where we now also use that $Q_q(w_j)=0$.  As the vanishing of $q_q(y_j)$ implies that of $\omega_q(y_j,y_j)$, it follows that $Z$ is isotropic, as desired, proving the first part of the statement.

\medskip

For the second part of the statement, we start off by counting dimensions.
By Lemma~\ref{lem:dimension of perp},
\[ \dim(U^\perp)
= \codim U \ \ \ \textrm{and}\ \ \ \   \dim(Z^\perp)=\codim Z \]
since neither of them intersects $\K(E)$. 
It follows that 
\[ \codim Z=\dim U^\perp= \codim U \]
since $E=U^\perp\op Z$.

Now, we claim that $U$ is also a complement to $Z^\perp$.  If $x\in U\cap Z^\perp$, then $x$ is orthogonal to both $U^\perp$ and $Z$, hence to all of $E$, so $x\in\K(E)$, forcing $x=0$ as $U\cap \K(E)=0$.  Then the above dimension count shows that
\[ U\op Z^\perp = E, \]
proving the first part of statement (2). 
As  $\K(U\op Z)$ is contained in $Z^\perp\le E=U\op Z^\perp$, and hence in $Z$. Similarly,  $\K(U\op Z)$ is contained in $U$, so the kernel is zero as $U\cap Z=0$, proving statement (1).
The second part of (1) then follows from applying Lemma~\ref{lem:perp of nondegenerate}. 
\end{proof}

\subsection{The formed space $Z^\perp/Z$ for $Z$ isotropic}\label{sec:modform}

Given an isotropic subspace $Z\le E$, we will now construct a canonical form on $Z^\perp/Z$. We start by proving a few additional properties of orthogonal complements of isotropic subspaces.

\begin{lemma}\label{lem:radical of perp}
Let $(E,q)$ be a formed space and $Z\leq E$ an isotropic subspace.  Then
\[ \R(Z^\perp)=Z+\R(E) \]
\end{lemma}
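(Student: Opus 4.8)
The statement $\R(Z^\perp) = Z + \R(E)$ for $Z \le E$ isotropic should follow from a direct chase using the properties of orthogonal complements and kernels already established, together with the definitions of $\K$ and $\R$. I would first observe that since $Z$ is isotropic we have $Z \le Z^\perp$, and moreover $\R(E) \le Z^\perp$ because $\R(E) \le \K(E) = E^\perp \le Z^\perp$; hence $Z + \R(E) \le Z^\perp$ makes sense and we are comparing two subspaces of $Z^\perp$.

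The plan is to prove the two inclusions separately. For $Z + \R(E) \subseteq \R(Z^\perp)$: an element $v \in \R(E)$ lies in $\K(Z^\perp)$ since it is $\omega_q$-orthogonal to all of $E$, a fortiori to $Z^\perp$, and it has $Q_q(v) = 0$, so $v \in \R(Z^\perp)$; an element $z \in Z$ is $\omega_q$-orthogonal to $Z^\perp$ (that is the definition of $Z^\perp$, using symmetry of orthogonality from Remark after Definition~\ref{def:kerneletc}), hence $z \in \K(Z^\perp)$, and $Q_q(z) = 0$ since $Z$ is isotropic, so $z \in \R(Z^\perp)$; since $\R(Z^\perp)$ is a subspace (it is the radical of the formed space $Z^\perp$ with its restricted form), it contains the sum. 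For the reverse inclusion $\R(Z^\perp) \subseteq Z + \R(E)$: I would first identify $\K(Z^\perp)$. An element $v \in Z^\perp$ lies in $\K(Z^\perp)$ iff $\omega_q(v, Z^\perp) = 0$, i.e. $v \in (Z^\perp)^\perp = Z + \K(E)$ by Lemma~\ref{lem:perpperp}; combined with $v \in Z^\perp$ this gives $\K(Z^\perp) = Z^\perp \cap (Z + \K(E)) = Z + (Z^\perp \cap \K(E)) = Z + \K(E)$ (the last step using $Z \le Z^\perp$ and the modular law, and $\K(E) \le Z^\perp$). So $\K(Z^\perp) = Z + \K(E)$. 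Now take $w \in \R(Z^\perp)$; then $w = z + k$ with $z \in Z$, $k \in \K(E)$, and $Q_q(w) = 0$. Since $Q_q$ is additive on $\K(E)$ and more generally $Q_q(z + k) = Q_q(z) + Q_q(k) + \omega_q(z,k)$ by equation~(\ref{equ:Qomega}) (working mod $\Lambda$, but these are honest elements of $\F$ here as $Q_q$ vanishes on the relevant radicals — I need to be slightly careful), with $Q_q(z) = 0$ ($z$ isotropic) and $\omega_q(z,k) = 0$ ($k \in \K(E)$), we get $Q_q(k) = Q_q(w) = 0$, so $k \in \R(E)$ and $w = z + k \in Z + \R(E)$.

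The only subtlety I anticipate — and this is the main point to get right — is that $Q_q$ takes values in $\F/\Lambda$, not $\F$, so the additivity computation $Q_q(z+k) = Q_q(z) + Q_q(k) + \omega_q(z,k)$ lives in $\F/\Lambda$; but since I only need the conclusion "$Q_q(k) = 0$ in $\F/\Lambda$" this is fine, as $Q_q(w) = 0$ and $Q_q(z) = 0$ in $\F/\Lambda$ and $\omega_q(z,k) = 0$ in $\F$ hence in $\F/\Lambda$. One should also double-check that the restricted form on $Z^\perp$ has $\omega_{q|_{Z^\perp}} = \omega_q|_{Z^\perp \times Z^\perp}$ and $Q_{q|_{Z^\perp}} = Q_q|_{Z^\perp}$, which is immediate since restriction of forms is given by pullback along the inclusion and $\omega, Q$ are functorial. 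With these identifications, the argument above is complete; no hard step remains, it is purely a matter of assembling Lemma~\ref{lem:perpperp}, the symmetry of orthogonality, equation~(\ref{equ:Qomega}), and the definitions of $\K$ and $\R$.
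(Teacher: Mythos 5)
Your proof is correct and follows essentially the same route as the paper: the key steps — computing $\K(Z^\perp)=Z^\perp\cap Z^{\perp\perp}=Z+\K(E)$ via Lemma~\ref{lem:perpperp}, then using additivity of $Q_q$ on $Z+\K(E)$ (where $\omega_q$ vanishes) to conclude $Q_q(z+k)=Q_q(k)$ and hence $k\in\R(E)$ — are exactly those in the paper's argument. The extra care you take with the easy inclusion and the $\F/\Lambda$ target of $Q_q$ is fine but not a departure.
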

\begin{proof}
Clearly $Z$ and $\R(E)$ are contained in $\R(Z^\perp)$, so we will check the reverse inclusion.  First,
\[ \K(Z^\perp) = Z^\perp\cap Z^{\perp\perp} = Z+\K(E), \]
using Lemma~\ref{lem:perpperp}.
Now, since $\omega_q$ vanishes on $Z+\K(E)$, $Q_q$ is additive there.  But $Q_q$ vanishes on $Z$.  So if $z\in Z$ and $k\in\K(E)$ such that
$0 = Q_q(z+k)=Q_q(k)$, then $k\in\R(E)$, so $z+k\in Z+\R(E)$ as claimed.
\end{proof}

\begin{lem}\label{lem:induced form1}
Let $Z<E$ be a subspace. Pulling back along the quotient map $E\to E/Z$ induces a bijection 
between forms $q$ on $E/Z$ with radical $R$ and forms $\hat q$ on $E$ with radical $R+Z$. 
In particular, if $(E,\hat q)$ is a formed space and $Z\le \R(E)$, there is a uniquely defined form on $E/Z$ such that the projection $E\to E/Z$ is an isometry; its radical is $\R(E)/Z$.
\end{lem}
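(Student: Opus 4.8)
\textbf{Proof plan for Lemma~\ref{lem:induced form1}.}

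The plan is to construct the bijection explicitly and then track what happens to the associated data $\omega$, $Q$, and the radical. Let $\pi\colon E\to E/Z$ be the quotient map. First I would recall (from \cite{Forms}, Theorem~2.5) that a form is completely determined by the pair $(\omega_q, Q_q)$, and that a sesquilinear form $\omega$ together with a set map $Q$ satisfying $Q(av)=a\bar a Q(v)$ and equation~(\ref{equ:Qomega}) defines a form. So it suffices to show: pulling back along $\pi$ identifies forms on $E/Z$ with those forms $\hat q$ on $E$ whose associated $\omega_{\hat q}$ and $Q_{\hat q}$ both factor through $\pi$, i.e.~$Z\le \K(E,\hat q)$ for $\omega$ and $Z$ lies in the set where $Q_{\hat q}$ vanishes. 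The latter two conditions together say exactly that $Z\le\R(E,\hat q)$ in the sense of Definition~\ref{def:kerneletc}(3). Since $\pi^*\omega_q(v,w)=\omega_q(\pi v,\pi w)$ and $\pi^*Q_q(v)=Q_q(\pi v)$, the pullback $\hat q=\pi^*q$ indeed has $Z$ in its radical, and conversely any $\hat q$ with $Z\le\R(E,\hat q)$ descends: define $\bar\omega([v],[w])=\omega_{\hat q}(v,w)$ and $\bar Q([v])=Q_{\hat q}(v)$, which are well-defined precisely because $Z\le\K(E,\hat q)$ and $Q_{\hat q}|_Z=0$ together with additivity of $Q_{\hat q}$ on $\K$; one checks $(\bar\omega,\bar Q)$ still satisfies~(\ref{equ:Qomega}) and the scaling identity, hence defines a form $q$ on $E/Z$ with $\pi^*q=\hat q$. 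These two assignments are mutually inverse, giving the bijection.

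Next I would pin down the radicals. Given $q$ on $E/Z$ with radical $R\le E/Z$, I claim $\R(E,\hat q)=\pi^{-1}(R)$, which I will write as $R+Z$ by choosing any preimage subspace (any lift of $R$, plus $Z=\ker\pi$). The computation is: $v\in\K(E,\hat q)$ iff $\omega_{\hat q}(w,v)=0$ for all $w$, iff $\omega_q(\pi w,\pi v)=0$ for all $w$, iff $\pi v\in\K(E/Z,q)$; and then $v\in\R(E,\hat q)$ additionally requires $Q_{\hat q}(v)=0$, i.e.~$Q_q(\pi v)=0$, i.e.~$\pi v\in\R(E/Z,q)=R$. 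So $\R(E,\hat q)=\pi^{-1}(R)$ has the stated form. Running this in reverse: if $\hat q$ has radical $R'\supseteq Z$, the descended form $q$ has radical $\pi(R')=R'/Z$.

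Finally, the ``in particular'' is the special case $R=0$ applied in reverse. If $(E,\hat q)$ is a formed space and $Z\le\R(E,\hat q)$, then $\hat q$ is in the image of the bijection (its radical contains $Z$), so there is a unique form $q$ on $E/Z$ with $\pi^*q=\hat q$; unwinding definitions, $\pi^*q=\hat q$ is exactly the statement that $\pi\colon(E,\hat q)\to(E/Z,q)$ is an isometry. Its radical is $\R(E,\hat q)/Z$ by the radical computation just established. I expect no serious obstacle here; the only point requiring a little care is verifying that $\bar\omega$ and $\bar Q$ are genuinely well-defined on the quotient and still satisfy~(\ref{equ:Qomega})—but this is immediate once one notes that $Q_{\hat q}$ is additive on $\K(E,\hat q)$ (Remark after Definition~\ref{def:kerneletc}) and vanishes on $Z$, so $Q_{\hat q}(v+z)=Q_{\hat q}(v)$ for $z\in Z$, $v\in\K$—and more generally $Q_{\hat q}(v+z)=Q_{\hat q}(v)+Q_{\hat q}(z)+\omega_{\hat q}(v,z)=Q_{\hat q}(v)$ for any $v\in E$, $z\in Z$, since $Z\le\R$ forces both correction terms to vanish. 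The mild subtlety worth a sentence in the writeup is just making sure the phrase ``radical $R+Z$'' is interpreted as $\pi^{-1}(R)$, independent of the chosen lift.
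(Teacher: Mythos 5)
Your injectivity step and your radical computation run exactly as in the paper: pulled-back forms are detected by their pairs $(\omega,Q)$ via the determination statement \cite[Thm 2.5]{Forms}, and since $\flat_{\pi^*q}$ factors through $E/Z$ the kernel of $\pi^*q$ is $K+Z$ and its radical is $\pi^{-1}(R)=R+Z$; the ``in particular'' clause is read off the same way. Where you diverge is the surjectivity step, and that is where there is a genuine gap. Given $\hat q$ with $Z\le\R(E)$ you descend $(\omega_{\hat q},Q_{\hat q})$ to a pair $(\bar\omega,\bar Q)$ on $E/Z$ and then assert that this pair ``defines a form'' $q$ with $\pi^*q=\hat q$. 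But the fact this paper quotes from \cite[Thm 2.5]{Forms} is a uniqueness statement (a form is determined by its pair), not an existence statement, and the realization criterion you state --- $\bar Q(av)=\bar a a\,\bar Q(v)$ together with equation~(\ref{equ:Qomega}) --- is not sufficient as written: in the symplectic case ($\s=\id$, $\eps=-1$, $\Lambda=\F$) the map $Q$ is identically zero and (\ref{equ:Qomega}) is vacuous, yet only $\eps$-Hermitian $\omega$ with vanishing diagonal arise from forms; in general one also needs $\bar\omega$ to be $\eps$-Hermitian and to satisfy an exact (not merely mod $\Lambda$) diagonal compatibility with $\bar Q$. So ``hence defines a form'' needs either an actual construction of a sesquilinear representative or a citation of a realization theorem that this paper does not supply.

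The paper sidesteps precisely this point: to produce $q$ on $E/Z$ it picks a linear complement $L$ with $Z\op L=E$, transports $\hat q|_L$ to $E/Z$ along the isomorphism $L\cong E/Z$, and then uses $Z\le\R(E)$ plus the determination statement to verify $\pi^*q=\hat q$ by comparing the associated $\omega$'s and $Q$'s. If you replace your ``hence defines a form'' by this complement construction (which produces a genuine form rather than only a compatible pair), the rest of your write-up, including the identification $\R(E,\pi^*q)=\pi^{-1}(R)$ and the final specialization to $Z\le\R(E)$, goes through essentially verbatim.
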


It follows that pulling back via the quotient map $E\to E/Z$ determines a bijection between the isotropic subspaces of $E/Z$ and the isotropic subspaces of $E$ containing $Z$.

\begin{proof}
The projection map $p:E\to E/Z$ induces a map $p^*$ from forms on $E/Z$ to forms on $E$ by pull-back.  
It is injective: if $p^*q=p^*q'$, they have the same associated $\omega$ and $Q$, which implies that 
$\omega_{q}=\omega_{q'}$ and $Q_{q}=Q_{q'}$ because $p$ is surjective, which in turn implies $q'=q''$ by \cite[Thm 2.5]{Forms}. 

If $(E/Z,q)$ has kernel $K$ and radical $R$, we have that $(E,p^*q)$ has kernel $K+Z$ as $\flat_{p^*q}$ factors as 
$E\sta{p}\rar E/Z\sta{\flat_q}\rar (E/Z)^*\inc E^*$. The radical of  $(E,p^*q)$ is $R+Z$, computed similarly.  
Suppose that $(E,\hat q)$ is a formed space with $\R(E)=Z+R$ for some $R\le E$. We need to check that there is a form $q$ on $E/Z$ with radical $(R+Z)/Z$ with $p^*q=\hat q$. 
Pick an arbitrary linear complement
\[ Z\oplus L=E. \]
The quotient map $p$ induces an isomorphism $L\cong E/Z$. Define $q$ to be the form $q|_L$, seen as a form om $E/Z$ via this isomorphism.  
Using that $Z\le \R(E)$, one checks that $\omega_{p^* q}=\omega_{\hat q}$ and $Q_{p^*q}=Q_{\hat q}$, which shows that $p^*q=\hat q$ by \cite[Thm 2.5]{Forms}. 
\end{proof}

Recall from Definition~\ref{def:kerneletc} that the genus $\g$ of a formed space is the dimension of a maximal isotropic space minus the dimension of the radical.  

\begin{prop}\label{prop:induced form}
Let $(E,q)$ be a formed space and $Z\leq E$ an isotropic subspace.
There is a unique form on $Z^\perp/Z$ such that the quotient map
\[ \pi:Z^\perp\to Z^\perp/Z \]
is an isometry.  Furthermore, its radical is the image under the quotient map of $Z+\R(E)$ 
and 
its genus is 
\[ \g(Z^\perp/Z)=\g(Z^\perp)=\g(E)+\dim\R(E) -\dim(Z+\R(E)). \]
\end{prop}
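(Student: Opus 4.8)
The statement has three parts: existence and uniqueness of the form on $Z^\perp/Z$, the identification of its radical, and the genus formula. The plan is to reduce everything to already-established lemmas about quotients and orthogonal complements of isotropic subspaces.

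First, for existence and uniqueness: by Lemma~\ref{lem:radical of perp} we have $\R(Z^\perp) = Z + \R(E)$, and in particular $Z \le \R(Z^\perp)$. So $Z^\perp$ is a formed space (with the restricted form) in which $Z$ sits inside the radical, and Lemma~\ref{lem:induced form1} immediately gives the unique form on $Z^\perp/Z$ making $\pi$ an isometry, with radical the image of $\R(Z^\perp) = Z + \R(E)$ under $\pi$. This also proves the first two claims at once, and gives $\dim\R(Z^\perp/Z) = \dim(Z+\R(E)) - \dim Z$.

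Next, the genus. I would first compute $\g(Z^\perp)$ directly. Since $\pi\colon Z^\perp \to Z^\perp/Z$ is an isometry, it carries isotropic subspaces of $Z^\perp$ containing $Z$ bijectively to isotropic subspaces of $Z^\perp/Z$ (as noted after Lemma~\ref{lem:induced form1}), and every maximal isotropic subspace of $Z^\perp$ contains $Z$ (because $Z + W$ is still isotropic for $W$ isotropic in $Z^\perp$, using that $Z$ is orthogonal to $Z^\perp$ and $Z$ is isotropic). Hence a maximal isotropic subspace of $Z^\perp$ has the same dimension as its image, and subtracting the respective radical dimensions — which differ exactly by $\dim Z$ — gives $\g(Z^\perp/Z) = \g(Z^\perp)$. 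For the remaining equality $\g(Z^\perp) = \g(E) + \dim\R(E) - \dim(Z+\R(E))$, note that a maximal isotropic subspace $A$ of $E$ can be taken to contain $Z$ (again $A+Z$ is isotropic), and then $A \le Z^\perp$, so $A$ is also maximal isotropic in $Z^\perp$; thus the maximal isotropic subspaces of $E$ and of $Z^\perp$ have the same dimension. Subtracting $\dim\R(Z^\perp) = \dim(Z+\R(E))$ in one case and $\dim\R(E)$ in the other yields the claimed formula.

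The main obstacle, such as it is, is being careful about which maximal isotropic subspaces can be arranged to contain $Z$ and verifying that these remain maximal after passing between $E$, $Z^\perp$, and $Z^\perp/Z$ — this rests on Lemma~\ref{lem:dimmax} (all maximal isotropics have the same dimension) together with the observation that $Z$ being isotropic and contained in $Z^\perp$ forces $W \mapsto W+Z$ to preserve isotropy. Once that bookkeeping is set up, all three assertions follow by combining Lemmas~\ref{lem:radical of perp}, \ref{lem:induced form1}, and the definition of genus; no genuinely new argument is needed.
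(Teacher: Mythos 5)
Your overall route is the same as the paper's: Lemma~\ref{lem:radical of perp} gives $Z\le\R(Z^\perp)$, so Lemma~\ref{lem:induced form1} produces the unique form on $Z^\perp/Z$ and identifies the radical as $\pi(Z+\R(E))$, and the genus is computed by tracking maximal isotropic subspaces via Lemma~\ref{lem:dimmax}. Two points in your genus argument need repair, though. First, ``a maximal isotropic subspace of $Z^\perp$ has the same dimension as its image'' is not right: if $M$ is maximal isotropic in $Z^\perp$ (hence contains $Z$), then $\dim\pi(M)=\dim M-\dim Z$; the genus is nevertheless preserved because the radical dimension also drops by exactly $\dim Z$. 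As written your sentence is internally inconsistent (equal dimensions plus radicals differing by $\dim Z$ would make the genera differ), but this is only a bookkeeping slip.

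Second, and more substantively, your justification that ``a maximal isotropic subspace $A$ of $E$ can be taken to contain $Z$ (again $A+Z$ is isotropic)'' fails as stated: for an arbitrary maximal isotropic $A$ of $E$ the sum $A+Z$ need not be isotropic, since $A$ and $Z$ need not be orthogonal to each other --- e.g.\ in $\HH$ take $A=\langle e_2\rangle$ and $Z=\langle e_1\rangle$, where $A+Z=\HH$ is not isotropic. The earlier instance of this trick was legitimate only because there the isotropic subspace $W$ lay inside $Z^\perp$, so $\omega_q(W,Z)=0$. The statement you need is still true and easily obtained: extend $Z$ itself to a maximal isotropic subspace $A$ of $E$; by Lemma~\ref{lem:dimmax} it has dimension $\g(E)+\dim\R(E)$, it lies in $Z^\perp$ because $A$ is isotropic and contains $Z$, and it is then maximal isotropic in $Z^\perp$, which completes your computation. (The paper argues in the opposite direction --- a maximal isotropic subspace of $Z^\perp$ contains $Z+\R(E)$ and is therefore already maximal in $E$ --- but the content is the same.)
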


\begin{proof}
As $Z\le \R(Z^\perp)$, we can apply Lemma~\ref{lem:induced form1} to get a unique from on $Z^\perp/Z$ induced by $(Z^\perp,q|_{Z^\perp})$, with the quotient map inducing an isometry. 
Using in addition Lemma~\ref{lem:radical of perp}, we get that $\R(Z^\perp)=Z+\R(E)$, showing that its radical is as stated. 

For the genus computation, the first equality follows as $Z^\perp$ and $Z^\perp/Z$ agree after modding out their radicals.
Now, a maximal isotropic subspace of $Z^\perp$ must contain $\R(Z^\perp)=Z+\R(E)$. 
It must then be a maximal isotropic subspace of $E$ as well as it contains $Z$, so any larger one is necessarily contained in $Z^\perp$. So its dimension is $\g(E)+\dim\R(E)$.  The second equality follows.
\end{proof}

\section{Cohen-Macaulay posets}\label{app:CM}
Recall that a poset 
is Cohen-Macaulay if every interval in it is spherical (see Definition~\ref{def:CM}).
In this section, we show how, for a Cohen-Macaulay poset $\pP$, one can
use the homology of lower intervals to construct a chain complex
computing the reduced homology of $\pP$. Such a construction appears in Quillen's notebooks \cite{QuillenNotes}, and is used in for example \cite[Sec 2]{CharneyDed} and \cite[Sec 2]{Vogtmann}, but we do not know of a reference for it including proofs. 

\begin{thm}\label{thm:cpxofHCM}
Let $\pP$ be a Cohen-Macaulay poset and denote by $\pP_r\subset \pP$ the subset of elements of rank $r$.  Then there is a chain complex $\tilde C_\bullet(\pP)$ with terms
\[ \tilde C_r=\bigoplus_{X\in \pP_r}\tilde H_{r-1}(\pP_{<X}) \]
and $\tilde C_{-1}=\bbZ$, and whose homology is the reduced homology $\tilde H_*(\pP)$. 
\end{thm}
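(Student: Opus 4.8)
The statement asserts that for a Cohen–Macaulay poset $\pP$ there is a chain complex $\tilde C_\bullet(\pP)$ with $\tilde C_r=\bigoplus_{X\in\pP_r}\tilde H_{r-1}(\pP_{<X})$, $\tilde C_{-1}=\bbZ$, computing $\tilde H_*(\pP)$. The natural approach is to produce this complex as the $E^1$-page (or rather, as a spectral-sequence collapse) of the filtration of $|\pP|$ by rank. Concretely, filter $|\pP|$ by the subcomplexes $F_r=|\pP_{\le r}|$, the realization of the subposet of elements of rank $\le r$. This is an exhaustive filtration by subcomplexes, so it gives a homology spectral sequence with $E^1_{r,s}=H_{r+s}(F_r,F_{r-1})$ converging to $H_{r+s}(|\pP|)$ (using reduced homology at the bottom, with the convention $F_{-1}=\emptyset$, $\tilde H_{-1}(\emptyset)=\bbZ$ to account for the basepoint/augmentation). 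The heart of the argument is to identify $H_*(F_r,F_{r-1})$ and show it is concentrated in a single degree, so the spectral sequence degenerates to a chain complex.

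\textbf{Key steps.} First I would show that $F_r/F_{r-1}$ is homotopy equivalent to a wedge, over $X\in\pP_r$, of (unreduced) suspensions $\Sigma(|\pP_{<X}|_+)$ — more precisely that $\tilde H_n(F_r,F_{r-1})\cong\bigoplus_{X\in\pP_r}\tilde H_{n-1}(\pP_{<X})$. This is a standard excision/Mayer–Vietoris computation: the pair $(F_r,F_{r-1})$ decomposes, up to homotopy, as a disjoint union over the rank-$r$ elements $X$ of the pairs $(|\pP_{\le X}|,|\pP_{<X}|)$ (the open stars of the top cells $X$ are disjoint in $F_r/F_{r-1}$), and $|\pP_{\le X}|$ is a cone on $|\pP_{<X}|$ (with cone point $X$), whence the long exact sequence of the pair gives $\tilde H_n(|\pP_{\le X}|,|\pP_{<X}|)\cong\tilde H_{n-1}(|\pP_{<X}|)$. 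Here I must be careful about minimal elements ($\pP_{<X}=\emptyset$, contributing $\bbZ$ in degree $0$ via the convention $\tilde H_{-1}(\emptyset)=\bbZ$), and about the very bottom: the reduced homology of $|\pP|$ itself, i.e. including $F_{-1}=\emptyset$ as the previous stage. Second, I invoke the Cohen–Macaulay hypothesis: the interval $\pP_{<X}=(-\infty,X)$ is spherical of dimension $\dim(-\infty,X)$, and since every maximal chain below $X$ has length $\rk(X)=r$, that dimension is $r-1$. Hence $\tilde H_{*}(\pP_{<X})$ is concentrated in degree $r-1$, so $H_{r+s}(F_r,F_{r-1})$ is concentrated in $s=-1$, i.e. $E^1_{r,s}=0$ unless $s=-1$. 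Third, a spectral sequence with $E^1$ supported on a single line $s=-1$ collapses: all higher differentials vanish, and $\tilde H_*(|\pP|)$ is computed by the $d^1$-differential chain complex $(E^1_{\bullet,-1},d^1)$. Setting $\tilde C_r:=E^1_{r,-1}=\bigoplus_{X\in\pP_r}\tilde H_{r-1}(\pP_{<X})$, with $\tilde C_{-1}=\bbZ$ coming from $F_{-1}=\emptyset$, gives exactly the asserted complex, and its homology is $\tilde H_*(\pP)$.

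\textbf{Main obstacle.} The conceptual steps are routine; the genuine care is needed in two places. (a) Getting the bottom of the complex right — handling the augmentation $\tilde C_{-1}=\bbZ$ and the behavior at minimal elements — so that the spectral sequence really converges to \emph{reduced} homology rather than unreduced; this is the kind of off-by-one bookkeeping that needs the convention $\tilde H_{-1}(\emptyset)=\bbZ$ stated explicitly and threaded through. (b) The identification $H_n(F_r,F_{r-1})\cong\bigoplus_{X\in\pP_r}\tilde H_{n-1}(\pP_{<X})$ requires knowing that $F_r$ is obtained from $F_{r-1}$ by ``coning off'' each $|\pP_{<X}|$ along the cone point $X$, i.e. that the links of the rank-$r$ vertices in $F_r$ relative to $F_{r-1}$ are exactly the $|\pP_{<X}|$ and that these attachments are disjoint — this is where the graded/poset structure is used and should be spelled out. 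Alternatively, and perhaps cleaner for writing, one can avoid spectral-sequence language entirely: define $d_r:\tilde C_r\to\tilde C_{r-1}$ directly as the composite $\tilde H_{r-1}(\pP_{<X})\xrightarrow{\cong}\tilde H_r(|\pP_{\le X}|,|\pP_{<X}|)\to \tilde H_{r-1}(|\pP_{<X}|)\to\bigoplus_{Y\in\pP_{r-1}}\tilde H_{r-1}(|\pP_{\le Y}|,|\pP_{<Y}|)$ using that, by the Cohen–Macaulay property applied to intervals $(-\infty,Y)$ for $Y<X$, $\tilde H_{r-1}(|\pP_{<X}|)$ maps isomorphically to the rank-$(r-1)$ summands; then check $d^2=0$ and compute homology by induction on rank using the cofiber sequences $F_{r-1}\to F_r\to F_r/F_{r-1}$. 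I would present the spectral sequence version as the main line since it is shorter, with the Cohen–Macaulay collapse as the crux.
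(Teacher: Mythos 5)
Your proposal is correct and follows essentially the same route as the paper: filter $|\pP|$ by rank, identify each subquotient $|\pP_{\leq r}|/|\pP_{\leq r-1}|$ as a wedge over $X\in\pP_r$ of suspensions of $|\pP_{<X}|$, and use the Cohen--Macaulay hypothesis to concentrate the filtration spectral sequence on a single row so that it degenerates to the asserted chain complex. The paper packages the wedge decomposition as a lemma about subposets with discrete complement (allowing joins $\pP_{<X}\ast\pP_{>X}$), but applied to the rank filtration this is exactly your cone/suspension identification, including the $\tilde H_{-1}(\emptyset)=\bbZ$ convention at the bottom.
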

In particular, this chain complex is exact except in degree $\dim \pP$.
Note that, as $\tilde H_{-1}(\emptyset)=\bbZ$, we have 
\[ \tilde C_0(\bar \pP)=\bbZ \pP_0. \]

\begin{lemma}\label{lem:discrete complement}
Let $\mathcal{Q}\leq \pP$ be posets such that $\pP-\mathcal{Q}$ is discrete, i.e.~has no nontrivial order relations.  There is a homeomorphism
\[ |\pP|/|\mathcal{Q}|\simeq \bigvee_{X\in \pP-\mathcal{Q}} \Sigma \abs{\pP_{<X}*\pP_{>X}} \]
which is natural with respect to maps of such pairs, where $\Sigma$ denotes the unreduced suspension.
\end{lemma}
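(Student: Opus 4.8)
The final statement to prove is Lemma~\ref{lem:discrete complement}: for posets $\mathcal{Q}\leq \pP$ with $\pP-\mathcal{Q}$ discrete, there is a natural homeomorphism $|\pP|/|\mathcal{Q}|\simeq \bigvee_{X\in \pP-\mathcal{Q}}\Sigma|\pP_{<X}*\pP_{>X}|$.

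The plan is to analyze the quotient $|\pP|/|\mathcal{Q}|$ simplicially. A simplex of $|\pP|$ is a chain $x_0<x_1<\dots<x_k$; it lies in $|\mathcal{Q}|$ precisely when all $x_i\in\mathcal{Q}$, so the quotient identifies all such chains to the basepoint. Since $\pP-\mathcal{Q}$ is discrete, any chain in $\pP$ contains \emph{at most one} element $X$ of $\pP-\mathcal{Q}$; the non-collapsed simplices are exactly the chains containing exactly one such $X$. First I would observe that the open star of each vertex $X\in\pP-\mathcal{Q}$ in $|\pP|$, together with its link, organizes these chains: a chain through $X$ splits as (chain in $\pP_{<X}$) followed by $X$ followed by (chain in $\pP_{>X}$), so the link of $X$ in $|\pP|$ is $|\pP_{<X}*\pP_{>X}|$ (the join, as simplicial complexes, matching the poset join). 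Since distinct elements of $\pP-\mathcal{Q}$ can never appear in a common chain, the closed stars $\overline{\mathrm{st}}(X)$ of the various $X$ meet each other only inside $|\mathcal{Q}|$, and moreover the subcomplex of $|\pP|$ consisting of simplices that meet no $X$ is exactly $|\mathcal{Q}|$.

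The key step is then a standard ``collapse the complement of the open stars'' argument. Concretely, let $N=\bigcup_{X\in\pP-\mathcal{Q}}\overline{\mathrm{st}}(X)$ be the union of closed stars. One shows $|\pP|/|\mathcal{Q}|$ is homeomorphic to $N/(N\cap|\mathcal{Q}|)$ because the complement of the open stars deformation-retracts onto... more simply: $|\pP|=N\cup|\mathcal{Q}|$ since every simplex either meets some $X$ (so lies in $\overline{\mathrm{st}}(X)\subseteq N$) or meets none (so lies in $|\mathcal{Q}|$), and $N\cap|\mathcal{Q}|=\bigcup_X(\overline{\mathrm{st}}(X)\cap|\mathcal{Q}|)=\bigcup_X \mathrm{lk}(X)$, the union over $X$ of the links — but these links are pairwise \emph{disjoint} inside $|\mathcal{Q}|$ (again because two different $X$'s share no chain, so a chain in $\mathrm{lk}(X)\cap\mathrm{lk}(X')$ would have to be empty of $\pP-\mathcal{Q}$-elements yet... one needs a tiny argument here that $\mathrm{lk}(X)\cap\mathrm{lk}(X')$ is empty: a simplex in both would be a chain $c$ with $c\cup\{X\}$ and $c\cup\{X'\}$ both chains, impossible unless $X,X'$ comparable, contradicting discreteness). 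Therefore $|\pP|/|\mathcal{Q}| = N/(N\cap|\mathcal{Q}|) = \left(\bigsqcup_X \overline{\mathrm{st}}(X)\right)\big/\left(\bigsqcup_X \mathrm{lk}(X)\right) = \bigvee_X \overline{\mathrm{st}}(X)/\mathrm{lk}(X)$, where the wedge point is the image of the (disjoint) links. Finally $\overline{\mathrm{st}}(X) = X * \mathrm{lk}(X)$ is the cone on $\mathrm{lk}(X)=|\pP_{<X}*\pP_{>X}|$, and collapsing the base of a cone gives the unreduced suspension: $\overline{\mathrm{st}}(X)/\mathrm{lk}(X)\cong \Sigma|\pP_{<X}*\pP_{>X}|$. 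Naturality with respect to maps of pairs is immediate since every construction (stars, links, the cone structure) is functorial in the poset pair.

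The main obstacle I anticipate is being careful with the topology when $\pP-\mathcal{Q}$ is infinite: the decomposition $|\pP|=N\cup|\mathcal{Q}|$ and the identification $|\pP|/|\mathcal{Q}|=\bigvee_X \overline{\mathrm{st}}(X)/\mathrm{lk}(X)$ must be checked as an identification of CW complexes (or one restricts to the directed-colimit topology), so that the quotient really is the wedge and not merely a set-theoretic union. In the application only finitely many $X$ occur at a time, or one invokes that the realization has the weak topology, so this is routine but should be mentioned. The other small point requiring attention is verifying that the simplicial join of the order complexes of $\pP_{<X}$ and $\pP_{>X}$ agrees with the order complex of the poset join $\pP_{<X}*\pP_{>X}$ — this is exactly the standard fact that realization sends poset joins to topological joins, already recalled in the paper just before Theorem~\ref{thm:join decomposition}.
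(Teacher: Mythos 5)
Your proposal follows the same route as the paper's own (terse) proof: observe that every chain contains at most one element of $\pP-\mathcal{Q}$, decompose $|\pP|$ as $|\mathcal{Q}|$ together with the closed stars $\abs{\pP_{<X}\ast\{X\}\ast\pP_{>X}}$, note that each closed star meets $|\mathcal{Q}|$ in the link $\abs{\pP_{<X}\ast\pP_{>X}}$, and conclude that collapsing $|\mathcal{Q}|$ yields the wedge of cones modulo their bases, i.e.\ of unreduced suspensions; naturality is clear from the naturality of the decomposition. So in outline your argument is the paper's argument, carried out with more explicit point-set care.

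One intermediate claim, however, is false as stated: the links $\mathrm{lk}(X)$ are \emph{not} pairwise disjoint in general, and your parenthetical justification does not work. If $a\in\mathcal{Q}$ satisfies $a<X$ and $a<X'$ for two distinct (hence incomparable) $X,X'\in\pP-\mathcal{Q}$, then the chain $\{a\}$ lies in $\mathrm{lk}(X)\cap\mathrm{lk}(X')$; having $c\cup\{X\}$ and $c\cup\{X'\}$ both chains does not force $X$ and $X'$ to be comparable. Consequently $N$ is not a disjoint union of closed stars and $N\cap|\mathcal{Q}|$ is not a disjoint union of links. This does not sink the proof, because the wedge identification never needed disjointness: it only needs that for $X\neq X'$ the intersection $\overline{\mathrm{st}}(X)\cap\overline{\mathrm{st}}(X')$ lies in $|\mathcal{Q}|$ (any chain all of whose elements are comparable to both $X$ and $X'$ can contain neither, so lies in $\mathcal{Q}$) --- a fact you state correctly earlier in the paragraph. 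Collapsing $N\cap|\mathcal{Q}|$ then sends all pairwise overlaps to the basepoint, and the quotient is the wedge $\bigvee_X \overline{\mathrm{st}}(X)/\mathrm{lk}(X)$, after which the cone/suspension identification finishes the argument exactly as you say. Replace the disjointness sentence by this weaker statement and the proof is correct.
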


\begin{proof}
The assumption that $\pP-\mathcal{Q}$ is discrete gives that the simplices of $|\pP|$ that are not contained in $|\mathcal{Q}|$ are precisely those that contain precisely one element of $\pP-\mathcal{Q}$. Hence we have that 
\[ |\pP|/|\mathcal{Q}|\simeq \Big(|\mathcal{Q}| \bigcup_{X\in \pP-\mathcal{Q}} \abs{\pP_{<X}*\{p\}*\pP_{>X}}\Big) /|\mathcal{Q}|.\]
Noting that, for each $X\in \pP-\mathcal{Q}$, we have that $\abs{\pP_{<X}*\{X\}*\pP_{>X}}\cap |\mathcal{Q}|=\abs{\pP_{<X}*\pP_{>X}}$, we get the formula given in the statement. 
Naturality follows as the above decomposition is natural. 
\end{proof}

For all $r\geq0$, define
\[ \pP_{\leq r}=\set{X\in \pP}{\rank X\leq r}. \]
In particular, $\pP_r=\pP_{\leq r}\minus \pP_{\leq r-1}$.  Since $\pP_r$ is discrete (two distinct comparable elements cannot have the same rank), we get:
\begin{cor}
Let $\pP$ be graded poset. Then there is a natural (with respect to rank-preserving poset maps)
homotopy equivalence
\[ |\pP_{\leq r}|/|\pP_{\leq r-1}| \to\bigvee_{X\in \pP_r}
\Sigma|\pP_{<X}|. \]
\end{cor}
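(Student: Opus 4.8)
The statement is an immediate application of Lemma~\ref{lem:discrete complement}, once we identify the relevant posets. The plan is to apply that lemma to the pair $\mathcal{Q}=\pP_{\leq r-1}\le \pP_{\leq r}$, in the role of $\mathcal{Q}\le\pP$. First I would check the hypothesis that $\pP_{\leq r}\minus\pP_{\leq r-1}=\pP_r$ is discrete: if $X<Y$ are two comparable elements of a graded poset, then any maximal chain ending at $Y$ contains one ending at $X$ as a proper initial segment, so $\rank X<\rank Y$; hence no two distinct elements of $\pP_r$ are comparable. This is exactly the discreteness condition needed for Lemma~\ref{lem:discrete complement}, which then yields a natural homotopy equivalence
\[ |\pP_{\leq r}|/|\pP_{\leq r-1}|\ \simeq\ \bigvee_{X\in \pP_r}\Sigma\bigl|(\pP_{\leq r})_{<X}\ast(\pP_{\leq r})_{>X}\bigr|. \]

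The second step is to simplify the interval appearing inside the suspension for a fixed $X\in\pP_r$. On the one hand, every $Z\in\pP$ with $Z<X$ satisfies $\rank Z<\rank X=r$, hence $Z\in\pP_{\leq r-1}\subset\pP_{\leq r}$; conversely any $Z<X$ in $\pP_{\leq r}$ is in particular $<X$ in $\pP$. Thus $(\pP_{\leq r})_{<X}=\pP_{<X}$. On the other hand, any $Z>X$ in $\pP$ has $\rank Z>r$, so no such $Z$ lies in $\pP_{\leq r}$; therefore $(\pP_{\leq r})_{>X}=\emptyset$. Since the join of any poset with the empty poset is that poset itself, $(\pP_{\leq r})_{<X}\ast(\pP_{\leq r})_{>X}=\pP_{<X}$, and the displayed equivalence becomes
\[ |\pP_{\leq r}|/|\pP_{\leq r-1}|\ \simeq\ \bigvee_{X\in \pP_r}\Sigma\,|\pP_{<X}|, \]
as claimed.

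Finally, for naturality I would note that a rank-preserving poset map $f\colon\pP\to\pP'$ restricts to maps $\pP_{\leq r}\to\pP'_{\leq r}$ and $\pP_{\leq r-1}\to\pP'_{\leq r-1}$, hence defines a map of pairs $(\pP_{\leq r},\pP_{\leq r-1})\to(\pP'_{\leq r},\pP'_{\leq r-1})$ to which the naturality clause of Lemma~\ref{lem:discrete complement} applies; moreover $f$ carries $\pP_r$ into $\pP'_r$ and sends $\pP_{<X}$ into $\pP'_{<f(X)}$, so the identifications of the previous paragraph are compatible with $f$. There is essentially no obstacle here: the only point requiring a moment's care is the bookkeeping with the empty join and the unreduced suspension $\Sigma$ from Lemma~\ref{lem:discrete complement}, and the elementary fact about ranks in graded posets used to establish discreteness of $\pP_r$.
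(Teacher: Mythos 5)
Your proposal is correct and is exactly the paper's argument: the corollary is deduced by applying Lemma~\ref{lem:discrete complement} to the pair $\pP_{\leq r-1}\le \pP_{\leq r}$, using that $\pP_r$ is discrete because comparable elements of a graded poset have distinct ranks, and that the upper intervals $(\pP_{\leq r})_{>X}$ are empty so the join collapses to $\pP_{<X}$. Your additional checks (the identification of the lower interval and the compatibility with rank-preserving maps) are just the details the paper leaves implicit.
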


\begin{proof}[Proof of Theorem~\ref{thm:cpxofHCM}] 
The rank defines a finite filtration of the realization of $\pP$ by closed CW-inclusions.  There is an associated spectral sequence beginning with
\[ E^1_{pq}=\tilde H_{p+q}(|\pP_{\leq p}|/|\pP_{\leq p-1}|) \]
and converging to $H_{p+q}(|\pP|)=H_{p+q}(\pP)$. 
By the previous result,
\[ \tilde H_j(|\bar \pP_{\leq p}|/|\bar \pP_{\leq p-1}|)=
\bigoplus_{X\in \pP_p}\tilde H_{j-1}(|\pP_{<X}|) \]
which, by the Cohen-Macaulay property of $\pP$, is zero except when $j-1=p-1$.
Hence the spectral sequence is zero except along the row $q=0$, and so we have $E_2=E_\infty$, and also $E_\infty$ canonically identifies with the abutment.  In other words, it is merely a chain complex.
\end{proof}

\bibliographystyle{plain}
\bibliography{biblio}

\end{document}